\begin{document}

\newtheorem{theorem}{Theorem}[section]
\newtheorem{main}{Main Theorem}
\newtheorem{proposition}[theorem]{Proposition}
\newtheorem{corollary}[theorem]{Corollary}
\newtheorem{definition}[theorem]{Definition}
\newtheorem{lemma}[theorem]{Lemma}
\newtheorem{example}[theorem]{Example}
\newtheorem{remark}[theorem]{Remark}
\newtheorem{question}[theorem]{Question}
\newtheorem{conjecture}[theorem]{Conjecture}
\newtheorem{fact}[theorem]{Fact}
\newtheorem*{ac}{Acknowledgements}

\newcommand{\FS}{\mathfrak{F}_s}
\newcommand{\fF}{\mathfrak{F}}
\newcommand{\tfF}{\widetilde{\mathfrak{F}}}
\newcommand{\bZ}{\mathbb{Z}}
\newcommand{\bN}{\mathbb{N}}
\newcommand{\bC}{\mathbb{C}}
\newcommand{\bR}{\mathbb{R}}
\newcommand{\bRp}{\mathbb{R}_{\geq 0}}
\newcommand{\bZp}{\mathbb{Z}_{\geq 0}}
\newcommand{\fA}{\mathfrak{A}} 
\newcommand{\fB}{\mathfrak{B}} 
\newcommand{\bA}{\mathcal{A}} 
\newcommand{\bB}{\mathcal{B}} 
\newcommand{\fr}{r_\mathfrak{A}}
\newcommand{\cR}{\mathcal{R}}
\newcommand{\cS}{\mathcal{S}}
\newcommand{\cN}{\mathcal{N}}
\newcommand{\cM}{\mathcal{M}}
\newcommand{\sP}{\mathscr{P}}
\newcommand{\sC}{\mathscr{C}}
\newcommand{\sD}{\mathscr{D}}
\newcommand{\tB}{\widetilde{B}}
\newcommand{\Hil}{\mathcal{H}} 
\newcommand{\conv}{*} 
\newcommand{\FPdim}{\mathrm{FPdim}}
\newcommand{\PSL}{\mathrm{PSL}}
\newcommand{\Rep}{\mathrm{Rep}}

\title[Fusion Bialgebras and Fourier Analysis]{Fusion Bialgebras and Fourier Analysis \\ \tiny Analytic obstructions for unitary categorification}

\author{Zhengwei Liu}
\address{Z. Liu, Yau Mathematical Sciences Center and Department of Mathematics, Tsinghua University, Beijing, 100084, China}
\address{Z. Liu, Beijing Institute of Mathematical Sciences and Applications, Huairou District, Beijing, 101408, China}
\email{liuzhengwei@math.tsinghua.edu.cn}

\author{Sebastien Palcoux}
\address{S. Palcoux, Beijing Institute of Mathematical Sciences and Applications, Huairou District, Beijing, 101408, China}
\email{sebastienpalcoux@gmail.com}
\urladdr{https://sites.google.com/view/sebastienpalcoux}

\author{Jinsong Wu}
\address{J. Wu, Institute for Advanced Study in Mathematics, Harbin Institute of Technology, Harbin, 150001, China}
\email{wjs@hit.edu.cn}

\begin{abstract}
We introduce fusion bialgebras and their duals and systematically study their Fourier analysis.
As an application, we discover new efficient analytic obstructions on the unitary categorification of fusion rings.
We prove the Hausdorff-Young inequality, uncertainty principles for fusion bialgebras and their duals.
We show that the Schur product property, Young's inequality and the sum-set estimate hold for fusion bialgebras, but not always on their duals.
If the fusion ring is the Grothendieck ring of a unitary fusion category, then these inequalities hold on the duals. Therefore, these inequalities are analytic obstructions of categorification.
We classify simple integral fusion rings of Frobenius type up to rank 8 and of Frobenius-Perron dimension less than 4080. We find 34 ones, 4 of which are group-like and 28 of which can be eliminated by applying the Schur product property on the dual.
In general, these inequalities are obstructions to subfactorize fusion bialgebras.
\end{abstract}


\maketitle

\vspace*{-.2cm}
\section{Introduction}


Lusztig introduced fusion rings in \cite{Lus87}. Etingof, Nikshych and Ostrik studied fusion categories \cite{ENO05} as a categorification of fusion rings, see also \cite{EtiKho95,EGNO15}. A central question is whether a fusion ring can be unitarily categorified, namely it is the Grothendieck ring of a unitary fusion category.

Jones introduced subfactor planar algebras as an axiomatization of the standard invariant of a subfactor in  \cite{Jon99}.
Planar algebras and fusion categories have close connections. There are various ways to construct one from the other.
For example, if $N \subset N\rtimes G$ is the group crossed product subfactor of a finite group $G$, then the $2$-box space $\mathscr{P}_{2,+}$ of its planar algebra captures the unitary fusion category $Vec(G)$ and its Fourier dual $\mathscr{P}_{2,-}$ captures the unitary fusion category $\mathrm{Rep}(G)$.
The Grothendieck ring of a unitary fusion category can be realized as the 2-box space of a subfactor planar algebra using the quantum double construction, such that the ring multiplication is implemented by the convolution of 2-boxes \cite{Mug03II,Liu19}.

Recently, Jiang, the first author and the third author formalized and proved numbers of quantum inequalities for subfactor planar algebras \cite{Liuex,JLW16,JLWscm,LiuWuscm} inspired by Fourier analysis.
These inequalities automatically hold for the Grothendieck rings of unitary fusion categories $\sC$ as explained in \cite{Liu19}, through the well-known quantum double construction from unitary fusion categories to subfactors, see e.g. \cite{Mug03II}.
Moreover, the Fourier dual of a subfactor is still a subfactor. So these inequalities also hold on the Fourier dual of the Grothendieck ring, which can be regarded as representations of the Grothendieck ring.

This paper is inspired by three questions:
\begin{itemize}
\item Vaughan Jones \cite{Jones15}: What are the applications of these inequalities on subfactors to other areas?
\item Zhenghan Wang \cite{Wang17}: Are these inequalities obstructions of categorification?
\item Pavel Etingof \cite{Etingof19}: Do the inequalities on Grothendieck rings hold on fusion rings?
\end{itemize}

In this paper, we prove that these quantum inequalities on subfactor planar algebras hold on fusion rings and partially, but not all, on the Fourier dual of fusion rings.
Therefore, the inequalities that fail on the dual of the fusion rings are new analytic obstructions for unitary categorification of fusion rings. For examples, the quantum Schur product theorem [\cite{Liuex}, Theorem 4.1] holds on the Fourier dual of Grothendieck rings, but not on the Fourier dual of fusion rings. It turns out to be a surprisingly efficient obstruction of unitary categorification of fusion rings. Moreover, it is easy to check the Schur product property on the dual of a commutative fusion ring in practice. In this way, we find many fusion rings which admit no unitary categorification, due to the Schur product property, and which cannot be ruled out by previous obstructions.

In \S\ref{Sec: Fusion Bialgebras}, we introduce fusion bialgebras as a generalization of fusion rings and their duals over the field $\mathbb{C}$. The definition of fusion bialgebras is inspired by the 2-box spaces $\mathscr{P}_{2,\pm}$ of subfactor planar algebras.
We show that if $\mathscr{P}_{2,+}$ is commutative, then it is a fusion bialgebra.
If a fusion bialgebra arises in this way, then we say that it is subfactorizable. We classify fusion bialgebras up to dimension three.
The classification of the two dimensional subfactorizable fusion bialgebras is equivalent to the remarkable classification of the Jones index of subfactors \cite{Jon83}. It remains challenging to classify three dimensional subfactorizable fusion bialgebras.

In \S\ref{Sec: Schur Product Property}-\S\ref{Sec: Fusion Subalgebras and Bishifts of Biprojections}, we systematically study quantum Fourier analysis on fusion bialgebras.
We show that the Hausdorff-Young inequalities, uncertainty principles hold for fusion bialgebras and their duals;
Young's inequalities and the sum-set estimate hold for fusion bialgebras, but not necessarily on their duals.
We characterize their extremizers in \S\ref{Sec: Fusion Subalgebras and Bishifts of Biprojections}.
In fact, for the dual of a fusion bialgebra, Young's inequality implies Schur product property, and Schur product property implies the sum-set estimate.
Therefore, Young's inequality is also an obstruction to unitary categorify a fusion ring or to subfactorize a fusion bialgebra, and the sum-set estimate is a potential obstruction.
It is worth mentioning that the Schur product property (or Young's inequality) holds on arbitrary $n$-box space of the Temperley-Lieb-Jones planar algebra if and only if it is a subfactor planar algebras, namely the circle parameter is the square root of the Jones index \cite{Jon83}.

In \S\ref{Sec: Application}, we reformulate Schur product property (on the dual) in terms of irreducible representations of the fusion ring/algebra, especially in terms of the character table for the commutative case. In the family of fusion algebras of rank 3 with every object self-dual, we observe that about $30\%$ of over 10000 samples do not have the Schur product property (on the dual). So they cannot be subfactorized. We consider families of rank 4 or 5 fusion rings, and we compare (visually) Schur product criterion and Ostrik's criterion \cite[Theorem 2.21]{Ost15}.

Next, we give a classification of simple integral fusion rings of Frobenius type with the following bounds of Frobenius-Perron dimensions (with  $\FPdim \neq p^aq^b, pqr$, by \cite{ENO11}).
$$\begin{array}{c|c|c|c|c|c|c|c}
\text{rank} & \le 5 & 6     & 7   & 8   & 9 & 10 &  \text{all}  \\ \hline
\FPdim <  & 1000000 & 150000 & 15000 & 4080 & 504 & 240 & 132
\end{array}$$
First, given a Frobenius-Perron dimension, we classify all possible types (the list of dimensions of the ``simple objects''). Secondly, we classify the fusion matrices for a given type.
We derive several inequalities from Fourier analysis on fusion rings which bound the fusion coefficients using the dimensions. These inequalities are efficient in the second step of the classification. For some specific types, the use of these inequalities reduced drastically the computation time (from 50 hours to 5 seconds).
We end up with 34 simple integral fusion rings in the classification (all commutative), 4 of which are group-like and 28 of which cannot be unitarily categorified by showing that the Schur product property (on the dual) does not hold. It remains 2 ones. None of these 28+2 ones can be ruled out by already known methods.
\begin{question}
Do the remaining two fusion rings admit a unitary categorification?
\end{question}
\noindent It has two motivations, first the categorification of a simple integral non group-like fusion ring would be non weakly-group-theoretical and so would provide a positive answer to Etingof-Nikshych-Ostrik \cite[Question 2]{ENO11}, next there is no known non group-like examples of irreducible finite index maximal depth $2$ subfactor \cite[Problem 4.12]{Pal18}, but its fusion category would be unitary, simple, integral (and of Frobenius type, assuming Kaplansky's 6th conjecture \cite{Kap75}).

In summary, Fourier analysis on subfactors provides efficient analytic obstructions of unitary categorification or of subfactorization.



\vspace*{-0.7cm}
\tableofcontents

\begin{ac}
Zhengwei Liu would like to thank Pavel Etingof, Vaughan Jones and Zhenghan Wang for inspiring questions and valuable discussions. We thank David Penneys for sharing the two families of rank $4$ fusion rings and the hospitality of the 2019 Oberwolfach workshop Subfactors and Applications. The computation was supported by Tsinghua National Laboratory for Information Science and Technology, and we thank Lin Jiao from this laboratory for her help.
Zhengwei Liu and Sebastien Palcoux were supported by Tsinghua University (Grant no. 100301004) and Beijing Institute of Mathematical Sciences and Applications.
Zhengwei Liu was supported by NKPs (Grant no. 2020YFA0713000).
Jinsong Wu was supported by NSFC (Grant no. 11771413 and 12031004).
Zhengwei Liu and Jinsong Wu would like to thank Harvard University for the hospitality and the support of Templeton Religion Trust (Grant no. TRT 0159).
\end{ac}

\section{Fusion Bialgebras}\label{Sec: Fusion Bialgebras}

In this section, we introduce fusion bialgebras which capture fusion algebras of fusion rings over $\mathbb{C}$ and their duals, namely representations.
The definition of fusion bialgebras is motivated by a connection between subfactor planar algebras and unitary fusion categories based on the quantum double construction. Its algebraic aspects have been discussed in \cite{Liu19}. In this paper, we investigate its analytic aspects and study Fourier analysis on fusion bialgebras.

The fusion bialgebra has a second multiplication $\diamond$ and involution $\#$ on the fusion algebra.
Several basic results on fusion rings, see for example \cite{EGNO15}, can be generalized to fusion bialgebras.
Many examples of fusion bialgebras come from subfactor theory, and we say that they can be subfactorized.
It is natural to ask whether a fusion bialgebra can be subfactorized.
The question for the two dimensional case is equivalent to the classification of the Jones index.
If a fusion ring has a unitary categorification, then the corresponding fusion bialgebra has a subfactorization.
We introduce analytic obstructions of subfactorization from Fourier analysis on subfactors, so they are also obstructions of unitary categorification. We discuss their applications in \S\ref{Sec: Application}.

\subsection{Definitions}

Let $\bN=\bZp$ be the set of all natural numbers.
Let $\bRp$ be the set of non-negative real numbers.

\begin{definition}\label{def:bB}
Let $\bB$ be a unital *-algebra over the complex field $\bC$.
We say $\bB$ has a $\bRp$-basis $B=\{x_1=1_{\bB}, x_2, \ldots, x_m\}, m\in\mathbb{Z}_{\geq 1}$, if
\begin{enumerate}[(1)]
\item $\{x_1, \ldots, x_m\}$ is a linear basis over $\bC$;
\item $\displaystyle x_jx_k=\sum_{s=1}^m N_{j,k}^s x_s$, $N_{j,k}^s\in \bRp$;
\item there exists an involution $*$ on $\{1, 2, \ldots, m\}$ such that $x_k^*:=x_{k^*}$ and $N_{j, k}^1=\delta_{j, k^*}$.
\end{enumerate}
\end{definition}

We write the identity $1_{\bB}$ as $1$ for short, if there is no confusion.
When $N_{j,k}^s \in \mathbb{N}$, $B$ gives a fusion ring, and $\bB$ is called a fusion algebra.
The *-algebra $\bB$ with a $\bRp$-basis $B$ can be considered as a fusion algebra over the field $\bC$.

\begin{definition}
For a unital *-algebra $\bB$ with a $\bRp$-basis $B$, we define a linear functional $\tau: \bB \to \bC$ by $\tau(x_j)=\delta_{j,1}$.
\end{definition}

Then $\tau(x_jx_k)=N_{j,k}^1=\delta_{j,k^*}$ and $\tau(xy)=\tau(yx)$ for any $x, y\in \bB$.
Moreover
\begin{equation}\label{Equ: rotation}
N_{j,k}^{s^*}=\tau(x_jx_k x_s)=\tau(x_sx_jx_k)=N_{s,j}^{k^*}=\tau(x_kx_sx_j)=N_{k,s}^{j^*}.
\end{equation}
Note that $x_{k^*}x_{j^*}=(x_jx_k)^*$.
We obtain Frobenius reciprocity
\begin{equation}\label{Equ: star}
N_{j,k}^{s}=N_{k^*, j^*}^{s^*}=N_{j^*,s}^{k}.
\end{equation}
\noindent Therefore $\tau$ is a faithful tracial state on the *-algebra $\bB$.
Following the Gelfand-Naimark-Segal construction,
we obtain a Hilbert space $\Hil=L^2(\bB, \tau)$ with the inner product
$$\langle x , y \rangle=\tau(y^*x),$$
and a unital *-representation $\pi$ of the $*$-algebra $\bB$ on $\Hil$.
Moreover $B$ forms an orthonormal basis of $\Hil$. On this basis, we obtain a representation $\pi_B: \bB \to M_{m}(\bC)$.
In particular,
$$\pi_B(x_j)_{k,s}=N_{j,k}^{s}.$$
We denote the matrix $\pi_B(x_j)$ by $L_j$. Then
$$L_jL_k=\sum_{s=1}^m N_{j,k}^s L_s,$$
and
$$L_j^*=L_{j^*}.$$

\begin{remark}
Under the Gelfand-Naimark-Segal construction, the $*$-algebra $\bB$ forms a $C^*$-algebra, which is also a von Neumann algebra. In this paper, we only consider the finite dimensional case, so we do not distinguish $C^*$-algebras and von Neumann algebras.
\end{remark}

\begin{definition}
For a unital *-algebra $\bB$ with a $\bRp$-basis $B$, we define a linear functional $d: \bB \to \bC$ by setting $d(x_j)$ to be the operator norm $\|L_j\|_\infty$ of $L_j$, below denoted by $\|x_j\|_{\infty, \bB}$.
\end{definition}

Recall the Perron-Frobenius theorem for matrices:
\begin{theorem}[Perron-Frobenius Theorem, \cite{ENO05} Theorem 8.1]\label{Thm:FP}
Let $A$ be a square matrix with nonnegative entries.
\begin{enumerate}[(1)]
\item $A$ has a nonnegative real eigenvalue.
The largest nonnegative real eigenvalue $\lambda(A)$ of $A$ dominates absolute values of all other eigenvalues of $A$.
\item If $A$ has strictly positive entries then $\lambda(A)$ is a simple positive eigenvalue, and the corresponding eigenvector can be normalized to have strictly positive entries.
\item If $A$ has an eigenvector $f$ with strictly positive entries, then the corresponding eigenvalue is $\lambda(A)$.
\end{enumerate}
\end{theorem}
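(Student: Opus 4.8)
The statement is the classical Perron--Frobenius theorem, so the plan is to give the standard variational proof, deducing the three parts from the Collatz--Wielandt functional and reducing the merely nonnegative case to the strictly positive one by perturbation. For a nonnegative $n\times n$ matrix $A$ and a nonzero vector $x\in\bRp^n$, set $r_A(x)=\min\{(Ax)_i/x_i : x_i>0\}$ and $\hat\lambda(A)=\sup\{r_A(x): x\in\bRp^n,\ x\neq 0\}$; since $r_A$ is scale invariant the supremum may be taken over the compact simplex $\Delta=\{x\geq 0:\sum_i x_i=1\}$, on which a pigeonhole estimate shows it is finite. The whole theorem comes down to showing $\hat\lambda(A)$ is attained, is an eigenvalue, dominates $|\mu|$ for every eigenvalue $\mu$, and behaves well under strict positivity.

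First I would handle strictly positive $A$. The map $\Phi(x)=Ax/\|Ax\|_1$ is continuous and carries $\Delta$ into its relative interior, so Brouwer's fixed point theorem gives $f>0$ in $\Delta$ with $Af=\lambda f$, $\lambda=\|Af\|_1>0$. Applying the same to $A^{T}$ produces a strictly positive left eigenvector $w$ with $w^{T}A=\lambda' w^{T}$; pairing $Af=\lambda f$ with $w$ forces $\lambda=\lambda'$, and pairing the componentwise inequality $Ax\geq r_A(x)\,x$ with $w$ gives $r_A(x)\leq\lambda$ for every $x$, so $\lambda=\hat\lambda(A)$ and it is attained. For simplicity: if $Av=\lambda v$ with $v$ real, pick the largest $t$ with $f-tv\geq 0$; then $f-tv$ is a nonnegative eigenvector for $\lambda>0$ with a zero coordinate, while $A(f-tv)$ is strictly positive unless $f-tv=0$, forcing $v\in\bC f$; a complex eigenvector is reduced to this by taking real and imaginary parts, so the eigenspace is one dimensional. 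Algebraic simplicity follows because a generalized eigenvector $u$ with $(A-\lambda)u=f$ would give $0=(w^{T}A-\lambda w^{T})u=w^{T}f>0$, a contradiction. This proves (2).

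For part (1) in general, apply the above to $A_\varepsilon=A+\varepsilon J$ ($J$ the all-ones matrix), obtaining $\hat\lambda(A_\varepsilon)$ with positive eigenvector $f_\varepsilon\in\Delta$. Since $\varepsilon\mapsto\hat\lambda(A_\varepsilon)$ is nonincreasing and bounded below by $\hat\lambda(A)$, it converges to some $\lambda_0\geq\hat\lambda(A)$; passing to a subsequence with $f_\varepsilon\to f\in\Delta$ gives $Af=\lambda_0 f$ with $f\geq 0$, $f\neq 0$, whence $\lambda_0=r_A(f)\leq\hat\lambda(A)$ and so $\lambda_0=\hat\lambda(A)$ is a nonnegative real eigenvalue. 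Domination is the triangle inequality: if $Av=\mu v$, $v\neq 0$, then $|\mu|\,|v|=|Av|\leq A|v|$ componentwise, so $r_A(|v|)\geq|\mu|$ and thus $|\mu|\leq\hat\lambda(A)$; in particular $\hat\lambda(A)$ is the largest nonnegative real eigenvalue $\lambda(A)$ and it dominates all eigenvalues. Finally, part (3): given $Af=\mu f$ with $f>0$, take any nonzero $w\in\bRp^n$ with $w^{T}A=\lambda(A)w^{T}$ (obtained by applying part (1) to $A^{T}$); then $\lambda(A)\,w^{T}f=w^{T}Af=\mu\,w^{T}f$ and $w^{T}f>0$ give $\mu=\lambda(A)$.

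I expect the only delicate point to be the passage to the limit in part (1): one must check that the perturbed Perron values do not drop below $\hat\lambda(A)$ in the limit and that the limiting eigenvector is nonzero, both of which are secured by monotonicity of $\hat\lambda$ in $A$ and by normalizing all eigenvectors to lie in $\Delta$. Every other step is either an application of Brouwer's theorem or a one-line pairing against a (left) Perron eigenvector.
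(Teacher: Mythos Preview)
The paper does not prove this theorem at all: it is quoted verbatim from \cite{ENO05} as a classical fact and used without argument. So there is no ``paper's own proof'' to compare against.

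Your write-up is a correct sketch of the standard variational/Brouwer proof of Perron--Frobenius, and each step checks out: the Collatz--Wielandt functional gives the right candidate for $\lambda(A)$, Brouwer on the simplex produces the positive eigenvector in the strictly positive case, the left-eigenvector pairing handles both the identification $\lambda=\hat\lambda(A)$ and part~(3), and the perturbation $A_\varepsilon=A+\varepsilon J$ together with compactness of $\Delta$ reduces the general nonnegative case to the strict one. The only places I would tighten are cosmetic: in the simplicity argument you should say explicitly that if $v$ is real and not a multiple of $f$ you may replace $v$ by $-v$ so that the supremum $t_0=\sup\{t:f-tv\ge 0\}$ is finite and positive, and in the monotonicity step you are using that $A\le A_\varepsilon$ entrywise implies $r_A(x)\le r_{A_\varepsilon}(x)$ for every $x\ge 0$, hence $\hat\lambda(A)\le\hat\lambda(A_\varepsilon)$. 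With those small clarifications the argument is complete.
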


\begin{proposition}\label{Prop: dim d}
Let $\bB$ be a unital *-algebra with a $\bRp$-basis $B$. Then
$$d(x_j)d(x_k)=\sum_{s=1}^m N_{j,k}^s d(x_s),\quad d(x_j)=d(x_{j^*})\geq 1.$$
\end{proposition}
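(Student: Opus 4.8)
The plan is to realize the vector $v = (d(x_1), \dots, d(x_m))^T$ as a common (Perron–Frobenius) eigenvector for all the left-multiplication matrices $L_j$, and then read off the two claimed statements. First I would record the key structural fact that the $L_j$ all commute and have nonnegative entries, with $L_1 = I$. Consider the matrix $L = \sum_{k=1}^m L_k$; since $x_1 = 1_\bB$ lies in the basis and every $N_{1,k}^s = \delta_{k,s}$, the entry $L_{k,s} = \sum_j N_{j,k}^s \geq N_{1,k}^s = \delta_{k,s} > 0$ on the diagonal, and in fact I will argue $L$ has strictly positive entries (using that for each pair $(k,s)$ there is some $j$ with $N_{j,k}^s > 0$, because the basis generates $\bB$ as an algebra and the structure constants are symmetric enough via \eqref{Equ: rotation}; alternatively one passes to $L^N$ for large $N$, which suffices equally well). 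By Theorem \ref{Thm:FP}(2), $L$ has a simple positive Perron eigenvalue with a strictly positive eigenvector $f$, normalized so that $f_1 = 1$.

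Next I would show each $L_j$ sends $f$ to a multiple of $f$. Since the $L_j$ pairwise commute and commute with $L$, the eigenspace of $L$ for its simple top eigenvalue is one-dimensional and $L_j$-invariant, so $L_j f = \mu_j f$ for scalars $\mu_j$. Because $L_j$ has nonnegative entries and $f$ has strictly positive entries, $\mu_j \geq 0$; comparing the first coordinate gives $\mu_j = (L_j f)_1 = \sum_s N_{j,1}^s f_s = f_j$ (using $N_{j,1}^s = \delta_{j,s}$). Hence $L_j f = f_j\, f$ for all $j$, i.e. $f$ is a simultaneous nonnegative eigenvector. By Theorem \ref{Thm:FP}(3) applied to $L_j$, the eigenvalue $f_j$ must equal $\lambda(L_j) = \|L_j\|_\infty$ (the operator norm equals the spectral radius here because... actually I should be slightly careful: $L_j$ need not be normal, so I would instead note $\|L_j\|_\infty \geq \lambda(L_j) = f_j$ from Theorem \ref{Thm:FP}(1) applied to $L_j$ having the positive eigenvector $f$, and $\|L_j\|_\infty \leq \lambda(L) $-type bounds; cleaner is to use that $L_jL_j^* = L_jL_{j^*}$ is a nonnegative matrix fixing $f$ up to scalar, so $\|L_j\|_\infty^2 = \|L_jL_j^*\|_\infty = \lambda(L_jL_{j^*}) = f_jf_{j^*}$, and separately $f_j = f_{j^*}$). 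Let me instead organize it as: show $f_j = f_{j^*}$ first, then deduce the norm identity.

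To get $f_j = f_{j^*}$: transpose. The relation \eqref{Equ: star} says $N_{j,k}^s = N_{j^*, s}^k$, which translates to $(L_j)^T = L_{j^*}$ after the appropriate reindexing — more precisely $(L_j)_{k,s} = N_{j,k}^s = N_{j^*,s}^k = (L_{j^*})_{s,k}$, so $L_j^T = L_{j^*}$. Then $L_{j^*} f = f_{j^*} f$ reads $L_j^T f = f_{j^*} f$; pairing with $f$ gives $f_{j^*}\langle f, f\rangle = \langle L_j^T f, f\rangle = \langle f, L_j f\rangle = f_j \langle f,f\rangle$, so $f_j = f_{j^*}$. Now $L_j L_j^* = L_j L_{j^*}$ is a product of two commuting nonnegative matrices, hence nonnegative, and $L_jL_{j^*} f = f_j f_{j^*} f = f_j^2 f$, so by Theorem \ref{Thm:FP}(3) its top eigenvalue is $f_j^2$; since $L_jL_j^*$ is positive semidefinite, $\|L_j\|_\infty^2 = \|L_jL_j^*\|_\infty = \lambda(L_jL_j^*) = f_j^2$, giving $d(x_j) = \|L_j\|_\infty = f_j$. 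This proves $d(x_j) = f_j$ is a simultaneous eigenvalue, so $d(x_j)d(x_k) f = L_jL_k f = \sum_s N_{j,k}^s L_s f = \sum_s N_{j,k}^s d(x_s) f$, and comparing coefficients of (say) the first coordinate of $f$ yields $d(x_j)d(x_k) = \sum_s N_{j,k}^s d(x_s)$. Finally $d(x_j) = d(x_{j^*})$ is exactly $f_j = f_{j^*}$ shown above, and $d(x_j) = f_j \geq 1$ follows from $L_j L_{j^*} f = f_j^2 f$ together with $N_{j,j^*}^1 = \delta_{j,j} = 1$: the $(1,1)$-entry of $L_jL_{j^*}$ is $\sum_s N_{j,1}^s N_{j^*,s}^1$... more simply, $f_j^2 = (L_jL_{j^*}f)_1 / f_1 \geq N_{j,j^*}^1 f_{1} \cdot(\text{something})$; the clean statement is $f_j^2 = \sum_s N_{j,j^*}^s f_s \geq N_{j,j^*}^1 f_1 = 1$ since all terms are nonnegative and $f_1 = 1$.

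The main obstacle I anticipate is the positivity/primitivity argument needed to invoke Theorem \ref{Thm:FP}(2): one must ensure a single matrix (either $L = \sum_k L_k$ or a high power thereof) has strictly positive entries so that its Perron eigenvalue is simple, which is what forces the common eigenspace to be one-dimensional and hence makes "simultaneous eigenvector" automatic from commutativity. In full generality a $\bRp$-basis need not be "connected," so I would either add the observation that $\sum_k L_k^N > 0$ for large $N$ follows from $\bB$ being generated as an algebra by $B$ (so that no proper subset of basis indices is closed under multiplication in a way that blocks reachability), or — if that fails — restrict to the indecomposable case; the rest of the argument is then routine linear algebra with the Frobenius-reciprocity identities \eqref{Equ: rotation} and \eqref{Equ: star} doing the bookkeeping.
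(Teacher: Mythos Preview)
Your argument has a genuine gap: you assert that ``the $L_j$ all commute,'' and then use this to conclude that each $L_j$ preserves the (one-dimensional) Perron eigenspace of $L = \sum_k L_k$. But the $L_j$ commute if and only if $\bB$ is commutative, and Definition~\ref{def:bB} does \emph{not} assume this (indeed the paper later discusses noncommutative fusion rings explicitly). For a non-abelian group algebra $\bC[G]$, the left-multiplication operators $L_g$ certainly do not pairwise commute, and there is no reason $L_j$ should commute with $L$ in general, so $L_j f$ need not lie in the Perron eigenspace of $L$.

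The paper's proof sidesteps exactly this issue by working with the \emph{right}-multiplication matrices $R_j$ and $R = \sum_j R_j$ instead. The point is that $L_j R_k = R_k L_j$ for all $j,k$ by associativity of $\bB$, so each $L_j$ commutes with $R$ regardless of whether $\bB$ is commutative. Hence if $v$ is the Perron eigenvector of $R$, then $L_j v$ is again an $R$-eigenvector for the same (simple) eigenvalue, forcing $L_j v = c_j v$. From there your computations go through verbatim. Your worry about strict positivity of the matrix is in fact fine: for any $k,s$ one has $x_{k^*} x_s \neq 0$ (since $(x_k x_{k^*}) x_s = x_s + (\text{nonnegative})$ by $N_{k,k^*}^1 = 1$), and Frobenius reciprocity then gives $\sum_j N_{k,j}^s > 0$. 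Incidentally, your extra step justifying $\|L_j\|_\infty = \lambda(L_j)$ via $L_j L_j^* = L_j L_{j^*}$ and Theorem~\ref{Thm:FP}(3) applied to the nonnegative, symmetric matrix $L_j L_{j^*}$ is a nice clarification of a point the paper leaves implicit.
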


\begin{proof}
The right multiplication of $x_j$ on the orthonormal basis $B$ defines a matrix $R_j$. Then $R=\sum_{j=1}^m R_j$ has strictly positive entries.
Let $v=\sum_{j=1}^m \lambda_j x_j$ be the simple positive eigenvector of the right action $R$.
By Theorem~\ref{Thm:FP}, we can normalize $v$, such that $\lambda_1=1$ and $\lambda_j >0$.
As $L_j v$ is also a positive eigenvector, we have that $L_j v =\|L_j\|_\infty v=d(x_j) v$ by Theorem~\ref{Thm:FP}.
Since $L_jL_kv=d(x_k)L_jv=d(x_k)d(x_j)v$, we obtain that
$$d(x_j)d(x_k)=\sum_{s=1}^m N_{j,k}^s d(x_s).$$
Note that $\sum_{j=1}^m d(x_j)x_j$ is an eigenvector for $R$ by the equation above, we see that $\lambda_k=d(x_k)$ for any $1\leq k\leq m$.

Note that $L_j^*=L_{j^*}$, we have $d(x_j)=d(x_{j^*})$, and
$d(x_j)^2=d(x_j)d(x_{j^*}) \geq 1.$
Finally, we see that $d(x_j)\geq 1$.
\end{proof}

\begin{definition}[An alternative C$^*$-algebra $\bA$]\label{def:bA}
We define an abelian C$^*$-algebra $\bA$ with the basis $B$, a multiplication $\diamond$ and an involution $\#$,
$$x_j \diamond x_k =\delta_{j,k} d(x_j)^{-1} x_j,$$
$$( x_j)^{\#}= x_j.$$
The C$^*$-norm on $\bA$ is given by $\displaystyle \|x\|_{\infty, \bA}=\max_{1\leq j\leq m} \frac{|d(x\diamond x_j)|}{d(x_j)}$ for any $x\in \bA$.
\end{definition}

\begin{proposition}
The linear functional $d$ is a faithful state on $\bA$.
\end{proposition}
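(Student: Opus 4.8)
The plan is to verify directly that $d$ is a state on the abelian C$^*$-algebra $\bA$ and that it is faithful, using the formula $\|x\|_{\infty,\bA}=\max_j |d(x\diamond x_j)|/d(x_j)$ together with the multiplicative-type identity $d(x_j)d(x_k)=\sum_s N_{j,k}^s d(x_s)$ from Proposition~\ref{Prop: dim d}. Since $\bA$ is abelian and finite-dimensional with orthogonal ``minimal-like'' basis $\{x_j\}$ under $\diamond$, it is isomorphic to $\bC^m$; concretely, setting $e_j = d(x_j)^{-1} x_j$ gives $e_j \diamond e_k = \delta_{j,k} e_j$, so the $e_j$ are the minimal projections and $\|\sum_j c_j e_j\|_{\infty,\bA} = \max_j |c_j|$. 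It is worth first checking that this matches the stated norm: for $x = \sum_j c_j e_j$ one has $x \diamond x_k = c_k d(x_k)^{-1} x_k$, hence $d(x \diamond x_k) = c_k d(x_k)^{-1} d(x_k) = c_k$, wait — more carefully $d(x\diamond x_k) = c_k d(x_k)^{-1} d(x_k)$; since $d(x_k)=\|L_k\|_\infty$ and $d$ is linear, $d(x\diamond x_k) = c_k d(x_k)^{-1} d(x_k) = c_k$ only after writing $x = \sum_j (c_j/d(x_j)) x_j$. So the norm formula indeed returns $\max_k |c_k|$, confirming $\bA \cong \bC^m$ as C$^*$-algebras with the $e_j$ as minimal projections.

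Next I would establish the two defining properties of a state. \emph{Positivity and normalization:} writing a general element as $x = \sum_j a_j x_j$ and a positive element as $x = y^\# \diamond y$ with $y = \sum_j b_j x_j$, compute $y^\# \diamond y = \sum_j |b_j|^2 d(x_j)^{-1} x_j$ (using $x_j^\# = x_j$ and the $\diamond$-multiplication table), so $d(y^\#\diamond y) = \sum_j |b_j|^2 d(x_j)^{-1} d(x_j) = \sum_j |b_j|^2 \ge 0$. For normalization, $d(1_{\bA})$: the unit of $\bA$ is $\sum_j e_j = \sum_j d(x_j)^{-1} x_j$ (check: $(\sum_j d(x_j)^{-1}x_j)\diamond x_k = d(x_k)^{-1} d(x_k)^{-1} x_k \cdot d(x_k)$ — again using linearity of $d$ one gets $e_k$), so $d(1_{\bA}) = \sum_j d(x_j)^{-1} d(x_j) = m$. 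Hence $d$ is not normalized to $1$; I would state the proposition's ``state'' as meaning a positive faithful functional, or normalize by $m^{-1}d$, and I'd clarify this point in the write-up to match the paper's conventions (the more natural reading is that $d$ is a faithful \emph{positive} functional, and possibly the paper intends ``weight'' or has normalized the unit differently — I will flag this).

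\emph{Faithfulness:} if $x \ge 0$ and $d(x) = 0$, then writing $x = \sum_j \mu_j e_j$ with all $\mu_j \ge 0$ (positivity in the abelian algebra means nonnegative coefficients on minimal projections), we get $d(x) = \sum_j \mu_j d(e_j) = \sum_j \mu_j$ since $d(e_j) = d(x_j)^{-1}d(x_j) = 1$; thus $\sum_j \mu_j = 0$ forces all $\mu_j = 0$, i.e. $x = 0$. The only genuine input beyond bookkeeping is Proposition~\ref{Prop: dim d}, which guarantees $d(x_j) \ge 1 > 0$ so that the normalizations $e_j = d(x_j)^{-1}x_j$ make sense, and the linearity of $d$. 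I do not expect a serious obstacle here; the main care-point is simply to keep straight the two multiplications and involutions ($\cdot$ vs.\ $\diamond$, $*$ vs.\ $\#$) and to handle the normalization constant $d(1_{\bA}) = m$ honestly rather than silently asserting $d(1_{\bA}) = 1$.
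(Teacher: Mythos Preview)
Your overall strategy matches the paper's one-line argument, but there is a concrete algebraic slip that propagates through your normalization discussion. From $x_j\diamond x_j = d(x_j)^{-1}x_j$ you need $p_j := d(x_j)\,x_j$ to get an idempotent: $p_j\diamond p_j = d(x_j)^2\cdot d(x_j)^{-1}x_j = d(x_j)x_j = p_j$. Your $e_j = d(x_j)^{-1}x_j$ satisfies $e_j\diamond e_j = d(x_j)^{-3}x_j = d(x_j)^{-2}e_j$, so it is \emph{not} a projection. Consequently the unit of $\bA$ is $1_\bA = \sum_j d(x_j)x_j$ (your check ``$(\sum_j d(x_j)^{-1}x_j)\diamond x_k = x_k$'' in fact yields $d(x_k)^{-2}x_k$), and hence $d(1_\bA) = \sum_j d(x_j)^2 = \mu$, not $m$. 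So your remark about the normalization constant is based on the wrong number; the paper is indeed using ``state'' loosely for a faithful positive functional, and the honest value is the Frobenius--Perron dimension $\mu$.

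That said, your faithfulness argument survives the error. Your computation $y^\#\diamond y = \sum_j |b_j|^2 d(x_j)^{-1}x_j$ is correct and shows that positive elements are exactly those with nonnegative $x_j$-coefficients; then $d(\sum_j c_j x_j) = \sum_j c_j d(x_j)$ with $d(x_j)\ge 1>0$ forces $c_j=0$ when the sum vanishes. This is precisely the paper's proof, which simply observes that $\{d(x_j)x_j\}$ are the minimal projections and that $d(x_j)\ge 1$ (from Proposition~\ref{Prop: dim d}) makes $d$ faithful. Fix the exponent on the projections and the unit, recompute $d(1_\bA)=\mu$, and your write-up is fine.
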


\begin{proof}
Note that $\{d(x_j) x_j\}$ are orthogonal minimal projections of $\bA$.  By Proposition~\ref{Prop: dim d}, $d(x_j) \geq 1$, so $d$ is faithful.
\end{proof}

\begin{definition}
For any $1\leq t\leq \infty$, the $t$-norms on $\bA$ and $\bB$ are defined as follows:
\begin{align*}
\|x\|_{t, \bA}=d(|x|^t)^{1/t}, \ x\in \bA, \quad \|x\|_{t, \bB}=\tau(|x|^t)^{1/t},\  x\in \bB, |x|=(x^*x)^{1/2} \quad 1\leq t<\infty
\end{align*}
and
\begin{align*}
\|x\|_{\infty, \bA}=\max_{1\leq j\leq m} \frac{|d(x\diamond x_j)|}{d(x_j)}, \quad \|x\|_{\infty, \bB}=\sup_{\|y\|_{2, \bB}=1}\|xy\|_{2, \bB},
\end{align*}
\end{definition}

\begin{remark}
For any $x\in \bA$, $|x|=(x^\#\diamond x)^{1/2}$.
\end{remark}

\begin{definition}\label{Fourier transform}[Fourier transform]
Let $\bA$, $\bB$ be *-algebra with the same basis $B$, but different multiplications and involutions defined in Definition~\ref{def:bB} and~\ref{def:bA}.
The Fourier transform $\fF:\bA \to \bB$ is a linear map defined by
$$\fF(x_j)=x_j, ~\forall  j.$$
\end{definition}

Note that $\fF$ is a bijection.

\begin{proposition}[Plancherel's formula]\label{prop:planch}
The Fourier transform $\fF: \bA \to \bB$ is a unitary transformation:
$$\| \fF(x) \|_{2,\bB} = \|x\|_{2,\bA},$$
i.e.
$$\tau(\fF(x)^*\fF(x))=d(x^{\#}\diamond x).$$
\end{proposition}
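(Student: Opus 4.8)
The plan is to verify the identity directly on the basis $B$, then extend by sesquilinearity. Since $\fF$ fixes each $x_j$, the left-hand side is $\tau(\fF(x)^*\fF(x))$ computed with the $\bB$-multiplication and $\bB$-involution, while the right-hand side is $d(x^\#\diamond x)$ computed with the $\bA$-structure; so the whole statement amounts to comparing two inner products on the common vector space spanned by $B$. Write $x=\sum_j a_j x_j$. On the $\bA$-side, because the $\diamond$-product diagonalizes $B$ with $x_j\diamond x_j = d(x_j)^{-1}x_j$ and $x_j^\# = x_j$, one gets $x^\#\diamond x = \sum_j |a_j|^2 d(x_j)^{-1} x_j$, hence $d(x^\#\diamond x) = \sum_j |a_j|^2 d(x_j)^{-1} d(x_j) = \sum_j |a_j|^2$. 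So the right-hand side is just $\sum_j |a_j|^2$.

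On the $\bB$-side, I would use that $B$ is an orthonormal basis of $\Hil = L^2(\bB,\tau)$, which was established in the excerpt right after the GNS construction (the inner product is $\langle x,y\rangle = \tau(y^*x)$, and $\tau(x_j x_k^*) = \tau(x_j x_{k^*}) = N_{j,k^*}^1 = \delta_{j,k}$ by axiom (3) of Definition~\ref{def:bB}). Therefore $\|\fF(x)\|_{2,\bB}^2 = \tau(\fF(x)^*\fF(x)) = \langle \fF(x), \fF(x)\rangle = \sum_j |a_j|^2$ as well, since $\fF(x) = \sum_j a_j x_j$. Comparing the two computations gives $\tau(\fF(x)^*\fF(x)) = d(x^\#\diamond x) = \|x\|_{2,\bA}^2$, which is the claim; unitarity of $\fF$ then follows because $\fF$ is a linear bijection (noted immediately before the proposition) that preserves the $2$-norm, hence preserves the inner products by polarization.

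There is essentially no hard step here: the proposition is really a bookkeeping statement reconciling the two $2$-norms in Definition~\ref{Fourier transform}'s setup, both of which turn out to be the standard $\ell^2$-norm of the coordinate vector $(a_j)$ in the basis $B$. The only point that needs a little care is confirming that $B$ really is $\tau$-orthonormal — i.e. that $\tau(x_k^* x_j) = \delta_{j,k}$ — which uses precisely the normalization $N_{j,k}^1 = \delta_{j,k^*}$ from Definition~\ref{def:bB}(3) together with $x_k^* = x_{k^*}$; and, symmetrically, that $\{d(x_j)x_j\}$ are the orthogonal minimal projections of $\bA$ with $d$-values summing correctly, which was already recorded in the proof that $d$ is a faithful state on $\bA$. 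I would state the basis computation for the $\bA$-side as a one-line display, the basis computation for the $\bB$-side as another, and conclude.
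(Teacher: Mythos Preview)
Your proof is correct and follows essentially the same approach as the paper: verify on the basis $B$ that both inner products reduce to $\delta_{j,k}$ (the paper checks $d(x_j\diamond x_k)=\delta_{j,k}=\tau(x_j^*x_k)$ directly, you expand a general $x$ in the basis, which is equivalent by sesquilinearity). The paper's write-up is more compact, but there is no substantive difference.
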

\begin{proof}
We only have to check the equation for the basis $B$.
For any $1\leq j,k \leq m$, we have
\begin{align*}
d(x_j \diamond x_k)= \delta_{j, k} d(x_k)^{-1}d(x_k)=\delta_{j, k}=\tau(x_{j^*}x_k)= \tau(x_j^*x_k).
\end{align*}
Then the proposition is true.
\end{proof}

Under the Fourier transform, the multiplication on $\bB$ induces the convolution on $\bA$. We denote the convolution of $x, y \in \bA$ by $$x \conv y:= \fF^{-1}(\fF(x)\fF(y)).$$
The C$^*$-algebras $\bA$ and $\bB$ share the same vector spaces, but have different multiplications, convolutions and traces.
These traces are non-commutative analogues of measures.


We axiomatize the quintuple $(\bA, \bB, \fF, d, \tau)$ as a \emph{fusion bialgebra} in the following definition. To distinguish the multiplications and convolutions on $\bA$ and $\bB$, we keep the notations as above.

\begin{definition}[Fusion bialgebras]\label{Def: Fusion bialgebras}
Suppose $\bA$ and $\bB$ are two finite dimensional C$^*$-algebras with faithful traces $d$ and $\tau$ respectively, $\bA$ is commutative, and $\fF:\bA \to \bB$ is a unitary transformation preserving $2$-norms (i.e. $\tau(\fF(x)^*\fF(y))=d(x^\#\diamond y)$ for any $x, y\in \bA$).
We call the quintuple $(\bA, \bB, \fF, d, \tau)$ a fusion bialgebra, if the following conditions hold:
\begin{itemize}
\item[(1)]Schur Product: For operators $x,y \geq 0$ in $\bA$, $x \conv y := \fF^{-1}(\fF(x)\fF(y)) \geq 0$ in $\bA$.
\item[(2)]Modular Conjugation: The map $J(x):=\fF^{-1}(\fF(x)^*)$ is an anti-linear, *-isomorphism on $\bA$.
\item[(3)]Jones Projection: The operator $\fF^{-1}(1)$ is a positive multiple of a minimal projection in $\bA$.
\end{itemize}
Furthermore, if $\fF^{-1}(1)$ is a minimal projection and $d(\fF^{-1}(1))=1$, then we call the fusion bialgebra canonical.
\end{definition}


\begin{remark}\label{Rem: Equivalent Definition of Fusion Bialgebras}
One can reformulate the definition of fusion bialgebras using the quintuple $(\bA, \conv, J, d, \tau)$.
\end{remark}

\begin{remark}
We show that subfactors provide fruitful fusion bialgebras in \S\ref{Sec: example}.
One can compare the three conditions in Definition~\ref{Def: Fusion bialgebras} with the corresponding concepts in subfactor theory.
\end{remark}

\begin{proposition}[Gauge transformation]
Given a fusion bialgebra $(\bA, \bB, \fF, d, \tau)$, then $(\bA, \bB, \lambda_1^{\frac{1}{2}} \lambda_2^{-\frac{1}{2}}\fF, \lambda_1 d, \lambda_2 \tau)$ is also a fusion bialgebra, with $\lambda_1, \lambda_2 > 0$.
Therefore, any fusion bialgebra is equivalent to a canonical one up to a gauge transformation.
\end{proposition}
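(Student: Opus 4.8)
The plan is to verify directly that the rescaled quintuple $(\bA, \bB, \lambda_1^{1/2}\lambda_2^{-1/2}\fF, \lambda_1 d, \lambda_2 \tau)$ satisfies every clause of Definition~\ref{Def: Fusion bialgebras}, and then to read off the final sentence by choosing the gauge parameters that normalize the Jones projection. Write $d' = \lambda_1 d$, $\tau' = \lambda_2 \tau$, and $\fF' = \lambda_1^{1/2}\lambda_2^{-1/2}\fF$. First I would note that $d'$ and $\tau'$ are still faithful traces on $\bA$ and $\bB$ respectively (positivity and faithfulness are preserved under multiplication by a positive scalar), that $\bA$ is still commutative, and that the multiplications and involutions on $\bA$ and $\bB$ are unchanged, so only the data involving $\fF$, $d$, $\tau$ needs checking. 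The $2$-norm intertwining identity becomes $\tau'(\fF'(x)^*\fF'(y)) = \lambda_2 \cdot \lambda_1\lambda_2^{-1}\,\tau(\fF(x)^*\fF(y)) = \lambda_1 \, d(x^\#\diamond y) = d'(x^\#\diamond y)$, so $\fF'$ still preserves the (rescaled) $2$-norms; in particular $\fF'$ is still a bijection, hence unitary between the corresponding $L^2$ spaces.

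Next I would check the three numbered conditions. For Schur positivity, the induced convolution is $x \conv' y = (\fF')^{-1}(\fF'(x)\fF'(y)) = \lambda_1^{-1/2}\lambda_2^{1/2}\,\fF^{-1}\!\big(\lambda_1\lambda_2^{-1}\fF(x)\fF(y)\big) = \lambda_1^{1/2}\lambda_2^{-1/2}\,(x\conv y)$, a positive scalar multiple of $x\conv y$, so $x,y\geq 0$ still gives $x\conv' y \geq 0$. For the modular conjugation, $J'(x) = (\fF')^{-1}(\fF'(x)^*) = \lambda_1^{-1/2}\lambda_2^{1/2}\fF^{-1}\big(\overline{\lambda_1^{1/2}\lambda_2^{-1/2}}\,\fF(x)^*\big) = \lambda_1^{-1/2}\lambda_2^{1/2}\cdot\lambda_1^{1/2}\lambda_2^{-1/2}\,\fF^{-1}(\fF(x)^*) = J(x)$ (using that the scalar is positive, so its complex conjugate equals itself), hence $J'=J$ is still an anti-linear $*$-isomorphism of $\bA$. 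For the Jones projection, $(\fF')^{-1}(1) = \lambda_1^{-1/2}\lambda_2^{1/2}\,\fF^{-1}(1)$, which is a positive scalar multiple of $\fF^{-1}(1)$, hence still a positive multiple of a minimal projection in $\bA$. This establishes that $(\bA,\bB,\fF',d',\tau')$ is a fusion bialgebra.

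Finally, for the last sentence, write $\fF^{-1}(1) = c\,p$ with $p$ a minimal projection in $\bA$ and $c>0$. Then $(\fF')^{-1}(1) = c\,\lambda_1^{-1/2}\lambda_2^{1/2}\,p$, so choosing $\lambda_1, \lambda_2$ with $\lambda_1^{-1/2}\lambda_2^{1/2} = c^{-1}$ (e.g. $\lambda_2 = c^2\lambda_1$) makes $(\fF')^{-1}(1) = p$ a minimal projection; among the remaining one-parameter freedom, rescale so that $d'(p) = \lambda_1 d(p) = 1$, which is possible since $d(p)>0$ by faithfulness of $d$ on the nonzero positive element $p$. Both normalizations can be met simultaneously because the two conditions fix $\lambda_1$ and $\lambda_2$ independently. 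The resulting fusion bialgebra is canonical, which proves the claim. I do not anticipate a genuine obstacle here; the only point requiring a moment's care is confirming that the complex conjugate of the gauge scalar in the definition of $J'$ is the scalar itself, which holds precisely because $\lambda_1^{1/2}\lambda_2^{-1/2}$ is a positive real.
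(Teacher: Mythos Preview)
Your proof is correct and takes the same approach as the paper, which simply states that the proposition follows from Definition~\ref{Def: Fusion bialgebras}; you have spelled out in full the straightforward verification that the paper leaves implicit.
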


\begin{proof}
It follows from the definition of the fusion bialgebra in Definition~\ref{Def: Fusion bialgebras}.
\end{proof}

\begin{theorem}\label{Thm: extension}
If $(\bA, \bB, \fF, d, \tau)$ is a fusion bialgebra, then $\bB$ has a unique $\bRp$-basis $B=\{x_1=1, x_2, \ldots, x_m\}$, such that $\fF^{-1}(x_j)$ are multiples of minimal projections of $\bA$.
Moreover, $B$ is invariant under the gauge transformation.
Conversely, any C$^*$-algebra $\bB$ with a $\bRp$-basis $B$ can be  extended to a canonical fusion bialgebra, such that $\fF^{-1}(x_j)$ are multiples of minimal projections of $\bA$.
\end{theorem}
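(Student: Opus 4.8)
The plan is to transport, through the Fourier transform $\fF$, the unique basis of minimal projections of the commutative $C^{*}$-algebra $\bA$ to a rescaled basis of $\bB$, and to show the rescaling is forced.

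\emph{Preliminaries.} Since $\bA$ is a finite-dimensional commutative $C^{*}$-algebra it has a unique set of minimal projections $p_{1},\dots,p_{m}$; by the Jones Projection axiom of Definition~\ref{Def: Fusion bialgebras} I label them so that $\fF^{-1}(1_{\bB})=c\,p_{1}$ with $c>0$. By the Modular Conjugation axiom $J(x):=\fF^{-1}(\fF(x)^{*})$ is an anti-linear $\#$-automorphism of $\bA$ with $J^{2}=\mathrm{id}$, so it permutes $\{p_{j}\}$ by an involution $j\mapsto j^{*}$, and $J(p_{1})=p_{1}$ because $1_{\bB}^{*}=1_{\bB}$. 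Put $\mu_{j}:=d(p_{j})>0$. Using that $\fF$ preserves $2$-norms, that $\|\cdot\|_{2,\bB}$ is $\ast$-invariant (as $\tau$ is a trace), and that $\fF(Jp_{j})=\fF(p_{j})^{*}$, I first record $\mu_{j^{*}}=d(Jp_{j})=\|\fF(Jp_{j})\|_{2,\bB}^{2}=\|\fF(p_{j})\|_{2,\bB}^{2}=\|p_{j}\|_{2,\bA}^{2}=d(p_{j})=\mu_{j}$.

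\emph{Existence.} Set $\lambda_{j}:=c(\mu_{1}/\mu_{j})^{1/2}>0$ and $x_{j}:=\fF(\lambda_{j}p_{j})$; then $\lambda_{1}=c$, $\lambda_{j}=\lambda_{j^{*}}$, and $B=\{x_{1}=1_{\bB},x_{2},\dots,x_{m}\}$ is a linear basis of $\bB$ with $x_{1}=1_{\bB}$. By the Schur Product axiom $p_{j}\conv p_{k}\geq 0$, so $p_{j}\conv p_{k}=\sum_{s}c^{s}_{j,k}p_{s}$ with $c^{s}_{j,k}\geq 0$; applying $\fF$ and $\fF(p_{j}\conv p_{k})=\fF(p_{j})\fF(p_{k})$ gives $x_{j}x_{k}=\sum_{s}\lambda_{j}\lambda_{k}\lambda_{s}^{-1}c^{s}_{j,k}\,x_{s}$, so the structure constants lie in $\bRp$. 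Also $x_{j}^{*}=\fF(\lambda_{j}p_{j})^{*}=\fF(J(\lambda_{j}p_{j}))=\fF(\lambda_{j}p_{j^{*}})=\fF(\lambda_{j^{*}}p_{j^{*}})=x_{j^{*}}$. The remaining point $N^{1}_{j,k}=\delta_{j,k^{*}}$ is the crux: the coefficient of $p_{1}$ in $p_{j}\conv p_{k}$ is $c^{1}_{j,k}=\mu_{1}^{-1}d(p_{1}\diamond(p_{j}\conv p_{k}))$, and the defining isometry identity $\tau(\fF(x)^{*}\fF(y))=d(x^{\#}\diamond y)$, applied with $\fF(p_{1})=c^{-1}1_{\bB}$ and $\fF(p_{j})=\fF(Jp_{j})^{*}$, yields
\begin{align*}
d(p_{1}\diamond(p_{j}\conv p_{k})) &=\tau(\fF(p_{1})^{*}\fF(p_{j})\fF(p_{k}))=c^{-1}\tau(\fF(Jp_{j})^{*}\fF(p_{k}))\\
&=c^{-1}d(p_{j^{*}}\diamond p_{k})=c^{-1}\mu_{j^{*}}\delta_{j^{*},k}.
\end{align*}
Hence $N^{1}_{j,k}=\lambda_{j}\lambda_{k}\lambda_{1}^{-1}c^{1}_{j,k}=\frac{\lambda_{j}\lambda_{k}\mu_{j^{*}}}{c^{2}\mu_{1}}\delta_{j^{*},k}=\delta_{j,k^{*}}$, using $\lambda_{j^{*}}=\lambda_{j}$, $\mu_{j^{*}}=\mu_{j}$ and $\lambda_{j}^{2}\mu_{j}=c^{2}\mu_{1}$. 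Thus $B$ is an $\bRp$-basis with each $\fF^{-1}(x_{j})=\lambda_{j}p_{j}$ a positive multiple of a minimal projection.

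\emph{Uniqueness and gauge invariance.} If $B'=\{x'_{1}=1_{\bB},\dots,x'_{m}\}$ is another such basis, then each $\fF^{-1}(x'_{j})$ is a positive multiple of a minimal projection, and linear independence forces these to be $p_{1},\dots,p_{m}$; after relabelling, $\fF^{-1}(x'_{j})=\nu_{j}p_{j}$ with $\nu_{j}>0$, i.e. $x'_{j}=(\nu_{j}/\lambda_{j})x_{j}$. Re-running the computations above with $B'$, the three $\bRp$-basis requirements $x'_{1}=1_{\bB}$, $(x'_{j})^{*}\in B'$ and $N^{1}_{j,k}(B')=\delta_{j,k^{*}}$ force $\nu_{1}=c$, $\nu_{j}=\nu_{j^{*}}$ and $\nu_{j}^{2}\mu_{j}=c^{2}\mu_{1}$, whence $\nu_{j}=\lambda_{j}$ and $B'=B$. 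For gauge invariance, a gauge transformation with parameters $\alpha,\beta>0$ fixes $\bA,\bB$ (hence $\{p_{j}\}$) and sends $c\mapsto\alpha^{-1/2}\beta^{1/2}c$, $\mu_{j}\mapsto\alpha\mu_{j}$, so $\lambda_{j}\mapsto\alpha^{-1/2}\beta^{1/2}\lambda_{j}$ and $x_{j}=\fF(\lambda_{j}p_{j})\mapsto(\alpha^{1/2}\beta^{-1/2}\fF)(\alpha^{-1/2}\beta^{1/2}\lambda_{j}p_{j})=x_{j}$.

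\emph{Converse, and the main obstacle.} Given a $C^{*}$-algebra $\bB$ with an $\bRp$-basis $B$, I take $\tau$, $d$, $(\bA,\diamond,\#)$ and $\fF$ as in Definitions~\ref{def:bB} and~\ref{def:bA} and the surrounding results of \S\ref{Sec: Fusion Bialgebras}; then $\tau,d$ are faithful traces, $\fF$ preserves $2$-norms by Proposition~\ref{prop:planch} and polarization, and $\fF^{-1}(x_{j})=x_{j}=d(x_{j})^{-1}(d(x_{j})x_{j})$ is a positive multiple of the minimal projection $d(x_{j})x_{j}$ of $\bA$. It remains to verify the three axioms of Definition~\ref{Def: Fusion bialgebras}. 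For Schur Product: a positive element of $\bA$ is a nonnegative combination of the $x_{j}$, so for $x=\sum_{j}a_{j}x_{j}$, $y=\sum_{k}b_{k}x_{k}\geq 0$ one has $x\conv y=\fF^{-1}(\fF(x)\fF(y))=\sum_{s}(\sum_{j,k}a_{j}b_{k}N^{s}_{j,k})x_{s}\geq 0$ since $N^{s}_{j,k}\in\bRp$. For Modular Conjugation: $J(x)=\fF^{-1}(\fF(x)^{*})$ sends $x_{j}\mapsto x_{j^{*}}$ (using $x_{j}^{*}=x_{j^{*}}$), hence is an anti-linear $\#$-automorphism of $\bA$ with $J^{2}=\mathrm{id}$. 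For Jones Projection: $\fF^{-1}(1_{\bB})=x_{1}=d(x_{1})x_{1}$ is a minimal projection with $d(x_{1})=\|L_{1}\|_{\infty}=1$, so the bialgebra is moreover canonical. The step I expect to be the main obstacle is precisely the normalization $N^{1}_{j,k}=\delta_{j,k^{*}}$ above: one has to read off the $p_{1}$-component of a convolution of minimal projections from the isometry property of $\fF$ alone, and this is where the identity $\mu_{j^{*}}=\mu_{j}$ (an output of the Modular Conjugation axiom) is needed to pin down the scalars $\lambda_{j}$ uniquely; everything else is bookkeeping with the definitions.
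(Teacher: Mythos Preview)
Your proof is correct and follows essentially the same approach as the paper: both arguments transport the minimal projections of $\bA$ through $\fF$, compute the coefficient of $p_1$ in $p_j\conv p_k$ via the isometry identity $\tau(\fF(x)^*\fF(y))=d(x^\#\diamond y)$, and use the constraint $N^1_{j,k}=\delta_{j,k^*}$ to pin down the unique rescaling $\lambda_j=c(\mu_1/\mu_j)^{1/2}$ (which coincides with the paper's $\delta_\bB^{1/2}(\tilde N^1_{j,j^*})^{-1/2}$). Your treatment of uniqueness, gauge invariance, and the converse is in fact more explicit than the paper's, which asserts uniqueness without detail and handles the converse by a reference to ``the above arguments''.
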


\begin{proof}
By the above arguments, if a C$^*$-algebra $\bB$ has a $\bRp$-basis $B=\{x_1=1, x_2, \ldots, x_m\}$,  then we obtain a canonical fusion bialgebra $(\bA, \bB, \fF, d, \tau)$.

On the other hand,
suppose $(\bA, \bB, \fF, d, \tau)$ is a fusion bialgebra.
Let $P_j$, $1\leq j \leq m$, be the minimal projections of $\bA$, and $\fF^{-1}(1)=\delta_{\bB} P_1$, for some $\delta_{\bB}>0$.
The modular conjugation $J$ is a *-isomorphism, so $J(P_j)=P_{j^*}$, for some $1\leq j^* \leq m$. Then $\fF(P_j)=\fF(P_{j^*})^*$ and $J(P_1)=P_1$.
Moreover,
\begin{align*}
d(P_j)&=d(P_j^\# \diamond P_j) =\tau( \fF(P_j)^* \fF(P_j) )\\
&=\tau( \fF(P_{j^*})^* \fF(P_{j^*})) =d(P_{j^*}) \;.
\end{align*}
By the Schur Product property,
\begin{align}\label{Equ: jks1}
P_j \conv P_k=\sum_{s=1}^m \tilde{N}_{j,k}^s P_s \;,
\end{align}
for some $\tilde{N}_{j,k}^s \in \bRp$.
Since the functional $d$ is faithful, $d(P_j)>0$.
Taking the inner product with $P_1$ on both sides of Equation \eqref{Equ: jks1}, we have that
\begin{align*}
\tilde{N}_{j,k^*}^1 &=\frac{d( P_1 \diamond (P_j \conv P_{k^*}) )}{ d(P_1)}
= \frac{\tau(  \fF(P_1)^* \fF(P_j \conv P_{k^*})  )}{ d(P_1)}\\
&= \frac{1}{d(P_1) \delta_{\bB}} \tau( \fF(P_j \conv P_{k^*}) )
= \frac{1}{d(P_1) \delta_{\bB}} \tau( \fF(P_j) \fF(P_{k^*}) )\\
&= \frac{1}{d(P_1) \delta_{\bB}} \tau( \fF(P_j)  \fF(P_{k})^*)
= \frac{1}{d(P_1) \delta_{\bB}} d( P_j \diamond P_{k}) \\
&=  \frac{d(P_j) \delta_{j,k}}{d(P_1) \delta_{\bB}} \;.
\end{align*}
In particular, $\tilde{N}_{1,1}^1=\delta_{\bB}^{-1}$.
Take
\begin{align*}
x_j&= \delta_{\bB}^{\frac{1}{2}} (\tilde{N}_{j,j^*}^1)^{-\frac{1}{2}} \fF(P_j) \;, \\
N_{j,k}^s&=\delta_{\bB}^{\frac{1}{2}}  (\tilde{N}_{j,j^*}^1)^{-\frac{1}{2}}(\tilde{N}_{k,k^*}^1)^{-\frac{1}{2}}(\tilde{N}_{s,s^*}^1)^{\frac{1}{2}}\tilde{N}_{j,k}^s \;.
\end{align*}
Then
\begin{align*}
x_j  x_k=\sum_{s=1}^m N_{j,k}^s x_s,  \quad x_j^*=x_{j^*} \;, \quad N_{j,k}^s  \geq 0  \quad N_{j,k}^1  = \delta_{j,k^*} \;.
\end{align*}
Therefore, $\{ x_j \}_{1\leq j\leq m}$ forms a $\bRp$-basis of $\bB$.
Moreover, it is the unique $\bRp$-basis of $\bB$ such that $\fF^{-1}(x_j)$ are positive multiples of minimal projections in $\bA$.

Furthermore, applying the gauge transformation, we obtain a canonical fusion bialgebra $$(\bA, \bB, \check{\fF}, \check{d}, \check{\tau})=\left(\bA, \bB, \delta_{\bB} \fF,  \frac{d}{d(P_1)}, \frac{\tau}{d(P_1)  \delta_{\bB}^2} \right).$$
In this fusion bialgebra, the minimal projections in $\bA$ are still $P_j$, $1\leq j \leq m$. Their convolution becomes
\begin{align*}
P_j \conv P_k=\delta_{\bB} \sum_{s=1}^m \tilde{N}_{j,k}^s P_s \;.
\end{align*}
The corresponding $x_j$ becomes
\begin{align*}
(\delta_{\bB}\tilde{N}_{j,j^*}^1)^{-\frac{1}{2}} \delta_{\bB}\fF(P_j) =x_j\;. \\
\end{align*}
Therefore, the $\bRp$-basis $B$ is invariant under the gauge transformation.

\end{proof}

\begin{definition}[Frobenius-Perron Dimension]
For a fusion bialgebra $(\bA, \bB, \fF, d, \tau)$, $\fF^{-1}(1)$ is a multiple of a minimal projection $P_1$ in $\bA$. We define $\mu:= \frac{d(1_\bA)}{d(P_1)}$ as the Frobenius-Perron dimension of the fusion bialgebra.
\end{definition}

\begin{remark}
Note that the Frobenius-Perron dimension $\mu$ is invariant under the gauge transformations.
When the fusion bialgebra is canonical, let $B=\{x_1=1, x_2, \ldots, x_m\}$ be the basis of $\bB$ in Theorem \ref{Thm: extension}. Denote $\fF^{-1}(x_j) \in \bA$ by $x_j$ (as in Definition \ref{Fourier transform}). Then
$ \mu=\sum_{j=1}^m d(x_j)^2$.
This coincides with the definition of the Frobenius-Perron dimension of a fusion ring.
\end{remark}

\begin{remark}
For a canonical fusion bialgebra $(\bA, \bB, \fF, d, \tau)$,
one can consider $\tau$ as a Haar measure and $\mu d\circ \fF^{-1}$ as a Dirac measure on $\bB$.
On the dual side, one can consider $\mu^{-1} d$ as a Haar measure and $\tau\circ\fF$ as a Dirac measure on $\bA$.
\end{remark}

\begin{proposition}\label{prop:rotation2}
Let $(\bA, \bB, \fF, d, \tau)$ be a fusion ring.
Then for any $ x, y, z\in \bA$, we have
$$d((x*y)\diamond z)=\overline{d((J(z)*x^\# )\diamond J(y))}$$
\end{proposition}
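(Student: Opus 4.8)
The plan is to transport the whole identity into $\bB$ through the Fourier transform and reduce it to a formal statement about the trace $\tau$, the adjoint $*$ on $\bB$, and a single auxiliary linear anti-automorphism of $\bB$.

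First I would set up a short "dictionary". From the $2$-norm–preserving property of $\fF$ (Proposition~\ref{prop:planch}), applied after replacing $u$ by $u^{\#}$, one gets $d(u\diamond v)=\tau\big(\fF(u^{\#})^{*}\fF(v)\big)$ for $u,v\in\bA$. By construction $\fF(u*v)=\fF(u)\fF(v)$ and $\fF(J(u))=\fF(u)^{*}$, and since $J$ is an anti-linear $*$-isomorphism of $\bA$ one has $J^{2}=\mathrm{id}$ and $J(u^{\#})=J(u)^{\#}$. The only non-formal ingredient is the behaviour of $\#$ under $\fF$: by Theorem~\ref{Thm: extension}, $\bB$ has an $\bRp$-basis $B=\{x_{j}\}$ with each $\fF^{-1}(x_{j})$ a positive multiple of a minimal projection of $\bA$, hence self-adjoint, so if $\overline{\,\cdot\,}$ denotes entrywise complex conjugation of coordinates in the basis $B$ then $\fF(u^{\#})=\overline{\fF(u)}$. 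Because the structure constants $N_{j,k}^{s}$ are real and $x_{j}^{*}=x_{j^{*}}$, the map $\overline{\,\cdot\,}$ is an anti-linear algebra automorphism of $\bB$ commuting with $*$; consequently $\widehat S(w):=\overline{w^{*}}$ defines a linear anti-automorphism of $\bB$ with $\widehat S^{2}=\mathrm{id}$ and $\tau\circ\widehat S=\tau$ (the last since the involution $j\mapsto j^{*}$ fixes $j=1$). Equivalently, $\#$ respects the convolution: $(u*v)^{\#}=u^{\#}*v^{\#}$.

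Next I would rewrite both sides. Put $a=\fF(x)$, $b=\fF(y)$, $c=\fF(z)$. Using $(x*y)^{\#}=x^{\#}*y^{\#}$ and the dictionary,
\[
d\big((x*y)\diamond z\big)=\tau\big(\fF((x*y)^{\#})^{*}\fF(z)\big)=\tau\big(\widehat S(b)\,\widehat S(a)\,c\big),
\]
and expanding $(J(z)*x^{\#})^{\#}$ the same way,
\[
d\big((J(z)*x^{\#})\diamond J(y)\big)=\tau\big(a^{*}\,\overline{c}\,b^{*}\big).
\]
Since $\tau$ is a state, $\overline{\tau(w)}=\tau(w^{*})$, so the complex conjugate of the second quantity equals $\tau\big(b\,\widehat S(c)\,a\big)$. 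Hence the proposition reduces to showing
\[
\tau\big(\widehat S(b)\,\widehat S(a)\,c\big)=\tau\big(b\,\widehat S(c)\,a\big)\qquad\text{for all } a,b,c\in\bB .
\]
This last identity is pure bookkeeping: applying $\widehat S$ inside the trace on the left (legitimate as $\tau\circ\widehat S=\tau$) and using that $\widehat S$ is an involutive anti-automorphism rewrites $\tau(\widehat S(b)\widehat S(a)c)$ as $\tau(\widehat S(c)\,a\,b)$, while $\tau(b\,\widehat S(c)\,a)=\tau(\widehat S(c)\,a\,b)$ by traciality of $\tau$.

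The step I expect to be the genuine obstacle is the identification $\fF(u^{\#})=\overline{\fF(u)}$, equivalently that the involution $\#$ is multiplicative for the convolution $*$: this is exactly where the full fusion-bialgebra structure — encoded in Theorem~\ref{Thm: extension}, and ultimately in the Schur-product and Jones-projection axioms — is used, rather than only Plancherel and the definition of $J$. Once the anti-automorphism $\widehat S$ is available, the rest is routine manipulation with the trace.
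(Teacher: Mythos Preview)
Your proof is correct and, at bottom, uses the same ingredients as the paper's: Plancherel to pass to $\bB$, the relation $\fF\circ J=(\cdot)^{*}\circ\fF$, traciality of $\tau$, and the compatibility of $\#$ with convolution. The paper simply writes the argument as a direct chain of equalities, applying Plancherel once in each direction and using traciality in the middle, whereas you push both sides fully into $\bB$ and encode the structure in the linear anti-involution $\widehat S$. Your added value is making explicit the one genuinely nontrivial step --- that $\fF(u^{\#})=\overline{\fF(u)}$, equivalently $(u*v)^{\#}=u^{\#}*v^{\#}$ --- which the paper uses silently in its final equality $d((J(z^{\#})*x)\diamond J(y)^{\#})=\overline{d((J(z)*x^{\#})\diamond J(y))}$.
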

\begin{proof}
We have
\begin{align*}
d((x*y)\diamond z)&= \tau(\fF(x*y)\fF (z^\#)^*)
=\tau(\fF(x)\fF(y)\fF (z^\#)^*)\\
&= \tau(\fF (z^\#)^*\fF(x)\fF(y))
= \tau(\fF (J(z^\#))\fF(x)\fF(J(y))^*)\\
&= \tau(\fF (J(z^\#)* x)\fF(J(y))^*)
=d((J(z^\#)* x)\diamond J(y)^\#)\\
&=\overline{d((J(z)*x^\# )\diamond J(y))}
\end{align*}
This completes the proof of the proposition.
\end{proof}

\subsection{Examples}\label{Sec: example}
\begin{example}
When the basis $B$ forms a group under the multiplication of $\bB$,
the C$^*$-algebra $\bB$ is the group algebra, $\Hil$ is its left regular representation Hilbert space, and $\tau$ is the normalized trace.
On the other side, the C$^*$-algebra $\bA$ is $L^{\infty}(B)$ and $d$ is the unnormalized Haar measure.
\end{example}

\begin{example}
When the basis $B$ forms a fusion ring,
the C$^*$-algebra $\bB$ is the fusion algebra. The quintuple $(\bA, \bB, \fF, d, \tau)$ is a canonical fusion bialgebra.
\end{example}

\begin{theorem}\label{PA to FB}
Suppose $\cN\subset \cM$ is a finite-index subfactor and $\sP_{\bullet}$ is its planar algebra \cite{Jon99}.
If $\sP_{2,+}$ is abelian, then $(\sP_{2,+}, \sP_{2,-}, \FS, tr_{2,+}, tr_{2,-})$ is a fusion bialgebra, and $\mu$ is the Jones index. Moreover,
we obtain a canonical one $(\sP_{2,+}, \sP_{2,-}, \fF, d, \tau)$, such that $d=\mu tr_{2,+}$ is the unnormalized trace of $\sP_{2,+}$, $\tau=tr_{2,-}$ is the normalized trace of $\sP_{2,-}$, and $\fF =\mu^{1/2}\FS= \delta \FS$, where
$\FS: \sP_{2,+} \to \sP_{2,-}$ is the string Fourier transform.
\end{theorem}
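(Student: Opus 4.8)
The plan is to verify that the quintuple $(\sP_{2,+},\sP_{2,-},\FS,tr_{2,+},tr_{2,-})$ satisfies Definition~\ref{Def: Fusion bialgebras}, then to compute the Frobenius--Perron dimension and rescale via the gauge transformation to reach the canonical form. Since $\cN\subset\cM$ has finite index, $\sP_{2,\pm}$ are finite dimensional C$^*$-algebras with faithful Markov traces $tr_{2,\pm}$, and $\sP_{2,+}$ is abelian by hypothesis, so the underlying algebraic data are in place. The string Fourier transform $\FS\colon\sP_{2,+}\to\sP_{2,-}$ is a linear isomorphism; I would first invoke the standard fact that it is unitary for the $2$-norms given by $tr_{2,\pm}$, i.e.\ $tr_{2,-}(\FS(x)^*\FS(y))=tr_{2,+}(x^\#\diamond y)$, where $\diamond,\#$ are the product and $*$-operation of $\sP_{2,+}$. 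This is exactly the $2$-norm preservation demanded in the definition, and it also shows that the induced convolution $\conv$ on $\sP_{2,+}$ is a positive multiple of the coproduct of $2$-boxes.

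Next I would check the three conditions. Condition~(1) (Schur product) is precisely the quantum Schur product theorem \cite[Theorem 4.1]{Liuex}: for $x,y\geq 0$ in $\sP_{2,+}$, the coproduct $x\conv y=\FS^{-1}(\FS(x)\FS(y))$ is again positive. Condition~(2): $J(x)=\FS^{-1}(\FS(x)^*)$ is conjugate-linear and bijective for formal reasons, and the compatibility of the rotation with the $*$-structure of a $*$-planar algebra shows that $J$ is multiplicative for $\diamond$ and $\#$-preserving, hence an anti-linear $*$-isomorphism of $\sP_{2,+}$. Condition~(3): a short diagrammatic computation gives $\FS(e_1)=\delta^{-1}\,1_{\sP_{2,-}}$ (equivalently $\fF^{-1}(1)=\FS^{-1}(1_{\sP_{2,-}})=\delta\,e_1$), where $e_1\in\sP_{2,+}$ is the Jones projection and $\delta^2=[\cM:\cN]$; since $e_1\sP_{2,+}e_1=\bC e_1$, the projection $e_1$ is minimal, so $\fF^{-1}(1)$ is a positive multiple of a minimal projection. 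Hence $(\sP_{2,+},\sP_{2,-},\FS,tr_{2,+},tr_{2,-})$ is a fusion bialgebra.

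Taking $P_1=e_1$ and using $tr_{2,+}(1)=1$, $tr_{2,+}(e_1)=\delta^{-2}$, the Frobenius--Perron dimension is $\mu=d(1_\bA)/d(P_1)=\delta^2=[\cM:\cN]$, the Jones index. Applying the gauge transformation with $\lambda_1=\mu$, $\lambda_2=1$ produces $(\sP_{2,+},\sP_{2,-},\mu^{1/2}\FS,\mu\,tr_{2,+},tr_{2,-})$; here $\fF^{-1}(1)=\mu^{-1/2}\delta\,e_1=e_1$ is a minimal projection with $\mu\,tr_{2,+}(e_1)=1$, so this fusion bialgebra is canonical, $d=\mu\,tr_{2,+}$ is the unnormalized trace of $\sP_{2,+}$, $\tau=tr_{2,-}$ is the normalized trace of $\sP_{2,-}$, and $\fF=\mu^{1/2}\FS=\delta\FS$, as stated. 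The one genuinely substantive step is Condition~(1): conditions~(2) and~(3) are routine planar calculus and the $2$-norm identity is classical, but the positivity of the coproduct of two positive $2$-boxes is the nontrivial input, supplied by \cite{Liuex}; everything else is bookkeeping of the $\delta$-factors so that the gauge transformation lands exactly on the stated canonical quintuple.
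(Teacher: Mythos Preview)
Your proof is correct. The paper's own argument takes a slightly different logical route: instead of verifying the three axioms of Definition~\ref{Def: Fusion bialgebras} directly, it lets $P_1,\dots,P_m$ be the minimal projections of $\sP_{2,+}$ (with $P_1=e_1$), sets $x_j=Tr(P_j)^{-1/2}\fF(P_j)$, and asserts that the resulting $\{x_j\}$ form an $\bR_{\geq 0}$-basis of $\sP_{2,-}$ with $x_j^*=x_{j^*}$, $N_{j,k}^s\geq 0$, and $N_{j,k}^1=\delta_{j,k^*}$; the conclusion then follows from Theorem~\ref{Thm: extension}. The two arguments are equivalent through that theorem: you check the axioms and then could invoke Theorem~\ref{Thm: extension} to extract the basis, while the paper builds the basis and lets Theorem~\ref{Thm: extension} supply the axioms. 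In both approaches the one substantive analytic input is exactly the step you flagged---positivity of the coproduct, i.e.\ $N_{j,k}^s\geq 0$, coming from \cite[Theorem~4.1]{Liuex}; the paper's proof states this inequality without citation, so your version is in fact more explicit about where the work lies. Your handling of the gauge transformation and the computation $\mu=\delta^2$ are also spelled out more fully than in the paper.
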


\begin{proof}
Let $P_j$, $j=1,2,\ldots, m$ be the minimal projections of $\sP_{2,+}$ and $P_1$ be the Jones projection.
Let $Tr$ be the unnormalized trace of $\sP_{2,+}$, namely $Tr(P_1)=1$.
Take $x_j=\frac{1}{\sqrt{Tr(P_i)}}\fF(P_j)$ and $x_{j^*}=\frac{1}{\sqrt{Tr(P_i)}} \fF(\overline{P_j})$, where $\overline{P_j}$ is the contragredient of $P_j$.
Then
$$x_j x_k=N_{j,k}^s x_s,$$
$x_1$ is the identity, $x_k^*=x_{k^*}$, $N_{j,k}^s \geq 0$, and $N_{j, k}^1=\delta_{j, k^*}$.
\end{proof}

\begin{remark}
On the 2-box space $\sP_{2,\pm}$ of a subfactor planar algebra, the Fourier transform is a $90^{\circ}$ rotation and the contragredient is a $180^{\circ}$ rotation, see e.g. \S 2.1 in \cite{Liuex}.
\end{remark}

\begin{definition}[Subfactorization]
We call $(\sP_{2,+}, \sP_{2,-}, \FS, tr_{2,+}, tr_{2,-})$ the fusion bialgebra of the subfactor $\cN\subset \cM$.
We say a fusion bialgebra $(\bA,\bB)$ can be subfactorized, if it comes from a subfactor $\cN\subset \cM$ in this way. We call $\cN\subset \cM$ a subfactorization of the fusion bialgebra.
\end{definition}

\subsection{Classifications}
In this section, we classify fusion bialgebras up to dimension three. By the gauge transformation, it is enough to classify canonical fusion bialgebras, which reduces to classify the $\bRp$-basis of C$^*$-algebra by Theorem~\ref{Thm: extension}.
Recall in Theorem \ref{PA to FB} that $(\sP_{2,+}, \sP_{2,-}, \FS, tr_{2,+}, tr_{2,-})$ of a subfactor planar algebra is a fusion bialgebra, if $\sP_{2,+}$ is abelian.
We refer the readers to \cite{BJ00,BJ03,BJL17,LiuYB,Ren,Edg19} for known examples of three dimensional fusion bialgebras from 2-box spaces of subfactors planar algebras.
In these examples, different subfactor planar algebras produce different fusion bialgebras.

In general, without assuming $\sP_{2,+}$ to be abelian, $(\sP_{2,+}, \sP_{2,-}, \FS, tr_{2,+}, tr_{2,-})$ has been studied as {\it the structure of 2-boxes of a planar algebra}, see Definition 2.25 in \cite{Liuex}.
One may ask when a subfactor planar algebra (generated by its 2-boxes) is determined by its structure of 2-boxes, equivalently by its fusion bialgebra when $\sP_{2,+}$ is abelian.
A positive answer is given in Theorem 2.26 in \cite{Liuex} for exchange relation planar algebras: exchange relation planar algebras are classified by its structure of 2-boxes.
Classifying fusion bialgebras is a key step to classify exchange relation planar algebras.
On the other hand, it would be interesting to find different subfactors planar algebras generated by 2-boxes with the same fusion bialgebras (or structures of 2-boxes).

\begin{proposition}[Rank-Two Classification]
Two dimensional canonical fusion bialgebras are classified by the Frobenius-Perron dimension $\mu \geq 2$. Moreover, they can be subfactorized if and only if $\mu$ is a Jones index.
\end{proposition}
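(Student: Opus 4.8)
The plan is to reduce the classification to a one-parameter computation via Theorem~\ref{Thm: extension}, solve that computation explicitly, and then read off the subfactorizability statement from Theorem~\ref{PA to FB} together with Jones's index theorem \cite{Jon83}.

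First, by Theorem~\ref{Thm: extension} and the gauge transformation, classifying two-dimensional canonical fusion bialgebras is the same as classifying $\bRp$-bases $B=\{x_1=1,x_2\}$ of two-dimensional C$^*$-algebras. Since $M_2(\bC)$ has dimension $4$, the only two-dimensional C$^*$-algebra is $\bC\oplus\bC$, so $\bB$ is automatically abelian here. From Definition~\ref{def:bB}(3): if $2^*=1$ then $x_2=x_1^*=1$, impossible; hence $2^*=2$, $x_2=x_2^*$, and $N_{2,2}^1=\delta_{2,2^*}=1$. Writing $a:=N_{2,2}^2\in\bRp$ gives the single relation $x_2^2=1+ax_2$. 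Conversely, for every $a\in\bRp$ the polynomial $t^2-at-1$ has two distinct real roots, so $\bC[t]/(t^2-at-1)\cong\bC\oplus\bC$ is a C$^*$-algebra on which $t$ is self-adjoint and $\{1,t\}$ is an $\bRp$-basis. Thus the relevant data are exactly the values $a\in\bRp$, and each determines $\bA,\fF,d,\tau$ through the construction of Theorem~\ref{Thm: extension}.

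Next I would identify the Frobenius-Perron dimension with this parameter. The matrix $L_2$ has characteristic polynomial $t^2-at-1$, so by Proposition~\ref{Prop: dim d} (Perron-Frobenius) $d(x_2)$ is its larger root; equivalently $d(x_2)\ge 1$ and $a=d(x_2)-d(x_2)^{-1}$. By the formula $\mu=\sum_j d(x_j)^2$ in the canonical case, $\mu=d(x_1)^2+d(x_2)^2=1+d(x_2)^2$. As $a$ runs over $\bRp$, the quantity $d(x_2)$ runs over $[1,\infty)$ and $\mu$ runs over $[2,\infty)$, the map $a\mapsto\mu$ being a strictly increasing bijection $\bRp\to[2,\infty)$. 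This shows that two-dimensional canonical fusion bialgebras are classified, up to gauge, by $\mu\ge 2$.

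For the subfactorizability claim: if the fusion bialgebra is subfactorized by $\cN\subset\cM$, then by Theorem~\ref{PA to FB} $\mu$ equals the Jones index $[\cM:\cN]$, which by \cite{Jon83} lies in $\{4\cos^2(\pi/n):n\ge 3\}\cup[4,\infty)$; so $\mu$ is a Jones index. Conversely, if $\mu\ge 2$ is a Jones index, let $\cN\subset\cM$ be the corresponding Temperley-Lieb-Jones subfactor (which exists by \cite{Jon83}); its $2$-box space $\sP_{2,+}=\mathrm{span}\{1,e_1\}$ is two-dimensional and abelian, so by Theorem~\ref{PA to FB} it produces a two-dimensional canonical fusion bialgebra of Frobenius-Perron dimension $\mu$, which by the uniqueness just established is (up to gauge) the fusion bialgebra attached to $\mu$; hence that one is subfactorizable. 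The computations above are routine; the only genuine input is Jones's theorem, and the step requiring care is the converse direction — one must exhibit, for each admissible $\mu$, a subfactor with two-dimensional abelian $2$-box space, for which the Temperley-Lieb-Jones subfactor is the natural candidate and one should check that $\sP_{2,+}$ is indeed spanned by $1$ and the Jones projection.
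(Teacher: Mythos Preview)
Your proof is correct and follows essentially the same approach as the paper: reduce via Theorem~\ref{Thm: extension} to classifying $\bRp$-bases $\{1,x_2\}$, force $x_2^*=x_2$, read off the single structure constant from $d(x_2)\ge 1$, and invoke Temperley--Lieb--Jones subfactors for the subfactorizability. Your write-up is more explicit than the paper's (you justify that $\bB\cong\bC\oplus\bC$, spell out both directions of the ``if and only if'', and flag the need to verify $\dim\sP_{2,+}=2$ for TLJ), but the argument is the same.
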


\begin{proof}
If $\{x_1, x_2\}$ is a $\bRp$-basis, then $x_2^*=x_2$.
By Proposition~\ref{Prop: dim d}, $d_2:=d(x_2)\geq 1$, and
\begin{align*}
x_2^2&=x_1+\frac{d_2^2-1}{d_2} x_2 \;.
\end{align*}
So $\mu \geq 2$.
Conversely, when $\mu \geq 2$, we obtain a $\bRp$-basis in this way.

Furthermore, when $\mu$ is a Jones index, the canonical fusion bialgebra can be subfactorized by the Temperley-Lieb-Jones subfactors with index $\mu$.
\end{proof}

Suppose $\{x_1=1, x_2, x_3\}$ is the $\bRp$-basis of a three-dimensional C$^*$-algebra $\bB$. Then $\bB$ is commutative.
Take $d_2=d(x_2)$ and $d_3=d(x_3)$.
There are two different cases: $x_2^*=x_2$ or $x_2^*=x_3$.

\begin{proposition}[Rank-Three Classification, Type I]\label{Prop: Classification I}
In the case $x_2^*=x_2$, three dimensional canonical fusion bialgebras are classified by three parameters $d_2,d_3,a$, such that
$d_2,d_3\geq 1$, $0 \leq a \leq 1$, $b=1-a$, $d_2^2-1-a d_3^2 \geq 0$, $d_3^2-1-b d_2^2 \geq 0$. Moreover,
\begin{align*}
x_2 x_2&=x_1+ \frac{d_2^2-1-a d_3^2}{d_2} x_2 +  a d_3 x_3  \;, \\
x_2 x_3&=ad_3 x_2 + bd_2 x_3 \;, \\
x_3 x_3&=x_1+b d_2 x_2 + \frac{d_3^2-1-b d_2^2}{d_3}x_3 \;.
\end{align*}
\end{proposition}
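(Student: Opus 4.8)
The plan is to read off the structure constants $N_{j,k}^s$ of the $\bRp$-basis as explicit functions of three parameters, to recognize the displayed inequalities as exactly the conditions making those data legitimate, and then to use Theorem~\ref{Thm: extension} to translate between ``$\bRp$-basis of a C*-algebra'' and ``canonical fusion bialgebra''. First I would reduce by symmetry: since the index involution fixes $1$ and, by hypothesis, $2$, it also fixes $3$, so $x_2^*=x_2$, $x_3^*=x_3$, and $\bB$ is commutative. Frobenius reciprocity \eqref{Equ: star} then reads $N_{j,k}^s=N_{k,j}^s=N_{j,s}^k$ with all indices self-dual; as the transpositions $(j\,k)$ and $(k\,s)$ generate $S_3$, the array $N_{j,k}^s$ is totally symmetric in its three indices. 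Together with the unit axiom $N_{1,k}^s=\delta_{k,s}$ of Definition~\ref{def:bB}, this leaves only the entries $N_{22}^2$, $N_{22}^3=N_{23}^2$, $N_{23}^3=N_{33}^2$, $N_{33}^3$ to be determined.

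Next I would pin these down using Proposition~\ref{Prop: dim d}, with $d_2=d(x_2)$, $d_3=d(x_3)$. The equation $d_2d_3=N_{23}^2d_2+N_{23}^3d_3$, after setting $a:=N_{23}^2/d_3\geq 0$, is equivalent to $N_{23}^3=(1-a)d_2$; this is the source of the relation $b=1-a$ with $b\geq 0$ (hence $0\leq a\leq 1$), and it fixes $N_{23}^2=ad_3$, $N_{33}^2=bd_2$. The equations $d_2^2=1+N_{22}^2d_2+N_{22}^3d_3$ and $d_3^2=1+N_{33}^3d_3+N_{33}^2d_2$ then give $N_{22}^2=(d_2^2-1-ad_3^2)/d_2$ and $N_{33}^3=(d_3^2-1-bd_2^2)/d_3$, which are precisely the displayed product formulas, with every structure constant now a function of $(d_2,d_3,a)$. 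Non-negativity of these constants, together with $d_2,d_3\geq 1$ from Proposition~\ref{Prop: dim d}, is exactly the stated list of inequalities, which settles necessity.

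For the converse I would begin from parameters $(d_2,d_3,a)$ obeying the inequalities, define $N_{j,k}^s$ by those formulas (extended by total symmetry and the unit convention), and verify that $\bB$ is then a C*-algebra carrying this $\bRp$-basis. Non-negativity and $N_{j,k}^1=\delta_{j,k}=\delta_{j,k^*}$ are immediate. Associativity, using commutativity and unitality, reduces to the identity $(x_2x_2)x_3=x_2(x_2x_3)$ together with its image under $2\leftrightarrow 3$, $a\leftrightarrow b$, $d_2\leftrightarrow d_3$; on expanding the first, the $x_1$- and $x_2$-coefficients match automatically (from $N_{22}^3=N_{23}^2$ and $N_{22}^3N_{33}^2=N_{23}^3N_{23}^2=ab\,d_2d_3$), while the $x_3$-coefficient collapses to $(1-a-b)$ times a strictly positive quantity, hence vanishes since $a+b=1$. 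Thus $\bB$ is a unital commutative $*$-algebra with an $\bRp$-basis; as in the GNS discussion preceding Proposition~\ref{Prop: dim d}, $\tau$ is a faithful positive trace and $\pi_B$ a faithful $*$-representation, so $\bB$ is a C*-algebra, and one checks that $\sum_j d_j x_j$ is a strictly positive eigenvector of the total right-multiplication $R=\sum_j R_j$, so the argument in the proof of Proposition~\ref{Prop: dim d} identifies $d_2,d_3$ with the norms $\|L_2\|_\infty,\|L_3\|_\infty$, i.e.\ with the functional $d$. Theorem~\ref{Thm: extension} then yields the canonical fusion bialgebra, and since its $\bRp$-basis is unique, the triple $(d_2,d_3,a)$ (with $a=N_{23}^2/d(x_3)$) is a complete invariant, up to the relabeling $x_2\leftrightarrow x_3$, which acts by $(d_2,d_3,a)\mapsto(d_3,d_2,1-a)$.

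The hard part is the associativity verification in the converse direction. It is a finite computation, but the qualitative point to extract is that the sole genuinely nontrivial coefficient identity factors as $(1-a-b)$ times a strictly positive expression, so that $a+b=1$ together with the positivity inequalities already exhausts the constraints --- no further relation among $d_2$, $d_3$, $a$ is hiding inside associativity. Everything else (the symmetry reduction, the unit axiom, the passage to a C*-algebra, and the identification of the input $d_j$ with the operator norms) is forced by the machinery already developed in this section.
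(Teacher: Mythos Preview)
Your proof is correct and follows essentially the same approach as the paper's: reduce the structure constants via Frobenius reciprocity and the unit axiom, pin them down using the multiplicativity of $d$ from Proposition~\ref{Prop: dim d}, read off the inequalities from non-negativity, and for the converse check associativity directly. You are simply more explicit than the paper, which dispatches the converse with ``by a direct computation''; in particular your factorization of the $x_3$-coefficient discrepancy as $(1-a-b)(1+bd_2^2+ad_3^2)$ and your remarks on the C*-structure and the identification of the input $d_j$ with the operator norms are details the paper leaves implicit.
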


\begin{proof}
Take parameters $a,b$, such that
\begin{align*}
x_2 x_3&=ad_3 x_2 + bd_2 x_3 \;.
\end{align*}
Then $a,b \geq 0$. Computing $d$ on both sides, we have that $a+b=1$. So $a\leq 1$.
By Equation \eqref{Equ: rotation}, $N_{2,2}^3=N_{2,3}^2$.
Hence
\begin{align*}
x_2 x_2&=x_1+ \frac{d_2^2-1-a d_3^2}{d_2} x_2 +  a d_3 x_3  \;,
\end{align*}
by computing $d$ on both sides.
Similarly $N_{3,3}^2=N_{2,3}^3$, and
\begin{align*}
x_3 x_3&=x_1+b d_2 x_2 + \frac{d_3^2-1-b d_2^2}{d_3}x_3 \;.
\end{align*}
As the coefficients are non-negative, we have that $d_2^2-1-a d_3^2 \geq 0$ and $d_3^2-1-b d_2^2 \geq 0$.

Conversely, with the above parameters, the multiplication is associative and $(x_jx_k)^*=x_k^*x_j^*$ by a direct computation. Therefore, we obtain the classification.
\end{proof}

\begin{proposition}[Rank-Three Classification, type II]\label{Prop: Classification II}
In the case $x_2^*=x_3$, three dimensional canonical fusion bialgebras are classified by one parameter $\mu \geq 3$. Moreover,  $ d_2=d_3=\sqrt{\frac{\mu-1}{2}}$,
\begin{align*}
x_2 x_2&=\frac{d_2^2-1}{2d_2} x_2 +  \frac{d_2^2+1}{2d_2} x_3  \;,  \\
x_2 x_3&=x_1+ \frac{d_2^2-1}{2d_2} (x_2 + x_3) \;, \\
x_3 x_3&=\frac{d_2^2+1}{2d_2} x_2 + \frac{d_2^2-1}{2d_2}x_3 \;.
\end{align*}
\end{proposition}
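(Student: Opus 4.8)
The plan is to follow the route used for Type~I in Proposition~\ref{Prop: Classification I}: by Theorem~\ref{Thm: extension} it suffices to classify the $\bRp$-bases $\{x_1=1,x_2,x_3\}$ of a three dimensional (hence commutative) C$^*$-algebra $\bB$ with $x_2^*=x_3$, and then to match the resulting single parameter with the Frobenius--Perron dimension. First I would write down the possible shapes of the three products $x_2x_2$, $x_2x_3$, $x_3x_3$. The normalization $N_{j,k}^1=\delta_{j,k^*}$ gives $N_{2,2}^1=N_{3,3}^1=0$ and $N_{2,3}^1=1$; the $*$-structure forces $x_3x_3=(x_2x_2)^*$ and makes $x_2x_3$ self-adjoint (so $N_{2,3}^2=N_{2,3}^3$); and $d_2=d_3$ by Proposition~\ref{Prop: dim d}. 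Together with commutativity this leaves the nonnegative unknowns $N_{2,2}^2$, $N_{2,2}^3$ and $N_{2,3}^2=N_{2,3}^3$.

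The core step is to collapse these using Frobenius reciprocity \eqref{Equ: rotation}--\eqref{Equ: star}. Applying $N_{j,k}^s=N_{j^*,s}^{k}$ with $(j,k,s)=(2,2,2)$ and commutativity yields $N_{2,2}^2=N_{2,3}^2$, so the entire table is governed by $c:=N_{2,2}^2=N_{2,3}^2=N_{2,3}^3$ and $e:=N_{2,2}^3$:
\[
x_2x_2=c\,x_2+e\,x_3,\qquad x_2x_3=x_1+c\,x_2+c\,x_3,\qquad x_3x_3=e\,x_2+c\,x_3 .
\]
Evaluating the dimension homomorphism $d$ of Proposition~\ref{Prop: dim d} on $x_2x_2$ gives $d_2=c+e$, and on $x_2x_3$ gives $d_2^2=1+2cd_2$, whence
\[
c=\frac{d_2^2-1}{2d_2},\qquad e=d_2-c=\frac{d_2^2+1}{2d_2},
\]
which is precisely the asserted table. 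Nonnegativity of all coefficients is automatic from $d_2\geq1$ (Proposition~\ref{Prop: dim d}), so $d_2\in[1,\infty)$ is the only constraint; since $\mu=\sum_j d(x_j)^2=1+2d_2^2$ for a canonical fusion bialgebra, this reads $\mu\geq3$ with $d_2=\sqrt{(\mu-1)/2}$, and the structure constants depend injectively on $\mu$.

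For the converse I would reverse the computation: given $\mu\geq3$, set $d_2=\sqrt{(\mu-1)/2}\geq1$, define $c,e$ and the $N_{j,k}^s$ by the formulas above, and check that these structure constants define a unital commutative $*$-algebra. The identity, the relation $N_{j,k}^1=\delta_{j,k^*}$, nonnegativity, and the compatibility $(x_jx_k)^*=x_k^*x_j^*$ are immediate; the only substantive point is associativity, which, after discarding triples involving $x_1$ and using commutativity together with the $*$-symmetry $2\leftrightarrow3$, reduces to the single identity $(x_2x_2)x_3=x_2(x_2x_3)$, i.e. to $e^2-c^2=1$; and $e^2-c^2=(e-c)(e+c)=\tfrac{1}{d_2}\cdot d_2=1$ holds by construction. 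Once these checks pass, $\bB$ is a C$^*$-algebra with an $\bRp$-basis (the GNS construction turns any unital $*$-algebra with an $\bRp$-basis into a C$^*$-algebra, as recalled above), so Theorem~\ref{Thm: extension} produces the canonical fusion bialgebra $(\bA,\bB,\fF,d,\tau)$; finally, the positive vector $v=x_1+d_2x_2+d_2x_3$ satisfies $v\,x_2=d_2v=v\,x_3$, so by Theorem~\ref{Thm:FP} it is the Perron eigenvector of the right regular representation, giving $d(x_2)=d(x_3)=d_2$ and hence Frobenius--Perron dimension exactly $\mu$.

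I expect the only real work to be the reciprocity bookkeeping in the second paragraph and, on the converse side, the observation that Theorem~\ref{Thm: extension} is exactly what upgrades a legitimate system of structure constants to a fusion bialgebra; there should be no conceptual obstacle, and the argument is shorter than that of Proposition~\ref{Prop: Classification I} because the self-duality pattern $x_2^*=x_3$ leaves one free parameter rather than three.
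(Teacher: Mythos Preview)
Your proof is correct and follows essentially the same route as the paper's: self-adjointness of $x_2x_3$ and the $*$-relation fix the shape of the products, Frobenius reciprocity \eqref{Equ: rotation}--\eqref{Equ: star} collapses the unknowns to one parameter, and evaluating $d$ determines the coefficients; the converse is a direct associativity check. Your write-up is in fact more explicit than the paper's on two points---the reduction of associativity to the single identity $e^2-c^2=1$ and the verification via the Perron eigenvector that the constructed algebra has the prescribed Frobenius--Perron dimension---but these are elaborations of what the paper records as ``by a direct computation.''
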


\begin{proof}
As $x_2^*=x_3$, we have that $ d_2=d_3=\sqrt{\frac{\mu-1}{2}} \geq 1$ and $x_2x_3$ is self-adjoint. So
\begin{align*}
x_2 x_3&=x_1+ \lambda (x_2 + x_3) \;,
\end{align*}
for some $\lambda \geq 0$.
Computing $d$ on both sides, we have that $ \lambda=\frac{d_2^2-1}{2d_2}$.
By Equation \eqref{Equ: rotation}, $N_{2,2}^2=N_{2,3}^3$. So
\begin{align*}
x_2 x_2&=\frac{d_2^2-1}{2d_2} x_2 +  \frac{d_2^2+1}{2d_2} x_3  \;,
\end{align*}
by computing $d$ on both sides.
Similarly $N_{3,3}^3=N_{2,3}^2$, and
\begin{align*}
x_3 x_3&=\frac{d_2^2+1}{2d_2} x_2 + \frac{d_2^2-1}{2d_2}x_3 \;.
\end{align*}
The coefficients are non-negative.

Conversely, with the above parameters, the multiplication is associative and $(x_jx_k)^*=x_k^*x_j^*$ by a direct computation. Therefore, we obtain the classification.
\end{proof}

The one-parameter family of three dimensional canonical fusion bialgebras in the above classification can be realized as the 2-box spaces of a one-parameter family of planar algebras constructed in \cite{LiuYB}. For each $d_2\geq 1$, there are a complex-conjugate pair of planar algebras to realize the fusion bialgebra as the 2-box spaces. So such a realization may not be unique.
Moreover, these planar algebras arise from subfactors if and only if $\mu=\cot^2(\frac{\pi}{2N+2})$ for some $N\in \bZ_+$.
Inspired by this observation, we conjecture that:
\begin{conjecture}
In the case II, the one-parameter family of three dimensional fusion bialgebras can be subfactorized if and only if $\mu=\cot^2(\frac{\pi}{2N+2})$.
\end{conjecture}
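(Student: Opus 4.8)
\textbf{Sufficiency} is essentially a citation. Suppose $\mu=\cot^{2}(\pi/(2N+2))$ for some $N\in\bZ_{+}$; since $\mu\geq 3$ this forces $N\geq 2$. By the construction in \cite{LiuYB}, the member $\mathcal{P}^{(N)}_{\bullet}$ of the one-parameter family at this parameter is a genuine subfactor planar algebra, whose $2$-box space $\mathcal{P}^{(N)}_{2,+}$ is three-dimensional and abelian and whose structure of $2$-boxes is exactly the multiplication table of Proposition~\ref{Prop: Classification II} for this $\mu$. By Theorem~\ref{PA to FB} the associated quintuple is a fusion bialgebra which, after the gauge transformation, is the canonical one of Proposition~\ref{Prop: Classification II}. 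Hence this fusion bialgebra is subfactorized.

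\textbf{Necessity} is the substantive direction. Assume the type-II fusion bialgebra with parameter $\mu\geq 3$ is subfactorized by $\cN\subset\cM$ with planar algebra $\sP_{\bullet}$, so that $\sP_{2,+}$ is three-dimensional and abelian, its structure of $2$-boxes agrees up to gauge with Proposition~\ref{Prop: Classification II}, and the Jones index equals $\mu$. First I would pass to the planar $*$-subalgebra $\mathcal{Q}_{\bullet}\subseteq\sP_{\bullet}$ generated by $\sP_{2,+}$. It satisfies $\mathcal{Q}_{2,\pm}=\sP_{2,\pm}$ (it contains $\sP_{2,+}$, is contained in $\sP_{\bullet}$, and is stable under $\FS$), it contains the Jones projection $\fF^{-1}(1)$, it has $\dim\mathcal{Q}_{0,\pm}=1$, and it inherits the spherical $C^{*}$-structure; by the standard reconstruction of a subfactor from a spherical $C^{*}$-planar algebra, $\mathcal{Q}_{\bullet}$ is again the standard invariant of a subfactor carrying the same fusion bialgebra. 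So one may assume $\sP_{\bullet}$ is generated by its $2$-box space; since $\bB=\bC[x_{2}]$ (one checks from Proposition~\ref{Prop: Classification II} that $x_{3}$, and hence all of $\bB$, is a polynomial in $x_{2}$), this means $\sP_{\bullet}$ is generated by a single $2$-box.

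The crucial step is then to show that such a $\sP_{\bullet}$ is forced to be one of the planar algebras $\mathcal{P}^{(N)}_{\bullet}$ of \cite{LiuYB}. The strategy is to prove that $\sP_{\bullet}$ satisfies a Yang--Baxter (hence exchange) relation: using the multiplication table of Proposition~\ref{Prop: Classification II}, Frobenius reciprocity \eqref{Equ: rotation}, and positivity of the Markov trace, one would reduce every word in the $2$-box generator lying in $\sP_{3,+}$ to a short normal form and bound $\dim\sP_{3,+}$ by the value characteristic of an exchange-relation planar algebra in the sense of \cite{Liuex}. Then Theorem~2.26 of \cite{Liuex} (exchange-relation planar algebras are determined by their structure of $2$-boxes), together with the classification in \cite{LiuYB}, would identify $\sP_{\bullet}$ with $\mathcal{P}^{(N)}_{\bullet}$ for some $N$, and the quoted fact that these are subfactor planar algebras only when $\mu=\cot^{2}(\pi/(2N+2))$ would complete the proof.

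\textbf{The main obstacle} is precisely this crucial step: a priori there is no reason that a subfactor planar algebra generated by a three-dimensional abelian $2$-box space with this particular structure of $2$-boxes must satisfy an exchange (Yang--Baxter) relation. If $\dim\sP_{3,+}$ can exceed the exchange-relation bound, the family of \cite{LiuYB} need not be exhaustive and extra subfactorizations at non-quantized $\mu$ could occur; ruling this out is exactly what is missing, which is why the statement is only conjectural. The analytic obstructions developed in this paper do not close the gap by themselves --- they are satisfied by the type-II family well beyond the quantized values --- so the quantization has to come from planar-algebraic control of the higher box spaces rather than from Fourier analysis on the fusion bialgebra alone.
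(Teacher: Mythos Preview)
The statement you were asked to address is labelled a \emph{Conjecture} in the paper; the authors give no proof, only the preceding observation that the \cite{LiuYB} family of planar algebras realizes the type-II fusion bialgebra and yields subfactors precisely when $\mu=\cot^{2}(\pi/(2N+2))$, followed by ``Inspired by this observation, we conjecture that:''. You have correctly recognized this: your sufficiency argument is the same citation the paper relies on, and your necessity argument is explicitly presented as a strategy with its gap cleanly isolated.

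The strategy you outline for necessity --- pass to the planar $*$-subalgebra generated by the $2$-boxes, argue it must satisfy a Yang--Baxter/exchange relation, then invoke Theorem~2.26 of \cite{Liuex} together with the classification in \cite{LiuYB} --- is the natural one, and your identification of the obstacle is accurate: nothing in the paper or the cited literature forces a subfactor planar algebra with this three-dimensional $2$-box structure to have $3$-box space small enough to be of exchange type. Your closing remark that the analytic obstructions of this paper cannot themselves quantize $\mu$ in the type-II family is also correct: Schur product on the dual, Young's inequality, and the sum-set estimate hold automatically for any subfactorizable fusion bialgebra (Propositions~\ref{prop:ufusionschur} and the results of \S\ref{Sec: Young}), so they cannot separate the quantized from the non-quantized values. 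In short, there is no discrepancy with the paper --- both you and the authors leave the necessity direction open.
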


%

\subsection{Duality}
\begin{definition}
For a fusion bialgebra $(\bA, \bB, \fF, d, \tau)$, we define the quintuple $(\bB, \bA, \tfF, \tau, d)$ as its Fourier dual, where $\tfF=\#\fF^{-1}*$.
\end{definition}
\begin{remark}
To be compatible with the examples from subfactor theory, this is the natural Fourier dual, not $(\bB, \bA, \fF^{-1}, \tau, d)$.
\end{remark}

\begin{definition}[Contragredient]
For any $x \in \bA$, we define its contragredient as
\begin{align*}
\overline{x}:=&\tfF \fF(x) \;.
\end{align*}
For any $y \in \bB$, we define its contragredient as
\begin{align*}
\overline{y}:=&\fF \tfF (y) \;.
\end{align*}
\end{definition}

When $\bB$ is commutative, it is natural to ask whether the dual  $(\bB, \bA, \fF^{-1}, \tau, d)$ is also a fusion bialgebra.
We need to check the three conditions in Definition~\ref{Def: Fusion bialgebras}.
The conditions (2) and (3) always hold on the dual, but condition (1) may not hold.

\begin{proposition}[Dual Modular Conjugation]
For a fusion bialgebra $(\bA, \bB, \fF, d, \tau)$,
the map $J_{\bB}(x):=\tfF^{-1}(\tfF(x)^{\#})$ is an anti-linear, *-isomorphism on $\bB$.
\end{proposition}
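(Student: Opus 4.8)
The plan is to translate the claim about $J_\bB$ on $\bB$ into the already-established claim about $J = J_\bA$ on $\bA$, using the explicit formula $\tfF = \#\,\fF^{-1}*$ and the fact (Definition~\ref{Def: Fusion bialgebras}(2)) that $J(x) = \fF^{-1}(\fF(x)^*)$ is an anti-linear $*$-isomorphism of $\bA$. First I would unwind $\tfF$ and its inverse: from $\tfF = \#\circ \fF^{-1}\circ *$ we get $\tfF^{-1} = *\circ \fF\circ \#$ (reading right to left, since $\#$ and $*$ are involutions and $\fF$ is a bijection). Substituting into $J_\bB(x) = \tfF^{-1}(\tfF(x)^\#)$ and simplifying the composition, I expect the $\fF$'s and the involutions to collapse so that $J_\bB$ is conjugate, via $\fF$, to $J_\bA$ (or to $J_\bA$ composed with one of the involutions). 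Concretely, the computation should show $J_\bB = \fF\circ J_\bA\circ \fF^{-1}$ up to applying $*$ or $\#$, and since $\fF$ intertwines the relevant structures this will transport the properties of $J_\bA$ to $J_\bB$.

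The key steps, in order: (i) derive the formula for $\tfF^{-1}$ from the definition of $\tfF$; (ii) compute $J_\bB(x)$ as an explicit composite of $\fF^{\pm 1}$, $*$ and $\#$, and simplify using that $*,\#$ are involutions; (iii) observe that anti-linearity of $J_\bB$ follows because exactly one anti-linear ingredient ($*$, equivalently $\#$, or the anti-linearity already present in $J_\bA$) survives the simplification; (iv) verify $J_\bB$ preserves the multiplication of $\bB$: since the multiplication on $\bB$ is (via $\fF$) the convolution $\conv$ on $\bA$, and $J_\bA$ is a $*$-isomorphism of $(\bA,\diamond)$ hence also behaves well with respect to $\conv$ by the Schur-product/modular-conjugation compatibility, the statement for $J_\bB$ follows by transport of structure; (v) check $J_\bB$ preserves the involution $\#$ of $\bB$ (here $\#$ denotes the $*$-operation of $\bB$, i.e. the one written $*$ on $\bB$ elsewhere), again by pushing the identity $J_\bA(x^\#) = J_\bA(x)^\#$ through $\fF$; (vi) note $J_\bB$ is bijective since it is a composite of bijections, and $J_\bB^2 = \mathrm{id}$ if $J_\bA^2=\mathrm{id}$, which one gets for free from being an anti-linear $*$-isomorphism of a finite-dimensional $C^*$-algebra fixing enough elements, or directly from the formula.

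The main obstacle I anticipate is purely bookkeeping: getting the order of the four operations $\fF$, $\fF^{-1}$, $*$, $\#$ exactly right, since $*$ and $\#$ act on \emph{different} algebras ($\#$ on $\bA$, $*$ on $\bB$) and $\fF$ is only an isometry, not an algebra map, so one cannot freely commute it past products — only past the linear/anti-linear involutions. The safe route is to verify the identity on the $\bRp$-basis $B$ of $\bB$ from Theorem~\ref{Thm: extension}, where $\fF^{-1}(x_j)$ is a positive multiple of a minimal projection $P_j$ of $\bA$, $J_\bA(P_j) = P_{j^*}$, and all four operations act by explicit permutations of indices together with complex conjugation of coefficients; then $J_\bB(x_j) = x_{j^*}$ up to a positive scalar, from which the $*$-isomorphism property is immediate by comparing structure constants using $N_{j,k}^s = N_{k^*,j^*}^{s^*}$ (Equation~\eqref{Equ: star}). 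This reduces the whole proposition to the symmetry of the fusion coefficients already recorded in the excerpt, so no genuinely new ingredient is needed.
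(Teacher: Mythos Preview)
Your ``safe route'' is exactly the paper's approach---compute $J_\bB$ on the $\bRp$-basis---but you state the wrong answer. Carrying out your own formula $\tfF^{-1}=*\circ\fF\circ\#$ gives
\[
J_\bB(x_j)=\tfF^{-1}\bigl(\tfF(x_j)^{\#}\bigr)=\bigl(\fF(\fF^{-1}(x_j^*)^{\#})\bigr)^*=\bigl(\fF(x_{j^*}^{\#})\bigr)^*=(x_{j^*})^*=x_j,
\]
since $\fF^{-1}(x_{j^*})=x_{j^*}$ in $\bA$, $x_{j^*}^{\#}=x_{j^*}$, and $\fF(x_{j^*})=x_{j^*}$ in $\bB$. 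So $J_\bB(x_j)=x_j$, \emph{not} $x_{j^*}$. The map is therefore nothing but complex conjugation of coefficients in the real basis $B$; multiplicativity and $*$-compatibility are then immediate from the reality of the $N_{j,k}^s$, with no need to invoke Equation~\eqref{Equ: star}.

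Your claimed value $J_\bB(x_j)=x_{j^*}$ would make $J_\bB$ coincide with the $*$-operation on $\bB$, and the very identity you cite, $N_{j,k}^s=N_{k^*,j^*}^{s^*}$, shows that this map \emph{reverses} products: it is an anti-homomorphism, not a homomorphism, so the proposition would fail whenever $\bB$ is non-commutative. The slip also propagates backwards to your step~(iv): $J_\bA$ is an anti-homomorphism for the convolution $\conv$ (since $J_\bA(x\conv y)=\fF^{-1}((\fF(x)\fF(y))^*)=J_\bA(y)\conv J_\bA(x)$), so the transport-of-structure argument via ``$J_\bB=\fF\circ J_\bA\circ\fF^{-1}$'' does not give a homomorphism either---indeed that composite is literally $x\mapsto x^*$. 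The extra $*$ you allow for is precisely what fixes this, and once it is in place the whole thing collapses to the one-line basis computation above, which is what the paper does.
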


\begin{proof}
Note that the map $J_{\bB}$ is anti-linear and
$$J_{\bB}(x_j)=\tfF^{-1}(\tfF(x_j)^{\#})= \fF(\fF^{-1}(x_j^*)^{\#})^*=x_j,$$
so $J_{\bB}(x_jx_k)=x_jx_k$, and $J_{\bB}$ is an anti-linear *-isomorphism on $\bB$.
\end{proof}

\begin{proposition}[Dual Jones Projection]\label{Prop: Dual Jones Projection}
For a fusion bialgebra $(\bA, \bB, \fF, d, \tau)$,
$\fF(1_{\bA})$ is a positive multiple of a central, minimal projection $e_{\bB}$ in $\bB$, where $1_{\bA}$ is the identity of $\bA$.
Moreover, $ \mu=\frac{\tau(1_{\bB})}{\tau(e_{\bB})}$.
\end{proposition}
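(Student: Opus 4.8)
The assertion is gauge-invariant up to positive scalars: under $(\fF,d,\tau)\mapsto(\lambda_1^{1/2}\lambda_2^{-1/2}\fF,\lambda_1 d,\lambda_2\tau)$ the vector $\fF(1_\bA)$ is merely rescaled (so the underlying projection $e_\bB$ is unchanged), $\mu$ is unchanged, and $\tau(1_\bB)$ and $\tau(e_\bB)$ are both multiplied by $\lambda_2$. So, using the gauge-transformation proposition and Theorem~\ref{Thm: extension}, I would first reduce to the canonical fusion bialgebra attached to a fusion algebra $\bB$: then $\bB$ carries its canonical $\bRp$-basis $B=\{x_1=1_\bB,\dots,x_m\}$, $\bA$ is the diagonal $C^*$-algebra of Definition~\ref{def:bA} (whose minimal projections are $d(x_j)x_j$), $\fF(x_j)=x_j$, and $\tau(x_j)=\delta_{j,1}$. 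Since $\bigl(\sum_j d(x_j)x_j\bigr)\diamond x_k=x_k$ for all $k$, the identity of $\bA$ is $1_\bA=\sum_{j=1}^m d(x_j)x_j$, whence
\[
\fF(1_\bA)=\sum_{j=1}^m d(x_j)\,x_j=:R\in\bB,
\]
the Frobenius-Perron regular element; $R$ is self-adjoint because the modular-conjugation $*$-isomorphism $J$ fixes $1_\bA$, so $\fF(1_\bA)^*=\fF(J(1_\bA))=\fF(1_\bA)$.

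The rest is an elementary property of $R$. By Proposition~\ref{Prop: dim d}, $\sum_j d(x_j)N_{k,j}^s=d(x_k)d(x_s)$, and combining this with the Frobenius reciprocity \eqref{Equ: star} and $d(x_j)=d(x_{j^*})$ also gives $\sum_j d(x_j)N_{j,k}^s=d(x_k)d(x_s)$; equivalently $x_kR=Rx_k=d(x_k)R$ for every $k$, hence $yR=Ry=\rho(y)\,R$ for every $y=\sum_k\alpha_k x_k\in\bB$, where $\rho(y):=\sum_k\alpha_k d(x_k)$. From $yR=Ry=\rho(y)R$ I read off that $R$ is central, that $R^2=\rho(R)R=\mu R$ with $\mu=\sum_j d(x_j)^2$, and that $RyR=\mu\,\rho(y)R$, so $R\,\bB\,R=\bC R$. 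Therefore $e_\bB:=\mu^{-1}R$ is a central, self-adjoint idempotent with $e_\bB\bB e_\bB=\bC e_\bB$, i.e. a central minimal projection of $\bB$, and $\fF(1_\bA)=R=\mu\,e_\bB$ is the asserted positive multiple.

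For the dimension formula, $\tau(e_\bB)=\mu^{-1}\tau(R)=\mu^{-1}\sum_j d(x_j)\tau(x_j)=\mu^{-1}d(x_1)=\mu^{-1}$, while $\tau(1_\bB)=\tau(x_1)=1$, so $\mu=\tau(1_\bB)/\tau(e_\bB)$; since both sides are gauge-invariant, this holds for an arbitrary fusion bialgebra.

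The one step needing care rather than insight is the reduction to the canonical case: one has to unwind the normalization in Theorem~\ref{Thm: extension} to be sure that, there, $\bA$ really is the diagonal $C^*$-algebra of Definition~\ref{def:bA} with $\fF(x_j)=x_j$, so that $\fF(1_\bA)=\sum_j d(x_j)x_j$ with the \emph{same} dimension function $d$ appearing on both sides. Once that identification is granted, the conclusion --- that an element $R$ with $yR=Ry\in\bC R$ for all $y$ is a positive scalar multiple of a central minimal projection --- is purely formal and uses nothing about commutativity of $\bB$, which is exactly why this argument establishes condition~(3) (indeed with centrality) for the dual even though the dual need not be commutative.
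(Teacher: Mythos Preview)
Your proof is correct and follows essentially the same approach as the paper: reduce by gauge transformation to the canonical case, identify $\fF(1_\bA)=\sum_j d(x_j)x_j$, and use Proposition~\ref{Prop: dim d} together with Frobenius reciprocity to show $x_kR=Rx_k=d(x_k)R$, whence $e_\bB=\mu^{-1}R$ is a central minimal projection. Your presentation is slightly more explicit than the paper's (you verify both sides of centrality and spell out $R\bB R=\bC R$ for minimality, and you derive self-adjointness via $J(1_\bA)=1_\bA$ rather than directly from $d(x_j)=d(x_{j^*})$), but the substance is the same.
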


\begin{proof}
Since the gauge transformation only changes the global scaler, without loss of generality, we assume that $(\bA, \bB, \fF, d, \tau)$ is a canonical fusion bialgebra. Then
\begin{align*}
 \fF(1_{\bA})&=\sum_{j=1}^m \fF(P_j)=\sum_{j=1}^m d(x_j) x_j \;.
\end{align*}
Note that $d(x_j)=d(x_{j*}) \geq 0$ and $x_j^*=x_{j^*}$, so $\fF(1_{\bA})=\fF(1_{\bA})^*$.
By Equation \eqref{Equ: rotation} and Proposition~\ref{Prop: dim d},
\begin{align*}
\fF(1_{\bA}) x_k=&\sum_{j=1}^m d(x_j) x_j x_k
=\sum_{j,s=1}^m d(x_j) N_{j,k}^{s} x_s \\
=&\sum_{j,s=1}^m d(x_{j^*}) N_{k,s^*}^{j^*} x_s
=\sum_{s=1}^m d(x_{k}x_{s^*}) x_s \\
=&d(x_k) \sum_{s=1}^m d(x_{s}) x_s
=d(x_k) \fF(1_{\bA}) \;.
\end{align*}
So $\fF(1_{\bA})*\fF(1_{\bA})=\mu \fF(1_{\bA})$  and
$$e_{\bB}=\mu^{-1}\tfF(1_{\bA})=\mu^{-1}\fF(1_{\bA})=\mu^{-1}\sum_{j=1}^m d(x_j) x_j$$
 is a central, minimal projection.
Moreover,
\begin{align*}
\tau(e_{\bB})=\mu^{-1}\sum_{j=1}^m d(x_j) \tau(x_j)=\mu^{-1}, \quad \tau(1_{\bB})=\tau(x_1)=1 \;.
\end{align*}
We have $ \frac{\tau(1)}{\tau(e_{\bB})}=\mu.$
\end{proof}

\subsection{Self Duality}
In this subsection, we will give the definition of the self-dual fusion bialgebra and study the $S$-matrix associated to it.

\begin{definition}
Two fusion bialgebras $(\bA, \bB, \fF, d, \tau)$ and $(\bA', \bB', \fF', d', \tau')$ are called isomorphic, if there are *-isomorphisms $\Phi_{\bA}: \bA \to \bA'$ and $\Phi_{\bB}: \bB \to \bB'$, such that $\Phi_{\bB} \fF = \fF' \Phi_{\bA}$, $d=d'\Phi_{\bA}$ and $\tau=\tau' \Phi_{\bB}$.
\end{definition}

\begin{definition}
A fusion bialgebra $(\bA, \bB, \fF, d, \tau)$ is called self-dual, if its dual $(\bB, \bA, \tfF, \tau, d)$ is a fusion bialgebra and they are isomorphic.
Furthermore, it is called symmetrically self-dual, if $\Phi_{\bB}\Phi_{\bA}=1$ on $\bA$.
\end{definition}

The maps $\Phi_{\bA}, \Phi_{\bB}$ implementing the self-duality may not be unique, even for finite abelian groups.

\begin{proposition}\label{Prop: Contragredient}
Suppose $(\bA, \bB, \fF, d, \tau)$ is a self-dual canonical fusion bialgebra with a $\bRp$-basis $B=\{x_1=1_{\bB}, x_2, \ldots, x_m\}$ of $\bB$,
Then
$$\overline{\sum_j \lambda_j {x_j}}=\sum_j \lambda_j x_{j^*}.$$
Consequently, the contragredient maps on $\bA$ and $\bB$ are anti-$*$-isomorphisms.
\end{proposition}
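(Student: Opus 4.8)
The goal is to show that on a self-dual canonical fusion bialgebra, the contragredient map $\overline{\,\cdot\,}$ (defined by $\overline{x} = \tfF\fF(x)$ on $\bA$, and $\overline{y}=\fF\tfF(y)$ on $\bB$) sends $x_j$ to $x_{j^*}$, and is an anti-$*$-isomorphism. My plan is to first unwind the definition $\tfF = \#\,\fF^{-1}\,*$ and compute $\overline{x_j}$ directly. On $\bB$ we have $\overline{y} = \fF\tfF(y) = \fF(\fF^{-1}(y)^*)^{\#}$; since $\fF^{-1}(x_j)$ is (a positive multiple of) a minimal projection $P_j$ in $\bA$, and $*$ on $\bA$ is the trivial involution $\#$ on the minimal projections up to the labelling $j\mapsto j^*$ coming from the modular conjugation $J$, I expect $\fF^{-1}(x_j)^* = \fF^{-1}(x_{j^*})$, whence $\overline{x_j} = \fF(\fF^{-1}(x_{j^*})^{\#}) = \fF\fF^{-1}(x_{j^*}) = x_{j^*}$, using that $\#$ is the identity on $\bA$'s basis vectors. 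Linearity then gives $\overline{\sum_j \lambda_j x_j} = \sum_j \lambda_j x_{j^*}$ (note: $\overline{\,\cdot\,}$ is \emph{linear}, not anti-linear, since it is the composite of the anti-linear $\fF^{-1}*$ with the anti-linear $\#$, so there is no conjugation of the $\lambda_j$).

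The cleanest route to identifying $\fF^{-1}(x_j)^* = \fF^{-1}(x_{j^*})$ is via the modular conjugation $J$. Recall from the proof of Theorem~\ref{Thm: extension} that $J(P_j) = P_{j^*}$, where $J(x) = \fF^{-1}(\fF(x)^*)$, and that the basis element $x_j$ is the normalization $\delta_{\bB}^{1/2}(\tilde N_{j,j^*}^1)^{-1/2}\fF(P_j)$ with $x_j^* = x_{j^*}$; since the normalizing scalars for $j$ and $j^*$ agree (as $d(P_j)=d(P_{j^*})$ forces $\tilde N_{j,j^*}^1 = \tilde N_{j^*,j}^1$), we get $\fF^{-1}(x_j)^* = \fF(P_j)^{*}$ pulled back appropriately, i.e. $\fF^{-1}(x_j^*) = \fF^{-1}(x_{j^*})$ is a positive multiple of $J(P_j) = P_{j^*}$. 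So I would state this as a preliminary remark, then feed it into the computation of $\overline{x_j}$ above.

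For the "consequently" clause, I would argue: $\fF\colon\bA\to\bB$ intertwines the involutions in the twisted sense built into the fusion-bialgebra axioms, and the contragredient on $\bB$ equals the $\bC$-linear extension of $x_j\mapsto x_{j^*}$, which is exactly the $*$-operation of $\bB$ followed by complex conjugation of coefficients — equivalently, it is a $\bC$-linear map satisfying $\overline{yz} = \overline{z}\,\overline{y}$ (because $(yz)^* = z^*y^*$ and $x_{j^*}$ multiply with the same structure constants $N_{j,k}^s = N_{k^*,j^*}^{s^*}$ by Frobenius reciprocity \eqref{Equ: star}), and $\overline{y^*} = \overline{y}^{\,*}$. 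So $\overline{\,\cdot\,}$ is an anti-multiplicative, $*$-preserving, $\bC$-linear bijection, i.e. an anti-$*$-isomorphism; the same holds on $\bA$ by applying $\fF$ and $\tfF$, or by the symmetric argument using that the contragredient on $\bA$ is the $\bC$-linear extension of $x_j \mapsto x_{j^*}$ with respect to the $\diamond$-product (which is trivially anti-commutative since $\diamond$ is commutative), so it suffices to check it preserves $\#$ and is a bijection.

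**Main obstacle.** The one genuine subtlety is pinning down that $\fF^{-1}(x_j)^* = \fF^{-1}(x_{j^*})$, i.e. that the $*$-structure on $\bA$ and the labelling involution $j\mapsto j^*$ on $B$ are compatible through $\fF^{-1}$. This is where self-duality (rather than merely "$\bB$ commutative so the dual is a fusion bialgebra") might a priori be needed — but in fact it follows already from Theorem~\ref{Thm: extension} and the modular conjugation axiom, independently of self-duality; self-duality only enters to make both $\bA$ and $\bB$ simultaneously fusion bialgebras so that the statement is symmetric. I would double-check that the normalization constants relating $x_j$ to $\fF(P_j)$ are invariant under $j\mapsto j^*$, which is the only place a sign/scalar error could creep in; given $d(P_j) = d(P_{j^*})$ this is immediate, so I do not expect real difficulty, only bookkeeping.
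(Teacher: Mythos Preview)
Your approach is correct and lands on the same one-line computation as the paper, namely
\[
\overline{x_j}=\fF\tfF(x_j)=\fF\bigl((\fF^{-1}(x_j^*))^{\#}\bigr)=\fF\bigl((\fF^{-1}(x_{j^*}))^{\#}\bigr)=x_{j^*},
\]
using only that $x_j^*=x_{j^*}$ in $\bB$ (by the definition of an $\bRp$-basis) and that $\#$ fixes the basis of $\bA$.

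However, you take an unnecessary detour. Your initial unwinding ``$\overline{y}=\fF(\fF^{-1}(y)^*)^{\#}$'' has the $*$ and $\fF^{-1}$ in the wrong order: in $\tfF=\#\,\fF^{-1}\,*$ the $*$ acts first, on $\bB$, so one never needs to make sense of ``$\fF^{-1}(x_j)^*$'' as an element of $\bA$. This mis-parse is what forces you into the modular-conjugation argument and the appeal to the normalization constants from Theorem~\ref{Thm: extension}; none of that is needed, and the paper dispenses with it entirely. Your ``main obstacle'' is thus self-inflicted --- the identity you worry about, and the role of self-duality, simply do not arise once the operations are applied in the intended order. Your treatment of the anti-$*$-isomorphism claim (via Frobenius reciprocity \eqref{Equ: star} for anti-multiplicativity) is fine and in fact more detailed than what the paper writes.
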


\begin{proof}
The statements follow from the fact that the contragredient map is linear and
$$\overline{x_j}=\fF \tfF(x_j)=\fF \fF^{-1}(x_j^*)^{\#}=x_{j^*}=x_{j}^*,  ~\forall~ 1\leq j \leq m.$$
\end{proof}

\begin{definition}
Suppose $(\bA, \bB, \fF, d, \tau)$ is a self-dual canonical fusion bialgebra with a $\bRp$-basis $B=\{x_1=1_{\bB}, x_2, \ldots, x_m\}$ of $\bB$, we define the $S$-matrix $S$ as an $m \times m$ matrix with entries $S_j^k$, such that
$$\fF\Phi_{\bB}(x_j)=\sum_{k=1}^m S_{j}^k x_k.$$
\end{definition}

\begin{proposition}\label{Prop: S unitary}
For a self-dual canonical fusion bialgebra  $(\bA, \bB, \fF, d, \tau)$,
$\fF\Phi_{\bB}$ is a unitary transformation on $L^2(\bB,\tau)$, and the $S$ matrix is a unitary.
\end{proposition}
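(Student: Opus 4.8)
The plan is to show that $\fF\Phi_{\bB}$ preserves the $2$-norm on $\bB$, and then that it is a bijection, from which unitarity follows; the $S$-matrix is then unitary because it is precisely the matrix of this unitary with respect to the orthonormal basis $B$ of $L^2(\bB,\tau)$.

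First I would unwind the definitions. By hypothesis the dual $(\bB,\bA,\tfF,\tau,d)$ is a fusion bialgebra isomorphic to $(\bA,\bB,\fF,d,\tau)$ via $*$-isomorphisms $\Phi_{\bA}\colon\bA\to\bB$ and $\Phi_{\bB}\colon\bB\to\bA$ satisfying $\Phi_{\bA}\tfF=\fF'\Phi_{\bB}$ together with the compatibility $d=\tau\circ\Phi_{\bB}$ and $\tau=d\circ\Phi_{\bA}$ (reading the isomorphism definition for the dual pair). In particular $\Phi_{\bB}\colon\bB\to\bA$ intertwines the traces, so it is a $*$-isomorphism of C$^*$-algebras carrying $\tau$ to $d$; hence it is an isometry from $L^2(\bB,\tau)$ onto $L^2(\bA,d)$: for $x\in\bB$,
\[
\|\Phi_{\bB}(x)\|_{2,\bA}^2=d(\Phi_{\bB}(x)^{\#}\diamond\Phi_{\bB}(x))=d(\Phi_{\bB}(x^*x))=\tau(x^*x)=\|x\|_{2,\bB}^2.
\]
Next, Plancherel's formula (Proposition~\ref{prop:planch}) says $\fF\colon\bA\to\bB$ is a unitary from $L^2(\bA,d)$ onto $L^2(\bB,\tau)$. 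Composing, $\fF\Phi_{\bB}\colon L^2(\bB,\tau)\to L^2(\bB,\tau)$ is a composition of two surjective isometries, hence a surjective isometry of the Hilbert space $L^2(\bB,\tau)$, i.e.\ a unitary transformation. This is exactly the first assertion.

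For the $S$-matrix, recall $B=\{x_1,\dots,x_m\}$ is an orthonormal basis of $L^2(\bB,\tau)$ (this is the Gelfand--Naimark--Segal picture, where $\langle x,y\rangle=\tau(y^*x)$ and $\tau(x_{j}^*x_k)=\delta_{j,k}$ by condition (3) of Definition~\ref{def:bB}). By definition $\fF\Phi_{\bB}(x_j)=\sum_k S_j^k x_k$, so $S=(S_j^k)$ is precisely the matrix of the operator $\fF\Phi_{\bB}$ in the orthonormal basis $B$. An operator on a finite-dimensional Hilbert space is unitary iff its matrix in an orthonormal basis is a unitary matrix; since $\fF\Phi_{\bB}$ is unitary by the previous paragraph, $S$ is a unitary matrix. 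Concretely, $\sum_{k}S_j^k\overline{S_l^k}=\langle \fF\Phi_{\bB}(x_j),\fF\Phi_{\bB}(x_l)\rangle=\langle x_j,x_l\rangle=\delta_{j,l}$.

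The only real subtlety — the step I expect to need the most care — is bookkeeping the direction and normalization of the isomorphism maps: one must be sure that the compatibility conditions in the definition of isomorphism, applied to the self-dual pair, genuinely force $\Phi_{\bB}$ to carry $\tau$ to $d$ (and not, say, $d$ to $\tau$ up to a scalar), and that on the canonical fusion bialgebra there is no stray gauge factor. Once the convention is pinned down, the argument is just "isometry $\circ$ isometry $=$ isometry, and unitary matrices are matrices of unitaries in an orthonormal basis," with no computation beyond the displayed identities above.
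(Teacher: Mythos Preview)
Your proof is correct and follows the same approach as the paper's: both argue that $\fF$ and $\Phi_{\bB}$ are individually unitary (the first by Plancherel, the second because it is a $*$-isomorphism intertwining the traces), hence so is their composition, and then $S$ is unitary as the matrix of a unitary in the orthonormal basis $B$. The only slip is the labeling of the compatibility conditions---it should read $\tau=d\circ\Phi_{\bB}$ and $d=\tau\circ\Phi_{\bA}$---but your displayed computation uses the correct relation, so the argument goes through.
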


\begin{proof}
Both $\fF$ and $\Phi_{\bB}$ are unitary transformations, so the composition is a unitary on $L^2(\bB,\tau)$.
Recall that $B$ is an orthonormal basis of $L^2(\bB,\tau)$, we have $S$ is a unitary matrix.
\end{proof}

\begin{proposition}
A self-dual canonical fusion bialgebra is symmetrically self-dual if and only if $S_{j}^k=S_{k}^j$.
In this case, $$\fF\Phi_{\bB}=\Phi_{\bA}\tfF.$$
\end{proposition}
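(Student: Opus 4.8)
The plan is to unwind both characterizations in terms of the $S$-matrix and the maps $\Phi_{\bA},\Phi_{\bB}$, using only the relations $\Phi_{\bB}\fF=\fF'\Phi_{\bA}$ from the isomorphism of a fusion bialgebra with its dual, together with Proposition~\ref{Prop: Contragredient} and Proposition~\ref{Prop: S unitary}. First I would record precisely what $\tfF$ and $\fF$ do on the basis $B$, and translate the condition $\Phi_{\bB}\Phi_{\bA}=1$ (definition of symmetrically self-dual) into a statement purely about the operator $\fF\Phi_{\bB}$ on $L^2(\bB,\tau)$; since $S$ is the matrix of $\fF\Phi_{\bB}$ in the orthonormal basis $B$, symmetry of $S$ should be equivalent to $\fF\Phi_{\bB}$ being a symmetric (self-transpose) operator with respect to this basis, and I would reinterpret self-transpose in a basis-free way as $\langle \fF\Phi_{\bB}(x_j),\overline{x_k}\rangle = \langle \fF\Phi_{\bB}(x_k),\overline{x_j}\rangle$ using Proposition~\ref{Prop: Contragredient} to identify $\overline{x_k}=x_k^*$.

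Next I would establish the identity $\fF\Phi_{\bB}=\Phi_{\bA}\tfF$ as the real content. Both sides are maps $\bB\to\bB$; using $\tfF=\#\fF^{-1}*$ and the intertwining relations $\Phi_{\bB}\fF=\fF\Phi_{\bA}$ (for the self-duality, $\fF'$ is the dual Fourier transform $\tfF$, so one must be careful which $\fF$ appears), I would rewrite $\Phi_{\bA}\tfF$ by pushing $\Phi_{\bA}$ through $\tfF$ and comparing with $\fF\Phi_{\bB}$. The key algebraic input is that $\Phi_{\bA}$ and $\Phi_{\bB}$ are $*$-isomorphisms intertwining the Fourier transforms, so they intertwine the involutions $\#$ and $*$ appropriately; combined with $\Phi_{\bB}\Phi_{\bA}=1$ this should let me convert $\fF\Phi_{\bB}$ into $\Phi_{\bA}\tfF$ (the ``only if'' direction), and conversely, having $\fF\Phi_{\bB}=\Phi_{\bA}\tfF$ forces a compatibility between the two maps that yields $\Phi_{\bB}\Phi_{\bA}=1$ and hence $S_j^k=S_k^j$.

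Concretely, I would proceed as follows. Compute $S_j^k=\langle\fF\Phi_{\bB}(x_j),x_k\rangle=\tau\bigl(x_k^*\,\fF\Phi_{\bB}(x_j)\bigr)$. Using that $\fF$ transports $\diamond$ to multiplication and Plancherel (Proposition~\ref{prop:planch}), rewrite this as a pairing over $\bA$ involving $\Phi_{\bA}$ (via $\Phi_{\bB}\fF=\fF\Phi_{\bA}$, equivalently $\fF^{-1}\Phi_{\bB}\fF=\Phi_{\bA}$). Then $S_j^k$ becomes an expression symmetric under $j\leftrightarrow k$ precisely when $\Phi_{\bA}^{-1}=\Phi_{\bB}$, i.e. $\Phi_{\bB}\Phi_{\bA}=1$ on $\bA$ (note $\Phi_{\bA}:\bA\to\bB$ and $\Phi_{\bB}:\bB\to\bA$ in the self-dual setting, so $\Phi_{\bB}\Phi_{\bA}$ is indeed an endomorphism of $\bA$). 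The equality $\fF\Phi_{\bB}=\Phi_{\bA}\tfF$ then drops out by evaluating both sides on the basis using the $S$-matrix and $\tfF(x_j)=\fF^{-1}(x_j^*)^{\#}$, together with Proposition~\ref{Prop: Contragredient}.

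I expect the main obstacle to be bookkeeping: keeping straight the four maps $\fF,\fF^{-1},\tfF,\tfF^{-1}$ and the two involutions $*,\#$ across the two algebras $\bA,\bB$, and in particular verifying that under the self-duality isomorphism the ``$\fF'$'' appearing in $\Phi_{\bB}\fF=\fF'\Phi_{\bA}$ is exactly $\tfF$ (so that the intertwining relation reads $\Phi_{\bB}\fF=\tfF\Phi_{\bA}$), not $\fF^{-1}$. Once the correct intertwining relation is pinned down, the proof is a short chain of substitutions; the symmetry of $S$ and the operator identity are two faces of the single relation $\Phi_{\bB}\Phi_{\bA}=1$.
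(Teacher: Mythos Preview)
Your approach is correct and essentially the same as the paper's: both hinge on the intertwining relation $\Phi_{\bB}\fF=\tfF\Phi_{\bA}$ (which you correctly identify at the end), combined with Propositions~\ref{Prop: Contragredient} and~\ref{Prop: S unitary}. The paper packages the argument more compactly through the single pivot identity $(\Phi_{\bB}\fF)^2=\tfF\fF$: since $\Phi_{\bB}\fF\Phi_{\bB}\fF=\tfF\Phi_{\bA}\Phi_{\bB}\fF$ always holds, symmetric self-duality ($\Phi_{\bA}\Phi_{\bB}=1$) is equivalent to $(\Phi_{\bB}\fF)^2=\tfF\fF$, hence to $(S^2)_j^k=\delta_{j,k^*}$, hence to $S_j^k=S_k^j$ by unitarity of $S$; the operator identity $\fF\Phi_{\bB}=\Phi_{\bA}\tfF$ then follows by cancelling $\Phi_{\bB}$ and $\fF$ from $\Phi_{\bB}\fF\Phi_{\bB}\fF=\Phi_{\bB}\Phi_{\bA}\tfF\fF$. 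One small slip to fix in your write-up: midway you invoke ``$\Phi_{\bB}\fF=\fF\Phi_{\bA}$, equivalently $\fF^{-1}\Phi_{\bB}\fF=\Phi_{\bA}$,'' which is the wrong relation---as you yourself note later, it must be $\Phi_{\bB}\fF=\tfF\Phi_{\bA}$.
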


\begin{proof}
For a self-dual canonical fusion bialgebra, we have that
\begin{align*}
\Phi_{\bB} \fF \Phi_{\bB} \fF &= \tfF \Phi_{\bA} \Phi_{\bB} \fF \;.
\end{align*}
By Propositions~\ref{Prop: Contragredient} and~\ref{Prop: S unitary},
the fusion bialgebra is symmetrically self-dual if and only if $(\Phi_{\bB}\fF)^2=\tfF\fF$  if and only if
$(S^2)_{j}^k=\delta_{j,k^*}$ if and only if $S_{j}^k=S_{k}^j$.
In this case,
\begin{align*}
\Phi_{\bB}\fF\Phi_{\bB}\fF&=\Phi_{\bB}\Phi_{\bA}\tfF\fF \;.
\end{align*}
So $\fF\Phi_{\bB}=\Phi_{\bA}\tfF.$
\end{proof}

\begin{remark}
For the group case, $S$ is a bicharacter, see \cite{LMP18} for the discussion on self-duality and symmetrically self-duality.
\end{remark}

\begin{theorem}[Verlinde Formula]
For a self-dual canonical fusion bialgebra  $(\bA, \bB, \fF, d, \tau)$,
$$SL_jS^*=D_j, \quad 1\leq j\leq m,$$
where $L_jx=x_j x$ for $x\in \bB$ and $D_j x_k= \frac{S_j^k}{d(x_k)} x_k$.
\end{theorem}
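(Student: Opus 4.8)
The plan is to upgrade the statement to the \emph{operator} identity $\fF\Phi_{\bB}\, L_j\, (\fF\Phi_{\bB})^{-1}=D_j$ on $L^2(\bB,\tau)$: by Proposition~\ref{Prop: S unitary} the map $\fF\Phi_{\bB}$ is unitary with matrix $S$ in the orthonormal basis $B$, so $S^*=(\fF\Phi_{\bB})^{-1}$ and this operator identity is exactly $SL_jS^*=D_j$. Since the basis $B$ and the quantities involved are gauge invariant (Theorem~\ref{Thm: extension}), I would first pass to a canonical fusion bialgebra, so that $\bA$ has minimal projections $P_1,\dots,P_m$ with $\fF^{-1}(x_l)=d(x_l)^{-1}P_l$, equivalently $\fF(P_l)=d(x_l)\,x_l$.

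First I would diagonalize the left multiplications on the $\bB$-side. As $\bB$ is a commutative finite-dimensional C$^*$-algebra it has minimal projections $Q_1,\dots,Q_m$ forming a linear basis, with associated characters $\omega_k$; thus $y=\sum_k\omega_k(y)Q_k$ and $yQ_k=\omega_k(y)Q_k$ for every $y\in\bB$. In particular each $Q_k$ is a joint eigenvector of all the $L_j$ with $L_jQ_k=\omega_k(x_j)Q_k$, so $\{\omega_k(x_j)\}_k$ is the eigenvalue list of $L_j$.

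Next I would transport this eigenbasis through $\fF\Phi_{\bB}$. Since $\Phi_{\bB}\colon\bB\to\bA$ is a $*$-isomorphism of abelian C$^*$-algebras it permutes minimal projections, say $\Phi_{\bB}(Q_k)=P_{\sigma(k)}$; combined with $\fF(P_l)=d(x_l)x_l$ this gives $\fF\Phi_{\bB}(Q_k)=d(x_{\sigma(k)})\,x_{\sigma(k)}$, i.e.\ $\fF\Phi_{\bB}$ carries each joint eigenvector $Q_k$ to a scalar multiple of a single basis vector $x_{\sigma(k)}$. Hence $\fF\Phi_{\bB}\,L_j\,(\fF\Phi_{\bB})^{-1}$ is diagonal in the basis $B$, acting by $x_l\mapsto\omega_{\sigma^{-1}(l)}(x_j)\,x_l$. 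Finally, expanding $\fF\Phi_{\bB}(x_j)=\sum_k\omega_k(x_j)\,\fF\Phi_{\bB}(Q_k)=\sum_l\omega_{\sigma^{-1}(l)}(x_j)\,d(x_l)\,x_l$ and comparing with the defining relation $\fF\Phi_{\bB}(x_j)=\sum_l S_j^l x_l$ yields $\omega_{\sigma^{-1}(l)}(x_j)=S_j^l/d(x_l)$, which is exactly the $(l,l)$-entry of $D_j$, so $\fF\Phi_{\bB}\,L_j\,(\fF\Phi_{\bB})^{-1}=D_j$.

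The argument is essentially bookkeeping once the eigenbasis is identified; the steps that require care are tracking the permutation $\sigma$ through the normalizations $d(x_l)$, and the reduction to the canonical case so that $\fF(P_l)=d(x_l)x_l$ holds with no stray scalar (here gauge invariance of $B$ is what makes the reduction harmless). One should also keep in mind that $S$ in $SL_jS^*$ is read as the unitary operator $\fF\Phi_{\bB}$, so the asserted equality is literally operator conjugation.
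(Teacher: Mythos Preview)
Your proof is correct. Both you and the paper establish the operator identity $\fF\Phi_{\bB}\,L_j\,(\fF\Phi_{\bB})^{-1}=D_j$ and verify it on basis vectors. The paper's argument is shorter and more direct: rather than introducing the minimal projections $Q_k$ of $\bB$, the characters $\omega_k$, and the permutation $\sigma$, it simply uses that $\Phi_{\bB}$ is a $*$-isomorphism to rewrite $\Phi_{\bB}\bigl(x_j\cdot(\fF\Phi_{\bB})^{-1}x_k\bigr)$ as $\Phi_{\bB}(x_j)\diamond\fF^{-1}(x_k)$, then expands $\fF\Phi_{\bB}(x_j)=\sum_s S_j^s x_s$ and uses the $\diamond$-diagonality $\fF^{-1}(x_s)\diamond\fF^{-1}(x_k)=\delta_{s,k}\,d(x_k)^{-1}\fF^{-1}(x_k)$ to read off $S_j^k/d(x_k)$ in one line. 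Your detour through the spectral decomposition of $\bB$ is really a repackaging of this same diagonality: the fact that $\Phi_{\bB}$ carries minimal projections to minimal projections, together with $\fF(P_l)=d(x_l)x_l$, is precisely what makes $\diamond$ diagonal in the $\fF^{-1}(x_k)$ basis. What your version buys is the explicit eigenvalue interpretation $\omega_{\sigma^{-1}(l)}(x_j)=S_j^l/d(x_l)$, which is conceptually nice even if unnecessary for the bare statement. One small redundancy: the hypothesis already assumes the bialgebra is canonical, so your gauge-reduction step is not needed.
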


\begin{proof}
Assume that $B=\{x_1=1, x_2, \ldots, x_m\}$ is a basis of $\bB$.
We have
\begin{align*}
\fF\Phi_\bB L_j(\fF\Phi_\bB)^{-1}(x_k))&=\fF\Phi_\bB(x_j(\fF\Phi_\bB)^{-1}(x_k)) \\
&=\fF(\Phi_\bB(x_j)\diamond \Phi_\bB((\fF\Phi_\bB)^{-1}(x_k)))\\
&=\fF(\fF^{-1}\fF\Phi_\bB(x_j)\diamond \fF^{-1}(x_k))\\
&=\sum_{s=1}^m S_j^s \fF(\fF^{-1}(x_s)\diamond \fF^{-1}(x_k))\\
&= \frac{S_j^k}{d(x_k)} x_k\\
\end{align*}
This completes the proof of the theorem.
\end{proof}

\section{Schur Product Property}\label{Sec: Schur Product Property}
In this section, we will study Schur product property for the dual of a fusion bialgebra.

\begin{definition}
For a fusion bialgebra $(\bA, \bB, \fF, d, \tau)$, the multiplication $\diamond$ on $\bA$ induces a convolution $\conv_{\bB}$ on $\bB$:  $~\forall  x,y \in \bB$,
\begin{align}\label{Equ: convolution b}
x \conv_{\bB} y :=& \tfF^{-1}(\tfF(x) \diamond \tfF(y))
=(\fF(\fF^{-1}(y^*) \diamond \fF^{-1}(x^*)))^* \;.
\end{align}
We say $\bB$ has the Schur product property, if
$x \conv_{\bB} y \geq 0 $, for any $x,y \geq 0$ in $\bB$.
\end{definition}

\begin{proposition}[Frobenius Reciprocity]\label{prop:rotation}
Let $(\bA, \bB, \fF, d, \tau)$ be a fusion bialgebra.
Then for any $ x ,  y,  z\in \bB$, we have
$$\tau((x*_\bB y)z)=\overline{\tau((J_\bB(x)*_\bB z^*)y^*)}$$
\end{proposition}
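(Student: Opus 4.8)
The plan is to verify the identity by unwinding all three objects in terms of the Fourier transform $\fF$ and the modular conjugation $J$ on $\bA$, reducing everything to the known Frobenius reciprocity on $\bA$ already recorded in Proposition~\ref{prop:rotation2}. First I would rewrite the left-hand side using the definition of $\conv_\bB$ in Equation~\eqref{Equ: convolution b}: since $x*_\bB y = \tfF^{-1}(\tfF(x)\diamond\tfF(y))$, the trace pairing $\tau((x*_\bB y)z)$ becomes a pairing on $\bA$ after applying $\tfF$, and then the $2$-norm-preserving property of $\fF$ (Plancherel, Proposition~\ref{prop:planch}) together with the relation $\tfF=\#\fF^{-1}*$ converts the $\tau$-pairing on $\bB$ into a $d$-pairing of the form $d((a\conv b)\diamond c)$ on $\bA$, where $a,b,c$ are the images of $x,y,z$ under the appropriate (anti-)linear maps built from $\fF^{-1}$, $*$, and $\#$.

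Next I would apply Proposition~\ref{prop:rotation2}, which states $d((a*b)\diamond c)=\overline{d((J(c)*a^\#)\diamond J(b))}$, to the rearranged left-hand side. The point is that the maps $J_\bB$ on $\bB$ and $J$ on $\bA$ are intertwined by $\tfF$ (this is essentially the Dual Modular Conjugation proposition, or can be extracted from the definitions $J_\bB(x)=\tfF^{-1}(\tfF(x)^\#)$ and $J(x)=\fF^{-1}(\fF(x)^*)$ together with $\tfF=\#\fF^{-1}*$), so the $J$ appearing on the $\bA$-side after applying Proposition~\ref{prop:rotation2} can be pushed back through $\tfF$ to become $J_\bB$ on the $\bB$-side. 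Finally I would repackage the right-hand side of the resulting identity, again using Equation~\eqref{Equ: convolution b} and Plancherel, to recognize it as $\overline{\tau((J_\bB(x)*_\bB z^*)y^*)}$, which completes the proof.

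An alternative, possibly cleaner, route is to prove the identity directly on the $\bRp$-basis $B=\{x_1,\dots,x_m\}$ of $\bB$: by linearity (resp.\ anti-linearity in the right-hand side) it suffices to check $\tau((x_j*_\bB x_k)x_\ell)=\overline{\tau((J_\bB(x_j)*_\bB x_\ell^*)x_k^*)}$ for all $j,k,\ell$. Here $J_\bB(x_j)=x_j$ by the Dual Modular Conjugation proposition, and both sides become explicit expressions in the dual structure constants $\widetilde N$ (the coefficients of $x_j*_\bB x_k=\sum_s \widetilde N_{j,k}^s x_s$); the claim then reduces to a symmetry of the $\widetilde N$ analogous to Equation~\eqref{Equ: rotation}, namely a ``rotation by $180^\circ$ of the dual three-point function.'' This rotational symmetry for the dual is exactly the content transferred from the $\bA$-side via $\fF$.

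\textbf{Main obstacle.} I expect the principal difficulty to be bookkeeping: keeping straight which multiplication ($\cdot$ on $\bB$ versus $\diamond$ on $\bA$), which convolution ($*$ on $\bA$ versus $*_\bB$ on $\bB$), and which involution ($*$ versus $\#$) is active at each stage, and correctly tracking the complex conjugations introduced by the anti-linear maps $J$, $J_\bB$, and by $\tfF=\#\fF^{-1}*$. In particular one must be careful that $\tfF$ is anti-linear-free (it is linear, being a composite of the linear $\fF^{-1}$ with two involutions) but does not preserve products in the naive way, so the conversion between $\tau$-pairings and $d$-pairings must be done via Plancherel rather than by formally moving $\tfF$ across a product. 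Once the dictionary between the $\bA$-side and $\bB$-side data is set up carefully, the identity should follow mechanically from Proposition~\ref{prop:rotation2}.
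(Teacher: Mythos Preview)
Your overall strategy---unwind the $\bB$-side expression via Plancherel into a quantity on $\bA$, use a symmetry there, then repackage---is exactly what the paper does. However, there is one misidentification in your plan: when you push $\tau((x*_\bB y)z)$ to the $\bA$-side you do \emph{not} land on an expression of the form $d((a\conv b)\diamond c)$, so Proposition~\ref{prop:rotation2} is not the relevant ingredient. Concretely, using the second form in Equation~\eqref{Equ: convolution b} and Plancherel one obtains
\[
\tau((x*_\bB y)z)=d\bigl(\fF^{-1}(y^*)^\#\diamond\fF^{-1}(x^*)^\#\diamond\fF^{-1}(z)\bigr),
\]
a pure triple $\diamond$-product; the $\bB$-convolution $*_\bB$ transfers to $\diamond$ on $\bA$, not to $\conv$. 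The symmetry you actually need on the $\bA$-side is therefore just the commutativity of $(\bA,\diamond)$, not the rotation identity of Proposition~\ref{prop:rotation2} (that proposition encodes cyclicity of $\tau$ transferred to $\bA$, which is the dual phenomenon). Once you permute the three $\diamond$-factors to peel off the $y$-term first, a second application of Plancherel and the identity $\fF^{-1}(J_\bB(x)^*)=\fF^{-1}(x^*)^\#$ let you recognize the remainder as $(J_\bB(x)*_\bB z^*)^*$, giving the result after one conjugation. This is precisely the paper's four-line computation. Your alternative basis-check route is also correct, since both sides are genuinely trilinear in $(x,y,z)$ and $J_\bB(x_j)=x_j$ on the $\bRp$-basis.
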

\begin{proof}
We have
\begin{align*}
\tau((x*_\bB y)z)&=\tau((\fF(\fF^{-1}(y^*) \diamond \fF^{-1}(x^*)))^*z)\\
&= d(\fF^{-1}(y^*)^\# \diamond \fF^{-1}(x^*)^\#\diamond \fF^{-1}(z))\\
&=\tau(y \fF( \fF^{-1}(x^*)^\#\diamond \fF^{-1}(z)))\\
&=\overline{\tau( \fF( \fF^{-1}(J_\bB(x)^*)\diamond \fF^{-1}(z^{**}))^*y^*)}\\
&= \overline{\tau((J_\bB(x)*_\bB z^*)y^*)}
\end{align*}
This completes the proof of the proposition.
\end{proof}

\begin{proposition}\label{prop:ufusionschur}
Suppose the fusion bialgebra $(\bA, \bB, \fF, d, \tau)$ can be subfactorized.
Then the Schur product property holds on the dual.
\end{proposition}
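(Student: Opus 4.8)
The plan is to reduce the Schur product property on the dual $\bB$ to the Schur product property (condition (1)) of the original fusion bialgebra $(\bA,\bB,\fF,d,\tau)$, which, since $(\bA,\bB)$ is subfactorized, is inherited from the quantum Schur product theorem for subfactor planar algebras [\cite{Liuex}, Theorem 4.1] applied to the $2$-box space $\sP_{2,-}$ of the subfactorizing subfactor. Concretely, by Theorem~\ref{PA to FB} we may identify $\bA = \sP_{2,+}$, $\bB = \sP_{2,-}$, and the string Fourier transform $\FS$ (rescaled) with $\fF$, so that the convolution $\conv_\bB$ on $\bB$ defined in Equation~\eqref{Equ: convolution b} is exactly the convolution of $2$-boxes in $\sP_{2,-}$ coming from multiplication in $\sP_{2,+}$. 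The Schur product theorem on $\sP_{2,-}$ then says precisely that $x \conv_\bB y \geq 0$ whenever $x, y \geq 0$ in $\sP_{2,-}$.

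First I would recall from Equation~\eqref{Equ: convolution b} that
\[
x \conv_{\bB} y = \bigl(\fF(\fF^{-1}(y^*) \diamond \fF^{-1}(x^*))\bigr)^*,
\]
so positivity of $x\conv_\bB y$ is equivalent to positivity of $\fF(\fF^{-1}(y^*)\diamond \fF^{-1}(x^*))$; since $x,y\geq 0$ implies $x^*=x$, $y^*=y$, this is $\fF(\fF^{-1}(y)\diamond \fF^{-1}(x))$. Next I would observe that under the subfactorization, $\fF^{-1}$ carries $\bB = \sP_{2,-}$ to $\bA = \sP_{2,+}$ as the (inverse) string Fourier transform up to a positive scalar, and that the string Fourier transform together with its $180^\circ$-rotation relative sends positive $2$-boxes to positive $2$-boxes in the appropriate sense used in the quantum Schur product theorem. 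Then the statement to invoke is that for $a, b \geq 0$ in $\sP_{2,-}$, the $2$-box $a * b$ (Fourier dual convolution) is again positive — this is [\cite{Liuex}, Theorem 4.1] applied with the roles of $\sP_{2,+}$ and $\sP_{2,-}$ interchanged, which is legitimate because the Fourier dual of a subfactor planar algebra is again a subfactor planar algebra, as noted in the introduction.

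The main obstacle, and the only point requiring care, is the bookkeeping that matches the abstract convolution $\conv_\bB$ of Definition~\eqref{Equ: convolution b} — built from $\diamond$, $\#$, $\fF$, $*$ — with the concrete planar-algebra convolution on $\sP_{2,-}$ for which the Schur product theorem is stated; this involves keeping track of the $90^\circ$ rotations (Fourier transforms) and $180^\circ$ rotations (contragredients) described in the Remark after Theorem~\ref{PA to FB}, and checking that the adjoint $*$ and the conjugate-linear twist do not spoil positivity (they do not, since $*$ preserves the positive cone and the $180^\circ$ rotation is a $*$-anti-automorphism). Once this identification is in place, the result is immediate. I would therefore structure the proof as: (i) invoke Theorem~\ref{PA to FB} to realize $\bB$ as $\sP_{2,-}$ of the subfactorization; (ii) rewrite $x\conv_\bB y$ via \eqref{Equ: convolution b} and the self-adjointness of $x,y$; (iii) identify this with the planar-algebra convolution on $\sP_{2,-}$; (iv) apply the quantum Schur product theorem [\cite{Liuex}, Theorem 4.1] on $\sP_{2,-}$ to conclude $x\conv_\bB y\geq 0$.
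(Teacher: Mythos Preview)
Your proposal is correct and takes essentially the same approach as the paper: both invoke the quantum Schur product theorem [\cite{Liuex}, Theorem~4.1] on the subfactor planar algebra realizing the fusion bialgebra. The paper's proof is a one-line citation of that theorem, whereas you have spelled out the bookkeeping (identifying $\bB$ with $\sP_{2,-}$ via Theorem~\ref{PA to FB} and matching $\conv_\bB$ with the planar-algebra convolution), but the argument is the same.
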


\begin{proof}
It follows from the quantum Schur product theorem on subfactors, Theorem 4.1 in \cite{Liuex}.
\end{proof}

\begin{proposition}
Suppose the fusion bialgebra $(\bA, \bB, \fF, d, \tau)$ is self-dual.
Then the Schur product property holds on the dual.
\end{proposition}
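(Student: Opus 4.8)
The plan is to reduce the statement for a self-dual fusion bialgebra to the Schur product property of the \emph{original} fusion bialgebra, which holds by hypothesis (condition (1) of Definition~\ref{Def: Fusion bialgebras}). The key observation is that self-duality gives *-isomorphisms $\Phi_{\bA}: \bA \to \bB$ and $\Phi_{\bB}: \bB \to \bA$ intertwining the Fourier transforms, $\Phi_{\bB}\fF = \tfF \Phi_{\bA}$, together with $d = \tau \Phi_{\bA}$ and $\tau = d\,\Phi_{\bB}$. So the dual quintuple $(\bB, \bA, \tfF, \tau, d)$ is, up to these isomorphisms, literally the same data as $(\bA, \bB, \fF, d, \tau)$; the Schur product property on the dual is therefore transported from the Schur product property on $(\bA, \bB, \fF, d, \tau)$, which is built into the definition of a fusion bialgebra.

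Concretely, first I would recall that by definition of the Fourier dual $(\bB, \bA, \tfF, \tau, d)$, the relevant convolution is $\conv_{\bB}$ on $\bB$ induced by the multiplication $\diamond$ of $\bA$, as in Equation~\eqref{Equ: convolution b}. The claim to prove is: if $x, y \geq 0$ in $\bB$, then $x \conv_{\bB} y \geq 0$ in $\bB$. Second, using the self-duality isomorphism $\Phi_{\bB}: \bB \to \bA$, which is a *-isomorphism (hence positivity-preserving in both directions), it suffices to show $\Phi_{\bB}(x \conv_{\bB} y) \geq 0$ in $\bA$. Third, I would verify that $\Phi_{\bB}$ carries $\conv_{\bB}$ to the convolution $\conv$ on $\bA$: since $\Phi_{\bB}\fF = \tfF\Phi_{\bA}$ and $\Phi_{\bA} = \Phi_{\bB}^{-1}$-type relations hold (more precisely, chasing the definitions $\tfF = \#\fF^{-1}*$ and $x\conv_{\bB} y = \tfF^{-1}(\tfF(x)\diamond\tfF(y))$ through the intertwining relations), one gets $\Phi_{\bB}(x \conv_{\bB} y) = \Phi_{\bB}(x) \conv \Phi_{\bB}(y)$ up to the involutions $\#$ and $*$, which are also positivity-preserving. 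Finally, since $\Phi_{\bB}(x), \Phi_{\bB}(y) \geq 0$ in $\bA$, the Schur product property of the original fusion bialgebra $(\bA,\bB,\fF,d,\tau)$ gives $\Phi_{\bB}(x)\conv\Phi_{\bB}(y) \geq 0$, and pulling back through $\Phi_{\bB}^{-1}$ (another *-isomorphism) yields $x \conv_{\bB} y \geq 0$.

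The main obstacle I anticipate is purely bookkeeping: correctly tracking how the antilinear involutions $*$ on $\bB$, $\#$ on $\bA$, and the asymmetric definition $\tfF = \#\fF^{-1}*$ interact with the intertwiners $\Phi_{\bA}, \Phi_{\bB}$, so that the claimed identity $\Phi_{\bB}(x \conv_{\bB} y) = \Phi_{\bB}(x) \conv \Phi_{\bB}(y)$ (or its version with a sprinkling of $\#$'s and $*$'s) comes out exactly right. One must be careful that the isomorphism conditions $\Phi_{\bB}\fF = \fF'\Phi_{\bA}$ are stated for isomorphisms \emph{between} fusion bialgebras, so applying them to the self-duality pair $(\bA,\bB,\fF,d,\tau) \cong (\bB,\bA,\tfF,\tau,d)$ requires identifying $\bA' = \bB$, $\bB' = \bA$, $\fF' = \tfF$ and reading the compatibility relation accordingly. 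Once the diagram is set up correctly, each arrow is a *-isomorphism or an (anti-)*-isomorphism preserving positivity, so the conclusion is immediate; no hard analysis is needed, only the reduction. I would also remark that the statement can alternatively be seen as a special case of the fact that a self-dual fusion bialgebra is isomorphic to its own dual, so \emph{all three} conditions of Definition~\ref{Def: Fusion bialgebras} transfer, but since conditions (2) and (3) on the dual were already established unconditionally in the preceding propositions, only condition (1) — the Schur product property — needs this argument.
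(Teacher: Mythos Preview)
Your argument is correct but works harder than necessary. The paper's proof is a single line: by definition, a fusion bialgebra is \emph{self-dual} only if its dual $(\bB,\bA,\tfF,\tau,d)$ is itself a fusion bialgebra (and is isomorphic to the original); the Schur product property on the dual is precisely condition~(1) of Definition~\ref{Def: Fusion bialgebras} applied to that dual quintuple, so it holds by hypothesis.

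You instead use only the \emph{isomorphism} part of self-duality and transport the Schur product property from $(\bA,\bB,\fF,d,\tau)$ to its dual via $\Phi_{\bA},\Phi_{\bB}$. This is a genuine argument and, once the intertwining $\Phi_{\bB}\fF=\tfF\Phi_{\bA}$ is unwound, yields $\Phi_{\bA}(a\conv b)=\Phi_{\bA}(a)\conv_{\bB}\Phi_{\bA}(b)$, so positivity transfers. What your route buys is that it shows the clause ``the dual is a fusion bialgebra'' in the definition of self-duality is in fact redundant: the existence of the isomorphisms alone already forces the Schur product property (and hence all three conditions) on the dual. The paper's route buys brevity, since it simply reads the conclusion off the definition as stated.
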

\begin{proof}
It follows from the definition of self-dual fusion bialgebras.
\end{proof}

We define a linear map $\Delta:\bB \to \bB \otimes \bB$ such that
$$\Delta(x_j)=\frac{1}{d(x_j)}x_j\otimes x_j, \quad \Delta(x^*)=\Delta(x)^*, \quad x \in\bB.$$
Then $\Delta$ is a $*$-preserving map.
We say $\Delta$ is positive if $\Delta(x)\geq 0$ for any $x\geq 0$.

\begin{proposition}\label{prop:comulti3}
Let $(\bA, \bB, \fF, d, \tau)$ be a fusion bialgebra and suppose $\Delta$ is positive.
Then the Schur product property holds for $(\bB, \bA, \tfF, \tau, d)$
\end{proposition}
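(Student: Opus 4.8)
The plan is to reduce the Schur product property for the dual $(\bB, \bA, \tfF, \tau, d)$ to the positivity of the comultiplication $\Delta$. Recall that by definition the dual Schur product property asks: for $x, y \geq 0$ in $\bB$, is $x \conv_\bB y \geq 0$, where $x \conv_\bB y = \tfF^{-1}(\tfF(x)\diamond\tfF(y))$? The key observation is that the convolution on $\bB$ is governed by the structure constants of the multiplication $\diamond$ on $\bA$, which is diagonal in the basis $B$: $x_j \diamond x_k = \delta_{j,k} d(x_j)^{-1} x_j$. So first I would compute $x_j \conv_\bB x_k$ explicitly in terms of the $\bRp$-basis. Using $\tfF(x_j) = \fF^{-1}(x_j^*)^{\#}$ together with the fact (from Theorem~\ref{Thm: extension} and its proof) that $\fF^{-1}(x_j)$ are positive multiples of the minimal projections $P_j$ of $\bA$, one finds that $\tfF$ sends the basis elements (up to scalars) to the minimal projections, so that $\tfF(x_j)\diamond\tfF(x_k)$ is proportional to $\delta_{j,k}$ times a single minimal projection. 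Pulling back through $\tfF^{-1}$ gives $x_j \conv_\bB x_k = c\,\delta_{j,k}\, x_j$ for an explicit positive scalar $c = c(j)$; in the canonical normalization $c(j) = d(x_j)^{-1}$ up to the overall Frobenius--Perron factor.

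The second step is to recognize this formula as exactly the statement that the convolution $\conv_\bB$ is, up to the identification $L^2(\bB,\tau)\otimes L^2(\bB,\tau) \supset (\text{diagonal})$, the composition of $\Delta$ with the multiplication. Concretely, for $x = \sum_j \alpha_j x_j$ and $y = \sum_k \beta_k x_k$ in $\bB$, we get $x \conv_\bB y = \sum_j \alpha_j \beta_j\, d(x_j)^{-1} x_j$ (up to normalization), which is precisely $m_\bB \circ (\text{something})$ — more to the point, it equals $(\mathrm{id}\otimes\omega)$ applied to $\Delta$-type data. The cleanest route: observe $x\conv_\bB y = (\text{ev})\bigl(\Delta(x)\cdot(y\otimes 1)\bigr)$ or directly that the map $y \mapsto x \conv_\bB y$ has the same matrix as a Schur (Hadamard) multiplication operator twisted by $\Delta$. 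Since $\Delta$ is a $*$-preserving map and by hypothesis $\Delta(x) \geq 0$ whenever $x \geq 0$, and since $y \geq 0$ gives $y \otimes 1 \geq 0$ (equivalently $1\otimes y \geq 0$) in $\bB\otimes\bB$, the product $\Delta(x)(y\otimes 1)$ need not itself be positive, so I would instead use the symmetric expression: $x\conv_\bB y$ is a compression/partial trace of $\Delta(x)^{1/2}(1\otimes y)\Delta(x)^{1/2} \geq 0$, hence positive.

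Let me make the middle of this precise, since that is where the real content sits. For basis elements, $\Delta(x_j)(x_k \otimes x_l) = d(x_j)^{-1} (x_j x_k)\otimes(x_j x_l)$, and applying the functional $\mathrm{id}\otimes (\tau\text{-ish pairing})$ recovers $\conv_\bB$ by the explicit constants computed in step one; the matching of coefficients is a finite check using $N_{j,k}^1 = \delta_{j,k^*}$ and Proposition~\ref{Prop: dim d}. Once the identity $x \conv_\bB y = (\mathrm{id}\otimes \tau)\bigl((1\otimes y)\,\Delta(x)\bigr)\cdot(\text{const})$ — or its manifestly positive symmetrized form — is established, positivity of $x\conv_\bB y$ for $x,y\geq 0$ follows immediately from positivity of $\Delta$, positivity of the partial trace/slice maps, and positivity of $1\otimes y$.

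\textbf{Main obstacle.} The delicate point is not any single inequality but getting the normalization and the precise tensor-leg bookkeeping right so that $\conv_\bB$ genuinely appears as a slice of $\Delta(x)$ against a positive element, rather than as something like $\Delta(x)\cdot(y\otimes 1)$ which would only be positive under commutativity of $\bB$. I expect to spend most of the effort verifying the coefficient identity $x_j \conv_\bB x_k = d(x_j)^{-1}\delta_{j,k} x_j$ (in the canonical case) directly from the definition $x\conv_\bB y = (\fF(\fF^{-1}(y^*)\diamond\fF^{-1}(x^*)))^*$ together with $\fF^{-1}(x_j) = d(x_j)^{-1}\,(\text{unit-trace projection})\cdot(\text{gauge})$, and then packaging it so that positivity of $\Delta$ is visibly the only hypothesis used.
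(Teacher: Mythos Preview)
Your proposal is correct and follows essentially the same route as the paper: establish the identity expressing $x *_\bB y$ as a slice of $\Delta(x)$ against a positive element, then use positivity of $\Delta$ and of the slice map $\iota\otimes\tau$. The paper's proof does exactly this, computing on basis elements that $(\iota\otimes\tau)\bigl(\Delta(x)(1\otimes\overline{y})\bigr)=\overline{x*_\bB y}$ (note the contragredient $\overline{y}$ appears, a bookkeeping point you omitted), and then symmetrizing.

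One small refinement: for the ``manifestly positive symmetrized form'' you should factor $\overline{y}=y_1y_1^*$ rather than $\Delta(x)$. The paper writes $(1\otimes y_1^*)\Delta(x)(1\otimes y_1)\ge 0$ and then uses the trace property of $\tau$ on the second tensor leg to move $1\otimes y_1$ around. Your alternative $\Delta(x)^{1/2}(1\otimes y)\Delta(x)^{1/2}$ is positive, but showing its $(\iota\otimes\tau)$-slice equals that of $\Delta(x)(1\otimes y)$ would require commuting elements in the \emph{first} tensor factor, which is not available when $\bB$ is noncommutative. Factoring $y$ instead avoids this entirely.
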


\begin{proof}
We denote by $\iota$ the identity map.
Note that for any $x=\sum_{j=1}^m \lambda_j x_j, y=\sum_{j=1}^m \lambda_j' x_j\in \bB$, we have
\begin{align*}
(\iota\otimes \tau)(\Delta(x)(1\otimes \overline{y})) &=(\iota\otimes \tau)\left(\sum_{j=1}^m\frac{\lambda_j}{d(x_j)} x_j\otimes x_j\left(1\otimes \sum_{j=1}^m \lambda_j' x_{j^*}\right)\right)\\
&=(\iota\otimes \tau)\left(\sum_{j,k=1}^m\frac{\lambda_j\lambda_k'}{d(x_j)} x_j\otimes x_jx_{k^*} \right)\\
&=\sum_{j,k=1}^m\frac{\lambda_j\lambda_k'}{d(x_j)} x_j\tau( x_jx_{k^*} )\\
&=\sum_{j=1}^m\frac{\lambda_j\lambda_j'}{d(x_j)} x_j= \overline{x *_\bB y}.
\end{align*}

Let $\overline{y}=y_1y^*_1$ and $x\geq 0$.
Then $(1\otimes y_1^*)\Delta(x)(1\otimes y_1)\geq 0$.
Hence
$$(\iota\otimes \tau)\left((1\otimes y_1^*)\Delta(x)(1\otimes y_1)\right)\geq 0,$$
i.e. $x *_\bB y\geq 0$.
\end{proof}

\begin{proposition} \label{SchurProp}
For a fusion bialgebra $(\bA, \bB, \fF, d, \tau)$, the Schur product property holds on $\bB$, if and only if
\begin{align*}
d((J(x)*x)\diamond (J(y)*y) \diamond (J(z)*z)) \geq 0, ~\forall x,y,z \in \bA.
\end{align*}
\end{proposition}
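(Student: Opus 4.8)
The plan is to express the Schur product property on $\bB$ as a positivity statement and then dualize it into a trilinear inequality on $\bA$ using Frobenius reciprocity. First I would recall that, since $\bA$ is a finite-dimensional commutative C$^*$-algebra, every positive element of $\bA$ is of the form $J(x)\conv x$ up to the structure of convolution; more precisely, the relevant observation is that under the Fourier transform $\fF$ a positive element $b\in\bB$ corresponds to an element $a=\fF^{-1}(b)\in\bA$, and the condition ``$b\geq 0$'' should be re-encoded. Actually the cleaner route is: an element $w\in\bB$ is positive if and only if $w=v^*v$ for some $v\in\bB$, and writing $v=\fF(a)$ we get $w=\fF(a)^*\fF(a)=\fF(J(a)\conv a)$ by the definition of $J$ and the fact that $\fF$ intertwines $*$ on $\bB$ with $J$-twisted convolution on $\bA$. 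Hence the positive cone of $\bB$ is exactly $\{\fF(J(a)\conv a): a\in\bA\}$, and similarly for the other two variables.

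Next I would unwind the Schur product property: $\bB$ has the Schur product property iff for all positive $x,y\in\bB$, $x\conv_\bB y\geq 0$ in $\bB$, which (again testing positivity in the commutative-after-Fourier picture, or directly via the faithful trace) is equivalent to $\tau\big((x\conv_\bB y)\,z\big)\geq 0$ for all positive $z\in\bB$. Writing $x=\fF(J(a)\conv a)$, $y=\fF(J(b)\conv b)$, $z=\fF(J(c)\conv c)$ and using \eqref{Equ: convolution b} together with Plancherel (Proposition~\ref{prop:planch}) and the trace identity $\tau(\fF(u)^*\fF(v))=d(u^\#\diamond v)$, the pairing $\tau\big((x\conv_\bB y)z\big)$ should collapse to $d\big((J(a)\conv a)\diamond(J(b)\conv b)\diamond(J(c)\conv c)\big)$ — here one uses that $J$ is a $*$-isomorphism of $\bA$ (condition (2) of Definition~\ref{Def: Fusion bialgebras}) and that $d$ is a trace on the commutative algebra $(\bA,\diamond)$, so the three factors can be symmetrized. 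This is essentially the computation in Proposition~\ref{prop:rotation}, specialized to the case where each of $x,y,z$ is already a product of the required twisted form; I would cite that proposition to handle the bookkeeping of adjoints and the $\#$/$*$ interchange.

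The main obstacle, and the step I would be most careful about, is the claim that it suffices to test on elements of the special form $J(a)\conv a$ rather than on arbitrary positive $x,y,z\in\bB$ — i.e., that the positive cone of $\bB$ really is $\{\fF(J(a)\conv a)\}$ and not something strictly larger. Since $\bB$ need not be commutative, a positive element of $\bB$ is a sum $\sum_i v_i^*v_i$, so a priori one gets $\sum_i \fF(J(a_i)\conv a_i)$; but a single $v^*v$ already has the form $\fF(J(a)\conv a)$ with $a=\fF^{-1}(v)$, and the Schur product map $\conv_\bB$ is bilinear, so positivity on the generating elements $\fF(J(a)\conv a)$ extends to all positive elements by taking sums and invoking bilinearity and the closedness of the positive cone. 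Thus the reduction is legitimate. The remaining direction (the trilinear inequality implies the Schur product property) runs the same computation in reverse: given the inequality, for positive $x,y\in\bB$ and every positive $z=\fF(J(c)\conv c)$ one gets $\tau\big((x\conv_\bB y)z\big)\geq 0$, and since such $z$ exhaust (sums over) the positive cone of $\bB$ and $\tau$ is faithful, $x\conv_\bB y\geq 0$. I would present both implications as a single chain of equalities bracketed by the inequality, so that the ``if and only if'' is immediate once the key identity
\[
\tau\big((x\conv_\bB y)\,z\big)=d\big((J(a)\conv a)\diamond(J(b)\conv b)\diamond(J(c)\conv c)\big)
\]
is established.
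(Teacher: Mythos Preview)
Your proposal is correct and follows essentially the same route as the paper: identify the positive cone of $\bB$ with $\{|\fF(a)|^2:a\in\bA\}$ via $\fF(J(a)\conv a)=\fF(a)^*\fF(a)$, then use Plancherel and the definition of $\conv_\bB$ to turn the trilinear $d$-expression into $\tau\big(|\fF(a)|^2\,(|\fF(b)|^2\conv_\bB|\fF(c)|^2)\big)$, and conclude by faithfulness of $\tau$. Your detour through sums $\sum_i v_i^*v_i$ is unnecessary---every positive element of a C*-algebra is already a single $v^*v$ (take $v$ the positive square root), which the paper uses without comment.
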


\begin{proof}
By the Schur product property on $\bA$,
\begin{align*}
(J(x)*x)^*&=J(x)^\#*x^*=J(x^\#)*x^*,  ~\forall x \in \bA.
\end{align*}
Note that $\fF(J(x)*x)=|\fF(x)|^2 \geq 0$, and any positive operator in $\bB$ is of such form.
Therefore, by Proposition~\ref{prop:planch} and Equation \eqref{Equ: convolution b},
\begin{align*}
     & d((J(x)*x)\diamond (J(y)*y) \diamond (J(z)*z)) \geq 0, ~\forall x,y,z \in \bA, \\
\iff & \tau(\fF(J(x^{\#})*(x^{\#})^* \fF((J(y)*y)\diamond (J(z)*z) ) \geq 0, ~\forall x,y,z \in \bA, \\
\iff & \tau (|\fF(x^*)|^2  (|\fF(y)|^2 *_{\bB} |\fF(z)|^2)^* ) \geq 0~\forall x,y,z \in \bA, \\
\iff & |\fF(y)|^2 *_{\bB} |\fF(z)|^2  \geq 0~\forall y,z \in \bA,
\end{align*}
if and only if the Schur product property holds on $\bB$.
\end{proof}

The Schur product property may not hold on the dual, even for a 3-dimensional fusion bialgebra.
We give a counterexample.
For this reason, Young's inequality do not hold on the dual as well, see \S\ref{Sec: Young} for further discussions.
As a preparation, we first construct the minimal projections in $\bB$.

\begin{proposition}\label{Prop: Schur}
For the canonical fusion bialgebra  $(\bA, \bB, \fF, d, \tau)$ in Proposition~\ref{Prop: Classification I},
the minimal projections of $\bB$ are given by
\begin{align*}
Q_1&=\mu^{-1} (x_1 + d_2 x_2 + d_3 x_3) \;, \\
Q_2&=\nu_2^{-1} (x_1-\frac{\lambda_2}{d_2} x_2 - \frac{1-\lambda_2}{d_3} x_3 ) \;, \\
Q_3&=\nu_3^{-1}(x_1-\frac{\lambda_3}{d_2} x_2 - \frac{1-\lambda_3}{d_3} x_3 ) \;,
\end{align*}
where $\lambda_2,\lambda_3$ are the solutions of
\begin{align*}
\lambda_2+\lambda_3&=a d_3^2-b d_2^2 +1\;, \\
\lambda_2\lambda_3&=-bd_2^2 \;;
\end{align*}
and $ \nu_j=1+\frac{\lambda_j^2}{d_2^2}+\frac{(1-\lambda_j)^2}{d_3^2}$.
\end{proposition}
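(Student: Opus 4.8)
The plan is to exploit the standard dictionary between the minimal projections of the $3$-dimensional commutative $C^*$-algebra $\bB$ and its characters, i.e.\ its one-dimensional $*$-representations $\phi\colon\bB\to\bC$. Since $\bB\cong\bC^3$ it has exactly three (distinct) characters $\phi_1,\phi_2,\phi_3$, and the associated minimal projections $Q_1,Q_2,Q_3$ satisfy $y\,Q_i=\phi_i(y)Q_i$ for all $y\in\bB$. As $x_1,x_2,x_3$ are self-adjoint, each $\phi_i$ is real-valued on $B$. Using that $B$ is an orthonormal basis of $\Hil=L^2(\bB,\tau)$ and writing $Q_i=\sum_k c_{ik}x_k$, I would compute $c_{ik}=\langle Q_i,x_k\rangle=\tau(x_{k^*}Q_i)=\phi_i(x_{k^*})\,\tau(Q_i)=\phi_i(x_k)\,\tau(Q_i)$ and $\tau(Q_i)=\langle Q_i,Q_i\rangle=\tau(Q_i)^2\sum_k\phi_i(x_k)^2$, which gives the general shape
$$Q_i=\tau(Q_i)\Big(x_1+\phi_i(x_2)\,x_2+\phi_i(x_3)\,x_3\Big),\qquad \tau(Q_i)^{-1}=1+\phi_i(x_2)^2+\phi_i(x_3)^2 .$$
So the proposition reduces entirely to determining the three characters.

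The first character is the Frobenius--Perron one, $\phi_1\colon x_j\mapsto d_j$, which exists by Proposition~\ref{Prop: dim d}; substituting it yields $\tau(Q_1)^{-1}=1+d_2^2+d_3^2=\mu$ and the stated formula $Q_1=\mu^{-1}(x_1+d_2x_2+d_3x_3)$ (this is also the projection $e_\bB$ of Proposition~\ref{Prop: Dual Jones Projection}). For $i=2,3$, since distinct minimal projections of $\bB$ are orthogonal in $\Hil$, I would use $\langle Q_1,Q_i\rangle=0$, which after cancelling the positive scalars reads $1+d_2\phi_i(x_2)+d_3\phi_i(x_3)=0$. Introducing $\lambda_i:=-d_2\,\phi_i(x_2)$ this becomes $\phi_i(x_2)=-\lambda_i/d_2$ and $\phi_i(x_3)=-(1-\lambda_i)/d_3$, and the normalization above turns into exactly $\nu_i=1+\lambda_i^2/d_2^2+(1-\lambda_i)^2/d_3^2$; this already produces the asserted expressions for $Q_2$ and $Q_3$, so only the identification of $\lambda_2,\lambda_3$ remains.

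To pin those down, I would apply $\phi_i$ to the multiplication rule $x_2x_3=ad_3x_2+bd_2x_3$ from Proposition~\ref{Prop: Classification I}, i.e.\ $\phi_i(x_2)\phi_i(x_3)=ad_3\,\phi_i(x_2)+bd_2\,\phi_i(x_3)$; inserting $\phi_i(x_2)=-\lambda_i/d_2$, $\phi_i(x_3)=-(1-\lambda_i)/d_3$ and clearing the positive factor $d_2d_3$ collapses everything to
$$\lambda_i^2-(a d_3^2-b d_2^2+1)\,\lambda_i-b d_2^2=0 .$$
Hence $\lambda_2,\lambda_3$ both solve this monic quadratic; they are real (being $-d_2$ times eigenvalues of the self-adjoint operator $L_2$) and distinct (if $\lambda_2=\lambda_3$ then $\phi_2=\phi_3$, since $\phi_i(x_3)$ is an affine function of $\phi_i(x_2)$, contradicting that $\bB$ has three distinct characters), so they are precisely the two roots, giving $\lambda_2+\lambda_3=ad_3^2-bd_2^2+1$ and $\lambda_2\lambda_3=-bd_2^2$ as claimed. (Equivalently, $\lambda_2+\lambda_3$ and $\lambda_2\lambda_3$ can be read off from $\operatorname{tr}(L_2)$ and $\det(L_2)/d_2$.)

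Every step is an elementary manipulation, and the relations coming from $x_2^2$ and $x_3^2$ need not be checked separately --- they hold automatically because $\phi_2,\phi_3$ are genuine characters. The only places calling for a bit of care are the bookkeeping in the projection--character dictionary (getting the normalization $\tau(Q_i)^{-1}=\sum_k\phi_i(x_k)^2$ and the reality of the $\phi_i$ right) and the distinctness of $\lambda_2$ and $\lambda_3$, which is what guarantees $\{\lambda_2,\lambda_3\}$ is the full solution set of the quadratic rather than a single repeated value. I do not anticipate a genuine obstacle.
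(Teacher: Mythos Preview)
Your proof is correct and is actually cleaner than the paper's own argument. Both proofs start the same way: $Q_1$ is the Frobenius--Perron projection $e_\bB$, and orthogonality of $Q_1$ with $Q_2,Q_3$ forces the ansatz $Q_j=\nu_j^{-1}\bigl(x_1-\tfrac{\lambda_j}{d_2}x_2-\tfrac{1-\lambda_j}{d_3}x_3\bigr)$ with the stated normalization $\nu_j$. The divergence is in how $\lambda_2,\lambda_3$ are pinned down. The paper expands the product $Q_2Q_3=0$ using all three fusion rules, obtains three linear equations in the elementary symmetric functions $\omega_1=\lambda_2+\lambda_3$, $\omega_2=\lambda_2\lambda_3$, and then solves that system. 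You instead invoke the character picture: applying a character $\phi_i$ to the single rule $x_2x_3=ad_3x_2+bd_2x_3$ immediately yields the monic quadratic $\lambda^2-(ad_3^2-bd_2^2+1)\lambda-bd_2^2=0$, and distinctness of the three characters forces $\lambda_2\neq\lambda_3$, so they are exactly its two roots. Your route avoids the bookkeeping of expanding $Q_2Q_3$ and solving a $3\times2$ linear system; the paper's route is entirely hands-on and does not need to articulate the projection--character dictionary or the separate distinctness argument. Both are elementary, but yours is shorter.
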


\begin{proof}
Note that $\mu=1+d_2^2+d_3^2$.
By Proposition~\ref{Prop: Dual Jones Projection},
\begin{align*}
Q_1&=e_{\bB}=\mu^{-1} (x_1 + d_2 x_2 + d_3 x_3) \;.
\end{align*}
For $j=2,3$, $d(Q_1Q_j)=0$, so
\begin{align*}
Q_2&=\nu_2^{-1} (x_1-\frac{\lambda_2}{d_2} x_2 - \frac{1-\lambda_2}{d_3} x_3 ) \;, \\
Q_3&=\nu_3^{-1}(x_1-\frac{\lambda_3}{d_2} x_2 - \frac{1-\lambda_3}{d_3} x_3 ) \;,
\end{align*}
for some $\nu_2, \nu_3 >0$.
As $Q_j^2=Q_j$, we have that
$ \nu_j=1+\frac{\lambda_j^2}{d_2^2}+\frac{(1-\lambda_j)^2}{d_3^2}$.
Furthermore, $Q_2Q_3=0$, so
$$x_1- \frac{\lambda_2+\lambda_3}{d_2} x_2- \frac{2-\lambda_2-\lambda_3}{d_3}  x_3 +\frac{\lambda_2\lambda_3}{d_2^2} x_2^2 + \frac{\lambda_2+\lambda_3-2\lambda_2\lambda_3}{d_2d_3} x_2x_3 +\frac{(1-\lambda_2)(1-\lambda_3)}{d_3^2} x_3^2=0.$$
The coefficient of $x_j$ is $0$, for $1 \leq j \leq 3$. So
\begin{align*}
1+\frac{\lambda_2\lambda_3}{d_2^2}+\frac{(1-\lambda_2)(1-\lambda_3)}{d_3^2}&=0\;, \\
- \frac{\lambda_2+\lambda_3}{d_2}+\frac{\lambda_2\lambda_3}{d_2^2} \frac{d_2^2-1-a d_3^2}{d_2} + \frac{\lambda_2+\lambda_3-2\lambda_2\lambda_3}{d_2d_3} ad_3 +\frac{(1-\lambda_2)(1-\lambda_3)}{d_3^2}  b d_2&=0 \;, \\
- \frac{2-\lambda_2-\lambda_3}{d_3} +\frac{\lambda_2\lambda_3}{d_2^2} a d_3 + \frac{\lambda_2+\lambda_3-2\lambda_2\lambda_3}{d_2d_3} bd_2 +\frac{(1-\lambda_2)(1-\lambda_3)}{d_3^2} \frac{d_3^2-1-b d_2^2}{d_3}&=0  \;.
\end{align*}
Take
\begin{align*}
\omega_1&=\lambda_2+\lambda_3\;, \\
\omega_2&=\lambda_2\lambda_3 \;.
\end{align*}
Then
\begin{align*}
1+\frac{\omega_2}{d_2^2}+\frac{1-\omega_1+\omega_2}{d_3^2}&=0\;, \\
- \frac{\omega_1}{d_2}+\frac{\omega_2}{d_2^2} \frac{d_2^2-1-a d_3^2}{d_2} + \frac{\omega_1-2\omega_2}{d_2d_3} ad_3 +\frac{1-\omega_1+\omega_2}{d_3^2}  b d_2&=0 \;, \\
- \frac{2-\omega_1}{d_3} +\frac{\omega_2}{d_2^2} a d_3 + \frac{\omega_1-2\omega_2}{d_2d_3} bd_2 +\frac{1-\omega_1+\omega_2}{d_3^2} \frac{d_3^2-1-b d_2^2}{d_3}&=0  \;.
\end{align*}
Solving the linear system, we have that
\begin{align*}
\omega_1&=a d_3^2-b d_2^2 +1\;; \\
\omega_2&=-bd_2^2 \;.
\end{align*}
Therefore, $\lambda_2, \lambda_3$ are the solutions of
\begin{align*}
\lambda_2+\lambda_3&=a d_3^2-b d_2^2 +1\;, \\
\lambda_2\lambda_3&=-bd_2^2 \;.
\end{align*}

\end{proof}

\begin{theorem} \label{thm:SchurCounterEx}
For the three dimensional canonical fusion bialgebras $(\bA, \bB, \fF, d, \tau)$ parameterized by $d_2, d_3, a$ in Proposition~\ref{Prop: Classification I}, the Schur product property does not hold on the dual in general, for example, $d_2=1000, d_3=500, a=0.750001.$
\end{theorem}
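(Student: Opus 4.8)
The plan is to turn the Schur product property on $\bB$ into a finite list of scalar inequalities using Proposition~\ref{SchurProp}, and then to exhibit a single minimal projection of $\bB$ whose convolution cube violates one of them for the given parameters.

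First I would unwind Proposition~\ref{SchurProp} in the explicit basis $\{x_1,x_2,x_3\}$. Since $\bA$ is abelian with $x_j\diamond x_k=\delta_{j,k}\,d(x_j)^{-1}x_j$, and since $\fF^{-1}(x_j)=x_j$, for $u=\sum_k u_k x_k$, $v=\sum_k v_k x_k$, $w=\sum_k w_k x_k$ in $\bB$ one computes $\fF^{-1}(u)\diamond\fF^{-1}(v)\diamond\fF^{-1}(w)=\sum_k u_k v_k w_k\, d(x_k)^{-2}x_k$, hence
$$ d\bigl(\fF^{-1}(u)\diamond\fF^{-1}(v)\diamond\fF^{-1}(w)\bigr)=\sum_{k=1}^{3}\frac{u_k v_k w_k}{d(x_k)} . $$
As $x\mapsto J(x)*x$ has $\fF$-image $|\fF(x)|^2$, the elements $J(x)*x$ run exactly over $\fF^{-1}(\bB_+)$, so Proposition~\ref{SchurProp} says: the Schur product property holds on $\bB$ iff $\sum_k u_k v_k w_k/d(x_k)\ge 0$ for all $u,v,w\in\bB_+$. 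Because $\bB_+$ is the cone generated by the minimal projections $Q_1,Q_2,Q_3$ of Proposition~\ref{Prop: Schur}, and the expression is trilinear in $(u,v,w)$, it suffices to check the $27$ triples of minimal projections; concretely, the property fails as soon as there are $i,j,\ell\in\{1,2,3\}$ with $\sum_{k}(Q_i)_k(Q_j)_k(Q_\ell)_k/d(x_k)<0$, where $(Q_i)_k$ is the $x_k$-coefficient of $Q_i$ and $d(x_1)=1$, $d(x_2)=d_2$, $d(x_3)=d_3$.

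Next I would take $i=j=\ell=3$. From Proposition~\ref{Prop: Schur}, $(Q_3)_1=\nu_3^{-1}$, $(Q_3)_2=-\nu_3^{-1}\lambda_3/d_2$, $(Q_3)_3=-\nu_3^{-1}(1-\lambda_3)/d_3$, so
$$ \Sigma := \sum_{k=1}^{3}\frac{(Q_3)_k^{3}}{d(x_k)} \;=\; \nu_3^{-3}\left(1-\frac{\lambda_3^{3}}{d_2^{4}}-\frac{(1-\lambda_3)^{3}}{d_3^{4}}\right), $$
and since $\nu_3>0$ the sign of $\Sigma$ is that of the bracket. For $d_2=1000$, $d_3=500$, $a=0.750001$ (so $b=0.249999$) I would first verify admissibility in Proposition~\ref{Prop: Classification I}: $d_2^2-1-a d_3^2=812498.75>0$ and $d_3^2-1-b d_2^2=0\ge 0$, so this is a genuine canonical fusion bialgebra (a boundary case with $N_{3,3}^3=0$). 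Then $\lambda_2+\lambda_3=a d_3^2-b d_2^2+1=-62497.75$ and $\lambda_2\lambda_3=-b d_2^2=-249999$, and I would take $\lambda_3$ to be the root of larger modulus, $\lambda_3\approx -62501.8$. Substituting, the term $-(1-\lambda_3)^3/d_3^4\approx -3.9\times 10^{3}$ dominates $1-\lambda_3^3/d_2^4\approx 1+2.4\times 10^{2}$, so the bracket, hence $\Sigma$, is negative. Thus $Q_3\conv_{\bB}Q_3\not\ge 0$, i.e. the Schur product property fails on the dual, proving Theorem~\ref{thm:SchurCounterEx}.

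The computation is elementary; the only care needed is (i) selecting the correct root $\lambda_3$ (the one making $(1-\lambda_3)^3/d_3^4$ large and positive) and (ii) confirming the chosen parameters satisfy the constraints of Proposition~\ref{Prop: Classification I}. The ``hard part'' is really the choice of parameters rather than any estimate: the negativity comes from pushing $b d_2^2$ near $d_3^2-1$ with $d_3\ll d_2$, which forces $|\lambda_3|$ to be of order $d_2$ so that $(1-\lambda_3)^3/d_3^4$ outgrows the competing $\lambda_3^3/d_2^4$; once this balance is seen the inequality is immediate, and being strict it survives on an open neighbourhood of $(1000,500,0.750001)$, which is the content of the phrase ``in general'' in the statement.
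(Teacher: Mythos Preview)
Your proof is correct and follows essentially the same route as the paper: both pick the minimal projection of $\bB$ associated to the large negative root of the quadratic in Proposition~\ref{Prop: Schur} (the paper calls it $Q_2$, you call it $Q_3$ by the opposite labeling of $\lambda_2,\lambda_3$) and show that the scalar $1-\lambda^{3}/d_2^{4}-(1-\lambda)^{3}/d_3^{4}$ is negative, which via Proposition~\ref{SchurProp} forces the Schur product property on $\bB$ to fail. The only difference is that the paper reaches negativity by an asymptotic computation ($d_2\to\infty$, $d_3^2-1=bd_2^2$, giving limit $b^6-b^4<0$) and then cites the specific parameters numerically, whereas you evaluate directly at $(d_2,d_3,a)=(1000,500,0.750001)$; your explicit reduction to the $27$ trilinear inequalities on the minimal projections is a cleaner justification than the paper's bare citation of Proposition~\ref{Prop: Schur}.
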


\begin{proof}
Fix $0<a<1$, and $b=1-a$. Take $d_2 \to \infty$ and $d_3^2-1=b d_2^2$, then $\lambda_3\to b^{-1}, \lambda_2 d_2^{-2} \to -b^2$. Moreover,
\begin{align*}
&d_2^{-2} d\left((\fF^{-1}(\nu_2 Q_2))^3\right)\\
=&d_2^{-2} \left(1-\left(\frac{\lambda_2}{d_2}\right)^3 d_2^{-2} d_2 - \left(\frac{1-\lambda_2}{d_3}\right)^3 d_3^{-2} d_3 \right) \\
=& d_2^{-2} \left(1-\left(\lambda_2 d_2^{-2}\right)^3 d_2^2 - \frac{(1-\lambda_2)^3}{d_3^4}\right) \\
=& -\left(\lambda_2 d_2^{-2}\right)^3  +d_2^{-2}\left(1- \frac{(1-\lambda_2)^3}{(1+bd_2^2)^2}\right)\\
\to & b^6-b^4 <0
\end{align*}
By Proposition~\ref{Prop: Schur}, the Schur product property does not hold in general on the dual.
Numerically, one can take $d_2=1000, d_3=500, a=0.750001$, then $d((\fF^{-1}(\nu_2 Q_2))^3)<0$.
\end{proof}

\begin{remark} Subsection~\ref{Sec:SmallRank} provides a complementary approach for the study of this family of rank 3 fusion bialgebras, leading to visualize the areas of parameters where Schur product property (on the dual) does not hold, and to a character table whose matrix (function of the fusion coefficients) is equal to the inverse of the one underlying Theorem~\ref{Prop: Schur} (function of the Frobenius-Perron dimensions).
\end{remark}

\section{Hausdorff-Young Inequality and Uncertainty Principles}
In this section, we will recall some inequalities for general von Neumann algebras first and then we will prove the Hausdorff-Young inequalities and uncertainty principles for fusion bialgebras.

\begin{proposition}[H\"{o}lder's inequality, see for example Proposition 4.3, 4.5 in \cite{JLW16}]\label{prop:holder}
Let $\mathcal{M}$ be a von Neumann algebra with a normal faithful tracial state $\tau$ and $1\leq p, q\leq \infty$ with $1/p+1/q=1$.
Then for any $x\in L^p(\mathcal{M})$, $y\in L^q(\mathcal{M})$, we have
$$\|xy\|_1\leq \|x\|_p\|y\|_q$$
Moreover
\begin{enumerate}[(1)]
\item for $1<p<\infty$, $\|xy\|_1= \|x\|_p\|y\|_q$ if and only if $ \frac{|x|^p}{\|x\|_p^p}=\frac{|y^*|^q}{\|y\|_q^q}$;
\item for $p=\infty$, $\|xy\|_1\leq \|x\|_\infty\|y\|_1$ if and only if the spectral projection of $|x|$ corresponding to $\|x\|_\infty$ contains the projection $\cR(y)$ as subprojection, where $\cR(y)$ is the range projection of $y$.
\end{enumerate}
\end{proposition}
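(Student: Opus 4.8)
The plan is to follow the classical proof of Hölder's inequality, adapted to the tracial von Neumann algebra setting; all of this is standard non-commutative $L^p$-theory, and in the finite-dimensional case ultimately used in the paper it is elementary. First I would reduce the norm inequality $\|xy\|_1\le\|x\|_p\|y\|_q$ to the bilinear estimate $|\tau(ab)|\le\|a\|_p\|b\|_q$ for $a\in L^p(\mathcal{M})$, $b\in L^q(\mathcal{M})$: the polar decomposition $xy=u|xy|$ gives $\|xy\|_1=\tau(|xy|)=\tau(u^*xy)=\tau\big((u^*x)y\big)$, and $\|u^*x\|_p\le\|x\|_p$ because $u$ is a partial isometry (in fact $|u^*x|=|x|$, since $uu^*$ is the left support of $x$); so the bilinear estimate applied to $a=u^*x$, $b=y$ gives the claim.

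For the bilinear estimate I would use Hadamard's three-lines lemma. Write polar decompositions $a=u_a|a|$ and $b=|b^*|v_b$, and (after a routine truncation reducing to the case where $|a|,|b^*|$ are bounded with bounded inverse on their supports) set, for $z$ in the strip $0\le\mathrm{Re}\,z\le1$,
\[
F(z)=\tau\big(u_a\,|a|^{pz}\,|b^*|^{q(1-z)}\,v_b\big).
\]
Then $F$ is analytic in the interior and bounded continuous on the strip; on $\mathrm{Re}\,z=0$, writing $|b^*|^{q(1-it)}=|b^*|^{q}|b^*|^{-iqt}$ and using that $|a|^{ipt},|b^*|^{-iqt}$ are contractions, $|F(it)|\le\||b^*|^q\|_1=\|b\|_q^q$; on $\mathrm{Re}\,z=1$ similarly $|F(1+it)|\le\|a\|_p^p$; and at $z=1/p$, since $1/p+1/q=1$, $F(1/p)=\tau(u_a|a||b^*|v_b)=\tau(ab)$. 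The three-lines lemma then yields $|\tau(ab)|\le(\|b\|_q^q)^{1-1/p}(\|a\|_p^p)^{1/p}=\|a\|_p\|b\|_q$. (An equivalent route, which also streamlines the equality discussion, passes to generalized singular-value functions, invokes Fack's submajorization $\int_0^t\mu_s(ab)\,ds\le\int_0^t\mu_s(a)\mu_s(b)\,ds$, and applies scalar Hölder; in finite dimensions this is just a statement about decreasingly rearranged eigenvalue sequences.)

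Next, the equality cases. For $p=\infty$, $q=1$ the inequality is the bimodule bound on $L^1(\mathcal{M})$; reducing via $x=u_x|x|$ to $x\ge0$ (using $\|u_x|x|y\|_1=\||x|y\|_1$), writing $y=w|y|$ and letting $c:=xw|y|$ with polar decomposition $c=z|c|$, one gets $\|xy\|_1=\|c\|_1=\tau\big(|y|^{1/2}(z^*xw)|y|^{1/2}\big)\le\|z^*xw\|_\infty\tau(|y|)\le\|x\|_\infty\|y\|_1$; chasing equality, faithfulness of $\tau$ forces $|y|^{1/2}(z^*xw)|y|^{1/2}=|y|$, and then the equality case of Cauchy--Schwarz on the support of $|y|$ shows that $|x|$ acts as $\|x\|_\infty$ on $\overline{\mathrm{ran}\,y}$, i.e.\ the spectral projection of $|x|$ at $\|x\|_\infty$ dominates $\cR(y)$; the converse is the immediate computation $|x|y=\|x\|_\infty y\Rightarrow\|xy\|_1=\|x\|_\infty\|y\|_1$. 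For $1<p<\infty$, sufficiency of $|x|^p/\|x\|_p^p=|y^*|^q/\|y\|_q^q=:a$ follows by substitution: $|x|=\|x\|_p a^{1/p}$, $|y^*|=\|y\|_q a^{1/q}$, so $xy=\|x\|_p\|y\|_q\,u_x a^{1/p+1/q}v_y=\|x\|_p\|y\|_q\,u_x a v_y$, of $1$-norm $\|x\|_p\|y\|_q\tau(a)=\|x\|_p\|y\|_q$. For necessity, equality forces $|\tau((u^*x)y)|=\|u^*x\|_p\|y\|_q$, i.e.\ $u^*x/\|u^*x\|_p$ (with no residual phase, as $\tau((u^*x)y)$ is real positive) is a unit vector at which the functional $c\mapsto\tau(cy)$ on $L^p(\mathcal{M})$ attains its norm $\|y\|_q$; one such vector is a positive scalar times $v_b^*|y^*|^{q/p}$, and since $L^p(\mathcal{M})$ is uniformly convex for $1<p<\infty$ (Clarkson/Dixmier), the norming vector is unique, so $|u^*x|=|x|$ is proportional to $|y^*|^{q/p}$; raising to the power $p$ gives $|x|^p$ proportional to $|y^*|^q$, and normalizing yields the stated identity.

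The inequality itself is routine; the real work is in the equality cases. For $1<p<\infty$ the crux is uniqueness of the norming vector, i.e.\ uniform convexity of non-commutative $L^p$, together with the (automatic) observation that equality in the reduction $\|u^*x\|_p\le\|x\|_p$ costs nothing here; for $p=\infty$ the delicate point is the bookkeeping with support and range projections of partial isometries. In the finite-dimensional setting of the paper everything simplifies: the interpolation step can be replaced by an elementary rearrangement argument and the equality cases reduce to matrix computations.
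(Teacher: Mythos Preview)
The paper does not supply its own proof of this proposition; it is quoted from \cite{JLW16} (Propositions~4.3 and~4.5 there) and used as a black box. So there is no ``paper's proof'' to compare against---the proposition functions as background.

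Your argument follows the standard route and is essentially correct, but one parenthetical remark is inaccurate: in the reduction $\|xy\|_1=\tau(u^*xy)$ with $xy=u|xy|$, the projection $uu^*$ is the range projection of $xy$, not of $x$, so in general $|u^*x|\neq|x|$. This does not affect the inequality (you only need $\|u^*x\|_p\le\|x\|_p$, which holds because $u$ is a contraction), but it does affect the bookkeeping in your equality analysis for $1<p<\infty$: from $\|xy\|_1=\|x\|_p\|y\|_q$ you first deduce $\|u^*x\|_p=\|x\|_p$, and a short extra argument (using $|u^*x|^2\le|x|^2$ together with equality of the $p$-norms and faithfulness of $\tau$) is needed before you can identify $|u^*x|$ with $|x|$ and invoke uniform convexity. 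The overall strategy---three-lines for the bound, uniform convexity of noncommutative $L^p$ for the equality case $1<p<\infty$, and support/range-projection bookkeeping for $p=\infty$---is the right one and matches what is done in the cited reference.
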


\begin{corollary}\label{cor:holder1}
Let $\mathcal{M}$ be a von Neumann algebra with a normal faithful tracial state $\tau$ and $x\in \mathcal{M}$.
Then $\|x\|_2^2=\|x\|_\infty \|x\|_1$ if and only if $x$ is a multiple of a partial isometry.
\end{corollary}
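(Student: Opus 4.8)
The plan is to apply Hölder's inequality (Proposition~\ref{prop:holder}) in the two ways that express $\|x\|_2^2$, and then read off when equality can hold simultaneously. First I would write $\|x\|_2^2 = \tau(|x|^2) = \tau(x^*x) = \|x^*x\|_1$ and apply the case $p=\infty$, $q=1$ of Hölder with the factorization $x^*x = x^* \cdot x$ (or more symmetrically $|x| \cdot |x|$ after polar decomposition), obtaining $\|x\|_2^2 \leq \|x\|_\infty \|x\|_1$ always. So the content is the equality case. Writing the polar decomposition $x = v|x|$ with $v$ a partial isometry and $|x| = (x^*x)^{1/2}$, we have $\|x\|_2 = \||x|\|_2$, $\|x\|_\infty = \||x|\|_\infty$, $\|x\|_1 = \||x|\|_1$, so without loss of generality we may assume $x = |x| \geq 0$.

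Next, for $x \geq 0$ I would invoke part~(2) of Proposition~\ref{prop:holder} with $|x|^{1/2}$ in place of both factors: from $\|x\|_2^2 = \tau(x^2) = \||x|^{1/2}\cdot |x|^{1/2}\,\|$-type bookkeeping — more cleanly, apply Hölder to $\tau(x \cdot x) \le \|x\|_\infty \|x\|_1$ — equality holds iff the spectral projection of $x$ at its largest eigenvalue $\|x\|_\infty$ dominates the range projection $\mathcal{R}(x)$ of $x$. Since $x \geq 0$, $\mathcal{R}(x)$ is the support projection of $x$, i.e. the spectral projection onto the complement of $\{0\}$ in the spectrum. The domination condition therefore forces the spectrum of $x$ to be contained in $\{0, \|x\|_\infty\}$: $x = \|x\|_\infty\, e$ for a projection $e$, i.e. $x$ is a scalar multiple of a projection. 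Conversely, if $x = \lambda e$ with $e$ a projection and $\lambda \ge 0$, then $\|x\|_2^2 = \lambda^2 \tau(e) = \lambda \cdot \lambda\tau(e) = \|x\|_\infty\|x\|_1$, so the equality genuinely characterizes this case. Unwinding the reduction via polar decomposition, the general $x$ satisfies $\|x\|_2^2 = \|x\|_\infty\|x\|_1$ iff $|x| = \lambda e$, i.e. iff $x = \lambda v e$ is a scalar multiple of a partial isometry.

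The only mildly delicate point — and the place I would be most careful — is matching up the two directions of the equivalence in Proposition~\ref{prop:holder}(2): the stated condition there is that the spectral projection of $|x|$ at $\|x\|_\infty$ \emph{contains} $\mathcal{R}(y)$, and one must check this is genuinely equivalent, for the self-adjoint choice $y = x = |x| \ge 0$, to $|x|$ having two-point spectrum $\{0,\|x\|_\infty\}$; this is a short spectral-calculus argument using that $\mathcal{R}(|x|)$ is exactly the support of $|x|$. Everything else (the finite-dimensionality, faithfulness and traciality of $\tau$, the existence of polar decomposition in the finite von Neumann algebra $\mathcal{M}$) is standard and available from the setup in the excerpt. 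I do not expect any serious obstacle; the corollary is essentially a direct specialization of the Hölder equality case.
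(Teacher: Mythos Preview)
Your argument is correct and is precisely the derivation the paper intends: the corollary is stated immediately after Proposition~\ref{prop:holder} without a separate proof, and your use of the equality case in part~(2) (after reducing to $x\ge 0$ via polar decomposition) is exactly the intended one-line deduction. There is nothing to add.
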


\begin{proposition}[Interpolation Theorem, see for example Theorem 1.2 in \cite{Kos84}]\label{prop:inter}
Let $\mathcal{M}, \mathcal{N}$ be finite von Neumann algebras with normal faithful states $\tau_1,\tau_2$.
Suppose $T:\mathcal{M}\to \mathcal{N}$ is a linear map.
If
$$\|Tx\|_{p_1, \tau_2}\leq K_1\|x\|_{q_1, \tau_1}, \text{ and } \|Tx\|_{q_1, \tau_2}\leq K_2\|x\|_{q_2,\tau_1},$$
then
$$\|Tx\|_{p_\theta, \tau_2}\leq K_1^{1-\theta}K_2^\theta \|x\|_{q_\theta, \tau_1},$$
where $ \frac{1}{p_\theta}=\frac{1-\theta}{p_1}+\frac{\theta}{p_2}$, $ \frac{1}{q_\theta}=\frac{1-\theta}{q_1}+\frac{\theta}{q_2}$, $0\leq \theta \leq 1$.
\end{proposition}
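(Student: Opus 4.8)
The statement is the non-commutative Riesz--Thorin interpolation theorem (compare \cite{Kos84}), and the plan is to prove it by Hadamard's three-lines argument, using the duality between $L^p$-norms supplied by H\"older's inequality (Proposition~\ref{prop:holder}). I read the second hypothesis as $\|Tx\|_{p_2,\tau_2}\le K_2\|x\|_{q_2,\tau_1}$, with $p_2$ as in the conclusion $\tfrac{1}{p_\theta}=\tfrac{1-\theta}{p_1}+\tfrac{\theta}{p_2}$.

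\emph{Step 1 (dualize).} By Proposition~\ref{prop:holder} applied in $\mathcal{N}$, for the conjugate exponent $p_\theta'$ of $p_\theta$,
\[
\|Tx\|_{p_\theta,\tau_2}=\sup\bigl\{\,|\tau_2((Tx)y)|:\ y\in\mathcal{N},\ \|y\|_{p_\theta',\tau_2}\le 1\,\bigr\},
\]
so it suffices to estimate $|\tau_2((Tx)y)|$. By homogeneity and a density reduction I would take $x$ and $y$ whose moduli have finite spectrum and are invertible on their support projections, normalized so that $\|x\|_{q_\theta,\tau_1}=\|y\|_{p_\theta',\tau_2}=1$.

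\emph{Step 2 (analytic families).} With polar decompositions $x=u|x|$, $y=v|y|$, and $\tfrac{1}{q(z)}=\tfrac{1-z}{q_1}+\tfrac{z}{q_2}$, $\tfrac{1}{r(z)}=\tfrac{1-z}{p_1'}+\tfrac{z}{p_2'}$, I would set, for $z$ in the closed strip $0\le\Re z\le 1$,
\[
x_z=u\,|x|^{\,q_\theta/q(z)},\qquad y_z=v\,|y|^{\,p_\theta'/r(z)},\qquad F(z)=\tau_2\bigl((Tx_z)\,y_z\bigr).
\]
Since the moduli have finite spectrum, $z\mapsto x_z,y_z$ are analytic, $x_\theta=x$, $y_\theta=y$, and using $\Re\tfrac{1}{q(it)}=\tfrac{1}{q_1}$, $\Re\tfrac{1}{q(1+it)}=\tfrac{1}{q_2}$ (and similarly for $r$) one computes on the two boundary lines
\[
\|x_{it}\|_{q_1,\tau_1}=\|x_{1+it}\|_{q_2,\tau_1}=1,\qquad \|y_{it}\|_{p_1',\tau_2}=\|y_{1+it}\|_{p_2',\tau_2}=1,
\]
while $F$ is analytic in the open strip and bounded and continuous on its closure.

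\emph{Step 3 (three lines).} On $\Re z=0$, H\"older's inequality and the first hypothesis give
\[
|F(it)|\le \|Tx_{it}\|_{p_1,\tau_2}\,\|y_{it}\|_{p_1',\tau_2}\le K_1\,\|x_{it}\|_{q_1,\tau_1}\,\|y_{it}\|_{p_1',\tau_2}=K_1,
\]
and on $\Re z=1$ likewise $|F(1+it)|\le K_2$. The Hadamard three-lines lemma then yields $|F(\theta)|\le K_1^{1-\theta}K_2^{\theta}$; since $F(\theta)=\tau_2((Tx)y)$, taking the supremum over $y$ and undoing the normalization gives $\|Tx\|_{p_\theta,\tau_2}\le K_1^{1-\theta}K_2^{\theta}\|x\|_{q_\theta,\tau_1}$, and the density reduction of Step~1 extends this to all $x$.

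The main obstacle is making Step~2 rigorous in full generality: one must check that $z\mapsto x_z$ is genuinely analytic as a map into the (possibly non-tracial) $L^p$-scale and that $F$ stays bounded on the strip so that Phragm\'en--Lindel\"of applies. This is precisely the technical content handled in \cite{Kos84}. In the situation actually used in this paper the algebras $\mathcal{M},\mathcal{N}$ are finite-dimensional, so analyticity of $z\mapsto x_z$ and boundedness of $F$ are immediate and the argument above becomes entirely elementary.
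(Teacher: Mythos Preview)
Your proof is correct and follows the standard Riesz--Thorin/Hadamard three-lines argument, which is exactly the method of the reference \cite{Kos84}. Note, however, that the paper itself does not supply a proof of this proposition: it is quoted as a known result with a pointer to Kosaki's paper, so there is no ``paper's own proof'' to compare against. Your sketch is therefore more than what the paper provides, and your closing remark that the finite-dimensional setting used here makes the analyticity and boundedness verifications elementary is well placed.
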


\subsection{Hausdorff-Young Inequality}

\begin{proposition}\label{prop:hyinf1}
Let $(\bA, \bB, \fF, d, \tau)$ be a fusion bialgebra.
Then
$$\|\fF(x)\|_{\infty, \bB}\leq \|x\|_{1, \bA}, \quad x\in \bA $$
 and
 $$ \|\tfF(x)\|_{\infty,\bA}\leq \|x\|_{1, \bB}, \quad x\in \bB.$$
\end{proposition}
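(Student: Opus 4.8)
The plan is to exploit the explicit structure of a fusion bialgebra rather than any general machinery. After the usual gauge normalization (cf.\ Theorem~\ref{Thm: extension}) I may assume the fusion bialgebra is canonical, and I will use the following standard facts (Theorem~\ref{Thm: extension}, Propositions~\ref{Prop: dim d} and~\ref{prop:planch}): $\bA$ is abelian with minimal projections $P_1,\dots,P_m$; the $\bRp$-basis $B=\{x_1,\dots,x_m\}$ of $\bB$ is orthonormal in $L^2(\bB,\tau)$, one has $\fF(P_j)=d(x_j)x_j$ with $d(x_j)=\|x_j\|_{\infty,\bB}\geq 1$ and $d(x_j)=d(x_{j^*})$; consequently $x_j\diamond x_k=\delta_{j,k}d(x_j)^{-1}x_j$, $x_j^\#=x_j$, and Plancherel gives $d(P_j)=\|\fF(P_j)\|_{2,\bB}^2=d(x_j)^2$. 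In short, written in the common basis $x=\sum_j\xi_j x_j$, the maps $\fF$ and $\tfF$ act essentially as the identity on coordinates, up to the involutions $*$ and $\#$, and the only nontrivial input is this dictionary between minimal projections of $\bA$ and the $\bRp$-basis of $\bB$.

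For $\|\fF(x)\|_{\infty,\bB}\leq\|x\|_{1,\bA}$ with $x=\sum_j\xi_j x_j\in\bA$: on the $\bA$ side, $x^\#\diamond x=\sum_j|\xi_j|^2 d(x_j)^{-2}P_j$, so $|x|=\sum_j|\xi_j|d(x_j)^{-1}P_j$ and $\|x\|_{1,\bA}=d(|x|)=\sum_j|\xi_j|d(x_j)$. On the $\bB$ side, $\fF(x)=\sum_j\xi_j x_j$, and the triangle inequality for the $C^*$-norm of $\bB$ gives $\|\fF(x)\|_{\infty,\bB}\leq\sum_j|\xi_j|\,\|x_j\|_{\infty,\bB}=\sum_j|\xi_j|d(x_j)$. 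Comparing the two expressions proves the inequality. Morally this is the statement that, after normalizing by the Frobenius--Perron dimensions, the ``Fourier kernel'' has entries of modulus $1$.

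For the dual bound $\|\tfF(y)\|_{\infty,\bA}\leq\|y\|_{1,\bB}$ with $y\in\bB$: expand $y^*=\sum_k\gamma_k x_k$, so $\gamma_k=\tau(x_{k^*}y^*)$, hence $\overline{\gamma_k}=\tau(x_ky)$ and $\tfF(y)=\bigl(\fF^{-1}(y^*)\bigr)^\#=\sum_k\overline{\gamma_k}\,x_k=\sum_k\tau(x_ky)\,x_k\in\bA$. Using $x_k\diamond x_k=d(x_k)^{-1}x_k$ together with the definition of the $C^*$-norm on $\bA$, this yields $\|\tfF(y)\|_{\infty,\bA}=\max_k|\tau(x_ky)|/d(x_k)$. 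Then Hölder's inequality (Proposition~\ref{prop:holder}, $p=\infty$, $q=1$) gives $|\tau(x_ky)|\leq\|x_ky\|_{1,\bB}\leq\|x_k\|_{\infty,\bB}\|y\|_{1,\bB}=d(x_k)\|y\|_{1,\bB}$, so each term, and therefore the maximum, is at most $\|y\|_{1,\bB}$.

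The two halves are dual to one another: the first is just the triangle inequality on the target $C^*$-algebra $\bB$, and the second is the same estimate transported through the tracial pairing $\tau$. I expect the only genuinely delicate point to be the second, since $\bB$ is noncommutative: one cannot simply read off coordinates of $y$, and must route the estimate through Hölder (equivalently, through $L^1$--$L^\infty$ duality for $\tau$), whereas everything else is bookkeeping with the canonical normalization and the identities $\fF(P_j)=d(x_j)x_j$, $d(P_j)=d(x_j)^2$. Once these two endpoint bounds are established, one interpolates them against Plancherel via Proposition~\ref{prop:inter} to obtain the full Hausdorff--Young range, but that step is purely formal.
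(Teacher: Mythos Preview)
Your proof is correct and essentially identical to the paper's: the first inequality is the triangle inequality in $\bB$ combined with $\|x_j\|_{\infty,\bB}=d(x_j)$, and the second is $L^1$--$L^\infty$ duality (H\"older) applied to the pairing with the basis elements $x_{k^*}$. The only cosmetic difference is that the paper writes the H\"older step directly as $\tau(|y|)\geq \tau(\overline{\lambda_k}x_{k^*}y)/(|\lambda_k|d(x_{k}))=|\lambda_k|/d(x_k)$ rather than invoking Proposition~\ref{prop:holder} explicitly; your explicit gauge reduction to the canonical case is not needed since the paper's argument already lives in that normalization.
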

\begin{proof}
Let $x=\sum_{j=1}^m \lambda_j \fF^{-1}(x_j)\in \bA$.
Then
\begin{align*}
\|\fF(x)\|_{\infty, \bB}&= \left\| \sum_{j=1}^m \lambda_j x_j\right\|_{\infty, \bB}
\leq \sum_{j=1}^m |\lambda_j|\left\| x_j \right\|_{\infty, \bB}\\
&= \sum_{j=1}^m |\lambda_j|d(x_j)
=\|x\|_{1, \bA},
\end{align*}
This proves the first inequality.

For the second inequality, we let $x=\sum_{j=1}^m \lambda_jx_j$.
Then $ \displaystyle \|\tfF(x)\|_{\infty, \bA}=\max_{1\leq j\leq m}\frac{|\lambda_j|}{d(x_j)}.$
For any $k$ such that $\lambda_{k}\neq 0$, we have
\begin{align*}
\tau\left(\left|\sum_{j=1}^m \lambda_j  x_j\right|\right)
& \geq \frac{\tau\left(\overline{\lambda_{k}} x_{k^*}\sum_{j=1}^m \lambda_j  x_j\right)}{|\lambda_{k}| d(x_{k^*})}  =\frac{ |\lambda_k|}{d(x_k)}.
\end{align*}
Hence $ \|\tfF(x)\|_{\infty,\bA}\leq \|x\|_{1, \bB}.$
This completes the proof of the proposition.
\end{proof}

\begin{theorem}[Hausdorff-Young inequality]\label{thm:hy}
Let $(\bA, \bB, \fF, d, \tau)$ be a fusion bialgebra.
Then for any $1\leq p\leq 2$, $1/p+1/q=1$, we have
$$\|\fF(x)\|_{q, \bB}\leq \|x\|_{p, \bA}, \quad x\in \bA $$
 and
 $$ \|\tfF(x)\|_{q,\bA}\leq \|x\|_{p, \bB}, \quad x\in \bB.$$
\end{theorem}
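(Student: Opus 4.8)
The plan is to obtain the Hausdorff--Young inequality by interpolating between the $L^1 \to L^\infty$ bounds just established in Proposition~\ref{prop:hyinf1} and the $L^2 \to L^2$ isometry provided by Plancherel's formula (Proposition~\ref{prop:planch}), using the noncommutative interpolation theorem (Proposition~\ref{prop:inter}). Concretely, for the map $\fF : \bA \to \bB$, I would feed the interpolation machine the two endpoint estimates $\|\fF(x)\|_{\infty,\bB} \leq \|x\|_{1,\bA}$ (with constant $K_1 = 1$, exponents $p_1 = \infty$, $q_1 = 1$) and $\|\fF(x)\|_{2,\bB} = \|x\|_{2,\bA}$, hence $\|\fF(x)\|_{2,\bB} \leq \|x\|_{2,\bA}$ (with constant $K_2 = 1$, exponents $p_2 = 2$, $q_2 = 2$). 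Proposition~\ref{prop:inter} then yields $\|\fF(x)\|_{p_\theta,\bB} \leq \|x\|_{q_\theta,\bA}$ where $\frac{1}{p_\theta} = \frac{1-\theta}{\infty} + \frac{\theta}{2} = \frac{\theta}{2}$ and $\frac{1}{q_\theta} = \frac{1-\theta}{1} + \frac{\theta}{2}$. A quick check: setting $q = p_\theta$ gives $\theta = 2/q$, and then $\frac{1}{q_\theta} = 1 - \theta/2 = 1 - 1/q = 1/p$, so $q_\theta = p$; as $\theta$ runs over $[0,1]$, $q$ runs over $[2,\infty]$ and $p = q_\theta$ runs over $[1,2]$, which is exactly the stated range. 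This gives the first inequality.

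For the dual inequality, I would run the identical argument with $T = \tfF : \bB \to \bA$ in place of $\fF$. The second endpoint bound $\|\tfF(x)\|_{\infty,\bA} \leq \|x\|_{1,\bB}$ is the second half of Proposition~\ref{prop:hyinf1}. For the $L^2$ endpoint I need $\|\tfF(x)\|_{2,\bA} = \|x\|_{2,\bB}$; since $\tfF = \#\,\fF^{-1}\,*$ by definition of the Fourier dual, and $\fF$ is a $2$-norm isometry while $*$ (the involution on $\bB$) and $\#$ (the involution on $\bA$) are isometric for the respective traces (as $\tau$ and $d$ are tracial states), $\tfF$ is a composition of $2$-norm isometries, hence itself a $2$-norm isometry. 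Then Proposition~\ref{prop:inter} applied to $\tfF$ with the roles of $\bA$ and $\bB$ (and of $d$ and $\tau$) swapped gives $\|\tfF(x)\|_{q,\bA} \leq \|x\|_{p,\bB}$ for the same range of $p,q$.

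The only genuinely delicate point is bookkeeping: making sure the exponent pairs $(p_1,q_1) = (\infty,1)$ and $(p_2,q_2) = (2,2)$ are plugged into Proposition~\ref{prop:inter} in the correct slots (domain exponents $q_i$, codomain exponents $p_i$), and verifying that the resulting $(p_\theta, q_\theta)$ parametrizes precisely the conjugate pairs with $1 \le p \le 2$. I do not expect any analytic obstacle here — both endpoint inequalities and the interpolation theorem are already available in the excerpt — so the proof should be short, essentially a one-line invocation of interpolation for each of the two maps, preceded by the remark that $\tfF$ preserves the $2$-norm. I would also note in passing that the constant is $1$ throughout because $K_1 = K_2 = 1$, so no loss is incurred.
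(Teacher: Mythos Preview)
Your proposal is correct and follows exactly the paper's approach: the paper's proof is the one-line statement ``It follows from Proposition~\ref{prop:planch},~\ref{prop:hyinf1} and Proposition~\ref{prop:inter}'', and you have simply spelled out the interpolation bookkeeping (including the easy observation that $\tfF$ is a $2$-norm isometry) that this line encodes.
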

\begin{proof}
It follows from Proposition~\ref{prop:planch},~\ref{prop:hyinf1} and Proposition~\ref{prop:inter}.
\end{proof}

We divide the first quadrant into three regions $R_{T},R_{F},R_{TF}$.
Recall that $\mu=\sum_{j=1}^m d(x_j)^2$ is the Frobenius-Perron dimension of $\bB$.
Let $K$ be a function on $[0,1]^2$ given by
\begin{equation}\label{K}
K(1/p,1/q)=\left\{
\begin{array}{lll}
1 &\text{ for } &(1/p,1/q)\in R_F,\\
\mu^{1/p+1/q-1} &\text{ for } &(1/p,1/q)\in R_T,\\
\mu^{1/q-1/2} &\text{ for }& (1/p,,1/q)\in R_{TF}.
\end{array}
\right.
\end{equation}
as illustrated in Figure~\ref{Fig: norm}.

\begin{figure}
\begin{center}
\begin{tikzpicture}[scale=2]
\draw [line width=2] [->](0,0)--(0,3);
\node at (-0.1,-0.1) {0};
\node [left] at (0, 3) {$\frac{1}{q}$};
\draw [line width=2]  [->](0,0)--(3,0);
\node [below] at (1,0) {$\frac{1}{2}$};
\node at (1,0) {$\bullet$};
\node [below] at (2, 0) {1};
\node at (2,0) {$\bullet$};
\node [below] at (3,0) {$\frac{1}{p}$};
\draw [blue, line width=2] (1,1)--(0,1);
\node [left] at (0,1) {$\frac{1}{2}$};
\node at (0,1) {$\bullet$};
\node [left] at (0, 2) {1};
\node at (0,2) {$\bullet$};
\node [orange] at (0.5,1.5) {$\mu^{\frac{1}{q}-\frac{1}{2}}$};
\node at (0.5,1.2) {$R_{TF}$};
\draw [red, line width=2] (1,1)--(2,0);
\node at (0.5, 0.5) {$R_F$};
\node [orange] at (1, 0.5) {$1$};
\draw [blue, line width=2] (1,1)--(1,2);
\node [orange] at (1.5, 1.5) {$\mu^{\frac{1}{p}+\frac{1}{q}-1}$};
\node at (1.5,1) {$R_T$};
\draw [dashed, line width=2] (0,2)--(2,2);
\draw [dashed, line width=2] (2,0)--(2,2);
\end{tikzpicture}
\end{center}
\caption{The norms of the Fourier Transform.}
\label{Fig: norm}
\end{figure}
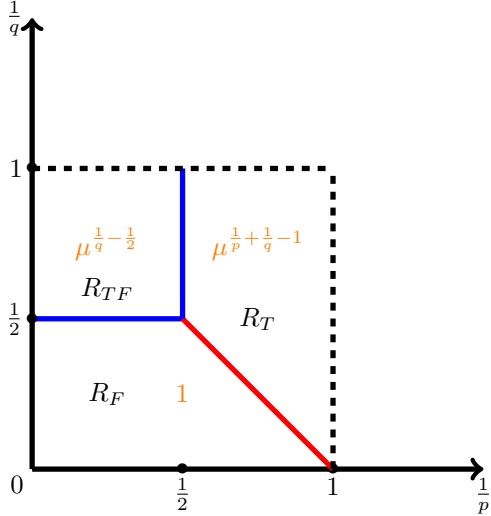

\begin{theorem}\label{thm:normfourier}
Let $(\bA, \bB, \fF, d, \tau)$ be a fusion bialgebra and $x\in \bB$
Then for any $1\leq p,q\leq \infty$, we have
$$K(1/p, 1/q)^{-1}\|x\|_{p, \bB} \leq \|\tfF(x)\|_{q, \bA}\leq K(1/p, 1/q)\|x\|_{p, \bB}$$
\end{theorem}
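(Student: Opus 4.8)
The plan is to prove the two-sided bound separately on the three regions $R_F$, $R_T$, and $R_{TF}$, by combining the elementary estimates already in hand (Plancherel, Proposition~\ref{prop:hyinf1}) with the Riesz--Thoren interpolation theorem (Proposition~\ref{prop:inter}) and a duality argument; the left-hand inequality will then follow from the right-hand inequality applied to $\fF$ in place of $\tfF$, using that $\tfF$ is (up to the involutions $\#$ and $*$, which are isometric for every $t$-norm) the inverse of $\fF$. First I would record the three ``corner'' estimates for $\tfF$: $\|\tfF(x)\|_{\infty,\bA}\le\|x\|_{1,\bB}$ and $\|\tfF(x)\|_{2,\bA}=\|x\|_{2,\bB}$ from Propositions~\ref{prop:hyinf1} and~\ref{prop:planch}, together with the crude bound $\|\tfF(x)\|_{1,\bA}\le\mu^{1/2}\|x\|_{2,\bB}$ and $\|\tfF(x)\|_{2,\bA}\le\mu^{1/2}\|x\|_{1,\bB}$, each obtained from Cauchy--Schwarz/Hölder against the constant-$1$ density and the fact that $d(1_\bA)=\mu$ in the canonical normalization. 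One also needs $\|\tfF(x)\|_{\infty,\bA}\le\|x\|_{\infty,\bB}$ with constant $1$ after renormalization, or rather its scaled form contributing the factor $\mu^{1/q-1/2}$ on the vertical edge; this is where the asymmetry between $\bA$ and $\bB$ (only $\bB$ carries a genuine fusion basis) forces the $\mu$-powers to appear.

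Next I would run interpolation region by region. On $R_F$ (the triangle with vertices $(1/2,1/2),(1,0),(1/2,1)$... i.e.\ the Hausdorff--Young region with $1\le p\le 2$, $1/p+1/q=1$ and below), Theorem~\ref{thm:hy} already gives $\|\tfF(x)\|_{q,\bA}\le\|x\|_{p,\bB}$ with constant $1$ on the critical line; interpolating this against the trivial inclusion estimates $\|\tfF(x)\|_{q,\bA}\le\|x\|_{q,\bB}$ (valid since $d$ dominates $\tau$-type normalization appropriately — more precisely one interpolates between two already-established corner bounds whose constants are both $1$) fills in the whole region with $K=1$. On $R_T$ (above the diagonal $1/p=1/q$, the ``reverse'' region) I would interpolate the two bounds carrying $\mu^{1/2}$ noted above, whose exponents track the linear function $1/p+1/q-1$, to get $K=\mu^{1/p+1/q-1}$. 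On $R_{TF}$ (the strip $0\le 1/p\le 1/2\le 1/q\le 1$, roughly) I would interpolate the $p=\infty$ edge estimate against Plancherel along vertical lines, producing the exponent $1/q-1/2$. Throughout, each application of Proposition~\ref{prop:inter} is just bookkeeping of the piecewise-linear exponent $1/p_\theta,1/q_\theta$ and multiplying constants $K_1^{1-\theta}K_2^{\theta}$.

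Finally, the left-hand inequality. Since $\tfF=\#\fF^{-1}*$ and $\#$, $*$ preserve all $\|\cdot\|_{t}$ norms, the map $x\mapsto\tfF(x)$ on $\bB$ and $y\mapsto\fF(y)$ on $\bA$ are mutually inverse up to these isometries; hence a bound $\|\fF(y)\|_{q',\bB}\le K'\|y\|_{p',\bA}$ transposes, via $y=\tfF(x)$ and the conjugate exponents, into $\|x\|_{p,\bB}\le K'\|\tfF(x)\|_{q,\bA}$ with $K'=K(1/q,1/p)$ evaluated in the dual region; checking that $K(1/q,1/p)$ on the reflected region equals $K(1/p,1/q)$ on the original region is exactly the symmetry built into the definition~\eqref{K} of $K$ (the three regions are permuted by the reflection across $1/p+1/q=1$ composed with coordinate swap), so the two-sided constant is the single function $K(1/p,1/q)$ as claimed.

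The main obstacle I expect is not any single inequality but getting the region geometry and the constant exponents to match up consistently: one must verify that the piecewise-linear function $K$ is exactly the one produced by interpolating the corner estimates (no slack), that the boundaries of $R_F,R_T,R_{TF}$ are the loci where two different interpolation schemes agree, and that the duality reflection carries each region to the region with the ``mirror'' constant. A secondary subtlety is keeping the $\bA$-versus-$\bB$ normalization straight: the factors of $\mu$ enter precisely because $d=\mu\,\mathrm{tr}$-type on $\bA$ while $\tau$ is a state on $\bB$, so Hölder against $1_\bA$ costs a power of $\mu=d(1_\bA)$, and one must track that power through every interpolation step.
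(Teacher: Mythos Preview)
Your strategy for the upper bound—establish the corner estimates (Proposition~\ref{prop:hyinf1} at $(1,0)$, Plancherel at $(1/2,1/2)$, and H\"older against $1_\bA$ with $d(1_\bA)=\mu$ to pick up the $\mu$-powers at $(1,1)$ and $(0,1)$), then fill each region by Riesz--Thorin—is correct and is exactly the argument the authors are invoking from \cite{LiuWuscm}. Your verbal descriptions of $R_F,R_T,R_{TF}$ are garbled ($R_F$ is not a triangle with vertex $(1/2,1)$; it is $\{1/q\le 1/2\}\cup\{1/p\ge 1/2,\ 1/p+1/q\le 1\}$), but this is cosmetic and does not affect the interpolation.

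The duality step for the lower bound, however, has a genuine gap. You claim that ``$K(1/q,1/p)$ on the reflected region equals $K(1/p,1/q)$'' by a symmetry built into \eqref{K}. No such symmetry exists: coordinate swap sends $R_{TF}=\{1/p\le 1/2\le 1/q\}$ into $R_F$ (where $K\equiv 1$), not back to $R_{TF}$; reflection across $1/p+1/q=1$ does not fix $K$ either. What your duality actually yields is $\|x\|_{p,\bB}\le K_\fF(1/q,1/p)\,\|\tfF(x)\|_{q,\bA}$ with $K_\fF$ the norm function of $\fF:\bA\to\bB$, and tracing the corner values gives $K_\fF(1/q,1/p)=K(1-1/p,\,1-1/q)$ (reflection through the centre of the square), which is \emph{not} $K(1/p,1/q)$ off the anti-diagonal. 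Indeed the lower bound as printed fails already at $(p,q)=(\infty,\infty)$ for the rank-two example $x_2^2=1+x_2$, $d(x_2)=\phi$: there $\|x_2\|_{\infty,\bB}=\phi$, $\|\tfF(x_2)\|_{\infty,\bA}=\phi^{-1}$, but $K(0,0)=1$, so the claimed inequality reads $\phi\le\phi^{-1}$. The gap is therefore not repairable as written; either the theorem's lower-bound constant should be $K(1-1/p,1-1/q)$, or the intended normalisation is the symmetric one of \cite{LiuWuscm} (both traces normalised) rather than the canonical fusion-bialgebra normalisation, and you should flag this rather than force a false symmetry.
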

\begin{proof}
It follows from the proof of Theorem 3.13 in \cite{LiuWuscm}.
We leave the details to the readers.
\end{proof}

\subsection{Uncertainty Principles}
 We will prove the Donoho-Stark uncertainty principle, Hirschman-Beckner uncertainty principle and R\'{e}nyi entropic uncertainty principle for fusion bialgebras.
For any $x\in \bA$, we let $\cR(x)$ be the range projection of $x$ and $\cS(x)=d(\cR(x))$.
For any $x\in \bB$, $\cS(x)=\tau(\cR(x))$.

\begin{lemma}\label{lem:support}
Let $(\bA, \bB, \fF, d, \tau)$ be a fusion bialgebra.
Then we have
$$\cS(x)=\cS(x^\#)=\cS(J(x)), \quad x\in \bA$$
and
$$\cS(x)=\cS(x^*)=\cS(J_\bB(x)), \quad x\in \bB.$$
\end{lemma}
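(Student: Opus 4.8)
The plan is to prove both identities by tracking how the three natural involutions on each algebra act on range projections, and then using that all three maps ($\#$, $*$, $J$ on $\bA$, and $*$, $\#$, $J_{\bB}$ on $\bB$) are either $*$-automorphisms or $*$-anti-automorphisms (possibly anti-linear), so in particular they send projections to projections and preserve the relevant traces. Since $\cR(x)$ depends only on the $*$-subalgebra generated by $x$, and $\cS$ is defined via $d$ (on $\bA$) or $\tau$ (on $\bB$), the strategy is: (i) show $\cR(\phi(x)) = \phi(\cR(x))$ for each relevant $\phi$; (ii) show $\phi$ preserves the trace used to define $\cS$; (iii) combine.

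First I would handle $\bA$, which is commutative. For $x \in \bA$, the range projection $\cR(x)$ is the sum of the minimal projections $P_j$ on which $x$ does not vanish, i.e.\ $\cR(x) = \sum_{j : \tau_j(x) \neq 0} P_j$ in the obvious coordinates. Since $\#$ is the identity map on the basis $B$ (Definition~\ref{def:bA}), it is a linear $*$-automorphism of $\bA$; being a $*$-homomorphism it sends $\cR(x)$ to $\cR(x^{\#})$, but one must check it preserves $d$ — and indeed $d(x_j^{\#}) = d(x_j)$ since $x_j^{\#} = x_j$. Hence $\cS(x^{\#}) = d(\cR(x^{\#})) = d(\#(\cR(x))) = d(\cR(x)) = \cS(x)$. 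For $J$: by condition (2) of Definition~\ref{Def: Fusion bialgebras}, $J$ is an anti-linear $*$-isomorphism of $\bA$, so $J(\cR(x)) = \cR(J(x))$ (anti-linearity does not affect the image of a projection-valued functional-calculus expression in a commutative algebra), and $J$ permutes the minimal projections as $P_j \mapsto P_{j^*}$; since $d(P_j) = d(P_{j^*})$ (established in the proof of Theorem~\ref{Thm: extension}), $d$ is $J$-invariant, so $\cS(J(x)) = \cS(x)$.

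Next I would do $\bB$. For $x \in \bB$ (not assumed commutative here, though in the applications it often is), $\cR(x) = \cR(x x^*) = \cR(|x^*|)$ and $\cR(x^*) = \cR(x^* x) = \cR(|x|)$; the operators $x^*x$ and $xx^*$ are unitarily equivalent (polar decomposition), so their range projections have the same $\tau$-trace because $\tau$ is a trace — this gives $\cS(x) = \cS(x^*)$ directly, without needing $*$ to be an algebra automorphism. For $J_{\bB}$: by the Dual Modular Conjugation proposition, $J_{\bB}$ is an anti-linear $*$-isomorphism of $\bB$ that fixes the basis $B$ pointwise, so $J_{\bB}(\cR(x)) = \cR(J_{\bB}(x))$ and $J_{\bB}$ preserves $\tau$ (since $\tau(x_j) = \delta_{j,1}$ and $J_{\bB}(x_j) = x_j$, it preserves $\tau$ on the basis, hence everywhere by anti-linearity with real values on $B$... more precisely $\tau \circ J_{\bB} = \overline{\tau}$ on each basis element, and $\tau$ is real on $B$). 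Therefore $\cS(J_{\bB}(x)) = \tau(\cR(J_{\bB}(x))) = \tau(J_{\bB}(\cR(x))) = \tau(\cR(x)) = \cS(x)$.

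The main obstacle, and the only genuinely delicate point, is bookkeeping around anti-linearity: one must be careful that an anti-linear $*$-isomorphism still sends the range projection of $x$ to the range projection of its image. This is true because $\cR(x)$ is a norm-limit of polynomials in $x^*x$ with real coefficients (continuous functional calculus with a real-valued function, e.g.\ approximations to $\mathbf{1}_{(0,\infty)}$), and an anti-linear $*$-homomorphism commutes with real-coefficient polynomials in self-adjoint elements and with norm limits; so $\phi(\cR(x)) = \cR(\phi(x))$ still holds. Once this is in hand, each of the six claimed equalities reduces to the trace-invariance checks above, all of which are immediate from the definitions of $d$ and $\tau$ on the basis $B$ together with the already-proved symmetry $d(x_j) = d(x_{j^*})$. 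I would write the argument uniformly: "each of the maps in question is a (possibly anti-linear) $*$-automorphism or $*$-anti-automorphism fixing $\cS$, hence preserves $\cS$," and then point to the single nontrivial case $\cS(x) = \cS(x^*)$ in $\bB$ which uses only that $\tau$ is tracial.
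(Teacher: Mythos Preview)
Your proof is correct in substance; the paper actually states this lemma without proof, so there is nothing to compare against. One small slip: you call $\#$ a ``linear $*$-automorphism of $\bA$,'' but $\#$ is the $*$-involution on $\bA$ and is therefore \emph{anti}-linear (it conjugates scalars). This does not harm the argument --- in the commutative algebra $\bA$, writing $x = \sum_j \lambda_j x_j$ gives $x^{\#} = \sum_j \overline{\lambda_j}\, x_j$, so $\cR(x^{\#}) = \cR(x)$ directly --- and in any case your later discussion of anti-linear $*$-isomorphisms and real-coefficient functional calculus already covers exactly this situation. The remaining cases (the $J$ and $J_{\bB}$ arguments via the modular-conjugation propositions, and $\cS(x)=\cS(x^*)$ in $\bB$ via the traciality of $\tau$ and polar decomposition) are fine as written.
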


\begin{theorem}[Donoho-Stark uncertainty principle]\label{thm:DS}
Let $(\bA, \bB, \fF, d, \tau)$ be a fusion bialgebra.
Then for any $0\neq x\in \bA$, we have
$$\cS (x)\cS(\fF(x))\geq 1;$$
for any $0\neq x\in \bB$, we have
$$\cS (x)\cS(\tfF(x))\geq 1;$$
\end{theorem}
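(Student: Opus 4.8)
The plan is to prove the Donoho--Stark uncertainty principle by combining the two Hausdorff--Young-type inequalities at the endpoint $p=1$, $q=\infty$ (Proposition~\ref{prop:hyinf1}) with the dimension-free H\"older bound relating the $1$-norm, $2$-norm and $\infty$-norm of an element, together with the Plancherel identity (Proposition~\ref{prop:planch}). The key observation is that for any $x$ in a finite von Neumann algebra with a tracial state one has $\|x\|_1 \le \cS(x)^{1/2}\|x\|_2$: this follows from H\"older's inequality $\|x\|_1 = \|\,\cR(x)\,x\|_1 \le \|\cR(x)\|_2\,\|x\|_2$ since $\|\cR(x)\|_2 = \tau(\cR(x))^{1/2} = \cS(x)^{1/2}$ (and the same on $\bA$ with $d$ in place of $\tau$, using $d(\cR(x))^{1/2}$).

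First I would take $0 \neq x \in \bA$ and chain the estimates. By the inequality just mentioned (applied on $\bA$), $\|x\|_{1,\bA} \le \cS(x)^{1/2}\|x\|_{2,\bA}$. By Proposition~\ref{prop:hyinf1}, $\|\fF(x)\|_{\infty,\bB} \le \|x\|_{1,\bA}$. Now apply the analogous $1$-norm bound on $\bB$ in reverse together with the elementary bound $\|y\|_{2,\bB}^2 \le \|y\|_{\infty,\bB}\,\|y\|_{1,\bB}$ — or more directly $\|y\|_{2,\bB} \le \cS(y)^{1/2}\|y\|_{\infty,\bB}$, which is H\"older again, $\|y\|_2 = \|\cR(y) y\|_2 \le \|\cR(y)\|_2 \|y\|_\infty = \cS(y)^{1/2}\|y\|_\infty$. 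Putting $y = \fF(x)$ and using Plancherel $\|\fF(x)\|_{2,\bB} = \|x\|_{2,\bA}$, I get
\[
\|x\|_{2,\bA} = \|\fF(x)\|_{2,\bB} \le \cS(\fF(x))^{1/2}\|\fF(x)\|_{\infty,\bB} \le \cS(\fF(x))^{1/2}\|x\|_{1,\bA} \le \cS(\fF(x))^{1/2}\cS(x)^{1/2}\|x\|_{2,\bA}.
\]
Since $x \neq 0$ gives $\|x\|_{2,\bA} > 0$ (faithfulness of $d$), dividing yields $\cS(x)\cS(\fF(x)) \ge 1$. The argument for $0 \neq x \in \bB$ is identical after replacing $\fF$ by $\tfF$: Proposition~\ref{prop:hyinf1} gives $\|\tfF(x)\|_{\infty,\bA} \le \|x\|_{1,\bB}$, Plancherel for $\tfF$ (which holds since $\tfF = \#\fF^{-1}*$ is a composition of $2$-norm isometries) gives $\|\tfF(x)\|_{2,\bA} = \|x\|_{2,\bB}$, and the same two H\"older steps close the loop to give $\cS(x)\cS(\tfF(x)) \ge 1$.

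The only point requiring a little care — and the step I would flag as the main (minor) obstacle — is verifying that $\tfF$ preserves the $2$-norm, i.e. that $d(\tfF(x)^{\#}\diamond \tfF(x)) = \tau(x^*x)$ for $x \in \bB$; this needs the fact that $J = \fF^{-1}(\fF(\cdot)^*)$ and the involutions $*,\#$ are isometric for the respective $2$-norms, which follows from the Modular Conjugation axiom (2) in Definition~\ref{Def: Fusion bialgebras} together with Plancherel. Everything else is a routine assembly of H\"older's inequality (Proposition~\ref{prop:holder}, case $p=2$), Plancherel, and the endpoint Hausdorff--Young bounds, with no dependence on the dimension $m$ or on the Schur product property, so the statement holds for all fusion bialgebras and their duals. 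One should also record that $\cS$ is well defined and positive on nonzero elements because $d$ and $\tau$ are faithful, so range projections are nonzero and have strictly positive trace.
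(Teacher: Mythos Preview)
Your proof is correct and follows essentially the same route as the paper: endpoint Hausdorff--Young (Proposition~\ref{prop:hyinf1}), two applications of H\"older (Proposition~\ref{prop:holder}) to bound $\|\cdot\|_1 \le \cS(\cdot)^{1/2}\|\cdot\|_2$ and $\|\cdot\|_2 \le \cS(\cdot)^{1/2}\|\cdot\|_\infty$, and Plancherel to close the loop. The only cosmetic difference is that the paper chains the estimates starting and ending at $\|\fF(x)\|_{\infty,\bB}$, whereas you start and end at $\|x\|_{2,\bA}$; and for the second inequality the paper simply notes it is a reformulation of the first (via $\cS(\tfF(x))=\cS(\fF^{-1}(x^*))$ from Lemma~\ref{lem:support}) rather than rerunning the argument with $\tfF$ and checking its Plancherel identity as you do.
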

\begin{proof}
The second inequality is the reformualtion of the first one.
We only have to prove the first one.
In fact,
\begin{align*}
\|\fF(x)\|_{\infty,\bB} &\leq \|x\|_{1, \bA}\leq \|\cR (x)\|_{2, \bA}\|x\|_{2, \bA} \quad \text{Proposition~\ref{prop:hyinf1},~\ref{prop:holder}}\\
&=\cS(x)^{1/2}\|\fF(x)\|_{2,\bB}\\
&\leq \cS(x)^{1/2}\|\cR(\fF(x))\|_{2,\bB}\|\fF(x)\|_{\infty,\bB} \quad \text{ Proposition~\ref{prop:holder}}\\
&=\cS(x)^{1/2}\cS(\fF(x))^{1/2}\|\fF(x)\|_{\infty,\bB},
\end{align*}
i.e. $\cS (x)\cS(\fF(x))\geq 1$.
This completes the proof of the theorem.
\end{proof}

For any $x\in \bB$, the von Neumann entropy $H(|x|^2)$ is defined by $H(|x|^2)=-\tau(x^*x\log x^*x)$ and for any $x\in \bA$ the von Neumann entropy is defined by $H (|x|^2)=-d\left((x^\#\diamond x)\log(x^\#\diamond x ) \right)$.

\begin{theorem}[Hirschman-Beckner uncertainty principle]\label{thm:HB}
Let $(\bA, \bB, \fF, d, \tau)$ be a fusion bialgebra.
Then for any $x\in\bA$, we have
$$H(|x|^2)+H(|\fF(x)|^2)\geq -4\|x\|_{2,\bA}^2\log\|x\|_{2,\bA}.$$
\end{theorem}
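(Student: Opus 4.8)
The plan is to derive the Hirschman–Beckner uncertainty principle by differentiating the Hausdorff–Young inequality from Theorem~\ref{thm:hy} at the endpoint $p=2$. Concretely, for $1\le p\le 2$ with $1/p+1/q=1$, Theorem~\ref{thm:hy} gives $\|\fF(x)\|_{q,\bB}\le \|x\|_{p,\bA}$. At $p=2$ both sides equal $\|x\|_{2,\bA}$ by Plancherel (Proposition~\ref{prop:planch}), so the function
\[
\Phi(p):=\|x\|_{p,\bA}-\|\fF(x)\|_{q,\bB}
\]
is nonnegative on $[1,2]$ and vanishes at $p=2$. Hence its one-sided derivative at $p=2$ (taken as $p\uparrow 2$) is $\le 0$. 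The main computation is therefore to differentiate $p\mapsto\|x\|_{p,\bA}$ and $q\mapsto\|\fF(x)\|_{q,\bB}$ and read off the entropy terms.

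First I would reduce to the normalized case $\|x\|_{2,\bA}=1$; the general inequality follows by homogeneity, since scaling $x\mapsto \lambda x$ changes $H(|x|^2)$ by $-2|\lambda|^2\log|\lambda|\cdot(\text{trace of }|x|^2/\ldots)$, and tracking the constants reproduces the factor $-4\|x\|_{2,\bA}^2\log\|x\|_{2,\bA}$ on the right. Next, using the spectral decomposition of $x^\#\diamond x$ in the abelian C$^*$-algebra $\bA$ with respect to the trace $d$, write $\|x\|_{p,\bA}^p=d((x^\#\diamond x)^{p/2})$. A standard computation gives
\[
\frac{d}{dp}\Big|_{p=2}\log\|x\|_{p,\bA}=\frac{1}{4}H(|x|^2)+\frac{1}{2}\|x\|_{2,\bA}^2\log\|x\|_{2,\bA}\cdot(\ldots),
\]
which under the normalization $\|x\|_{2,\bA}=1$ becomes $\tfrac14 H(|x|^2)$. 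Similarly, with $s=1/q$ ranging near $1/2$ from below, $\tfrac{d}{ds}\log\|\fF(x)\|_{1/s,\bB}$ at $s=1/2$ equals $-\tfrac14 H(|\fF(x)|^2)$ (the sign flips because increasing $p$ corresponds to decreasing $q$, i.e. increasing $s$, and the $L^q$-norm is a \emph{decreasing} function of $q$ in the relevant regime). I would carry out this differentiation carefully using the elementary identity $\frac{d}{dt}\big|_{t=1}\tau(a^t)=\tau(a\log a)$ for a positive operator $a$ with $\tau(a)=1$, applied to $a=x^\#\diamond x$ and $a=\fF(x)^*\fF(x)$, together with the chain rule relating the derivative in $p$ to the derivative in the exponent $p/2$.

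Combining, the inequality $\Phi(p)\ge 0=\Phi(2)$ forces $\Phi'(2^-)\le 0$, and since $\log$ is increasing this is equivalent to $\frac{d}{dp}\big|_{p=2}\big(\log\|x\|_{p,\bA}-\log\|\fF(x)\|_{q,\bB}\big)\le 0$, i.e.
\[
\tfrac14 H(|x|^2)+\tfrac14 H(|\fF(x)|^2)\ge 0
\]
in the normalized case; undoing the normalization yields the stated bound. The main obstacle I anticipate is purely technical rather than conceptual: justifying that $\Phi$ is differentiable at the endpoint $p=2$ and that one may legitimately pass from ``$\Phi\ge 0$, $\Phi(2)=0$'' to ``$\Phi'(2^-)\le 0$'', which requires knowing the relevant $p\mapsto\|x\|_{p}$ maps are $C^1$ near $p=2$ (true in finite dimensions, where all norms are smooth functions of $p$ away from degeneracies, and one can handle operators with zero eigenvalues by restricting to the range projection — which is where Lemma~\ref{lem:support} is convenient). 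A secondary point of care is getting the signs and the factor $4$ exactly right when converting the derivative in $q$ back to a derivative in $p$; I would double-check this by testing the formula on a rank-one extremizer (a normalized minimal projection), where both entropies vanish and the inequality is saturated.
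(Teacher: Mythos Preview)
Your approach is essentially the same as the paper's: differentiate the Hausdorff--Young inequality at $p=2$ and read off the entropies from the derivative of $p\mapsto\log\|x\|_p$. The paper parametrizes by the exponent on the $\bB$ side (taking $f(p)=\log\|\fF(x)\|_{p,\bB}-\log\|x\|_{q,\bA}$ for $p\ge 2$, so $f\le 0$, $f(2)=0$, hence $f'(2)\le 0$) and carries the general $\|x\|_{2,\bA}$ through without normalizing, but the substance is identical; note that your displayed derivative has a sign slip --- one actually gets $\frac{d}{dp}\big|_{p=2}\log\|x\|_{p,\bA}=-\tfrac14 H(|x|^2)$ in the normalized case --- though as you anticipated the signs sort themselves out in the end.
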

\begin{proof}
We assume that $x\neq 0$.
Let $f(p)=\log\|\fF(x)\|_{p,\bB}-\log\|x\|_{q, \bA}$, where $p\geq 2$ and $1/p+1/q=1$.
By using the computations in the proof of Theorem 5.5 in \cite{JLW16}, we have
$$\left. \frac{d}{dp}\|\fF(x)\|_{p, \bB}^p\right|_{p=2}=-\frac{1}{2} H(|\fF(x)|^2)$$
and
$$\left. \frac{d}{dp}\log\|\fF(x)\|_{p,\bB} \right|_{p=2}=-\frac{1}{4}\log\|\fF(x)\|_{2,\bB}^2-\frac{1}{4}\frac{H(|\fF(x)|^2)}{\|x\|_{2, \bA}^2}.$$
We obtain that
$$f'(2)=-\frac{1}{2}\log\|x\|_{2,\bA}^2-\frac{1}{4\|x\|_{2, \bA}^2}(H(|\fF(x)|^2)+H(|x|^2)).$$
By Proposition~\ref{prop:planch}, we have that $f(2)=0$.
By Theorem~\ref{thm:hy}, we have $f(p)\leq 0$ for $p\geq 2$.
Hence $f'(2)\leq 0$ and
$$H(|\fF(x)|^2)+H(|x|^2)\geq -4\|x\|_{2,\bA}^2\log\|x\|_{2,\bA}.$$
\end{proof}

\begin{remark}
Let $(\bA, \bB, \fF, d, \tau)$ be a fusion bialgebra.
The Hirschman-Beckner uncertainty principle is also true for $x\in \bB$ with respect to the Fourier transform $\tfF$.
\end{remark}

We give a second proof of Theorem \ref{thm:DS}:

\begin{proof}
By using Theorem \ref{thm:HB} and the inequality $\log \cS(x)\geq H(|x|^2)$, for any $\| x\|_{2,\bA}=1$, we see that Theorem \ref{thm:DS} is true.
\end{proof}

For any $x\in \bA$ or $\bB$ and $t\in (0,1)\cup (1, \infty)$, the R\'{e}nyi entropy $H_t(x)$ is defined by
$$H_t(x)=\frac{t}{1-t}\|x\|_t.$$
Then $H_t(x)$ are decreasing function (see Lemma 4.3 in \cite{LiuWuscm}) with respect to $t$ for $\|x\|_{\infty,\bA} \leq 1$ and $\|x\|_{\infty, \bB}\leq 1$ respectively.

\begin{theorem}[R\'{e}nyi entropic uncertainty principles]
Let $(\bA, \bB, \fF, d, \tau)$ be a fusion bialgebra, $1\leq t, s\leq \infty$.
Then for any $x\in \bA$ with $\|x\|_{2,\bA}=1$, we have
$$(1/t-1/2)H_{t/2}(|\fF(x)|^2)+(1/2-1/s)H_{s/2}(|x|^2)\geq -\log K(1/t,1/s).$$
\end{theorem}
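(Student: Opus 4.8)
The plan is to reduce the entropic inequality to the two‑sided norm bound for the Fourier transform that is already recorded as Theorem~\ref{thm:normfourier}, and then read that bound off in the direction $\fF:\bA\to\bB$.

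First I would unwind the two R\'enyi--entropy terms. Since $|\fF(x)|^2\ge 0$ and $\||z|^2\|_{r/2}=\|z\|_r^2$ for every $r$ and on either algebra, and since Plancherel's formula (Proposition~\ref{prop:planch}) gives $\|\fF(x)\|_{2,\bB}=\|x\|_{2,\bA}=1$, a short computation unwinding the definition of $H_r$ shows
$$(1/t-1/2)\,H_{t/2}(|\fF(x)|^2)=\log\|\fF(x)\|_{t,\bB},\qquad (1/2-1/s)\,H_{s/2}(|x|^2)=-\log\|x\|_{s,\bA},$$
for all $t,s\neq 2$; at $t=2$ (resp.\ $s=2$) the prefactor $1/t-1/2$ (resp.\ $1/2-1/s$) vanishes and the von Neumann entropy $H_1$ is finite, so the term is $0$ and agrees with $\log\|\fF(x)\|_{2,\bB}=\log 1=0$ (resp.\ $-\log\|x\|_{2,\bA}=0$) by the normalization. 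Hence the asserted inequality is equivalent to the purely operator‑theoretic estimate
$$\|\fF(x)\|_{t,\bB}\ \ge\ K(1/t,1/s)^{-1}\,\|x\|_{s,\bA}.$$

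Next I would deduce this estimate from Theorem~\ref{thm:normfourier}. From the definition $\tfF=\#\fF^{-1}*$ one gets $\fF^{-1}=\#\,\tfF\,*$. The involution $\#$ of the abelian C$^*$-algebra $\bA$ satisfies $|x^\#|=|x|$ (commutativity), and the involution $*$ of $\bB$ satisfies $\tau(|y^*|^r)=\tau(|y|^r)$ (traciality), so both $\#$ and $*$ are isometries for every $L^p$-norm. Applying Theorem~\ref{thm:normfourier} with exponents $p=t$, $q=s$ therefore gives, for every $y\in\bB$,
$$\|\fF^{-1}(y)\|_{s,\bA}=\|\tfF(y^*)\|_{s,\bA}\ \le\ K(1/t,1/s)\,\|y^*\|_{t,\bB}=K(1/t,1/s)\,\|y\|_{t,\bB}.$$
Taking $y=\fF(x)$ yields $\|x\|_{s,\bA}\le K(1/t,1/s)\,\|\fF(x)\|_{t,\bB}$, which is exactly the required estimate; combining it with the reduction above proves the theorem.

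The bookkeeping I would watch most carefully is twofold: (i) matching the two conventions for entropy at the degenerate exponents $t=2$ and $s=2$, so that the R\'enyi term with its vanishing prefactor really collapses to the von Neumann value consistently with $\|x\|_{2,\bA}=1$; and (ii) checking that the composition $\fF^{-1}=\#\,\tfF\,*$ together with the choice $p=t$, $q=s$ produces precisely the argument $K(1/t,1/s)$ occurring in the statement and not a reflected variant such as $K(1/s,1/t)$ or $K(1-1/t,1-1/s)$. Since Theorem~\ref{thm:normfourier} is itself only sketched (it follows from the proof of Theorem~3.13 in \cite{LiuWuscm}), the genuine analytic work — interpolation (Proposition~\ref{prop:inter}) between Plancherel, the endpoint $\|\fF(\cdot)\|_{\infty,\bB}\le\|\cdot\|_{1,\bA}$ of Proposition~\ref{prop:hyinf1}, the reverse Hausdorff--Young bounds, and the crude $L^p$--$L^q$ comparisons carrying the powers of $\mu$ across the three regions $R_F,R_T,R_{TF}$ of Figure~\ref{Fig: norm} — is inherited from there; everything added here is the elementary entropy/norm dictionary and the isometry argument, so I do not expect a serious obstacle beyond that bookkeeping.
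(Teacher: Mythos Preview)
Your proof is correct and follows exactly the route the paper indicates: the paper's own proof is the one-liner ``similar to the proof of Proposition~4.1 in \cite{LiuWuscm}, using Theorem~\ref{thm:normfourier}'', and you have simply written out that argument explicitly --- the entropy/norm dictionary reducing the statement to $\|x\|_{s,\bA}\le K(1/t,1/s)\|\fF(x)\|_{t,\bB}$, followed by the application of Theorem~\ref{thm:normfourier} via $\fF^{-1}=\#\,\tfF\,*$ with $p=t$, $q=s$. The bookkeeping concerns you flag (the degenerate exponents $t,s=2$ and the argument order in $K$) are both handled correctly.
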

\begin{proof}
The proof is similar to the proof of Proposition 4.1 in \cite{LiuWuscm}, using Theorem~\ref{thm:normfourier}.
\end{proof}

\section{Young's Inequality}\label{Sec: Young}
In this section, we study Young's inequality for the dual of fusion bialgebra and the connections between Young's inequality and the Schur product property.

\begin{proposition}\label{prop:young1}
Let $(\bA, \bB, \fF, d, \tau)$ be a fusion bialgebra.
Then for any $x, y\in\bA$, we have
$$\|x*y\|_{\infty, \bA}\leq \|x\|_{\infty,\bA}\|y\|_{1,\bA}.$$
\end{proposition}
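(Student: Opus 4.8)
The plan is to prove the inequality $\|x*y\|_{\infty,\bA}\le \|x\|_{\infty,\bA}\|y\|_{1,\bA}$ by passing everything through the Fourier transform to the multiplicative side $\bB$, where the convolution $*$ becomes ordinary multiplication and the relevant norms become the standard operator and $L^2$ norms. The key translation facts I would assemble first are: (i) by definition of the Fourier transform, $\fF(x*y)=\fF(x)\fF(y)$ in $\bB$; (ii) by Proposition~\ref{prop:planch} (Plancherel), $\fF$ is an isometry from the $2$-norm on $\bA$ to the $2$-norm on $\bB$; and (iii) the two $\infty$-norms are linked to the Fourier transform via Proposition~\ref{prop:hyinf1}, namely $\|\fF(x)\|_{\infty,\bB}\le\|x\|_{1,\bA}$, and via the definition of $\|\cdot\|_{\infty,\bB}$ as $\sup_{\|w\|_{2,\bB}=1}\|\fF(x)w\|_{2,\bB}$.

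First I would reduce the left-hand side to a statement on $\bB$. The quantity $\|x*y\|_{\infty,\bA}$ is, by the definition of the $\infty$-norm on $\bA$ given in Definition after Proposition on the $C^*$-norm, intertwined with $\fF$; more usefully, I would note that $\|x*y\|_{\infty,\bA}=\|\fF(x*y)\|_{?}$ must be re-expressed carefully, so instead my cleaner route is to bound $\|x*y\|_{\infty,\bA}$ directly. Since $\bA$ is abelian with minimal projections $d(x_j)x_j$, the $\infty$-norm on $\bA$ is the supremum of the ``values'' of $x*y$ at the characters of $\bA$. I would use Proposition~\ref{prop:hyinf1} in the form $\|\fF(z)\|_{\infty,\bB}\le\|z\|_{1,\bA}$ applied after writing $x*y$ via its Fourier image: concretely, $\fF(x*y)=\fF(x)\fF(y)$, and I would estimate the $\bB$-operator norm of this product, then transport back.

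The central computation I expect to carry the proof is a Hölder/operator-norm estimate on $\bB$: for the product $\fF(x)\fF(y)$, one has $\|\fF(x)\fF(y)\|_{2,\bB}\le\|\fF(x)\|_{\infty,\bB}\|\fF(y)\|_{2,\bB}$ by submultiplicativity of the operator norm against the $2$-norm (this is exactly the defining property of $\|\cdot\|_{\infty,\bB}$). I would then convert each factor: $\|\fF(x)\|_{\infty,\bB}\le\|x\|_{1,\bA}$ is \emph{not} what I want on the $x$-factor; rather I should put the $\infty$-norm on $x$ and the $1$-norm on $y$. So I would instead test $\|x*y\|_{\infty,\bA}$ against an arbitrary character, realize the character evaluation as an inner product in $\bA$, use Plancherel to move it to $\bB$, and bound using $\|\fF(x)\|_{\infty,\bB}$ controlled by $\|x\|_{\infty,\bA}$ (since $\fF$ maps the abelian $C^*$-algebra $\bA$ with its sup-norm compatibly) together with a $2$-norm factor from $y$ that reassembles, via Plancherel and Hölder in $\bA$, into $\|y\|_{1,\bA}$.

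The main obstacle will be matching the \emph{asymmetry} of the norms: the two factors $x$ and $y$ carry different norms ($\infty$ versus $1$), so I cannot simply use Plancherel symmetrically. The delicate step is justifying that $\|\fF(x)\|_{\infty,\bB}$ (operator norm on the multiplicative side) is controlled by $\|x\|_{\infty,\bA}$ (sup-norm on the abelian side) \emph{with constant one}; this requires using that $\bA$ is abelian so that $x$ acts by multiplication and its convolution operator $y\mapsto x*y$ has operator norm at most $\|x\|_{\infty,\bA}$ when measured appropriately, which is really the statement that the regular representation of the abelian algebra is bounded by the sup-norm. Once that comparison is pinned down, the inequality follows by writing $\|x*y\|_{\infty,\bA}=\sup_{\|z\|_{1,\bA}\le 1}|d((x*y)\diamond z)|$-type duality, applying Hölder on $\bA$ to separate $\|x\|_{\infty,\bA}$ from $\|y\|_{1,\bA}$, and invoking Plancherel only to verify the convolution acts as claimed. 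I would therefore spend most of the care on the norm-comparison lemma and treat the remaining estimates as routine applications of Hölder (Proposition~\ref{prop:holder}) and Plancherel (Proposition~\ref{prop:planch}).
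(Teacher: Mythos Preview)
Your approach has a genuine gap at the step you yourself flag as ``delicate'': the inequality $\|\fF(x)\|_{\infty,\bB}\le\|x\|_{\infty,\bA}$ is simply false. Take $x=\fF^{-1}(x_j)$ (which, recall, equals $x_j$ as a vector). Since $d(x_j)x_j$ is a minimal projection in $\bA$, one has $\|x_j\|_{\infty,\bA}=1/d(x_j)$, whereas $\|\fF(x_j)\|_{\infty,\bB}=\|x_j\|_{\infty,\bB}=d(x_j)$. So the proposed comparison fails by a factor of $d(x_j)^2$, which can be arbitrarily large. Your heuristic justification (``the regular representation of the abelian algebra is bounded by the sup-norm'') conflates two different actions: multiplication by $x$ on $\bA$ (controlled by $\|x\|_{\infty,\bA}$) versus convolution by $x$ on $\bA$ (controlled by $\|\fF(x)\|_{\infty,\bB}$). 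These are \emph{not} the same operator, and the whole content of the proposition is precisely a bound on the second one. The fallback you sketch---duality $\|x*y\|_{\infty,\bA}=\sup_{\|z\|_{1,\bA}\le 1}|d((x*y)\diamond z)|$ plus H\"older---does not separate $x$ and $y$ either, because H\"older on $\bA$ applies to the product $\diamond$, not to the convolution $*$; and if you rotate via Proposition~\ref{prop:rotation2} to $d((J(z)*x^{\#})\diamond J(y))$, you are back to bounding a convolution $\infty$-norm, which is circular.

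The paper's proof is a direct basis computation that avoids all of this. Writing $x=\sum_j\lambda_j\fF^{-1}(x_j)$ and $y=\sum_k\lambda'_k\fF^{-1}(x_k)$, one has $\|x*y\|_{\infty,\bA}=\max_s|\sum_{j,k}\lambda_j\lambda'_k N_{j,k}^s|/d(x_s)$. After a triangle inequality and pulling out $\max_j|\lambda_j|/d(x_j)=\|x\|_{\infty,\bA}$, the remaining sum $\sum_{j,k}d(x_j)|\lambda'_k|N_{j,k}^s/d(x_s)$ collapses to $\sum_k|\lambda'_k|d(x_k)=\|y\|_{1,\bA}$ precisely because $\sum_j d(x_j)N_{j,k}^s=d(x_k)d(x_s)$, i.e.\ $d$ is multiplicative (Proposition~\ref{Prop: dim d}). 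That multiplicativity is the structural ingredient your Fourier-analytic sketch never invokes, and it is what makes the inequality work.
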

\begin{proof}
For any $x=\sum_{j=1}^m \lambda_j\fF^{-1}(x_j)$ and $y=\sum_{j=1}^m \lambda_j'\fF^{-1}(x_j)$, we have
\begin{align*}
\|x*y\|_{\infty, \bA}
&= \left\| \sum_{j,k=1}^m\lambda_j\lambda_k' \fF^{-1}(x_jx_k)\right\|_{\infty, \bA} = \left\| \sum_{j,k,s=1}^m\lambda_j\lambda_k'  N_{j,k}^s \fF^{-1}(x_s)\right\|_{\infty, \bA} \\
&=\max_{1\leq s\leq m}\frac{\left|\sum_{j,k=1}^m\lambda_j\lambda_k' N_{j,k}^s\right|}{d(x_s)}
\leq \max_{1\leq s\leq m}\frac{\sum_{j,k=1}^m \left|\lambda_j\lambda_k' N_{j,k}^s\right|}{d(x_s)}\\
&\leq \max_{1\leq j\leq m}\frac{|\lambda_j|}{d(x_j)}\max_{1\leq s\leq m}\frac{\sum_{j,k=1}^m \left|d(x_j)\lambda_k' N_{j,k}^s\right|}{d(x_s)}\\
&=\|x\|_{\infty, \bA}\sum_{k=1}^m |\lambda_k'|d(x_k)
=\|x\|_{\infty,\bA}\|y\|_{1,\bA}.
\end{align*}
This completes the proof of the proposition.
\end{proof}

\begin{remark}\label{rk:young1}
It would be natural to ask whether the following Young's inequality for the dual $(\bB, \bA, \tfF, \tau, d)$
\begin{equation}\label{eq:young12}
\|x*_\bB y\|_{\infty, \bB}\leq \|x\|_{\infty, \bB}\|y\|_{1, \bB}
\end{equation}
holds in general, but it does not, because we will see that it implies the Schur product property on the dual, which does not hold on many examples provided by Theorem~\ref{thm:SchurCounterEx}, Subsections~\ref{Sec:SmallRank} and~\ref{sub:simple}.
\end{remark}

\begin{proposition}\label{rem:young1}
Suppose that $\fF^{-1}(x)> 0$, $x\in \bB$, we actually have that Inequality (\ref{eq:young12}) is true.
Hence
\begin{equation}\label{eq:young13}
\|x*_\bB y\|_{\infty, \bB}\leq 4 \|x\|_{\infty, \bB}\|y\|_{1, \bB}.
\end{equation}
\end{proposition}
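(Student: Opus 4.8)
The plan is to prove the two statements separately, treating Inequality~\eqref{eq:young12} as the main content and Inequality~\eqref{eq:young13} as an easy consequence.

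\textbf{Proof of Inequality~\eqref{eq:young12} under the hypothesis $\fF^{-1}(x)>0$.} First I would reduce to the canonical case by a gauge transformation, so that $B=\{x_1=1,\dots,x_m\}$ is the $\bRp$-basis of $\bB$ and $\fF^{-1}(x_j)=P_j$ are the minimal projections of $\bA$. Write $z:=\fF^{-1}(x)\in\bA$; by hypothesis $z>0$, so $z=\sum_j \mu_j P_j$ with all $\mu_j>0$, and $\|x\|_{\infty,\bB}=\|z\|_{\infty,\bA}=\max_j \mu_j/d(x_j)$ by the definition of the $\bA$-norm (here I use $d(z\diamond P_j)=\mu_j d(x_j)^{-1}d(P_j)$, wait—more precisely $z\diamond P_j=\mu_j d(x_j)^{-1}P_j$, hence $|d(z\diamond P_j)|/d(P_j)=\mu_j/d(x_j)$ since in the canonical normalization $d(P_j)=d(x_j)$). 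The key point is the formula, valid because $\fF^{-1}(x)\geq 0$, that the convolution $x*_\bB y$ is computed by the coproduct-type identity from the proof of Proposition~\ref{prop:comulti3}: for $y=\sum_k\lambda_k x_k$,
\begin{align*}
x*_\bB y = \sum_{j,k,s} \frac{\mu_j}{d(x_j)}\,\lambda_k\, N_{j,k}^{s}\, x_s \quad\text{(up to taking contragredient/adjoint conventions)},
\end{align*}
so that, proceeding exactly as in the proof of Proposition~\ref{prop:young1} with the roles of the two products exchanged, $\|x*_\bB y\|_{\infty,\bB}=\max_s \big|\sum_{j,k}\tfrac{\mu_j}{d(x_j)}\lambda_k N_{j,k}^s\big|/\text{(suitable normalization)} \leq (\max_j \mu_j/d(x_j))\cdot \sum_k|\lambda_k| d(x_k) = \|x\|_{\infty,\bB}\|y\|_{1,\bB}$, where the middle inequality uses that $\sum_{j,s} d(x_j)N_{j,k}^s d(x_s) = d(x_k)\sum_s d(x_s)^2/\dots$—i.e. the Frobenius–Perron/Perron eigenvector computation of Proposition~\ref{Prop: dim d} together with the rotation identity~\eqref{Equ: rotation}, exactly mirroring the computation in Proposition~\ref{prop:young1}. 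The positivity hypothesis $\fF^{-1}(x)>0$ is exactly what lets one replace $|\sum_j (\mu_j/d(x_j)) N_{j,k}^s \cdots|$ by $\sum_j (\mu_j/d(x_j)) N_{j,k}^s \cdots$ without losing a factor, since all coefficients $\mu_j/d(x_j)$ and $N_{j,k}^s$ are then nonnegative.

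\textbf{Proof of Inequality~\eqref{eq:young13}.} For general $x\in\bB$ with $x\geq 0$ I would decompose $z=\fF^{-1}(x)$, which is self-adjoint, into its positive and negative parts $z=z_+-z_-$, and then split further so that $x$ is written as a combination of at most four elements each of which is a (nonnegative multiple of a) $\fF$-image of a positive operator in $\bA$; applying~\eqref{eq:young12} to each piece and using the triangle inequality for $\|\cdot\|_{\infty,\bB}$ together with $\|z_\pm\|_{\infty,\bA}\leq\|z\|_{\infty,\bA}$ gives the factor $4$. Concretely, since Theorem~\ref{thm:normfourier} (the Hausdorff–Young type bounds) or simply a direct norm estimate controls $\|z_\pm\|$ in terms of $\|x\|$, one bounds $\|x*_\bB y\|_{\infty,\bB}$ by a sum of four terms each of the form $\|x_i\|_{\infty,\bB}\|y\|_{1,\bB}$ with $\sum_i\|x_i\|_{\infty,\bB}\leq 4\|x\|_{\infty,\bB}$.

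\textbf{Main obstacle.} The delicate step is making the convolution formula in the first part fully rigorous: one must be careful that the identity $x*_\bB y=\tfF^{-1}(\tfF(x)\diamond\tfF(y))$ unwinds—via $\tfF=\#\fF^{-1}*$ and the definitions of contragredient—to the clean "sum over $N_{j,k}^s$ weighted by $\mu_j/d(x_j)$" expression I used, with the correct placement of adjoints and of the $1/d(x_j)$ factors. I would verify this by evaluating on the basis $\{x_k\}$ and pairing against $x_{s^*}$ using $\tau$, reproducing the computation style of Proposition~\ref{prop:comulti3} and Proposition~\ref{SchurProp}; once that bookkeeping is pinned down, the inequality itself is the same Perron–Frobenius estimate already used in Proposition~\ref{prop:young1}. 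A secondary point to check is that the normalization constants surviving the gauge reduction do not spoil the clean constant $1$ in~\eqref{eq:young12}, which follows because both $\|\cdot\|_{\infty,\bB}$ and $\|\cdot\|_{1,\bB}$ and the convolution transform homogeneously under the gauge.
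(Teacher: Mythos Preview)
Your proposal contains two fundamental confusions that make the argument break down.

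\textbf{Wrong formula for the convolution.} You write
\[
x*_\bB y = \sum_{j,k,s} \frac{\mu_j}{d(x_j)}\,\lambda_k\, N_{j,k}^{s}\, x_s,
\]
but this is the \emph{other} convolution. By definition $x*_\bB y=\tfF^{-1}(\tfF(x)\diamond\tfF(y))$, and the multiplication $\diamond$ on $\bA$ is \emph{diagonal} in the basis: $x_j\diamond x_k=\delta_{j,k}d(x_j)^{-1}x_j$. Writing $x=\sum_j\lambda_j x_j$ and $y=\sum_j\lambda_j' x_j$ one gets immediately
\[
x*_\bB y=\sum_{j}\frac{\lambda_j\lambda_j'}{d(x_j)}\,x_j,
\]
with no fusion coefficients $N_{j,k}^s$ at all. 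The structure constants $N_{j,k}^s$ live in $*_\bA$, not in $*_\bB$, so ``mirroring Proposition~\ref{prop:young1}'' is the wrong template.

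\textbf{Wrong identification of $\|x\|_{\infty,\bB}$.} You assert $\|x\|_{\infty,\bB}=\|\fF^{-1}(x)\|_{\infty,\bA}$. The Fourier transform preserves $2$-norms (Plancherel), not $\infty$-norms. The correct identity, valid precisely because $\fF^{-1}(x)\geq0$ (so all $\lambda_j\geq0$), is
\[
\|x\|_{\infty,\bB}=\sum_j \lambda_j d(x_j)=\|\fF^{-1}(x)\|_{1,\bA}:
\]
the inequality $\leq$ is Proposition~\ref{prop:hyinf1}, and equality holds because $\sum_k d(x_k)x_k$ is a common Perron--Frobenius eigenvector for all $L_j$ with eigenvalue $d(x_j)$ (Proposition~\ref{Prop: dim d}), giving the eigenvalue $\sum_j\lambda_jd(x_j)$ for $x$.

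With these two corrections the paper's short proof runs cleanly: from the diagonal formula and $\|x_j\|_{\infty,\bB}=d(x_j)$ one gets $\|x*_\bB y\|_{\infty,\bB}\leq\sum_j|\lambda_j\lambda_j'|=\sum_j\lambda_jd(x_j)\cdot|\lambda_j'|/d(x_j)\leq\|x\|_{\infty,\bB}\cdot\|\fF^{-1}(y)\|_{\infty,\bA}\leq\|x\|_{\infty,\bB}\|y\|_{1,\bB}$, the last step again by Proposition~\ref{prop:hyinf1}. For~\eqref{eq:young13} your idea is right in spirit: decompose $\fF^{-1}(x)$ (for \emph{arbitrary} $x$, not just $x\geq0$) as a combination of four nonnegative elements of $\bA$, apply the first part to each piece, and use the triangle inequality.
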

\begin{proof}
Let $x=\sum_{j=1}^m\lambda_j \fF(x_j)$ with $\lambda_j\geq 0$ and $y=\sum_{j=1}^m \lambda_j' \fF(x_j)$.
Then $\|x\|_{\infty,\bB}=\sum_{j=1}^m \lambda_j d(x_j)$ and
\begin{align*}
\|x*_\bB y\|_{\infty,\bB}&=\left\| \sum_{j=1}^m\lambda_j\lambda_j'd(x_j)^{-1} x_j\right\|_{\infty, \bB}
 \leq \sum_{j=1}^m | \lambda_j\lambda_j'|\\
& = \sum_{j=1}^m \lambda_jd(x_j)\frac{|\lambda_j'|}{d(x_j)}
\leq  \max_{1\leq j\leq m} \frac{|\lambda_j'|}{d(x_j)}\sum_{j=1}^m \lambda_jd(x_j)\\
&=\|\fF^{-1}(y)\|_{\infty, \bA}\|x\|_{\infty, \bB}
 \leq \|y\|_{1, \bB}\|x\|_{\infty, \bB} \quad \text{Proposition~\ref{prop:hyinf1}}
\end{align*}
Inequality~(\ref{eq:young13} follows directly by the fact that any element is a linear combination of four positive elements.
\end{proof}

\begin{proposition}
Let $(\bA, \bB, \fF, d, \tau)$ be a fusion bialgebra with $\bB$ commutative.
Then the Schur product property for the dual $(\bB, \bA, \tfF, \tau, d)$ implies inequality (\ref{eq:young12}).
\end{proposition}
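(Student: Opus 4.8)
The plan is to deduce the inequality from Proposition~\ref{prop:young1}, applied not to $(\bA,\bB,\fF,d,\tau)$ but to its Fourier dual. First I would observe that, once $\bB$ is commutative, the quintuple $(\bB, \bA, \tfF, \tau, d)$ has exactly the shape demanded by Definition~\ref{Def: Fusion bialgebras}: $\bB$ and $\bA$ are finite dimensional C$^*$-algebras with faithful traces $\tau$ and $d$, the first one is commutative, and $\tfF$ is a $2$-norm preserving unitary. Condition~(2) (Modular Conjugation) for this quintuple is precisely the Dual Modular Conjugation proposition, and condition~(3) (Jones Projection) follows from Proposition~\ref{Prop: Dual Jones Projection} together with the identity $\tfF^{-1}(1_{\bA})=\fF(1_{\bA})^{*}=\fF(1_{\bA})$, which is a positive multiple of the central minimal projection $e_{\bB}$. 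Condition~(1) (Schur Product) for the dual is exactly the hypothesis. Hence the dual $(\bB, \bA, \tfF, \tau, d)$ is itself a fusion bialgebra, and Inequality~(\ref{eq:young12}) is literally the conclusion of Proposition~\ref{prop:young1} read for this fusion bialgebra, with the commutative algebra $\bB$ playing the role of $\bA$, the convolution $*_{\bB}$ playing the role of $*$, and $\tau$ playing the role of $d$.

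Second, I would unpack this incarnation so that the bookkeeping is transparent. By Theorem~\ref{Thm: extension} applied to the dual, $\bA$ carries a unique $\bRp$-basis whose images under $\tfF^{-1}$ are positive multiples of the minimal projections $P_1,\dots,P_m$ of $\bB$ (these are the projections $Q_j$ written down explicitly in Proposition~\ref{Prop: Schur} in the rank-$3$ case). In the basis $\{P_j\}$ the convolution $*_{\bB}$ has nonnegative structure constants: this is exactly the Schur product property on the dual, since in the commutative C$^*$-algebra $\bB$ an element $\sum_s a_s P_s$ is positive if and only if every $a_s\ge 0$. With this nonnegativity in hand, the elementary estimate from the proof of Proposition~\ref{prop:young1}---expand $x$ and $y$ in the basis $\{P_j\}$, move the modulus inside the sums, and factor out a $\max$ term and an $\ell^1$-sum exactly as there---runs verbatim and yields $\|x*_{\bB}y\|_{\infty,\bB}\le \|x\|_{\infty,\bB}\,\|y\|_{1,\bB}$; the dual ``dimensions'' that appear are positive because $\tau$ is faithful.

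The part that needs care is a matter of identifications, not of mathematics. One must match the abstract norms appearing in Proposition~\ref{prop:young1} for the dual with the concrete norms $\|\cdot\|_{1,\bB}$ and $\|\cdot\|_{\infty,\bB}$ defined in the paper: the $1$-norm is $\tau(|\cdot|)$ in both readings, and the $\infty$-norm defined via the operator norm on $L^2(\bB,\tau)$ coincides with the expression of the type $\max_j |\tau(x\,y_j)|/\tau(P_j)$ used in Proposition~\ref{prop:young1}, because $\bB$ is a finite dimensional commutative C$^*$-algebra and both quantities equal the maximum of the absolute values of its spectral coefficients. A secondary point is to check that the normalization turning the minimal projections of $\bB$ into the $\bRp$-basis of the dual is the one implicitly used above, which is precisely what the construction in the proof of Theorem~\ref{Thm: extension} provides. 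Once these identifications are pinned down the argument is formal, so I expect no genuine obstacle beyond this careful transcription.
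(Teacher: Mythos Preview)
Your proof is correct and follows exactly the paper's approach: verify that the dual $(\bB,\bA,\tfF,\tau,d)$ is itself a fusion bialgebra (conditions (2) and (3) from the Dual Modular Conjugation and Dual Jones Projection propositions, condition (1) being the hypothesis, commutativity of $\bB$ being assumed), and then invoke Proposition~\ref{prop:young1} for that dual. The paper's proof is the two-line version of your first paragraph; your second and third paragraphs are a correct but unnecessary unpacking of what Proposition~\ref{prop:young1} already guarantees abstractly.
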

\begin{proof}
By Definition~\ref{Def: Fusion bialgebras}, $(\bB, \bA, \tfF, \tau, d)$ is a fusion bialgebra. The proposition follows from Proposition~\ref{prop:young1}.
\end{proof}

\begin{proposition}
Let $(\bA, \bB, \fF, d, \tau)$ be a fusion bialgebra. If $\|\Delta\|\leq 1$ then Inequality (\ref{eq:young12}) holds.
\end{proposition}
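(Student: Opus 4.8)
The plan is to reduce Inequality~\eqref{eq:young12} to the slice identity obtained in the proof of Proposition~\ref{prop:comulti3}, and then to control the resulting slice of $\Delta(x)$ by an $L^{\infty}$--$L^{1}$ duality argument on the finite von Neumann algebra $\bB\otimes\bB$ equipped with the trace $\tau\otimes\tau$. First I would record two facts. The contragredient on $\bB$ satisfies $\overline{x_j}=\fF\tfF(x_j)=\fF\bigl((\fF^{-1}(x_j^{*}))^{\#}\bigr)=x_j^{*}$, since $\fF^{-1}(x_j^{*})$ is a positive multiple of a minimal projection of $\bA$ and is therefore fixed by $\#$ (cf.\ the proof of Proposition~\ref{Prop: Contragredient}); extending linearly, $\overline{(\cdot)}$ is a $\bC$-linear, $*$-preserving, product-reversing bijection of $\bB$ with $\tau\circ\overline{(\cdot)}=\tau$, hence an isometry for every $\|\cdot\|_{t,\bB}$ with $1\le t\le\infty$. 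In particular $\|x*_{\bB}y\|_{\infty,\bB}=\|\overline{x*_{\bB}y}\|_{\infty,\bB}$ and $\|\overline{y}\|_{1,\bB}=\|y\|_{1,\bB}$, while by the computation in the proof of Proposition~\ref{prop:comulti3} one has $\overline{x*_{\bB}y}=(\iota\otimes\tau)\bigl(\Delta(x)(1\otimes\overline{y})\bigr)$, where $\iota$ is the identity map on $\bB$.

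Next I would prove the slice estimate
\[
\bigl\|(\iota\otimes\tau)\bigl(W(1\otimes v)\bigr)\bigr\|_{\infty,\bB}\ \le\ \|W\|_{\infty,\bB\otimes\bB}\,\|v\|_{1,\bB},\qquad W\in\bB\otimes\bB,\ v\in\bB .
\]
Using the $L^{\infty}$--$L^{1}$ duality $\|z\|_{\infty,\bB}=\sup\{\,|\tau(wz)| : w\in\bB,\ \|w\|_{1,\bB}\le1\,\}$ afforded by the faithful tracial state $\tau$, it suffices to bound $|\tau(w\,(\iota\otimes\tau)(W(1\otimes v)))|$ for such $w$. For any such $w$,
\[
\tau\bigl(w\,(\iota\otimes\tau)(W(1\otimes v))\bigr)=(\tau\otimes\tau)\bigl((w\otimes1)\,W\,(1\otimes v)\bigr)=(\tau\otimes\tau)\bigl(W\,(w\otimes v)\bigr),
\]
the last equality by traciality of $\tau\otimes\tau$ on $\bB\otimes\bB$. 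Hölder's inequality (Proposition~\ref{prop:holder}) on $(\bB\otimes\bB,\tau\otimes\tau)$ together with $\|w\otimes v\|_{1,\bB\otimes\bB}=\|w\|_{1,\bB}\|v\|_{1,\bB}$ then gives $|(\tau\otimes\tau)(W(w\otimes v))|\le\|W\|_{\infty,\bB\otimes\bB}\|w\|_{1,\bB}\|v\|_{1,\bB}\le\|W\|_{\infty,\bB\otimes\bB}\|v\|_{1,\bB}$, and taking the supremum over $w$ establishes the estimate.

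Finally I would combine the pieces: applying the slice estimate with $W=\Delta(x)$ and $v=\overline{y}$, and using $\|\Delta(x)\|_{\infty,\bB\otimes\bB}\le\|\Delta\|\,\|x\|_{\infty,\bB}\le\|x\|_{\infty,\bB}$ — here $\|\Delta\|$ denotes the $C^{*}$-operator norm of $\Delta\colon\bB\to\bB\otimes\bB$, and this is the only place the hypothesis $\|\Delta\|\le1$ enters — we obtain $\|\overline{x*_{\bB}y}\|_{\infty,\bB}\le\|x\|_{\infty,\bB}\|\overline{y}\|_{1,\bB}=\|x\|_{\infty,\bB}\|y\|_{1,\bB}$. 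By the isometry of the contragredient this is exactly $\|x*_{\bB}y\|_{\infty,\bB}\le\|x\|_{\infty,\bB}\|y\|_{1,\bB}$, i.e.\ Inequality~\eqref{eq:young12}.

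I do not expect a genuine obstacle here beyond bookkeeping. The points requiring care are that the trace on $\bB\otimes\bB$ must be the tensor trace $\tau\otimes\tau$ (so that both traciality and the factorization $\|w\otimes v\|_{1}=\|w\|_{1}\|v\|_{1}$ on simple tensors are available), and that the contragredient must be tracked so that the $L^{1}$ factor lands on $y$ rather than on $\overline{y}$. In contrast to Proposition~\ref{prop:comulti3}, no positivity of $\Delta$ is needed — only the norm bound $\|\Delta\|\le1$.
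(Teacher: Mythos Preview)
Your proof is correct and follows essentially the same approach as the paper: both use the slice identity $\overline{x*_{\bB}y}=(\iota\otimes\tau)\bigl(\Delta(x)(1\otimes\overline{y})\bigr)$ from the proof of Proposition~\ref{prop:comulti3}, then the $L^{\infty}$--$L^{1}$ duality on $(\bB\otimes\bB,\tau\otimes\tau)$ together with H\"older's inequality and the hypothesis $\|\Delta\|\le 1$. Your version is in fact more carefully written than the paper's---you make explicit the isometry of the contragredient (needed to pass between $\|\overline{y}\|_{1}$ and $\|y\|_{1}$, and between $\|\overline{x*_{\bB}y}\|_{\infty}$ and $\|x*_{\bB}y\|_{\infty}$), which the paper leaves implicit.
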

\begin{proof}
As in the proof of Proposition \ref{prop:comulti3}, for any $x, y\in \bB$, we have that $\overline{x*_\bB y}=(\iota\otimes \tau)(\Delta(x)(1\otimes \overline{y}))$.
Then
\begin{align*}
\|(\iota\otimes \tau)(\Delta(x)(1\otimes y)) \|_{\infty, \bB}
&=\sup_{\|z\|_{1, \bB}=1}\left|(\tau\otimes \tau)(\Delta(x)(z\otimes \overline{y}))\right|\\
&\leq \sup_{\|z\|_{1, \bB}=1}\|\Delta(x)\|_{\infty, \bB} \|z\|_{1, \bB}\| y\|_{1, \bB}\\
&\leq \|x\|_{\infty, \bB}\|y\|_{1, \bB},
\end{align*}
This completes the proof.
\end{proof}

\begin{proposition}\label{prop:young2}
Let $(\bA, \bB, \fF, d, \tau)$ be a fusion bialgebra.
Then for any $x, y\in\bA$, we have
$$\|x*y\|_{1, \bA}=\|x\|_{1,\bA}\|y\|_{1,\bA}.$$
\end{proposition}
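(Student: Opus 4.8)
The plan is to compute $\|x*y\|_{1,\bA}$ directly on the basis and exploit the fact that the $1$-norm on $\bA$ is simply the $\ell^1$-norm of the coordinates with respect to the basis $B$, weighted by the dimensions $d(x_j)$. Write $x=\sum_j \lambda_j \fF^{-1}(x_j)$ and $y=\sum_k \lambda_k' \fF^{-1}(x_k)$; since the minimal projections of $\bA$ are $d(x_j)^{-1}\fF^{-1}(x_j)$ (up to normalization, as in the proof of Proposition~\ref{Prop: dim d}), we have $\|x\|_{1,\bA}=\sum_j |\lambda_j| d(x_j)$ and similarly for $y$. The key point is that $\le$ already follows from Proposition~\ref{prop:young1} together with the Hausdorff--Young-type bound, or more directly from the triangle inequality exactly as in that proof; so the real content is the reverse inequality $\|x*y\|_{1,\bA}\ge \|x\|_{1,\bA}\|y\|_{1,\bA}$.

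First I would reduce to the case $x,y\ge 0$ in $\bA$. Since $\bA$ is commutative and $\fF^{-1}(x_j)$ are positive multiples of minimal (orthogonal) projections, an element is positive iff all its coordinates $\lambda_j$ are nonnegative; and by the Schur product property (condition (1) of Definition~\ref{Def: Fusion bialgebras}) the convolution $x*y$ of two positive elements is again positive. For positive $x,y$ the coordinates of $x*y$ with respect to $\fF^{-1}(x_s)$ are $\sum_{j,k}\lambda_j\lambda_k' N_{j,k}^s\ge 0$, so no cancellation occurs and
\begin{align*}
\|x*y\|_{1,\bA}&=\sum_{s=1}^m \Big(\sum_{j,k=1}^m \lambda_j\lambda_k' N_{j,k}^s\Big) \frac{d(x_s)}{d(x_s)}\\
&=\sum_{j,k=1}^m \lambda_j\lambda_k' \sum_{s=1}^m N_{j,k}^s\, ,
\end{align*}
wait---I must be careful: $\|x*y\|_{1,\bA}=\tau\circ\fF(x*y)$ only after the right normalization. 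The cleaner route is to use $\|z\|_{1,\bA}=d(|z|)$ and, for $z\ge 0$, $d(z)=\tau(\fF(z))\cdot(\text{const})$; concretely $\|\fF^{-1}(x_s)\|_{1,\bA}=d(x_s)$, so by additivity over the orthogonal decomposition $\|x*y\|_{1,\bA}=\sum_s d(x_s)\,[\text{coordinate}_s(x*y)]=\sum_{j,k}\lambda_j\lambda_k'\sum_s N_{j,k}^s d(x_s)=\sum_{j,k}\lambda_j\lambda_k' d(x_j)d(x_k')$ by Proposition~\ref{Prop: dim d}. This last expression factors as $\big(\sum_j\lambda_j d(x_j)\big)\big(\sum_k\lambda_k' d(x_k)\big)=\|x\|_{1,\bA}\|y\|_{1,\bA}$, giving equality.

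For general $x,y$, decompose $x=x_1-x_2+i(x_3-x_4)$ into positive parts---or more simply, note $\|x*y\|_{1,\bA}\le\|x\|_{1,\bA}\|y\|_{1,\bA}$ by Proposition~\ref{prop:young1} applied after observing $\|\cdot\|_{\infty,\bA}$-versus-$\|\cdot\|_{1,\bA}$ duality is not quite what we need, so instead I would run the triangle-inequality estimate: $\|x*y\|_{1,\bA}=\|\sum_{j,k}\lambda_j\lambda_k'\fF^{-1}(x_jx_k)\|_{1,\bA}\le\sum_{j,k}|\lambda_j||\lambda_k'|\|\fF^{-1}(x_jx_k)\|_{1,\bA}=\sum_{j,k}|\lambda_j||\lambda_k'| d(x_j)d(x_k)=\|x\|_{1,\bA}\|y\|_{1,\bA}$, using that $\|\fF^{-1}(x_jx_k)\|_{1,\bA}=d(x_jx_k)=d(x_j)d(x_k)$ since $x_jx_k$ has nonnegative coordinates. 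Combined with the positive-case equality (which handles the nonnegative combination achieving the bound) and the fact that $x\mapsto\|x*y\|_{1,\bA}$ attains the product on positive elements, we conclude equality holds for all $x,y$ by taking $x,y$ with nonnegative coordinates and noting the general inequality plus density/homogeneity forces equality on the positive cone, which is what the statement asserts (the $1$-norm is insensitive to the phases in the sense that equality in the triangle inequality above is attained precisely when all $\lambda_j\lambda_k'$ share a common phase, in particular for positive $x,y$). The main obstacle is bookkeeping the normalization constants relating $d$, $\tau$, and $\fF$ correctly so that $\|\fF^{-1}(x_s)\|_{1,\bA}=d(x_s)$ holds on the nose; once that identity is pinned down, the rest is the factorization $\sum_s N_{j,k}^s d(x_s)=d(x_j)d(x_k)$ from Proposition~\ref{Prop: dim d}.
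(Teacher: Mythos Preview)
Your core computation---expand $x*y$ on the basis $\{\fF^{-1}(x_s)\}$, use $\|\fF^{-1}(x_s)\|_{1,\bA}=d(x_s)$, and collapse via $\sum_s N_{j,k}^s d(x_s)=d(x_j)d(x_k)$ from Proposition~\ref{Prop: dim d}---is exactly the paper's argument. The paper simply writes this as one chain of equalities, without the detours through Proposition~\ref{prop:young1} or the Schur product axiom.

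Your last paragraph, however, attempting to pass from positive $x,y$ to arbitrary $x,y$, does not work and cannot work: the equality is false for general $x,y\in\bA$. In the group bialgebra of $\bZ/2\bZ$, take $x=\fF^{-1}(x_1)-\fF^{-1}(x_2)$ and $y=\fF^{-1}(x_1)+\fF^{-1}(x_2)$; then $\fF(x)\fF(y)=(x_1-x_2)(x_1+x_2)=0$, so $x*y=0$, while $\|x\|_{1,\bA}=\|y\|_{1,\bA}=2$. The paper's own proof makes the same elision: its second displayed equality,
\[
\Bigl\|\sum_{j,k,s}\lambda_j\lambda_k' N_{j,k}^s\,\fF^{-1}(x_s)\Bigr\|_{1,\bA}=\sum_{j,k,s}|\lambda_j\lambda_k'|N_{j,k}^s\,d(x_s),
\]
moves the absolute value inside the sum over $j,k$, which is only valid when all the $\lambda_j\lambda_k'$ share a common phase---in particular when $x,y\ge 0$. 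So your instinct to isolate the positive case was right; the ``density/homogeneity'' argument you sketch afterwards has no content. What you have actually proved, correctly, is equality for $x,y\ge 0$ and the inequality $\le$ for all $x,y$. That is precisely what the rest of the paper uses: Proposition~\ref{prop:young3} only needs $\le$ at the $p=1$ endpoint for interpolation, and Theorems~\ref{thm:sumsetest} and~\ref{thm:sumset} apply the equality only to projections.
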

\begin{proof}
Suppose $x=\sum_{j=1}^m \lambda_j \fF^{-1}(x_j)$ and $y=\sum_{j=1}^m \lambda_j' \fF^{-1}(x_j)$.
Then
\begin{align*}
\|x*y\|_{1, \bA}&=\left\| \sum_{j=1, k=1,s=1}^m \lambda_j \lambda_k'N_{j,k}^s \fF^{-1}(x_s) \right\|_{1,\bA}
=\sum_{j=1, k=1,s=1}^m |\lambda_j \lambda_k'|N_{j,k}^s d(x_s)\\
&=\sum_{j=1, k=1}^m |\lambda_j \lambda_k'| d(x_j)d(x_k)
=\sum_{j=1}^m |\lambda_j| d(x_j) \sum_{k=1}^m|\lambda_k'| d(x_k)\\
&= \|x\|_{1, \bA} \|y\|_{1, \bA},
\end{align*}
i.e. $\|x*y\|_{1, \bA}=\|x\|_{1,\bA}\|y\|_{1,\bA}$.
\end{proof}

\begin{proposition}\label{prop:young22}
The following two statements are equivalent for $C>0$:
\begin{itemize}
\item[(1)] For any $x, y\in \bB$,  $\|x*_\bB y\|_{\infty, \bB}\leq C\|x\|_{1, \bB}\|y\|_{\infty, \bB}$.
\item[(2)] For any $x, y\in \bB$,  $\|x*_\bB y\|_{1, \bB}\leq C\|x\|_{1, \bB}\|y\|_{1, \bB}$.
\end{itemize}
\end{proposition}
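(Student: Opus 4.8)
The plan is to prove the equivalence by a duality argument, using the trace $\tau$ on $\bB$ together with the Frobenius reciprocity encoded in $\conv_\bB$. The key observation is that the convolution $\conv_\bB$ interacts with $\tau$ through an identity of the shape $\tau((x \conv_\bB y)z) = \overline{\tau((J_\bB(x)\conv_\bB z^*)y^*)}$, which is precisely Proposition~\ref{prop:rotation}. Combined with $\|w\|_{\infty,\bB} = \sup_{\|z\|_{1,\bB}=1}|\tau(wz)|$ (the standard $L^1$-$L^\infty$ duality for the tracial von Neumann algebra $\bB$, available since $\bB$ is finite dimensional) and the fact that $J_\bB$ is an isometric $*$-isomorphism in every $p$-norm (so it preserves both $\|\cdot\|_1$ and $\|\cdot\|_\infty$, by Lemma~\ref{lem:support} and $*$-preservation), one should be able to convert a bound of type $(1)$ into a bound of type $(2)$ and back.

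Concretely, first I would write, for $x,y \in \bB$,
$$\|x \conv_\bB y\|_{1,\bB} = \sup_{\|z\|_{\infty,\bB} = 1} |\tau((x\conv_\bB y)z)|,$$
then apply Proposition~\ref{prop:rotation} to rewrite $\tau((x\conv_\bB y)z) = \overline{\tau((J_\bB(x)\conv_\bB z^*)y^*)}$, and bound the right-hand side by $\|J_\bB(x)\conv_\bB z^*\|_{\infty,\bB}\,\|y^*\|_{1,\bB}$ via H\"older's inequality (Proposition~\ref{prop:holder}). Assuming $(1)$, this is at most $C\|J_\bB(x)\|_{1,\bB}\,\|z^*\|_{\infty,\bB}\,\|y^*\|_{1,\bB} = C\|x\|_{1,\bB}\|y\|_{1,\bB}$, since $J_\bB$ and $*$ are isometric for $\|\cdot\|_1$ and $\|z^*\|_\infty = 1$; taking the supremum over $z$ yields $(2)$. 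For the converse, assuming $(2)$, I would start from $\|x\conv_\bB y\|_{\infty,\bB} = \sup_{\|z\|_{1,\bB}=1}|\tau((x\conv_\bB y)z)|$, apply Proposition~\ref{prop:rotation} again to move into a convolution of two $L^1$-type factors against one $L^\infty$-type factor, use $(2)$ on the convolution and H\"older once more, and collect the norms, again exploiting the isometric property of $J_\bB$ and $*$.

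The main obstacle I anticipate is bookkeeping the involutions and the map $J_\bB$ correctly: Proposition~\ref{prop:rotation} is stated with a specific placement of $J_\bB$, $*$, and complex conjugation, and one has to make sure that after the manipulation the factor carrying the $\|\cdot\|_\infty$-norm is genuinely of the form ``(something) $\conv_\bB$ (something)'' so that the hypothesis $(1)$ or $(2)$ applies, rather than a bare product in $\bB$. A secondary point to check is that $\|\cdot\|_{\infty,\bB}$ here is the C$^*$-norm of $\bB$ (equivalently the $L^\infty$-norm for the trace $\tau$), so the duality $\|w\|_{\infty,\bB} = \sup_{\|z\|_{1,\bB}=1}|\tau(wz)|$ is legitimate; this is immediate in finite dimensions. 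No positivity or commutativity of $\bB$ is needed, so the statement and proof are purely formal consequences of Frobenius reciprocity plus H\"older duality.
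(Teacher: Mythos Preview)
Your proposal is correct and follows essentially the same approach as the paper: both directions are proved by writing the target norm via trace duality, applying Proposition~\ref{prop:rotation} to rotate the convolution, using H\"older's inequality, and then invoking the hypothesis together with the isometry of $J_\bB$ and $*$.
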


\begin{proof}
$(1)\Rightarrow (2)$:
\begin{align*}
\|x*_\bB y\|_{1, \bB} &=\sup_{\|z\|_\infty=1}\tau((x*_\bB y)z)\\
&=\sup_{\|\tilde{z}\|_\infty=1}\tau((J_\bB(x) *_\bB z^*)y^*)\quad \text{Proposition~\ref{prop:rotation}}\\
&\leq  \sup_{\|\tilde{z}\|_\infty=1}\|J_\bB(x) *_\bB \tilde{z}^*\|_{\infty, \bB}\|y\|_{1, \bB}\\
& \leq C\|x\|_{1, \bB}\|y\|_{1, \bB}.
\end{align*}

$(2)\Rightarrow (1)$:
\begin{align*}
\|x*_\bB y\|_{\infty, \bB} &=\sup_{\|z\|_1=1}\tau((x*_\bB y)z)\\
&=\sup_{\|\tilde{z}\|_1=1}\tau((J_\bB(x) *_\bB z^*)y^*)\quad \text{Proposition~\ref{prop:rotation}}\\
&\leq  \sup_{\|\tilde{z}\|_1=1}\|J_\bB(x) *_\bB \tilde{z}^*\|_{1, \bB}\|y\|_{\infty, \bB}\\
& \leq C\|x\|_{1, \bB}\|y\|_{\infty, \bB}.
\end{align*}


\end{proof}

\begin{proposition}\label{prop:young92}
Let $(\bA, \bB, \fF, d, \tau)$ be a fusion bialgebra.
The following statements:
\begin{enumerate}
\item[(1)] the Schur product property holds on the dual.
\item[(2)] $\|x*_\bB y\|_{1, \bB}=\|x\|_{1,\bB}\|y\|_{1,\bB}.$ for any $x\geq 0, y\geq 0$ in $\bB$;
\item[(3)] $\|x*_\bB y\|_{1, \bB}\leq \|x\|_{1,\bB}\|y\|_{1,\bB}.$ for any $x, y$ in $\bB$;
\item[(4)] $\|x*_\bB y\|_{\infty, \bB}\leq \|x\|_{1,\bB}\|y\|_{\infty,\bB}.$ for any $x, y$ in $\bB$;
\end{enumerate}
satisfy that $(4) \Leftrightarrow (3)\Rightarrow (1)\Leftrightarrow (2)$.
\end{proposition}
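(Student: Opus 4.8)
The plan is to prove the implications $(4)\Leftrightarrow(3)\Rightarrow(1)\Leftrightarrow(2)$ by combining the $J_\bB$-duality of Proposition~\ref{prop:rotation} (Frobenius reciprocity on $\bB$) with the Plancherel-type identities already established. The equivalence $(4)\Leftrightarrow(3)$ is the special case $C=1$ of Proposition~\ref{prop:young22}, so that piece is immediate and requires no further argument.

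For $(1)\Rightarrow(2)$, I would start from the fact that for $x,y\geq 0$ in $\bB$ the Schur product property gives $x\conv_\bB y\geq 0$, so $\|x\conv_\bB y\|_{1,\bB}=\tau(x\conv_\bB y)$. Using the definition of $\conv_\bB$ in Equation~\eqref{Equ: convolution b} and $\tfF=\#\fF^{-1}\conv$ together with the Plancherel formula, one computes $\tau(x\conv_\bB y)=\tau(\tfF^{-1}(\tfF(x)\diamond\tfF(y)))$; since $\tau\circ\tfF^{-1}$ pairs against the identity $1_\bA$ and $d(1_\bA)$-weightings multiply through the $\diamond$-product in the diagonal C$^*$-algebra $\bA$, this factors as $d(\tfF(x))\,d(\tfF(y))$ up to the normalization, which by Plancherel again equals $\tau(x)\,\tau(y)=\|x\|_{1,\bB}\|y\|_{1,\bB}$ when $x,y\geq 0$. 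In other words: $\conv_\bB$ of two positive elements is positive, hence its $1$-norm is its trace, and the trace of a convolution is always the product of the traces (this last fact holds unconditionally, analogously to Proposition~\ref{prop:young2} on the $\bA$-side). The conjunction gives $(2)$.

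For $(2)\Rightarrow(1)$ I would argue contrapositively. Suppose Schur product fails, i.e. there exist $x,y\geq 0$ in $\bB$ with $z:=x\conv_\bB y$ not positive. Decompose $z=z_+-z_-$ with $z_\pm\geq 0$ orthogonal and $z_-\neq 0$. Then $\|z\|_{1,\bB}=\tau(z_+)+\tau(z_-)>\tau(z_+)-\tau(z_-)=\tau(z)=\tau(x)\tau(y)=\|x\|_{1,\bB}\|y\|_{1,\bB}$, using once more that the trace of a convolution equals the product of traces. This strict inequality contradicts $(2)$, so $(2)\Rightarrow(1)$, and together with the previous paragraph $(1)\Leftrightarrow(2)$.

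Finally, $(3)\Rightarrow(1)$: again contrapositively, if Schur product fails pick $x,y\geq 0$ witnessing it; then the computation in the previous paragraph shows $\|x\conv_\bB y\|_{1,\bB}>\|x\|_{1,\bB}\|y\|_{1,\bB}$, violating $(3)$ (which quantifies over \emph{all} $x,y$, in particular positive ones). Hence $(3)\Rightarrow(1)$. The main obstacle I anticipate is bookkeeping the gauge normalizations correctly when unwinding $\tfF=\#\fF^{-1}\conv$ and verifying cleanly that $\tau(x\conv_\bB y)=\tau(x)\tau(y)$ for all $x,y$ (not just positive ones); once that identity is in hand, everything reduces to the elementary observation that for a self-adjoint element $w$ one has $\|w\|_1=\tau(w)$ iff $w\geq 0$, with strict inequality otherwise. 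I would isolate the trace-multiplicativity identity as the one genuine computation, mirroring Proposition~\ref{prop:young2}, and treat the rest as formal consequences.
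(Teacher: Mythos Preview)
Your proposal is correct and follows essentially the same route as the paper: the equivalence $(3)\Leftrightarrow(4)$ is Proposition~\ref{prop:young22} with $C=1$, and the remaining implications hinge on the trace-multiplicativity $\tau(x\conv_\bB y)=\tau(x)\tau(y)$ combined with the elementary fact that for self-adjoint $w$ one has $\|w\|_{1,\bB}=\tau(w)$ iff $w\ge 0$. The only simplification the paper offers is that the trace identity is obtained by a one-line coordinate computation (writing $x=\sum_j\lambda_jx_j$, $y=\sum_j\lambda_j'x_j$ gives $\tau(x\conv_\bB y)=\lambda_1\lambda_1'$ directly), rather than the Plancherel/$\tfF$ unwinding you sketch; your anticipated ``main obstacle'' therefore dissolves once you expand in the $\bRp$-basis.
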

\begin{proof}
Suppose $x=\sum_{j=1}^m \lambda_j x_j\geq 0$ and $y=\sum_{j=1}^m \lambda_j' x_j\geq 0$ in $\bB$.
Then
\begin{align*}
\|x*_\bB y\|_{1, \bB} \geq |\tau(x*_\bB y)|=\lambda_1\lambda_1' =\tau(x)\tau(y)=\|x\|_{1,\bB}\|y\|_{1, \bB}.
\end{align*}

$(1)\Rightarrow (2)$: By the Schur product property, we have $x*_\bB y\geq 0$ and $\|x*_\bB y\|_{1, \bB} =\tau(x*_\bB y)$.
This implies (2).

$(2)\Rightarrow (1)$: This implies that $\|x*_\bB y\|_{1, \bB} =\tau(x*_\bB y)$ for $x, y\geq 0$.
However this implies that $x*_\bB y\geq 0$, i.e. the Schur product property holds.

$(3)\Rightarrow (1)$: $(3)$ implies that  $\|x*_\bB y\|_{1, \bB} =\tau(x*_\bB y)$  for $x, y \geq 0$.
Thus the Schur product property holds.

$(3)\Leftrightarrow (4)$: It follows from Proposition \ref{prop:young22}.
\end{proof}

\begin{proposition}\label{prop:young3}
Let $(\bA, \bB, \fF, d, \tau)$ be a fusion bialgebra.
Then for any $x, y\in\bA$, $1\leq p\leq \infty$, we have
$$\|x*y\|_{p, \bA}\leq \|x\|_{p,\bA}\|y\|_{1,\bA}.$$
\end{proposition}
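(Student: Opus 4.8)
The plan is to reduce the claim to the two endpoint estimates already established and then interpolate. First I would fix $y\in\bA$ and regard the assertion as a statement about the linear map $T_y\colon\bA\to\bA$, $T_y(x)=x*y=\fF^{-1}(\fF(x)\fF(y))$, which is well defined and linear in $x$ because $\fF$ is a linear bijection and the multiplication of $\bB$ is bilinear. The two endpoints are then immediate: Proposition~\ref{prop:young1} gives $\|T_y(x)\|_{\infty,\bA}\le\|y\|_{1,\bA}\,\|x\|_{\infty,\bA}$, and Proposition~\ref{prop:young2} gives $\|T_y(x)\|_{1,\bA}=\|x\|_{1,\bA}\,\|y\|_{1,\bA}\le\|y\|_{1,\bA}\,\|x\|_{1,\bA}$. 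Hence $T_y$ is bounded on $L^1(\bA,d)$ and on $L^\infty(\bA,d)$ with norm $\le\|y\|_{1,\bA}$ in both cases.

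Next I would invoke the interpolation theorem, Proposition~\ref{prop:inter}, with $\cM=\cN=\bA$, the faithful functional $d$ in both slots, constants $K_1=K_2=\|y\|_{1,\bA}$, and endpoints $(p_1,q_1)=(1,1)$ and $(p_2,q_2)=(\infty,\infty)$. For $\theta\in[0,1)$ one computes
\[
\frac{1}{p_\theta}=\frac{1-\theta}{1}+\frac{\theta}{\infty}=1-\theta=\frac{1}{q_\theta},
\]
so $p_\theta=q_\theta=(1-\theta)^{-1}$ sweeps out all of $[1,\infty)$ as $\theta$ runs over $[0,1)$, and the theorem yields $\|T_y(x)\|_{p_\theta,\bA}\le\|y\|_{1,\bA}^{1-\theta}\|y\|_{1,\bA}^{\theta}\|x\|_{p_\theta,\bA}=\|y\|_{1,\bA}\|x\|_{p_\theta,\bA}$, that is, $\|x*y\|_{p,\bA}\le\|x\|_{p,\bA}\|y\|_{1,\bA}$ for every $1\le p<\infty$. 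The remaining value $p=\infty$ is exactly Proposition~\ref{prop:young1}, so the inequality holds for all $1\le p\le\infty$.

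I do not expect a genuine obstacle; the only point deserving a word of care is verifying that the hypotheses of Proposition~\ref{prop:inter} really apply here, namely that $\bA$ is a finite-dimensional C$^*$-algebra with a faithful trace $d$ (tracial automatically by commutativity of $\bA$) and that the norms $\|\cdot\|_{p,\bA}=d(|\cdot|^p)^{1/p}$ are precisely the associated noncommutative $L^p$-norms to which the theorem refers. Since the conclusion is homogeneous in $d$, the fact that $d$ is a faithful positive functional rather than a normalized state is harmless ($K_1$ and $K_2$ rescale identically). If one prefers not to cite the interpolation theorem, the inequality also admits a direct but more computational proof in this commutative finite-dimensional setting: expand $x=\sum_j\lambda_j\fF^{-1}(x_j)$, use the identity $\sum_sN_{j,k}^sd(x_s)=d(x_j)d(x_k)$ from Proposition~\ref{Prop: dim d} and Frobenius reciprocity to control the convolution coefficients, and bound $d(|x*y|^p)$ by a H\"older-type estimate; the interpolation route is shorter and reuses Propositions~\ref{prop:young1} and~\ref{prop:young2} verbatim.
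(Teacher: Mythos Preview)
Your proposal is correct and follows exactly the paper's approach: the paper's proof is the single line ``It follows from Proposition~\ref{prop:young1},~\ref{prop:young2} and Proposition~\ref{prop:inter},'' which is precisely the interpolation argument you spell out in detail. Your remark about the normalization of $d$ versus a state is a fair point of care, but otherwise there is nothing to add.
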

\begin{proof}
It follows from Proposition~\ref{prop:young1},~\ref{prop:young2} and Proposition~\ref{prop:inter}.
\end{proof}

%

\begin{proposition}\label{prop:young4}
Let $(\bA, \bB, \fF, d, \tau)$ be a fusion bialgebra.
Then for any $x, y\in\bA$, $1\leq p\leq \infty$, $1/p+1/q=1$, we have
$$\|x*y\|_{\infty, \bA}\leq \|x\|_{p,\bA}\|y\|_{q,\bA}.$$
\end{proposition}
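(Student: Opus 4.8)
The plan is to dualize the norm $\|\cdot\|_{\infty,\bA}$ and then transport the resulting pairing across the Fourier transform via the reciprocity identity of Proposition~\ref{prop:rotation2}. First I would use the $L^{1}$--$L^{\infty}$ duality for the finite-dimensional commutative von Neumann algebra $(\bA,d)$ (the supremum formula already used on the $\bB$-side in the proof of Proposition~\ref{prop:young22}) to write
$$\|x*y\|_{\infty,\bA}=\sup\bigl\{\,|d((x*y)\diamond z)|\ :\ z\in\bA,\ \|z\|_{1,\bA}\le 1\,\bigr\},$$
so that it suffices to prove $|d((x*y)\diamond z)|\le \|x\|_{p,\bA}\,\|y\|_{q,\bA}\,\|z\|_{1,\bA}$ for all such $z$.

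To obtain this I would invoke Proposition~\ref{prop:rotation2} \emph{twice}. One application gives $d((x*y)\diamond z)=\overline{d\bigl((J(z)*x^{\#})\diamond J(y)\bigr)}$; applying it a second time to the triple $(J(z),x^{\#},J(y))$ and using $J^{2}=\mathrm{id}$ (immediate from the definition $J(x)=\fF^{-1}(\fF(x)^{*})$) yields
$$d((x*y)\diamond z)=d\bigl((y*(J(z))^{\#})\diamond J(x^{\#})\bigr).$$
The point of the second rotation is that $x$ now occurs only in the single isolated factor $J(x^{\#})$, while $y$ sits as the left entry of a convolution — precisely the configuration for which Proposition~\ref{prop:young3} is stated. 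Hölder's inequality (Proposition~\ref{prop:holder}) for the product $\diamond$ of $(\bA,d)$, with the conjugate exponents $(q,p)$, then gives
$$\bigl|d\bigl((y*(J(z))^{\#})\diamond J(x^{\#})\bigr)\bigr|\le \|y*(J(z))^{\#}\|_{q,\bA}\,\|J(x^{\#})\|_{p,\bA},$$
Proposition~\ref{prop:young3} bounds $\|y*(J(z))^{\#}\|_{q,\bA}\le\|y\|_{q,\bA}\,\|(J(z))^{\#}\|_{1,\bA}$, and finally the observation that $J$ and $\#$ are isometries for every norm $\|\cdot\|_{r,\bA}$ (they permute, respectively fix, the minimal projections $P_{j}$, and $d(P_{j})=d(P_{j^*})$) collapses the estimate to $\|x\|_{p,\bA}\,\|y\|_{q,\bA}\,\|z\|_{1,\bA}$. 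Taking the supremum over $\|z\|_{1,\bA}\le 1$ completes the proof. The argument is uniform in $1\le p\le\infty$; for $p=\infty$ it reproves Proposition~\ref{prop:young1}.

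The only point requiring care is the exponent bookkeeping in the two applications of Proposition~\ref{prop:rotation2}: doing it just once and then applying Proposition~\ref{prop:young3} as stated delivers only the (in general weaker) bound $\|x*y\|_{\infty,\bA}\le\|x\|_{1,\bA}\|y\|_{q,\bA}$, so the second rotation, which shifts $y$ into the left convolution slot, is essential. A more classical alternative would be bilinear Riesz--Thorin interpolation between the two endpoints $\|x*y\|_{\infty,\bA}\le\|x\|_{\infty,\bA}\|y\|_{1,\bA}$ (Proposition~\ref{prop:young1}) and $\|x*y\|_{\infty,\bA}\le\|x\|_{1,\bA}\|y\|_{\infty,\bA}$ (which is the case $q=\infty$ of the single-rotation computation above, or the computation of Proposition~\ref{prop:young1} with the majorization assigned to the other factor), paralleling how Proposition~\ref{prop:young3} was obtained from Propositions~\ref{prop:young1}--\ref{prop:young2}. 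The obstacle there is that Proposition~\ref{prop:inter} is stated for \emph{linear} maps only, and fixing one variable does not rescue it: the two endpoint bounds for $x\mapsto x*y$ force $y$ into different $L^{r}$-spaces with no common constant, so one would need to quote or reprove a bilinear three-lines lemma. I would therefore carry out the two-rotations argument.
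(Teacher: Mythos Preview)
Your argument is correct and is essentially the paper's own proof: dualize the $\infty$-norm, apply the rotation identity of Proposition~\ref{prop:rotation2}, then H\"older and Proposition~\ref{prop:young3}. The one point on which you differ from the paper is your claim that a single rotation is insufficient. In fact the paper rotates only once, obtaining $\bigl|d((J(z)*x^{\#})\diamond J(y))\bigr|\le \|J(z)*x^{\#}\|_{p,\bA}\,\|J(y)\|_{q,\bA}$, and then bounds $\|J(z)*x^{\#}\|_{p,\bA}\le \|J(z)\|_{1,\bA}\|x^{\#}\|_{p,\bA}$, i.e.\ it uses the \emph{symmetric} form $\|a*b\|_{p,\bA}\le\|a\|_{1,\bA}\|b\|_{p,\bA}$ of Proposition~\ref{prop:young3}. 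That symmetric form follows immediately from the stated one via exactly the $J$-isometry observation you already make ($J(a*b)=J(b)*J(a)$ and $\|J(\cdot)\|_{r,\bA}=\|\cdot\|_{r,\bA}$). Your second rotation is just a repackaging of this symmetry step, so the two arguments are the same in substance.
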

\begin{proof}
We have
\begin{align*}
\|x*y\|_{\infty, \bA} &=\sup_{\|z\|_{1, \bA}=1}\left| d((x*y)\diamond z) \right|\\
&=\sup_{\|z\|_{1, \bA}=1} \left| d((J(z)*x^\#)\diamond J(y)) \right| \quad \text{Proposition~\ref{prop:rotation2}}\\
&\leq \sup_{\|z\|_{1, \bA}=1}\|J(z)* x^\#\|_{p, \bA} \|J(y)\|_{q,\bA} \quad \text{ Proposition~\ref{prop:holder}}\\
&\leq \|x^\#\|_{p, \bA} \|y\|_{q,\bA} \quad \text{ Proposition~\ref{prop:young3}}\\
&= \|x\|_{p, \bA} \|y\|_{q,\bA}.
\end{align*}
This completes the proof of the proposition.
\end{proof}

\begin{theorem}[Young's inequality]\label{thm:young}
Let $(\bA, \bB, \fF, d, \tau)$ be a fusion bialgebra.
Then for any $x, y\in\bA$, $1\leq p, q, r\leq \infty$, $1/p+1/q=1+1/r$, we have
$$\|x*y\|_{r, \bA}\leq \|x\|_{p, \bA}\|y\|_{q, \bA}.$$
\end{theorem}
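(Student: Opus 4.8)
The plan is to deduce the general Young inequality by interpolation from the two extreme cases already established. First I would fix $y \in \bA$ and consider the linear convolution operator $T_y : \bA \to \bA$ defined by $T_y(x) = x * y$. Proposition~\ref{prop:young3} gives, for every $1 \le p \le \infty$,
$$\|T_y(x)\|_{p, \bA} \le \|x\|_{p, \bA}\, \|y\|_{1, \bA},$$
which is the $r = p$ endpoint (corresponding to $1/q = 1$), while Proposition~\ref{prop:young4} gives
$$\|T_y(x)\|_{\infty, \bA} \le \|x\|_{p, \bA}\, \|y\|_{q, \bA}$$
with $1/p + 1/q = 1$, which is the $r = \infty$ endpoint. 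These are exactly the two boundary cases of the constraint line $1/p + 1/q = 1 + 1/r$ as $r$ ranges over $[p, \infty]$.

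Next I would interpolate in the target exponent $r$ using Proposition~\ref{prop:inter} (the Kosaki interpolation theorem) applied to the single von Neumann algebra $\bA$ with its faithful trace $d$, holding the domain fixed. Concretely, fix $1 \le p \le \infty$ and $1 \le q \le \infty$ with $1/p + 1/q = 1 + 1/r$; note $1/r = 1/p + 1/q - 1 \in [0, 1/p]$, so $r \ge p$. Choose $\theta \in [0,1]$ with $1/r = (1-\theta)/p + \theta/\infty = (1-\theta)/p$, i.e. $\theta = 1 - p/r$. Let $q_0$ be defined by $1/p + 1/q_0 = 1$ (the exponent paired with $p$ in Proposition~\ref{prop:young4}) and apply interpolation to $T_y$ with $p_1 = p$, $K_1 = \|y\|_{1,\bA}$ and $p_2 = \infty$, $K_2 = \|y\|_{q_0, \bA}$, keeping the source exponent equal to $p$ throughout (so $q_1 = q_2 = p$ in the notation of Proposition~\ref{prop:inter}). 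This yields
$$\|x * y\|_{r, \bA} \le \|y\|_{1, \bA}^{1-\theta}\, \|y\|_{q_0, \bA}^{\theta}\, \|x\|_{p, \bA}.$$
It then remains to check that $\|y\|_{1,\bA}^{1-\theta} \|y\|_{q_0,\bA}^{\theta}$ is controlled by $\|y\|_{q,\bA}$; but this is itself an instance of the interpolation inequality for $L^t$-norms on $\bA$ (log-convexity of $t \mapsto \log\|y\|_{1/t, \bA}$), since $1/q = (1-\theta)\cdot 1 + \theta\cdot(1/q_0)$ by a direct computation from $\theta = 1 - p/r$ and $1/q_0 = 1 - 1/p$. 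Alternatively, and more cleanly, I would interpolate $T_y$ itself between the bilinear-looking endpoints by treating it as a single operator and invoking Proposition~\ref{prop:inter} once with the $L^t$-scale arranged so the $y$-norms combine correctly; either route is routine once the endpoint cases are in hand.

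The main obstacle, such as it is, is purely bookkeeping: making sure the exponent arithmetic in Proposition~\ref{prop:inter} is applied along the correct one-parameter family so that the domain exponent stays fixed while the range exponent slides from $p$ to $\infty$, and then recombining the two $y$-norm factors into the single norm $\|y\|_{q,\bA}$ via log-convexity of $L^t$-norms. There is no analytic difficulty beyond what is already encapsulated in Propositions~\ref{prop:young3}, \ref{prop:young4} and the interpolation theorem; in particular, unlike the dual statement on $\bB$, no positivity or Schur-type hypothesis is needed here, because the convolution on $\bA$ is governed by the genuinely nonnegative structure constants $N_{j,k}^s$, which is exactly what powered the clean endpoint estimates. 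I would close by remarking that the degenerate cases ($p = 1$, or $q = 1$, or $r = \infty$) are already covered by Propositions~\ref{prop:young3} and~\ref{prop:young4} directly, so the interpolation argument only needs to run in the open range.
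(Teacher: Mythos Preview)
There is a genuine gap in your main line of argument. When you fix $y$, hold the source exponent at $p$, and interpolate $T_y$ between $L^p \to L^p$ (constant $\|y\|_{1,\bA}$) and $L^p \to L^\infty$ (constant $\|y\|_{q_0,\bA}$ with $q_0 = p'$), you correctly obtain
\[
\|x*y\|_{r,\bA} \le \|y\|_{1,\bA}^{1-\theta}\,\|y\|_{q_0,\bA}^{\theta}\,\|x\|_{p,\bA}.
\]
But the log-convexity of $L^t$-norms gives $\|y\|_{q,\bA} \le \|y\|_{1,\bA}^{1-\theta}\|y\|_{q_0,\bA}^{\theta}$, not the reverse; so your interpolated constant is in general \emph{larger} than $\|y\|_{q,\bA}$, and you cannot conclude Young's inequality from it. The phrase ``controlled by $\|y\|_{q,\bA}$'' hides exactly this wrong-way inequality, and the vague ``alternatively'' does not repair it.

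The fix is to interpolate so that both endpoint constants are the \emph{same} norm of the fixed element. Fix $x$ and $p$, and set $S_x(y) = x*y$. Proposition~\ref{prop:young3} gives $S_x : L^1 \to L^p$ with norm $\le \|x\|_{p,\bA}$, and Proposition~\ref{prop:young4} gives $S_x : L^{p'} \to L^\infty$ with norm $\le \|x\|_{p,\bA}$. Now apply Proposition~\ref{prop:inter} with the source exponent moving from $1$ to $p'$ and the target from $p$ to $\infty$: for $\theta \in [0,1]$ one gets $1/q = (1-\theta) + \theta/p'$ and $1/r = (1-\theta)/p$, which satisfy $1/p + 1/q = 1 + 1/r$, and the interpolated constant is $\|x\|_{p,\bA}^{1-\theta}\|x\|_{p,\bA}^{\theta} = \|x\|_{p,\bA}$. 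This is precisely the inequality claimed, and is the intended meaning of the paper's one-line proof citing Propositions~\ref{prop:young3}, \ref{prop:young4} and \ref{prop:inter}.
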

\begin{proof}
It follows from Propositions~\ref{prop:young3},~\ref{prop:young4} and Proposition~\ref{prop:inter}.
\end{proof}

\begin{proposition}\label{prop:young91}
Let $(\bA, \bB, \fF, d, \tau)$ be a fusion bialgebra.
Then for any $x, y\in\bB$, $2\leq r\leq \infty$, $1\leq p, q\leq 2$, $1/p+1/q=1+1/r$, we have
$$\|x*_\bB y\|_{r, \bB}\leq \|x\|_{p, \bB}\|y\|_{q, \bB}.$$
\end{proposition}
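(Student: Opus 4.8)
The plan is to transport Young's inequality on $\bA$ (Theorem~\ref{thm:young}) over to $\bB$ through the Fourier transform. Under $\fF$, the convolution $\conv_{\bB}$ on $\bB$ turns into the pointwise product $\diamond$ on the \emph{commutative} C$^*$-algebra $\bA$, while the Hausdorff--Young inequality (Theorem~\ref{thm:hy}) lets one pass between $L^t(\bA)$ and $L^t(\bB)$. The point is that the hypotheses $1\le p,q\le 2\le r$ are precisely the range in which Hausdorff--Young applies in the two directions needed, which is why this restricted Young's inequality survives on $\bB$ even though the full one (which would force the Schur product property on the dual) does not.

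Concretely, let $r'$, $p'$, $q'$ denote the conjugate exponents of $r,p,q$; then $1\le r'\le 2\le p',q'$, and $1/p+1/q=1+1/r$ is equivalent to $1/p'+1/q'=1/r'$. Using the formula for $\conv_{\bB}$ in~\eqref{Equ: convolution b} and the $*$-invariance of all $t$-norms,
\[
\|x\conv_{\bB}y\|_{r,\bB}=\bigl\|\fF\bigl(\fF^{-1}(y^*)\diamond\fF^{-1}(x^*)\bigr)\bigr\|_{r,\bB}\le \bigl\|\fF^{-1}(y^*)\diamond\fF^{-1}(x^*)\bigr\|_{r',\bA},
\]
where the inequality is the Hausdorff--Young bound for $\fF$ with exponents $r'\le 2$ and $r$ (for $r=\infty$ this is the case recorded in Proposition~\ref{prop:hyinf1}). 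Since $\bA$ is commutative one has $|f\diamond g|=|f|\,|g|$, so the generalized Hölder inequality — a routine consequence of Proposition~\ref{prop:holder}, valid because $p'/r',q'/r'\ge 1$ — gives
\[
\bigl\|\fF^{-1}(y^*)\diamond\fF^{-1}(x^*)\bigr\|_{r',\bA}\le \|\fF^{-1}(x^*)\|_{p',\bA}\,\|\fF^{-1}(y^*)\|_{q',\bA}.
\]

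It then remains to recognize the right-hand side in terms of norms on $\bB$. From $\tfF=\#\,\fF^{-1}\,*$ one gets $\fF^{-1}(x^*)=\bigl(\tfF(x)\bigr)^{\#}$, and on $\bA$ the involution $\#$ is just coordinate-wise conjugation in the $\bRp$-basis (Definition~\ref{def:bA}), hence isometric for every $t$-norm; thus $\|\fF^{-1}(x^*)\|_{p',\bA}=\|\tfF(x)\|_{p',\bA}$ and similarly for $y$. Applying the Hausdorff--Young inequality for $\tfF$ (Theorem~\ref{thm:hy}, with exponents $p\le 2,\ p'$, respectively $q\le 2,\ q'$) bounds these by $\|x\|_{p,\bB}$ and $\|y\|_{q,\bB}$. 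Chaining the three inequalities gives $\|x\conv_{\bB}y\|_{r,\bB}\le\|x\|_{p,\bB}\|y\|_{q,\bB}$, as desired.

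I do not anticipate a real obstacle; the delicate parts are purely bookkeeping. One must check that $1/p+1/q=1+1/r$ indeed forces the conjugate exponents into the range $[1,\infty]$ where generalized Hölder is licit; handle the degenerate cases $p=1$ or $q=1$ (each of which pins $r=2$), where the relevant Hausdorff--Young estimate is the $L^1$--$L^\infty$ inequality of Proposition~\ref{prop:hyinf1}; and verify that $*$ and $\#$ are isometries for all the $t$-norms on $\bA$ and $\bB$, which is immediate from their definitions together with $d(x_j)\in\mathbb{R}_{>0}$. Unlike the proof of Theorem~\ref{thm:young}, commutativity of $\bA$ enters essentially here — via Hölder — which is exactly why no form of the Schur product property on $\bB$ is required.
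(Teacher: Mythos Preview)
Your proof is correct and takes essentially the same approach as the paper: Hausdorff--Young for $\fF$ (to pass from $\|\cdot\|_{r,\bB}$ to $\|\cdot\|_{r',\bA}$), then generalized H\"older on the commutative algebra $\bA$, then Hausdorff--Young for $\tfF$ (to return to $\|\cdot\|_{p,\bB}$ and $\|\cdot\|_{q,\bB}$). The only cosmetic difference is that the paper starts from the formula $x\conv_{\bB}y=\tfF^{-1}(\tfF(x)\diamond\tfF(y))$ directly, which spares the bookkeeping you do with $*$ and $\#$; your extra care about endpoint cases and isometry of the involutions is fine and in fact more explicit than the paper.
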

\begin{proof}
For any $x, y\in \bB$, $1/r+1/r'=1$, $1/p+1/p'=1$, $1/q+1/q'=1$, we have
\begin{align*}
\|x*_\bB y\|_{r, \bB}
&=\|\fF(\tfF(x)\diamond \tfF(y))\|_{r, \bB} \\
&\leq \|\tfF(x)\diamond \tfF(y)\|_{r', \bA} \quad \text{Proposition~\ref{thm:hy}}\\
&=\|\tfF(x)\|_{p', \bA}\|\tfF(y)\|_{q', \bA} \quad \text{Proposition~\ref{prop:holder}}\\
&\leq \|x\|_{p, \bB}\|y\|_{q, \bB}. \quad \text{Proposition~\ref{thm:hy}}
\end{align*}
This completes the proof of the proposition.
\end{proof}

\begin{proposition}
Let $(\bA, \bB, \fF, d, \tau)$ be a self-dual fusion bialgebra.
Then Young's inequality holds on the dual.
\end{proposition}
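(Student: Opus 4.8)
The plan is to recognize that the statement is precisely Young's inequality (Theorem~\ref{thm:young}) transported through the Fourier duality. By the definition of self-duality, the quintuple $(\bB,\bA,\tfF,\tau,d)$ is itself a fusion bialgebra: its first, commutative algebra is $\bB$, its Fourier transform is $\tfF$, and its ordered pair of faithful traces is $(\tau,d)$. So the first thing I would do is apply Theorem~\ref{thm:young} to this fusion bialgebra, i.e.\ make the substitution $(\bA,\bB,\fF,d,\tau)\mapsto(\bB,\bA,\tfF,\tau,d)$ in its statement.

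Next I would unwind what that substituted statement says. The convolution on the commutative side of $(\bB,\bA,\tfF,\tau,d)$ is the map $(x,y)\mapsto\tfF^{-1}(\tfF(x)\diamond\tfF(y))$, which by Equation~\eqref{Equ: convolution b} is exactly $x*_\bB y$; and the $t$-norm on that side is $\tau(|x|^t)^{1/t}=\|x\|_{t,\bB}$. Hence Theorem~\ref{thm:young} for this fusion bialgebra reads $\|x*_\bB y\|_{r,\bB}\le\|x\|_{p,\bB}\|y\|_{q,\bB}$ for all $1\le p,q,r\le\infty$ with $1/p+1/q=1+1/r$ and all $x,y\in\bB$, which is exactly Young's inequality on the dual (the full range, extending the restricted-range Proposition~\ref{prop:young91}).

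Then the proof is complete: Theorem~\ref{thm:young} was established, via interpolation between Propositions~\ref{prop:young3} and~\ref{prop:young4}, for an \emph{arbitrary} fusion bialgebra, and $(\bB,\bA,\tfF,\tau,d)$ is one by hypothesis, so nothing further is needed; in particular the isomorphism between the fusion bialgebra and its dual is not used, only that the dual is a fusion bialgebra. The only point demanding attention --- and the closest thing to an obstacle --- is the routine bookkeeping of checking that the hypotheses packaged into ``fusion bialgebra'' (commutativity of the first algebra, and $2$-norm-preserving unitarity of the Fourier transform relative to the two traces) are genuinely the ones carried by the dual; they are, so the argument has no analytic content beyond Theorem~\ref{thm:young} itself.
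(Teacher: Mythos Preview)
Your proposal is correct and takes essentially the same approach as the paper, whose proof is simply ``It is directly from the definition.'' You have spelled out what that one-line proof means: self-duality in particular requires that the dual $(\bB,\bA,\tfF,\tau,d)$ be a fusion bialgebra, so Theorem~\ref{thm:young} applies to it verbatim, yielding Young's inequality for $*_\bB$ and the $\|\cdot\|_{t,\bB}$ norms; your observation that the isomorphism part of self-duality is not actually used is a nice bonus.
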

\begin{proof}
It is directly from the definition.
\end{proof}

\begin{proposition}\label{prop:young7}
Let $(\bA, \bB, \fF, d, \tau)$ be a fusion bialgebra.
Then the following are equivalent:
\begin{enumerate}
\item $\|x*_\bB y\|_{r, \bB}\leq \|x\|_{p, \bB}\|y\|_{q, \bB}, $ $1\leq p, q, r\leq \infty$, $1/p+1/q=1+1/r$  for any $x, y\in\bB$;
\item $\|x*_\bB y\|_{1, \bB}\leq \|x\|_{1, \bB}\|y\|_{1, \bB}$  for any $x, y\in\bB$;
\item $\|x*_\bB y\|_{\infty, \bB}\leq \|x\|_{\infty, \bB}\|y\|_{1, \bB}$  for any $x, y\in\bB$.
\end{enumerate}
We say the dual has Young's property if one of the above statements is true.
\end{proposition}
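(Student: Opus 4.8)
The plan is to establish the chain of implications $(1)\Rightarrow(2)\Rightarrow(3)\Rightarrow(1)$. The implication $(1)\Rightarrow(2)$ is immediate by specializing $p=q=r=1$. For $(2)\Rightarrow(3)$, I would invoke Proposition~\ref{prop:young22}, which already states the equivalence between the $(1,1,1)$-estimate and the $(\infty,1,\infty)$-estimate $\|x*_\bB y\|_{\infty,\bB}\le C\|x\|_{1,\bB}\|y\|_{\infty,\bB}$ with $C=1$; combined with the commutativity of convolution on $\bB$ (inherited from the commutativity of the multiplication $\diamond$ on $\bA$), this yields statement $(3)$ after swapping the roles of the two arguments. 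So the only substantive work is the implication $(3)\Rightarrow(1)$, which is a full Young's inequality obtained by interpolation, parallel to the proof of Theorem~\ref{thm:young} on the $\bA$ side.

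For $(3)\Rightarrow(1)$ I would proceed in two interpolation steps. First, assuming $(3)$, namely $\|x*_\bB y\|_{\infty,\bB}\le\|x\|_{\infty,\bB}\|y\|_{1,\bB}$, I would also need the endpoint $\|x*_\bB y\|_{1,\bB}\le\|x\|_{1,\bB}\|y\|_{1,\bB}$, which is exactly statement $(2)$ — available since $(3)\Rightarrow(2)$ follows from Proposition~\ref{prop:young22} as noted above (or directly: $(4)\Leftrightarrow(3)$ there, and $(3)\Rightarrow(1)\Leftrightarrow(2)$ in Proposition~\ref{prop:young92}). Fixing $y$ and applying the Interpolation Theorem (Proposition~\ref{prop:inter}) to the map $x\mapsto x*_\bB y$ between the endpoints $(p,r)=(1,1)$ and $(p,r)=(\infty,\infty)$ gives $\|x*_\bB y\|_{r,\bB}\le\|y\|_{1,\bB}\|x\|_{r,\bB}$ for all $1\le r\le\infty$. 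Second, fixing $x$ and using commutativity together with the just-obtained bound $\|x*_\bB y\|_{\infty,\bB}\le\|x\|_{\infty,\bB}\|y\|_{1,\bB}$ (that is $(3)$) and $\|x*_\bB y\|_{p,\bB}\le\|x\|_{p,\bB}\|y\|_{1,\bB}$, I would interpolate in the second variable. More cleanly, the standard trick: fix the target exponent $r$ and interpolate the bilinear map; concretely, from $\|x*_\bB y\|_{r,\bB}\le\|x\|_{r,\bB}\|y\|_{1,\bB}$ and the symmetric statement $\|x*_\bB y\|_{r,\bB}\le\|x\|_{1,\bB}\|y\|_{r,\bB}$, a second application of Proposition~\ref{prop:inter} (now in $y$, with $x$ fixed, between exponent pairs $(q,r)=(1,r)$ and $(q,r)=(r',\infty)$ where $1/r+1/r'=1$) produces the full range $1/p+1/q=1+1/r$. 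This is precisely the interpolation scheme used for Theorem~\ref{thm:young}, now run internally on $\bB$ using hypothesis $(3)$ in place of the unconditional Young inequalities that hold on $\bA$.

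The main obstacle is bookkeeping rather than conceptual: one must verify that the commutativity of $*_\bB$ genuinely holds (it does, because $*_\bB$ is transported from the commutative multiplication $\diamond$ on $\bA$ via $\tfF$, see Equation~\eqref{Equ: convolution b}), so that the two interpolation directions can be combined, and one must track the Lebesgue exponents carefully through the two applications of the Interpolation Theorem to land exactly on the scaling relation $1/p+1/q=1+1/r$. No positivity or Schur-type input is needed here — the equivalence is purely a consequence of Riesz–Thorin-style interpolation between the two endpoint estimates, with $(3)$ supplying the missing endpoint that is automatic on $\bA$ but not on $\bB$. I would therefore present the argument as: $(1)\Rightarrow(2)$ trivial; $(2)\Leftrightarrow(3)$ by Proposition~\ref{prop:young22} and commutativity; $(3)\Rightarrow(1)$ by two interpolations exactly as in Theorem~\ref{thm:young}, and leave the exponent arithmetic to the reader as the paper does elsewhere.
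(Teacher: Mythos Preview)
Your overall plan matches the paper's one-line proof (``similar to Propositions~\ref{prop:young22},~\ref{prop:young3},~\ref{prop:young4}''), and the step $(2)\Leftrightarrow(3)$ via Proposition~\ref{prop:young22} together with commutativity of $*_\bB$ is fine.

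The gap is in your second interpolation. After the first step you have $\|x*_\bB y\|_{r,\bB}\le\|x\|_{r,\bB}\|y\|_{1,\bB}$ and, by commutativity, the symmetric version; but a second application of Proposition~\ref{prop:inter} in one variable does \emph{not} reach the full Young range from these. Fixing $y$ and interpolating between $T_y:L^r\to L^r$ (constant $\|y\|_{1,\bB}$) and $T_y:L^1\to L^r$ (constant $\|y\|_{r,\bB}$) yields $\|x*_\bB y\|_{r,\bB}\le\|x\|_{s,\bB}\,\|y\|_{1,\bB}^{1-\theta}\|y\|_{r,\bB}^\theta$, and log-convexity of the $L^p$-norms gives $\|y\|_{q,\bB}\le\|y\|_{1,\bB}^{1-\theta}\|y\|_{r,\bB}^\theta$ --- the wrong direction. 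In particular the endpoint you name, $\|x*_\bB y\|_{\infty,\bB}\le\|x\|_{r,\bB}\|y\|_{r',\bB}$, is not among the facts you have established.

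What supplies that endpoint is the $\bB$-analogue of Proposition~\ref{prop:young4}, whose proof rests on Frobenius reciprocity (Proposition~\ref{prop:rotation}), not commutativity:
\[
\|x*_\bB y\|_{\infty,\bB}=\sup_{\|z\|_{1,\bB}=1}|\tau((x*_\bB y)z)|=\sup_{\|z\|_{1,\bB}=1}|\tau((J_\bB(x)*_\bB z^*)y^*)|\le\|x\|_{p,\bB}\|y\|_{q,\bB},
\]
using H\"older and the first-step bound $\|J_\bB(x)*_\bB z^*\|_{p,\bB}\le\|J_\bB(x)\|_{p,\bB}\|z\|_{1,\bB}=\|x\|_{p,\bB}$. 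This rotation step is precisely the content of the ``scheme used for Theorem~\ref{thm:young}'' that you invoke; commutativity alone is not a substitute for it.
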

\begin{proof}
It follows the similar proof of Proposition~\ref{prop:young22} and Proposition~\ref{prop:young3} and~\ref{prop:young4}.
\end{proof}

\begin{remark}
By Proposition~\ref{prop:young92}, we have that for the dual, Young's property implies Schur product property.
\end{remark}

\begin{proposition}\label{prop:range1}
Let $(\bA, \bB, \fF, d, \tau)$ be a fusion bialgebra.
Then for any $x, y\in \bA$, we have
$$\cR (x*y)\leq \cR (\cR(x)*\cR(y)).$$
In particular, $\cR (x*y)=\cR (\cR(x)*\cR(y))$ if $x\geq 0$, $y\geq  0$.
\end{proposition}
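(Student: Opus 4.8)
The approach is to reduce everything to the support calculus in the commutative C$^*$-algebra $\bA$. Since $\bA$ is finite dimensional and abelian, its minimal projections are exactly $e_j:=d(x_j)x_j$, $1\le j\le m$ (the $\diamond$-idempotents, cf.\ Definition~\ref{def:bA}); they are mutually orthogonal with $\sum_j e_j=1_\bA$, every $x\in\bA$ is uniquely $x=\sum_j\mu_j e_j$, and $\cR(x)=\sum_{j:\,\mu_j\ne 0}e_j$. Thus $\cR(\cdot)$ is just ``support'' in these coordinates, and I would use two trivial facts: $\cR\big(\sum_i c_i\big)\le\bigvee_i\cR(c_i)$ for arbitrary $c_i\in\bA$, with equality $\cR\big(\sum_i a_i\big)=\bigvee_i\cR(a_i)$ when all $a_i\ge 0$.

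First I would prove the inclusion $\cR(x\conv y)\le\cR(\cR(x)\conv\cR(y))$. Writing $x=\sum_j\mu_j e_j$, $y=\sum_k\nu_k e_k$, bilinearity of $\conv$ gives $x\conv y=\sum_{\mu_j\ne 0,\ \nu_k\ne 0}\mu_j\nu_k\,(e_j\conv e_k)$, and the Schur Product axiom (applied to the projections $e_j,e_k\ge 0$ in $\bA$) gives $e_j\conv e_k\ge 0$. The first trivial fact then yields $\cR(x\conv y)\le\bigvee_{\mu_j\ne 0,\ \nu_k\ne 0}\cR(e_j\conv e_k)$, while the second, applied to the positive summands of $\cR(x)\conv\cR(y)=\sum_{\mu_j\ne 0,\ \nu_k\ne 0}e_j\conv e_k$, gives $\cR(\cR(x)\conv\cR(y))=\bigvee_{\mu_j\ne 0,\ \nu_k\ne 0}\cR(e_j\conv e_k)$. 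Comparing the two proves the inclusion.

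For equality when $x,y\ge 0$, I would use that in the commutative algebra $\bA$ a nonzero positive element $a$ is sandwiched, $c_1\cR(a)\le a\le c_2\cR(a)$ for suitable scalars $c_2\ge c_1>0$ (the smallest and largest nonzero values of $a$). The Schur Product axiom makes $\conv$ monotone on positive pairs, via $a'\conv b'-a\conv b=(a'-a)\conv b'+a\conv(b'-b)\ge 0$ whenever $0\le a\le a'$ and $0\le b\le b'$. Feeding in $\alpha_1\cR(x)\le x\le\alpha_2\cR(x)$ and $\beta_1\cR(y)\le y\le\beta_2\cR(y)$, this gives
$$0\ \le\ \alpha_1\beta_1\,\cR(x)\conv\cR(y)\ \le\ x\conv y\ \le\ \alpha_2\beta_2\,\cR(x)\conv\cR(y).$$
Since $x\conv y$ is squeezed between two positive scalar multiples of $\cR(x)\conv\cR(y)$, it has the same support, i.e.\ $\cR(x\conv y)=\cR(\cR(x)\conv\cR(y))$; the case $x=0$ or $y=0$ is immediate.

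I do not expect a genuine obstacle: once the support dictionary is set up, everything is a short calculation. The one point needing care is that the Schur Product axiom of Definition~\ref{Def: Fusion bialgebras} must be invoked twice for different reasons — first to know $e_j\conv e_k\ge 0$, which is what makes $\cR(\cR(x)\conv\cR(y))$ equal to (not merely $\ge$) the join $\bigvee\cR(e_j\conv e_k)$, and then again to get monotonicity of $\conv$ in the equality case. No commutativity of $\conv$ and no positivity on the $\bB$-side is used, so the argument works for an arbitrary fusion bialgebra.
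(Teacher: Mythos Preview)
Your proof is correct and is essentially a fully fleshed-out version of the paper's one-line proof, which simply reads ``It follows from the Schur product property.'' You have unpacked precisely the mechanism the paper leaves implicit: the Schur product axiom makes $e_j\conv e_k\ge 0$ so that supports of positive sums are joins, and it gives monotonicity of $\conv$ for the sandwich argument in the equality case.
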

\begin{proof}
It follows from the Schur product property.
\end{proof}

\begin{remark}\label{prop:range2}
Let $(\bA, \bB, \fF, d, \tau)$ be a fusion bialgebra.
Suppose that the dual has Schur product property.
Then $\cR (x*_\bB y)=\cR (\cR(x)*_\bB \cR(y))$ if $x>0$, $y>0$ in $\bB$.
\end{remark}

\begin{proposition}
Young's property holds on a fusion bialgebra $(\bA, \bB, \fF, d, \tau)$ which can be subfactorized.
\end{proposition}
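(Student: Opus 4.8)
The plan is to reduce Young's property to the corresponding inequality for subfactor planar algebras and transport it through the realization of Theorem~\ref{PA to FB}. By Proposition~\ref{prop:young7} it suffices to verify one of the three equivalent statements there; I would aim at statement~(1), the full inequality $\|x *_{\bB} y\|_{r, \bB}\le \|x\|_{p, \bB}\|y\|_{q, \bB}$ with $1/p+1/q=1+1/r$, since this is the form in which Young's inequality is established for subfactors \cite{JLW16} (equivalently one could settle for the endpoint $\|x *_{\bB} y\|_{1, \bB}\le \|x\|_{1, \bB}\|y\|_{1, \bB}$). Note that one works directly with the convolution algebra $(\bB, *_{\bB}) = (\sP_{2,-},\text{convolution})$, which does not require $\bB$ to be commutative, so the dual need not itself be a fusion bialgebra.

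First I would invoke the gauge transformation to reduce to a canonical fusion bialgebra; Young's property is gauge-invariant because $d$, $\tau$, $\fF$, and hence $*_{\bB}$ and the norms $\|\cdot\|_{t,\bB}$, all rescale homogeneously, and the constraint $1/p+1/q=1+1/r$ is exactly the balancing condition making the inequality scale-invariant. Second, since the fusion bialgebra is subfactorizable, Theorem~\ref{PA to FB} identifies $(\bA,\bB)$ with $(\sP_{2,+},\sP_{2,-})$ of a finite-index subfactor, with $\tau=tr_{2,-}$ the normalized trace and $\fF=\delta\,\FS$ a scalar multiple of the string Fourier transform. Third, I would unwind the definitions $\tfF=\#\fF^{-1}*$ and $x*_{\bB}y=\tfF^{-1}(\tfF(x)\diamond\tfF(y))$ from \eqref{Equ: convolution b} to check that, under this identification, $*_{\bB}$ is the standard convolution on $\sP_{2,-}$ (the one implemented by the string Fourier transform) and that $\|\cdot\|_{t,\bB}$ are the usual $L^t$-norms on $\sP_{2,-}$; here one uses that on $2$-box spaces the Fourier transform is a quarter rotation while $\#$ and $*$ are the two reflections, so their composite is again a normalized rotation. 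Fourth, I would apply Young's inequality for subfactor planar algebras \cite{JLW16} and then quote Proposition~\ref{prop:young7} to obtain Young's property in all the equivalent forms.

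The main obstacle is the third step: matching the abstract operations $*_{\bB}$, $\#$, $*$ with the geometric operations on the $2$-boxes, and tracking normalization scalars so that the subfactor inequality transfers with no extraneous constant --- note that Young's inequality is \emph{not} invariant under rescaling the trace alone, so the convolution must be normalized compatibly, which is precisely what passing to the canonical form arranges. Everything else is routine. An alternative avoiding the explicit dictionary would be to re-run the subfactor proof of Young's inequality directly inside $(\bB,*_{\bB})$ by interpolating between the trivial $L^1$-$L^1\to L^1$ bound on positive elements --- which is the Schur product property on the dual, Proposition~\ref{prop:ufusionschur} --- and an $L^\infty$-$L^1\to L^\infty$ bound; but establishing the latter for general (not necessarily positive) elements seems to need essentially the same input as the subfactor theorem, so I expect the transport argument to be cleaner. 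As a consistency check, the endpoint case of Young's property already recovers Proposition~\ref{prop:ufusionschur} via Proposition~\ref{prop:young92}, in line with Young's property being strictly stronger than the Schur product property on the dual.
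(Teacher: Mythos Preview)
Your approach is essentially the paper's: both simply invoke Young's inequality for subfactor planar algebras, Theorem~4.13 of \cite{JLW16}. The paper's proof is the single line ``It follows from Theorem 4.13 in \cite{JLW16} for subfactors,'' leaving the dictionary you describe in steps 2--4 implicit.

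One caution on step~1: the gauge-invariance claim is not right. Under a gauge $(\lambda_1,\lambda_2)$ the map $\fF$ is multiplied by $\lambda_1^{1/2}\lambda_2^{-1/2}$, so $*_{\bB}$ rescales by $\lambda_1^{-1/2}\lambda_2^{1/2}$, while $\|\cdot\|_{t,\bB}$ rescales by $\lambda_2^{1/t}$; inserting into $\|x*_\bB y\|_r\le\|x\|_p\|y\|_q$, the two sides match only when $\lambda_1\lambda_2=1$. The relation $1/p+1/q=1+1/r$ balances homogeneity in $x,y$, not in the gauge parameters. Fortunately the reduction is unnecessary: by the definition of subfactorization the quintuple already \emph{is} $(\sP_{2,+},\sP_{2,-},\FS,tr_{2,+},tr_{2,-})$, and the result in \cite{JLW16} is stated in exactly that normalization, so your steps 2--4 alone finish the proof. (If you do pass to the canonical form, note $\lambda_2=1$ there, and the canonical $*_\bB$ is $\delta^{-1}$ times the subfactor convolution; thus the JLW16 inequality with constant $1$ transports to the canonical form, but going the other way would cost a factor $\delta$ --- another reason to skip step~1.)
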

\begin{proof}
It follows from Theorem 4.13 in \cite{JLW16} for subfactors.
\end{proof}

\begin{theorem}[Sum set estimate]\label{thm:sumsetest}
Let $(\bA, \bB, \fF, d, \tau)$ be a fusion bialgebra.
Then for any $x, y\in\bA$, we have
$$\cS(\cR(x)*\cR(y))\geq \max\{\cS(x), \cS(y)\}.$$
\end{theorem}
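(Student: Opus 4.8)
The plan is to reduce the sum-set estimate to the Young's inequality (Theorem~\ref{thm:young}) on $\bA$ together with the Donoho--Stark type support tracking already available via the Schur product property on $\bA$ (Proposition~\ref{prop:range1}). First I would replace $x,y$ by their range projections: since $\cS$ depends only on the range projection, it suffices to prove $\cS(\cR(x)*\cR(y)) \geq \cS(x)$ (the bound with $\cS(y)$ follows by symmetry). Write $p = \cR(x)$ and $q = \cR(y)$, so $p,q$ are projections in the commutative C$^*$-algebra $\bA$. By Proposition~\ref{prop:range1}, since $p \geq 0$ and $q \geq 0$, we have $\cR(p*q) = \cR(\cR(p)*\cR(q)) = \cR(p*q)$, and more importantly $p*q \geq 0$, so $p*q$ is a nonnegative element of $\bA$ whose support projection we must bound below.

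The key step is a norm comparison. Apply Young's inequality in the form $\|p * q\|_{\infty,\bA} \leq \|p\|_{\infty,\bA}\|q\|_{1,\bA} = \|q\|_{1,\bA} = d(q) = \cS(y)$, using that $p$ is a projection so $\|p\|_{\infty,\bA}=1$. On the other hand, I would estimate $\|p*q\|_{1,\bA}$ from below: by Proposition~\ref{prop:young2}, $\|p*q\|_{1,\bA} = \|p\|_{1,\bA}\|q\|_{1,\bA} = d(p)d(q) = \cS(x)\cS(y)$. Now since $p*q \geq 0$, writing $e = \cR(p*q)$ for its support projection and applying Hölder's inequality (Proposition~\ref{prop:holder}) gives
\begin{align*}
\cS(x)\cS(y) = \|p*q\|_{1,\bA} = d(e \diamond (p*q)) \leq \|e\|_{1,\bA}\,\|p*q\|_{\infty,\bA} \leq \cS(\cR(p*q))\cdot \cS(y).
\end{align*}
Dividing by $\cS(y) > 0$ (note $y \neq 0$, else the statement is trivial) yields $\cS(\cR(x)*\cR(y)) = \cS(\cR(p*q)) \geq \cS(x)$, and by the symmetric argument $\geq \cS(y)$, hence $\geq \max\{\cS(x),\cS(y)\}$.

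The main obstacle I anticipate is bookkeeping around the two different C$^*$-algebra structures and making sure the Hölder step is applied in $\bA$ with the trace $d$ and multiplication $\diamond$ — in particular that $d(e\diamond(p*q)) = d(p*q) = \|p*q\|_{1,\bA}$ because $e$ is the support projection and $p*q \geq 0$, and that $\|e\|_{1,\bA} = d(e) = \cS(\cR(p*q))$ by definition of $\cS$ on $\bA$. A secondary point worth a line is confirming $\cR(x)*\cR(y)$ has the same support as it would in the nonnegative case, which is exactly the content of the ``in particular'' clause of Proposition~\ref{prop:range1}; this guarantees $\cS(\cR(x)*\cR(y))$ is well-defined and equals $\cS(\cR(p*q))$ regardless of how one interprets the expression.
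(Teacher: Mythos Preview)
Your proof is correct and follows essentially the same strategy as the paper: bound $\|p*q\|_{1,\bA}$ from below exactly via Proposition~\ref{prop:young2}, and from above via H\"older against the support projection combined with Young's inequality. The only difference is cosmetic: you split with H\"older exponents $(1,\infty)$ and then use Proposition~\ref{prop:young1} to bound $\|p*q\|_{\infty,\bA}\le\|p\|_{\infty,\bA}\|q\|_{1,\bA}=\cS(y)$, whereas the paper splits with $(2,2)$ and uses Proposition~\ref{prop:young3} to bound $\|p*q\|_{2,\bA}\le\|p\|_{1,\bA}\|q\|_{2,\bA}$; both routes cancel cleanly and give the same conclusion.
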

\begin{proof}
We have that
\begin{align*}
\cS(x)\cS(y) &=\|\cR(x)\|_{1, \bA}\|\cR(y)\|_{1, \bA}\\
&=\|\cR(x)*\cR(y)\|_{1, \bA}\quad  \text{ Proposition~\ref{prop:young2}} \\
&\leq \|\cR(\cR(x)*\cR(y))\|_{2, \bA} \|\cR(x)*\cR(y)\|_{2, \bA}\quad \text{ Proposition~\ref{prop:holder}} \\
&\leq\cS(\cR(x)*\cR(y))^{1/2} \|\cR(x)\|_{1,\bA}\| \cR(y)\|_{2, \bA} \quad  \text{ Proposition~\ref{prop:young3}}\\
&=\cS(\cR(x)*\cR(y))^{1/2} \cS(x)\cS(y)^{1/2}.
\end{align*}
Hence $\cS(\cR(x)*\cR(y))\geq \cS(y)$.
\end{proof}

\begin{remark}
We thank Pavel Etingof for noticing us another proof of Theorem \ref{thm:sumsetest} from an algebraic point of view \cite{Etingof19}.
\end{remark}

\begin{theorem}[Sum set estimate]\label{thm:sumsetest2}
Let $(\bA, \bB, \fF, d, \tau)$ be a fusion bialgebra.
Suppose that the dual has the Schur product property.
Then for any $x, y\in\bB$, we have
$$\cS(\cR(x)*_\bB \cR(y))\geq \max\{\cS(x), \cS(y)\}.$$
\end{theorem}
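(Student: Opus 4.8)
The plan is to deduce this directly from Theorem~\ref{thm:sumsetest} by promoting the Fourier dual to a bona fide fusion bialgebra. Recall that, by the Dual Modular Conjugation proposition and by the Dual Jones Projection (Proposition~\ref{Prop: Dual Jones Projection}), the quintuple $(\bB, \bA, \tfF, \tau, d)$ always satisfies conditions (2) and (3) of Definition~\ref{Def: Fusion bialgebras}, and by Plancherel's formula (Proposition~\ref{prop:planch}) the map $\tfF$ is a unitary preserving the $2$-norm; with $\bB$ commutative, the only remaining condition of Definition~\ref{Def: Fusion bialgebras} is the Schur product property (condition (1)), which is exactly our hypothesis. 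So the first step is to record that $(\bB, \bA, \tfF, \tau, d)$ is itself a fusion bialgebra, with $\bB$ in the role of the commutative algebra and $*_{\bB}$ in the role of its convolution.

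The second step is to apply Theorem~\ref{thm:sumsetest} to the fusion bialgebra $(\bB, \bA, \tfF, \tau, d)$. Under this identification the convolution ``$*$'' becomes $*_{\bB}$ and the trace ``$d$'' becomes $\tau$, so the functional $\cS(\cdot)=d(\cR(\cdot))$ occurring in Theorem~\ref{thm:sumsetest} becomes $\cS(\cdot)=\tau(\cR(\cdot))$ on $\bB$, which is precisely the $\cS$ used in this section. Theorem~\ref{thm:sumsetest} then asserts exactly that $\cS(\cR(x)*_{\bB}\cR(y))\geq\max\{\cS(x),\cS(y)\}$ for all $x,y\in\bB$, which is the claim.

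I expect no serious obstacle: the whole content is the observation that the hypothesis supplies the one condition (Schur product) that the dual might otherwise lack, while the rest of Definition~\ref{Def: Fusion bialgebras} is automatic from earlier results; the only thing requiring care is the bookkeeping of which algebra, trace and convolution plays which role when Theorem~\ref{thm:sumsetest} is transported to the dual. If one prefers to avoid invoking the ``dual is a fusion bialgebra'' packaging, one can instead copy the proof of Theorem~\ref{thm:sumsetest} with $(\bA,*)$ replaced by $(\bB,*_{\bB})$: the $L^1$-multiplicativity $\|\cR(x)*_{\bB}\cR(y)\|_{1,\bB}=\|\cR(x)\|_{1,\bB}\|\cR(y)\|_{1,\bB}$ for positive operators comes from Proposition~\ref{prop:young92}, the Young-type bound $\|\cR(x)*_{\bB}\cR(y)\|_{2,\bB}\leq\|\cR(x)\|_{1,\bB}\|\cR(y)\|_{2,\bB}$ comes from Proposition~\ref{prop:young3} applied to the dual fusion bialgebra, and Hölder's inequality (Proposition~\ref{prop:holder}) closes the chain of inequalities exactly as in the proof of Theorem~\ref{thm:sumsetest}.
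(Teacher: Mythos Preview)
Your main approach has a genuine gap: promoting the dual $(\bB,\bA,\tfF,\tau,d)$ to a fusion bialgebra via Definition~\ref{Def: Fusion bialgebras} requires the first algebra to be \emph{commutative}, but the theorem does not assume $\bB$ is commutative (and the paper explicitly contemplates non-commutative fusion rings). So ``apply Theorem~\ref{thm:sumsetest} to the dual'' is only valid under an extra hypothesis you have not been given. Your fallback suffers from the same defect: you obtain the $L^2$ Young bound by quoting ``Proposition~\ref{prop:young3} applied to the dual fusion bialgebra,'' which again presupposes the dual is a fusion bialgebra, hence that $\bB$ is commutative.

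The paper's proof avoids this entirely. The $L^1$ equality step is exactly your Proposition~\ref{prop:young92} argument (this is where the Schur product hypothesis is used). But for the step
\[
\|\cR(x)*_{\bB}\cR(y)\|_{2,\bB}\le \|\cR(x)\|_{1,\bB}\,\|\cR(y)\|_{2,\bB}
\]
the paper invokes Proposition~\ref{prop:young91} with $(r,p,q)=(2,1,2)$, which holds for \emph{any} fusion bialgebra---it is proved purely from Hausdorff--Young and H\"older, with no commutativity or Schur product assumption on the dual. With that substitution your chain of inequalities goes through verbatim and matches the paper's proof. In short: your use of Proposition~\ref{prop:young92} and H\"older is right; the missing ingredient is Proposition~\ref{prop:young91} rather than Proposition~\ref{prop:young3}-on-the-dual.
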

\begin{proof}
By Proposition~\ref{prop:young91} and~\ref{prop:young92}, the proof is similar to the one of Theorem~\ref{thm:sumsetest}.
\end{proof}

\section{Fusion Subalgebras and Bishifts of Biprojections}\label{Sec: Fusion Subalgebras and Bishifts of Biprojections}

In this section, we define fusion subalgebras, biprojections and bishifts of biprojections for fusion bialgebras. We prove a correspondence between fusion subalgebras and biprojections. We prove partially that bishifts of biprojections are the extremizers of the inequalities proved in the previous sections.

\begin{definition}[Fusion subalgebra]
Let $(\bA, \bB, \fF, d, \tau)$ be a fusion bialgebra.
A subalgebra $\bA_0$ of $\bA$ is a fusion subalgebra if $(\bA_0, \fF(\bA_0), \fF, d, \tau)$ is a fusion bialgebra.
\end{definition}

\begin{definition}[Biprojection]
Let $(\bA, \bB, \fF, d, \tau)$ be a fusion bialgebra.
We say $x\in \fA$ is a biprojection if $x$ is projection and $\fF(x)$ is a multiple of a projection in $\bB$.
\end{definition}

\begin{proposition}\label{prop:biproj1}
Let $(\bA, \bB, \fF, d, \tau)$ be a fusion bialgebra and $P$ a biprojection.
Then there is a fusion subalgebra $\bA_0$ such that the range of $P$ is $\bA_0$.
\end{proposition}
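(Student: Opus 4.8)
The plan is to take $\bA_0$ to be the range of $P$, i.e.\ the corner $\bA_0:=P\diamond\bA=\{P\diamond x:x\in\bA\}$, which is automatically a unital commutative C$^*$-subalgebra of $\bA$ with unit $P$, and to prove that $(\bA_0,\fF(\bA_0),\fF,d,\tau)$ satisfies Definition~\ref{Def: Fusion bialgebras}. By the gauge transformation we may assume the fusion bialgebra is canonical, so that $\bB$ carries the distinguished $\bRp$-basis $B=\{x_1=1,x_2,\dots,x_m\}$ of Theorem~\ref{Thm: extension}, the minimal projections $P_1,\dots,P_m$ of $\bA$ satisfy $\fF(P_j)=d(x_j)\,x_j$, and $P=\sum_{j\in S}P_j$ for a nonempty subset $S\subseteq\{1,\dots,m\}$. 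Then $\bA_0=\mathrm{span}\{P_j:j\in S\}$ and $\fF(\bA_0)=V$, where $V:=\mathrm{span}\{x_j:j\in S\}\subseteq\bB$. Since $\fF$ carries the multiplication $\diamond$ of $\bA$ to the convolution on $\bB$, and the convolution $*$ of $\bA$ to the multiplication of $\bB$, everything reduces to showing that $V$ is a unital $*$-subalgebra of $\bB$ with $\{x_j\}_{j\in S}$ an $\bRp$-basis of it in the sense of Definition~\ref{def:bB}; the remaining conditions of Definition~\ref{Def: Fusion bialgebras} for $(\bA_0,V,\fF,d,\tau)$ — commutativity, the Plancherel identity $\tau(\fF(x)^*\fF(y))=d(x^\#\diamond y)$, the Schur product property (which needs $\bA_0$ closed under $*$, the dual of ``$V$ closed under multiplication''), the modular conjugation (using $J(P_j)=P_{j^*}$), and the Jones projection $P_1\in\bA_0$ (still minimal there) — are then immediate by restriction.

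The heart of the argument is that $S$ is closed under fusion and under $j\mapsto j^*$. Here I would use that $\fF(P)$ is a positive multiple of a projection, say $\fF(P)=c\,q$ with $q\in\bB$ a projection. From $\fF(P)\fF(P)=\fF(P*P)$ and $q^2=q$ we get $P*P=cP$, and the Schur product property on $\bA$ applied to $P\geq0$, together with $P\neq0$, forces $c>0$. Equivalently, $q=\fF(c^{-1}P)=c^{-1}\sum_{j\in S}d(x_j)\,x_j$ is a self-adjoint idempotent of $\bB$. Expanding $q^2=q$ in the basis $B$, the coefficient of $x_s$ equals $c^{-2}\sum_{j,k\in S}d(x_j)d(x_k)N_{j,k}^{s}$ on the left and is $0$ when $s\notin S$ on the right; since $d(x_j)>0$ and $N_{j,k}^{s}\geq0$, this forces $N_{j,k}^{s}=0$ whenever $j,k\in S$ and $s\notin S$, which is exactly ``$V$ closed under multiplication''. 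Expanding $q^*=q$ and using the linear independence of $B$ gives $S=S^*$, so $V$ is $*$-closed; and then $1\in S$, because for $j\in S$ one has $N_{j,j^*}^{1}=1\neq0$ with $x_jx_{j^*}\in V$ (alternatively, $\langle P,P_1\rangle_{\bA}=\tau(\fF(P_1)^*\fF(P))=c\,\tau(q)>0$ shows $P_1\leq P$ directly). Hence $\{x_j\}_{j\in S}$ is an $\bRp$-basis of the unital $*$-algebra $V$.

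With this in hand the proof concludes quickly: $V$ is then a C$^*$-subalgebra of $\bB$ with an $\bRp$-basis, and by construction $(\bA_0,\fF(\bA_0),\fF,d,\tau)$ inherits all the axioms of Definition~\ref{Def: Fusion bialgebras} (it is, in fact, the canonical fusion bialgebra attached to $V$ by the converse direction of Theorem~\ref{Thm: extension}), so $\bA_0$ is a fusion subalgebra, and its underlying space is the range of $P$ by construction. The one genuinely new ingredient is the fusion-closure of the support $S$ in the second paragraph; I expect the only (small) obstacle to be recognizing that the idempotent identity for $\fF(P)$, once written out in the nonnegative basis $B$, collapses exactly to the statement that $S$ is closed under fusion — once that is seen, the rest is routine bookkeeping about passing the fusion-bialgebra structure to the restriction.
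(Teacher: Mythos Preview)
Your proof is correct and follows the same overall strategy as the paper: write $P=\sum_{j\in S}P_j$, use that $\fF(P)$ is a positive multiple of a projection to deduce that the support $S$ is closed under $j\mapsto j^*$ and under fusion, and conclude that $V=\mathrm{span}\{x_j:j\in S\}$ is a unital $*$-subalgebra of $\bB$ with $\{x_j\}_{j\in S}$ as $\bRp$-basis.

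The one noteworthy difference is in the fusion-closure step. You read the idempotent relation $q^2=q$ at the coordinates $x_s$ with $s\notin S$, where the right-hand side vanishes, and conclude $N_{j,k}^s=0$ for $j,k\in S$ directly from nonnegativity. The paper instead reads the same relation at $s\in S$, obtaining $\mu_{\bA_0}\,d(x_s)=\sum_{j,k\in S}d(x_j)d(x_k)N_{j,k}^s$, then rewrites the right-hand side via Frobenius reciprocity as a partial sum bounded above by the full sum $\sum_k d(x_k)N_{s^*,j}^{k^*}=d(x_s)d(x_j)$, forcing equality and hence the same vanishing. Your route is shorter and avoids both Frobenius reciprocity and the inequality argument; the paper's route yields as a byproduct the explicit value $\lambda=\mu_{\bA_0}=\sum_{j\in S}d(x_j)^2=d(P)$, which is used later (e.g.\ in Lemma~\ref{lem:extrem1}), though of course that also drops out of your equation at $s=1$.
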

\begin{proof}
We write $\fF(P)=\sum_{j=1}^m \lambda_j x_j$.
By the fact that $P$ is a projection and $\fF(P)$ is a multiple of a projection, we obtain that $\lambda_j=0$ or $\lambda_j=d(x_j)$,and
\begin{align}
\fF(P)^2 &=\lambda \fF(P), \quad \fF(P)^*= \frac{\overline{\lambda}}{\lambda}\fF(P)  \label{eq:proj}.
\end{align}
Solving the Equation (\ref{eq:proj}), we obtain that
\begin{align}\label{eq:solv}
\lambda \overline{\lambda_{j^*}}=\overline{\lambda}\lambda_j  \quad , \quad \lambda \lambda_s=\sum_{j,k=1}^m\lambda_j\lambda_k N_{j,k}^s.
\end{align}
Let
$$\bA_0=\text{span}\{\fF^{-1}(x_j):\lambda_j\neq 0\}$$
and
$$I_{\bA_0}=\{1\leq j\leq m: \lambda_j\neq 0\}\subset \{1, \ldots, m\}.$$
Then $\fF(P)=\sum_{j\in I_{\bA_0}} d(x_j) x_j$.
By Equations (\ref{eq:solv}), we have that
$$\lambda d(x_s)=\sum_{j,k\in I_{\bA_0}} d(x_j)d(x_k)N_{j,k}^s,$$
and
$$\lambda =\sum_{j,k\in I_{\bA_0}} \lambda_j\lambda_k\delta_{j^*, k}=\sum_{j\in I_{\bA_0}}|\lambda_j|^2=\sum_{j\in I_{\bA_0}} d(x_j)^2>0.$$
Let $\mu_{\bA_0}=\sum_{j\in I_{\bA_0}} d(x_j)^2$.
We have
\begin{align}\label{eq:result}
 d(x_s)=\mu_{\bA_0}^{-1} \sum_{j,k\in I_{\bA_0}} d(x_j)d(x_k)N_{j,k}^s, \quad \forall s\in I_{\bA_0}.
\end{align}
By Equation (\ref{eq:solv}) and (\ref{eq:result}), we have that the involution $*$ is invariant on $I_{\bA_0}$ and
\begin{align*}
\mu_{\bA_0}d(x_s) &=\sum_{j,k\in I_{\bA_0}} d(x_j)d(x_k) N_{j,k}^s\\
&=\sum_{j\in I_{\bA_0}} d(x_j)\sum_{k\in I_{\bA_0}} d(x_{k^*}) N_{s^*, j}^{k^*}\\
&\leq \sum_{j\in I_{\bA_0}} d(x_j)\sum_{k=1}^m d(x_{k^*}) N_{s^*, j}^{k^*}\\
&= \sum_{j\in I_{\bA_0}} d(x_j)d(x_{s^*})d(x_j)\\
&=\mu_{\bA_0} d(x_s),
\end{align*}
i.e. $N_{s,j}^k=0$ for any $k\notin I_{\bA_0}$.
Therefore $x_jx_k=\sum_{s\in I_{\bA_0}} N_{j,k}^s x_s$ for any $j,k\in I_{\bA_0}$, i.e. $\fF(\bA_0)$ is a $*$-algebra and $(\bA_0, \fF(\bA_0), \fF, d, \tau)$ is a fusion bialgebra.
\end{proof}

\begin{proposition}\label{prop:biproj2}
Let $(\bA, \bB, \fF, d, \tau)$ be a fusion bialgebra and $(\bA_0, \fF(\bA_0), \fF, d, \tau)$ is a fusion subalgebra.
Then there is a biprojection $P$ such that the range of $P$ is $\bA_0$.
\end{proposition}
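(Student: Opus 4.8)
\emph{Plan.} The biprojection should be $P:=1_{\bA_0}$, the identity of $\bA_0$ with respect to the multiplication $\diamond$. Two things have to be verified: that $\fF(P)$ is a multiple of a projection of $\bB$, and that the range (the support) of $P$ in the abelian C$^*$-algebra $\bA$ is exactly $\bA_0$. The first will be an instance of the dual Jones projection statement (Proposition~\ref{Prop: Dual Jones Projection}) applied to the sub-bialgebra; the second amounts to the assertion that a fusion subalgebra is \emph{based}, i.e.\ spanned by a subset of the canonical projections $\fF^{-1}(x_j)$, and this is the point I expect to require real work.

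\emph{Step 1: $\bA_0$ is based.} Write $\bA=\bigoplus_j\bC e_j$ with $e_j:=d(x_j)\fF^{-1}(x_j)$ the minimal projections of $\bA$. Being a $\diamond$-subalgebra, $\bA_0$ is a sum of grouped $e_j$'s: $\bA_0=\bigoplus_i\bC f_i$ with $f_i=\sum_{j\in S_i}e_j$, where $\{S_i\}$ partitions a set $I_{\bA_0}$, and $I_{\bA_0}$ is $*$-invariant because the modular conjugation $J$ restricts to $\bA_0$ and $J(e_j)=e_{j^*}$. Since $\fF$ intertwines $\conv$ on $\bA$ with the product on $\bB$ and $\fF(\bA_0)$ is a C$^*$-subalgebra of $\bB$, $\bA_0$ is also closed under $\conv$; from $e_j\conv e_k=\sum_s\frac{d(x_j)d(x_k)}{d(x_s)}N_{j,k}^s e_s$, whose coefficients are nonnegative, this closure forces $N_{j,k}^s=0$ for $j,k\in I_{\bA_0}$, $s\notin I_{\bA_0}$, so $\mathrm{span}\{x_s:s\in I_{\bA_0}\}$ is a based subalgebra of $\bB$ and (by the Jones-projection axiom for the sub-bialgebra together with $\tau$-normalisation) $1\in I_{\bA_0}$. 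It then remains to show each $|S_i|=1$; this is the analogue of the classical fact that a based subring of a fusion ring is spanned by a subset of the basis, and I would deduce it from the uniqueness of the $\bRp$-basis of $\fF(\bA_0)$ in Theorem~\ref{Thm: extension}, whose $\fF^{-1}$-images are positive multiples of the $f_i$ while $d$ and $\tau$ restrict to the canonical functionals on the subalgebra. \textbf{This is the step where the real work lies}; it is also unavoidable, since the support of any projection of the abelian algebra $\bA$ is always a span of some of the $e_j$'s, so the proposition cannot hold without basedness.

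\emph{Step 2: conclusion.} Once $\bA_0=\mathrm{span}\{\fF^{-1}(x_j):j\in I_{\bA_0}\}$, the element $P=\sum_{j\in I_{\bA_0}}e_j=\sum_{j\in I_{\bA_0}}d(x_j)\fF^{-1}(x_j)$ is an orthogonal sum of minimal projections, hence a projection of $\bA$; it equals $1_{\bA_0}$, and its range is $\mathrm{span}\{e_j:j\in I_{\bA_0}\}=\bA_0$. Applying Proposition~\ref{Prop: Dual Jones Projection} to the fusion bialgebra $(\bA_0,\fF(\bA_0),\fF,d,\tau)$ shows that $\fF(P)=\fF(1_{\bA_0})$ is a positive multiple of a central minimal projection of $\fF(\bA_0)$, which, being a self-adjoint idempotent, is a projection of $\bB$; hence $P$ is a biprojection with range $\bA_0$. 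Alternatively one can bypass Proposition~\ref{Prop: Dual Jones Projection} and check directly that $\fF(P)=\sum_{j\in I_{\bA_0}}d(x_j)x_j$ is self-adjoint ($I_{\bA_0}^{*}=I_{\bA_0}$, $d(x_j)=d(x_{j^*})$) and, with $\mu_{\bA_0}:=\sum_{j\in I_{\bA_0}}d(x_j)^2$, satisfies
\[
\fF(P)^2=\sum_{s}\Big(\sum_{j,k\in I_{\bA_0}}d(x_j)d(x_k)N_{j,k}^s\Big)x_s=\mu_{\bA_0}\sum_{s\in I_{\bA_0}}d(x_s)\,x_s=\mu_{\bA_0}\fF(P),
\]
where the middle equality uses $N_{j,k}^s=0$ for $j,k\in I_{\bA_0}$, $s\notin I_{\bA_0}$ together with Frobenius reciprocity $N_{j,k}^s=N_{j^{*},s}^{k}$ and Proposition~\ref{Prop: dim d} to reduce $\sum_{k\in I_{\bA_0}}d(x_k)N_{j,k}^s$ to the full eigenvector identity $\sum_{k=1}^m d(x_k)N_{j,k}^s=d(x_j)d(x_s)$. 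Either way, $\mu_{\bA_0}^{-1}\fF(P)$ is a projection of $\bB$, completing the proof.
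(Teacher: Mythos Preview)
Your overall strategy matches the paper's: establish that $\bA_0$ is \emph{based} (equivalently, each element $y_j$ of the $\bRp$-basis of $\fF(\bA_0)$ equals some $x_{m_j}$), then set $P=\sum_j d(y_j)\fF^{-1}(y_j)=1_{\bA_0}$ and verify $\fF(P)^*=\fF(P)$ and $\fF(P)^2=\mu_{\bA_0}\fF(P)$. Your Step~2 is correct and in fact more explicit than the paper's version.

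The gap is Step~1, which you flag but do not close. Your proposed mechanism---uniqueness of the $\bRp$-basis from Theorem~\ref{Thm: extension}---does not by itself force $|S_i|=1$: uniqueness only pins down each $y_i$ as a specific positive multiple of $\fF(f_i)$, and this is perfectly compatible with $f_i$ being a sum of several $P_k$'s. The paper's argument at this point is different and more concrete: it writes $y_j=\sum_k C_{j,k}x_k$, computes
\[
1=M_{j^*,j}^1=\tau(y_j^*y_j)=\sum_k C_{j,k}^2,
\]
and then uses $C_{j,k}\in\bZ$ to conclude that exactly one $C_{j,k}$ is nonzero, hence $y_j=x_{m_j}$. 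So what is missing from your write-up is this orthonormality computation together with an integrality (or $\{0,1\}$-valuedness) constraint on the coefficients $C_{j,k}$. It is worth noting that the paper asserts $C_{j,k}\in\bZ$ and $M_{j,k}^s\in\bN$ without further comment; for a general fusion bialgebra (as opposed to a fusion ring) these claims themselves require justification, so your instinct that ``the real work lies'' here is well placed. A minor point: your derivation of $N_{j,k}^s=0$ for $j,k\in I_{\bA_0}$, $s\notin I_{\bA_0}$ reads as if it used $e_j*e_k\in\bA_0$, which would be circular; what actually gives it is $f_i*f_{i'}\in\bA_0$ combined with nonnegativity of all the fusion coefficients.
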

\begin{proof}
Let $\{y_1, \ldots, y_{m'}\}$ be a $\bR_{\geq 0}$-basis of $\fF^{-1}(\bA_0)$ such that $y_1=1$, $y_j\in \bB$ and $y_jy_k=\sum_{s=1}^{m'}M_{j,k}^s y_s$, where $M_{j,k}^s\in \mathbb{N}$ and $M_{j,k}^1=\delta_{y_j^*, y_k}$.
Suppose that $y_j=\sum_{k=1}^{m}C_{j,k}x_k$, $C_{j,k}\in \bZ$.
Then
$$1=M_{j^*, j}^1=\tau(y_{j}^*y_j)=\sum_{k=1}^m C_{j,k}^2.$$
Hence $y_j=x_{m_j}$ for some $1\leq m_j\leq m$ and $M_{j,k}^s=N_{m_j, m_k}^{m_s}$ for $1\leq j,k,s\leq m'$.
Let $P=\sum_{j=1}^{m'} d(y_j)\fF^{-1}(y_j)$.
Then
\begin{align*}
\fF(P)^2 & =\mu_{\bA_0}\fF(P), \quad \fF(P)^*=\fF(P).
\end{align*}
Then $P$ is a biprojection.
\end{proof}

\begin{theorem}\label{thm:biproj}
Let $(\bA, \bB, \fF, d, \tau)$ be a fusion bialgebra.
Then there is a bijection between the set of fusion subalgebras and the set of biprojections.
\end{theorem}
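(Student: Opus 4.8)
The statement asserts a bijection between fusion subalgebras of $\bA$ and biprojections in $\bA$. The natural plan is to assemble the two propositions just proved into mutually inverse maps. First I would define the map $\Phi$ from biprojections to fusion subalgebras by sending a biprojection $P$ to the range of $P$, which is a fusion subalgebra $\bA_0$ by Proposition~\ref{prop:biproj1}. Conversely, I would define the map $\Psi$ from fusion subalgebras to biprojections by sending a fusion subalgebra $\bA_0$ to the biprojection $P=\sum_{j} d(y_j)\fF^{-1}(y_j)$ constructed in Proposition~\ref{prop:biproj2}, where $\{y_j\}$ is the canonical $\bR_{\geq 0}$-basis of $\fF^{-1}(\bA_0)$ coming from Theorem~\ref{Thm: extension}.

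The core of the argument is then checking that $\Phi\circ\Psi=\mathrm{id}$ and $\Psi\circ\Phi=\mathrm{id}$. For $\Phi\circ\Psi=\mathrm{id}$: given a fusion subalgebra $\bA_0$, the biprojection $P$ from Proposition~\ref{prop:biproj2} satisfies $\fF(P)=\sum_{j}d(y_j)y_j$ with $y_j\in B$, so the range of $P$ is precisely $\mathrm{span}\{\fF^{-1}(y_j)\}=\bA_0$. For $\Psi\circ\Phi=\mathrm{id}$: given a biprojection $P$, Proposition~\ref{prop:biproj1} shows $\fF(P)=\sum_{j\in I_{\bA_0}}d(x_j)x_j$ and that the index set $I_{\bA_0}$ indexes the canonical $\bR_{\geq 0}$-basis of $\fF^{-1}(\bA_0)$; feeding $\bA_0$ back into Proposition~\ref{prop:biproj2} reproduces exactly the element $\sum_{j\in I_{\bA_0}}d(x_j)\fF^{-1}(x_j)=P$. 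The one point that needs a word of care is that a biprojection is genuinely determined by its range: this follows because the condition ``$P$ projection and $\fF(P)$ a multiple of a projection'' forces, via Equation~\eqref{eq:proj}, each coefficient $\lambda_j$ of $\fF(P)$ to be either $0$ or $d(x_j)$, so the coefficients are rigidly pinned down by the support, i.e.\ by the range.

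The main obstacle, such as it is, is bookkeeping rather than mathematics: one must make sure the $\bR_{\geq 0}$-basis of $\fF^{-1}(\bA_0)$ appearing in Proposition~\ref{prop:biproj2} really is the restriction of the ambient basis $B$ — this is exactly the content of the normalization argument $1=\sum_k C_{j,k}^2$ in that proof, forcing $y_j=x_{m_j}$ — so that the two constructions are manifestly inverse. Once that identification is in hand, the proof of Theorem~\ref{thm:biproj} is just: ``The maps $P\mapsto \mathrm{range}(P)$ and $\bA_0 \mapsto \sum_{j\in I_{\bA_0}} d(x_j)\fF^{-1}(x_j)$ are well defined by Propositions~\ref{prop:biproj1} and~\ref{prop:biproj2} respectively, and the above discussion shows they are mutually inverse.'' I would write it in essentially that compressed form, since all the substantive work has already been done in the two preceding propositions.
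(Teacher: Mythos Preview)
Your proposal is correct and takes essentially the same approach as the paper: the paper's proof is the single sentence ``It follows from Proposition~\ref{prop:biproj1} and~\ref{prop:biproj2},'' and your plan simply spells out why those two propositions yield mutually inverse maps. The extra bookkeeping you supply (that a biprojection is determined by its support, and that the $\bR_{\geq 0}$-basis of the subalgebra coincides with a subset of $B$) is exactly the content implicit in the proofs of those two propositions, so your compressed final sentence would match the paper's proof almost verbatim.
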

\begin{proof}
It follows from Proposition~\ref{prop:biproj1} and~\ref{prop:biproj2}.
\end{proof}

\begin{definition}[Left and right shifts]\label{def:shift}
Let $(\bA, \bB, \fF, d, \tau)$ be a fusion bialgebra and $B$ a biprojection.
A projection $P\in \bB$ is a shift of $\cR(\fF(B))$ if $\tau(P)=\tau(\cR(\fF(B)))$ and $P*_\bB \cR(\fF(B))=\tau(\cR(\fF(B)))P$.
A projection $P\in \bA$ is a left shift of $B$ if $d(B)=d(P)$ and $P*B=d(B)P$;
A projection $P\in \bA$ is a right shift of $B$ if $d(B)=d(P)$ and $B*P=d(B)P$;
\end{definition}

\begin{lemma}\label{lem:extrem1}
Let $(\bA, \bB, \fF, d, \tau)$ be a fusion bialgebra and $B$ a biprojection.
Then
\begin{enumerate}[(1)]
\item $\cR(\fF(B))$ is a shift of $\cR(\fF(B))$, $B$ is a left (right) shift of $B$;
\item $J_\bB(P)$ is a shift of $\cR(\fF(B))$ when $P$ is a shift of $\cR(\fF(B))$;
\item $J(P)$ is a left (right) shift of $B$ when $P$ is a right (left) shift of $B$;
\item $\cS(P)\cS(\fF(P))=1$ when $P$ is a left (right) shift of $B$ or $\cS(\fF^{-1}(P))\cS(P)=1$ when $P$ is a shift of $\cR(\fF(B))$;
\item $\cR(\fF^{-1}(P))=B$ if $P$ is a left (right) shift of $\cR(\fF(B))$ and $\cR(\fF(P))=\cR(\fF(B))$ if $P$ is a shift of $B$.
\end{enumerate}
\end{lemma}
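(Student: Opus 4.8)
The plan is to verify the five items essentially by unwinding the definitions of shift, left/right shift, and biprojection, and then invoking the structural facts already established: Theorem~\ref{thm:biproj} (biprojections correspond to fusion subalgebras), Lemma~\ref{lem:support} (support behavior under $\#$, $*$, $J$, $J_\bB$), Proposition~\ref{prop:range1} and Remark~\ref{prop:range2} (range of convolutions via range projections), the Schur product property on $\bA$, and Proposition~\ref{prop:biproj2} (the explicit form $P_0 = \sum_j d(y_j)\fF^{-1}(y_j)$ of the biprojection attached to a fusion subalgebra). I would treat the two ``sides'' (left/right shifts of $B$ in $\bA$, versus shifts of $\cR(\fF(B))$ in $\bB$) in parallel, since each assertion on one side is the Fourier-dual mirror of the corresponding assertion on the other; once one side is done, the other follows by applying $\fF$ (resp. $\tfF$) and using that $\fF$ intertwines the multiplication on $\bA$ with the convolution $*_\bB$ on $\bB$ up to the normalizations in Definition~\ref{Fourier transform}.

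For item (1): writing $B$ as the biprojection attached to its fusion subalgebra $\bA_0$ via Proposition~\ref{prop:biproj2}, one computes $B*B$ directly from $\fF(B)^2 = \mu_{\bA_0}\fF(B)$, giving $B*B = \mu_{\bA_0}B = d(B) B$ after identifying $d(B) = \mu_{\bA_0}$ (this identification is exactly the content of the computation $\lambda = \sum_{j\in I_{\bA_0}} d(x_j)^2$ in the proof of Proposition~\ref{prop:biproj1}, together with $d(B) = \|L_B\|_\infty$ applied to the projection $\mu_{\bA_0}^{-1}\fF(B)$). Hence $B$ is both a left and a right shift of itself, and symmetrically $\fF(B)^2 = \mu_{\bA_0}\fF(B)$ says $\cR(\fF(B))*_\bB \cR(\fF(B)) = \tau(\cR(\fF(B)))\,\cR(\fF(B))$ after normalizing, so $\cR(\fF(B))$ is a shift of itself. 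For item (2), apply the anti-linear $*$-isomorphism $J_\bB$ to the defining equation $P*_\bB \cR(\fF(B)) = \tau(\cR(\fF(B)))P$; using that $J_\bB(\cR(\fF(B))) = \cR(\fF(B))$ (which follows from $J_\bB$ fixing the basis $B$, hence fixing $\fF(B)$ and its range projection) and that $J_\bB$ is trace-preserving by Lemma~\ref{lem:support}, one gets the same equation for $J_\bB(P)$. Item (3) is identical with $J$ in place of $J_\bB$ on $\bA$, using $J(B) = B$ and that $J$ swaps the roles of the two convolution factors (this is why a right shift maps to a left shift).

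Item (4) is where the real work sits. If $P$ is, say, a left shift of $B$, then $P*B = d(B)P$ forces $\cR(P*B) = \cR(P)$; by the Schur-product consequence Proposition~\ref{prop:range1} (applied to the positive operators $P, B$), $\cR(P*B) = \cR(\cR(P)*B) = \cR(\cR(P)*\cR(B))$, so the support of $\cR(P)*B$ equals $\cR(P)$, which pins down $\cS(P)$ in terms of $\cS(\fF(P))$ via the trace identity $\tau(\fF(P*B)) = \tau(\fF(P))\tau(\fF(B))$ together with Plancherel; the cleanest route is to combine the Donoho--Stark inequality $\cS(P)\cS(\fF(P)) \ge 1$ of Theorem~\ref{thm:DS} with a matching upper bound coming from the fact that $P*B$ is forced to be ``as spread out as possible'' — concretely, $\|\fF(P)\|_{\infty,\bB}\,\|\fF(P)\|_{1,\bB} = \|\fF(P)\|_{2,\bB}^2$ in the extremal case, and Corollary~\ref{cor:holder1} then characterizes the equality. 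I expect the main obstacle to be exactly this: showing the Donoho--Stark inequality is saturated (equality $\cS(P)\cS(\fF(P)) = 1$) rather than merely $\ge 1$, which requires re-examining each inequality in the proof of Theorem~\ref{thm:DS} and checking that the shift condition forces every one of them to be an equality (so $P$ is a multiple of a partial isometry on both sides, i.e. a projection whose Fourier transform is a multiple of a projection — a biprojection-type rigidity). Item (5) then follows readily: once $\cS(P)\cS(\fF(P)) = 1$ and $\cR(P)*B = d(B)\cdot(\text{const})\cR(P)$ with $B$ a biprojection, the range of $\fF^{-1}(P)$ must be a fusion subalgebra containing (and by a dimension count, equal to) the one determined by $B$, so $\cR(\fF^{-1}(P)) = B$; the dual statement $\cR(\fF(P)) = \cR(\fF(B))$ for shifts of $B$ is the Fourier mirror. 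Throughout, the only genuinely delicate point is the rigidity in (4); items (1)--(3) and (5) are bookkeeping with the already-established $*$-isomorphism and support lemmas.
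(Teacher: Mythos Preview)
Your treatment of items (1)--(3) is essentially the paper's: direct computation from the biprojection identity $\fF(B)^2=\mu_{\bA_0}\fF(B)$ for (1), and the $*$-isomorphism properties of $J$, $J_\bB$ for (2)--(3). That is fine.

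For item (4), however, you have missed the key simplification, and the route you sketch is both vaguer and much harder than needed. The paper does not re-examine the inequalities inside the proof of Donoho--Stark; instead it observes that the shift equation itself, once pushed through the Fourier transform, \emph{directly} yields a range-projection containment. Concretely, if $P$ is a right shift of $B$ then $B*P=d(B)P$ gives $\fF(B)\fF(P)=d(B)\fF(P)$ in $\bB$; since $\cR(\fF(B))=d(B)^{-1}\fF(B)$, this says $\cR(\fF(B))\fF(P)=\fF(P)$, hence $\cR(\fF(P))\le \cR(\fF(B))$. Combined with $d(P)=d(B)$ (part of the definition of a shift), this immediately gives the upper bound
\[
\cS(P)\,\cS(\fF(P)) \;\le\; \cS(B)\,\cS(\fF(B)) \;=\; 1,
\]
the last equality being the computation you already did in (1). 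Sandwiching against the Donoho--Stark lower bound $\ge 1$ forces equality, and the equality $\cS(\fF(P))=\cS(\fF(B))$ together with the containment $\cR(\fF(P))\le \cR(\fF(B))$ gives $\cR(\fF(P))=\cR(\fF(B))$ for free --- so (5) drops out of the same sandwich, with no need for your ``fusion subalgebra plus dimension count'' detour. The case of shifts of $\cR(\fF(B))$ is symmetric, and the left-shift case reduces to the right-shift case via (3) and Lemma~\ref{lem:support}.

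Your proposed mechanism for the upper bound --- arguing toward $\|\fF(P)\|_{\infty,\bB}\|\fF(P)\|_{1,\bB}=\|\fF(P)\|_{2,\bB}^2$ and then invoking Corollary~\ref{cor:holder1} --- is circular as written: that equality is exactly the partial-isometry characterization you are trying to establish, not an input. The ``re-examine every inequality in Theorem~\ref{thm:DS}'' plan could perhaps be made to work, but it is the hard way around; the one-line containment $\cR(\fF(P))\le\cR(\fF(B))$ is the idea you are missing.
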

\begin{proof}
$(1)$ By Proposition~\ref{prop:biproj1}, we have $\cR(\fF(B))=d(B)^{-1}\fF(B)$,
$$B*B=d(B)B, \quad \cR(\fF(B))*_\bB \cR(\fF(B))=d(B)^{-1}\cR(\fF(B))$$
and
$$\cS(B)\cS(\fF(B))=\tau(\cR(\fF(B)))d(B)=\frac{1}{d(B)}d(B)=1.$$
It indicates that $\cR(\fF(B))$ is a shift of $\cR(\fF(B))$ and $B$ is a left (right) shift of $B$

$(2)$ and $(3)$ can be followed by the property of $J$ and $J_\bB$.

$(4)$ Suppose $P$ is a shift of $\cR(\fF(B))$.
Then $\cR(\fF^{-1}(P))\leq B.$ and
$$1\leq \cS(P)\cS (\fF^{-1}(P))=\tau(P)d(\cR(\fF^{-1}(P)))\leq \cS(\fF(B))d(B)=1,$$
i.e. $\cS(P)\cS (\fF^{-1}(P))=1$ and $\cR(\fF^{-1}(P))= B$.

Suppose $P$ is a right shift of $B$.
Then $\cR(\fF(P))\leq \cR(\fF(B))$ and
$$1\leq \cS(P)\cS(\fF(P))\leq \cS(B)\cS(\fF(B))=1,$$
i.e. $\cS(P)\cS(\fF(P))=1$.
Hence $\cR(\fF(P))=\cR(\fF(B))$.

Suppose $P$ is a left shift of $B$.
Then $J(P)$ is a right shift of $B$.
By Lemma~\ref{lem:support}, we have $\cS(P)\cS (\fF(P))=1$.
\end{proof}

\begin{definition}[Bishift of biprojection]
Let $(\bA, \bB, \fF, d, \tau)$ be a fusion bialgebra and $B$ a biprojection.
A nonzero element $x$ is a bishift of the biprojection $B$ if there is $y\in \bA$, a shift $\tB_g$ of $\cR(\fF(B))$ and a right shift $B_{h}$ of $B$ such that $x=\fF^{-1}(\tB_{g}) *(y\diamond B_{h})$.
\end{definition}

\begin{lemma}\label{lem: extrem2}
Let $(\bA, \bB, \fF, d, \tau)$ be a fusion bialgebra and $B$ a biprojection.
Suppose $x=\fF^{-1}(\tB_{g}) *(y\diamond B_{h})$ is a bishift of a biprojection $B$.
Then $\cR(x)=\tB_g$, $\cR (\fF(x))=B_{h}$ and
$$\cS(x)\cS(\fF(x))=1.$$
\end{lemma}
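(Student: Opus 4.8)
The plan is to exploit the structure theory of biprojections (Proposition~\ref{prop:biproj1}), the shift identities collected in Lemma~\ref{lem:extrem1}, and the Schur product property on $\bA$ together with the support/range machinery (Propositions~\ref{prop:range1}, \ref{prop:young2}, and the Donoho--Stark bound Theorem~\ref{thm:DS}). First I would reduce to the canonical case by a gauge transformation (harmless since all the quantities $\cR(\cdot)$, $\cS(\cdot)$, and the shift relations are gauge-invariant). Write $x = \fF^{-1}(\tB_g) * (y\diamond B_h)$. Since $B_h$ is a right shift of $B$, part (5) of Lemma~\ref{lem:extrem1} gives $\cR(\fF(B_h)) = \cR(\fF(B))$, and part (4) gives $\cS(B_h)\cS(\fF(B_h)) = 1$; similarly $\tB_g$ is a shift of $\cR(\fF(B))$ with $\cR(\fF^{-1}(\tB_g)) = B$ and $\cS(\fF^{-1}(\tB_g))\cS(\tB_g)=1$.

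The first substantive step is to compute $\cR(\fF(x))$. Using $\fF(x) = \fF(\fF^{-1}(\tB_g)) \cdot \fF(y\diamond B_h) = \tB_g \cdot \fF(y\diamond B_h)$ in $\bB$, I would show $\cR(\fF(x)) = \cR\big(\tB_g \cdot \fF(y\diamond B_h)\big)$ equals $\cR(\fF(B_h)) = B_h$ as a projection in $\bA$ — here one uses that $\tB_g$ is (a multiple of) a projection satisfying $\tB_g *_{\bB}\cR(\fF(B)) = \tau(\cR(\fF(B)))\tB_g$, which, translated through $\fF$, forces the range of the product onto exactly the ``slots'' of $B$, while the $\diamond$-multiplication by the projection $B_h$ on the $\bA$-side restricts the support to $\cR(\fF(B_h))=\cR(\fF(B))$; matching the two gives $\cR(\fF(x)) = B_h$. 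Dually, I would compute $\cR(x)$: since $x = \fF^{-1}(\tB_g) * (y\diamond B_h)$ is a convolution of $\fF^{-1}(\tB_g)$ (whose range is related to $B$) with $y\diamond B_h$, Proposition~\ref{prop:range1} gives $\cR(x) \le \cR(\cR(\fF^{-1}(\tB_g)) * \cR(y\diamond B_h))$, and the shift identity $\fF^{-1}(\tB_g) * B = d(B)\fF^{-1}(\tB_g)$ (rephrasing ``shift of $\cR(\fF(B))$'' on the $\bA$-side, cf. Definition~\ref{def:shift} and Lemma~\ref{lem:extrem1}(1)) collapses this to $\cR(x) = \tB_g$.

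Once $\cR(x) = \tB_g$ and $\cR(\fF(x)) = B_h$ are established, the entropy/support equality is immediate:
$$\cS(x)\,\cS(\fF(x)) = d(\cR(x))\,\tau(\cR(\fF(x))) = d(\tB_g)\,\tau(B_h).$$
By Lemma~\ref{lem:extrem1}(5) and its proof, $\tau(\tB_g) = \tau(\cR(\fF(B)))$ and $d(B_h) = d(B)$ (shifts preserve traces by Definition~\ref{def:shift}), but I actually need the "cross" product $d(\tB_g)\tau(B_h)$; here I would use that $\tB_g \in \bB$ with $\cR(\fF^{-1}(\tB_g)) = B$ gives $d(\fF^{-1}(\tB_g)) = d(B)$ hence (being a projection) $\tau(\tB_g) = \cS(\fF^{-1}(\tB_g))^{-1}\cdots$ — more cleanly, combine $\cS(\fF^{-1}(\tB_g))\cS(\tB_g) = 1$ with $\cR(\fF^{-1}(\tB_g)) = B = \cR(\fF^{-1}(B_h\text{-image}))$ to get $\cS(\tB_g) = \cS(B)^{-1}$ type relations, and likewise $\cS(\fF(B_h)) = \cS(\fF(B))$, so that $\cS(x)\cS(\fF(x)) = \cS(\tB_g)\cdot\cS(\fF(B_h))$. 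Since $\cS(\tB_g)\cdot\cS(\fF^{-1}(\tB_g)) = 1$ and $\cS(\fF^{-1}(\tB_g)) = \cS(B) = \cS(\fF(B))^{-1} = \cS(\fF(B_h))^{-1}$, the product telescopes to $1$.

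I expect the main obstacle to be the bookkeeping in the first step — rigorously identifying $\cR(x)$ and $\cR(\fF(x))$ as the named projections rather than merely as subprojections. The inequality directions $\cR(\fF(x)) \le B_h$ and $\cR(x) \le \tB_g$ follow formally from Proposition~\ref{prop:range1} and the Schur product property, but the reverse inclusions require showing no "collapse" occurs, i.e. that the convolution $\fF^{-1}(\tB_g)*(y\diamond B_h)$ is genuinely supported on all of $\tB_g$. This is where the precise shift relations (the eigen-equations $P*B = d(B)P$ and $P*_{\bB}\cR(\fF(B)) = \tau(\cR(\fF(B)))P$) must be used as equalities, not just to bound supports: they say $\fF^{-1}(\tB_g)$ is a "coset projection" for the fusion subalgebra attached to $B$, and $y\diamond B_h$ is supported inside that subalgebra's dual, so their convolution fills the coset exactly. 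Writing $y \geq 0$ WLOG (decompose into four positive parts, or note only $\cR(y\diamond B_h)$ matters) lets one invoke the equality case of Proposition~\ref{prop:range1}. Once the range identifications are in hand, the final $\cS(x)\cS(\fF(x)) = 1$ is a short computation using Lemma~\ref{lem:extrem1}(4)--(5), so I would keep that part terse.
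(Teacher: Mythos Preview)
Your approach has a genuine gap in exactly the place you flag as the ``main obstacle''. You try to establish the range identities directly (both inclusions) and then deduce $\cS(x)\cS(\fF(x))=1$ afterwards. The easy containments are indeed available: since $\fF(x)=\tB_g\cdot\fF(y\diamond B_h)$ is left-multiplied by the projection $\tB_g$, one has $\cR(\fF(x))\le \tB_g$; and Proposition~\ref{prop:range1} together with $\cR(\fF^{-1}(\tB_g))=B$ (Lemma~\ref{lem:extrem1}(5)), $\cR(y\diamond B_h)\le B_h$, and the right-shift identity $B*B_h=d(B)B_h$ give $\cR(x)\le\cR(B*B_h)=B_h$. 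But your argument for the \emph{reverse} containments (``the convolution fills the coset exactly'') is not rigorous: the element $\fF^{-1}(\tB_g)$ need not be positive, so the equality case of Proposition~\ref{prop:range1} is unavailable, and decomposing $y$ into positive parts does not help since nothing prevents $y\diamond B_h$ from having strictly smaller support than $B_h$. Making this direct route work would require at minimum a sumset-type input (essentially Theorem~\ref{thm:sumsetest}), which you do not invoke.

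The paper's proof sidesteps this entirely with a squeeze. From the easy containments alone one gets
\[
1 \;\le\; \cS(x)\,\cS(\fF(x)) \;\le\; \cS(B_h)\,\cS(\tB_g) \;=\; \cS(B)\,\cS(\fF(B)) \;=\; 1,
\]
the lower bound being Donoho--Stark (Theorem~\ref{thm:DS}) and the last equality Lemma~\ref{lem:extrem1}(1); equality throughout then forces $\cR(x)=B_h$ and $\cR(\fF(x))=\tB_g$. You listed Donoho--Stark among your tools but never deployed it---it is precisely the ingredient that makes the reverse inclusions automatic rather than something to be proved by hand. (As a side remark: note that $x\in\bA$ and $\fF(x)\in\bB$, so $\cR(x)$ must be the projection $B_h\in\bA$ and $\cR(\fF(x))$ the projection $\tB_g\in\bB$; the statement as printed has these swapped, and your write-up inherits some resulting type confusion.)
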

\begin{proof}
By Proposition~\ref{prop:range1}, we have
\begin{align*}
\cR (x) \leq \cR (\cR(\fF^{-1}(B_g))*\cR(y\diamond B_{h}))
\leq  \cR (B* B_{h}) =B_h
\end{align*}
Then we obtain that
\begin{align*}
1\leq \cS(x)\cS(\fF(x)) \leq \cS(B_h)\cS(\tB_{g})= \cS(B)\cS(\fF(B))=1.
\end{align*}
Hence the inequalities above are equalities, i.e. $\cR(x)=B_h$, $\cR (\fF(x))=\tB_{g}$.
\end{proof}

\begin{definition}
Let $(\bA, \bB, \fF, d, \tau)$ be a fusion bialgebra.
An element $x\in \bB$ is said to be extremal if $\|\fF^{-1}(x)\|_{\infty, \bA}=\|x\|_{1,\bB}$.
An element $x\in \bA$ is said to be extremal if $\|\fF(x)\|_{\infty, \bB}=\|x\|_{1, \bA}$.
\end{definition}

\begin{definition}
Let $(\bA, \bB, \fF, d, \tau)$ be a fusion bialgebra.
An element $x\in \bA$ is a bi-partial isometry if $x$ and $\fF(x)$ are multiples of partial isometries.
An element $x\in \bA$ is an extremal bi-partial isometry if $x$ is a bi-partial isometry and $x$, $\fF(x)$ are extremal.
\end{definition}

\begin{theorem}
Let $(\bA, \bB, \fF, d, \tau)$ be a fusion bialgebra.
Then the following statements are equivalent:
\begin{enumerate}[(1)]
\item $H(|x|^2)+H (|\fF(x)|^2)=-4\|x\|_{2, \bA}^2\log \|x\|_{2, \bA}$;
\item $\cS(x)\cS(\fF(x))=1$;
\item $x$ is an extremal bi-partial isometry.
\end{enumerate}
\end{theorem}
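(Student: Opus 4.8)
The plan is to establish the equivalences by proving $(2)\Leftrightarrow(3)$ directly, then $(3)\Rightarrow(1)$, and finally the substantive implication $(1)\Rightarrow(2)$. I would first normalize: replacing $x$ by $x/\|x\|_{2,\bA}$ affects none of the three conditions --- for (1) because $\|\fF(x)\|_{2,\bB}=\|x\|_{2,\bA}$ by Plancherel (Proposition~\ref{prop:planch}), and for (2), (3) because these only involve range projections and the scale-invariant identities $\|\fF(x)\|_{\infty,\bB}=\|x\|_{1,\bA}$, $\|x\|_{\infty,\bA}=\|\fF(x)\|_{1,\bB}$ --- so I may assume $\|x\|_{2,\bA}=\|\fF(x)\|_{2,\bB}=1$.

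\textbf{$(2)\Leftrightarrow(3)$.} I would revisit the chain of estimates from the proof of Theorem~\ref{thm:DS}:
$$\|\fF(x)\|_{\infty,\bB}\ \le\ \|x\|_{1,\bA}\ \le\ \cS(x)^{1/2}\|x\|_{2,\bA}\ =\ \cS(x)^{1/2}\|\fF(x)\|_{2,\bB}\ \le\ \cS(x)^{1/2}\cS(\fF(x))^{1/2}\|\fF(x)\|_{\infty,\bB},$$
whose second step is H\"older's inequality (Proposition~\ref{prop:holder}) for $\cR(x)\cdot|x|$ with exponents $2,2$ and whose third step is H\"older for $\cR(\fF(x))\cdot\fF(x)\fF(x)^*$ with exponents $1,\infty$; together with the companion chain $\|x\|_{\infty,\bA}\le\|\fF(x)\|_{1,\bB}\le\cS(\fF(x))^{1/2}\cS(x)^{1/2}\|x\|_{\infty,\bA}$ obtained by applying Proposition~\ref{prop:hyinf1} to $\tfF$ at $\fF(x)$, using $\tfF\fF(x)=J(x)^{\#}$ and that $J$ is isometric for every $\bA$-norm (it preserves $\|\cdot\|_{2,\bA}$ by Plancherel, $\|\cdot\|_{\infty,\bA}$ since it is a $*$-isomorphism, and hence every $\|\cdot\|_{t,\bA}$ because $d\circ J=d$, as $J(P_j)=P_{j^*}$ and $d(P_j)=d(P_{j^*})$). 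If $\cS(x)\cS(\fF(x))=1$, every inequality in both chains is an equality: the first inequalities say $x$ and $\fF(x)$ are extremal, and the equality conditions of H\"older's inequality (Proposition~\ref{prop:holder}(1) for the $2,2$ steps, Proposition~\ref{prop:holder}(2) for the $1,\infty$ steps) force $|x|^2$ and $|\fF(x)|^2$ to be multiples of projections, i.e.\ $x$ and $\fF(x)$ to be multiples of partial isometries; this is (3). Conversely, if $x$ is an extremal bi-partial isometry, then the first inequality in the first chain is an equality by extremality and the remaining two are equalities because $|x|^2$ and $|\fF(x)|^2$ are multiples of projections (giving $\|x\|_{1,\bA}=\cS(x)^{1/2}\|x\|_{2,\bA}$ and $\|\fF(x)\|_{2,\bB}=\cS(\fF(x))^{1/2}\|\fF(x)\|_{\infty,\bB}$); since $\fF(x)\neq0$ this forces $\cS(x)\cS(\fF(x))=1$, i.e.\ (2).

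\textbf{$(3)\Rightarrow(1)$, and hence $(2)\Rightarrow(1)$.} If $x$ is an extremal bi-partial isometry, then by the previous step $\cS(x)\cS(\fF(x))=1$, while $|x|^2$ equals $\cS(x)^{-1}$ times a projection and $|\fF(x)|^2$ equals $\cS(\fF(x))^{-1}$ times a projection. A direct computation with the definition of the von Neumann entropy gives $H(|x|^2)=\log\cS(x)$ and $H(|\fF(x)|^2)=\log\cS(\fF(x))$, so $H(|x|^2)+H(|\fF(x)|^2)=\log\!\big(\cS(x)\cS(\fF(x))\big)=0=-4\|x\|_{2,\bA}^2\log\|x\|_{2,\bA}$, which is (1). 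Together with $(2)\Rightarrow(3)$ this gives $(2)\Rightarrow(1)$.

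\textbf{$(1)\Rightarrow(2)$: the main difficulty.} I would start from the proof of Theorem~\ref{thm:HB}: for $f(p):=\log\|\fF(x)\|_{p,\bB}-\log\|x\|_{q,\bA}$ with $1/p+1/q=1$, one has $f(2)=0$, $f\le0$ on $[2,\infty]$ by Hausdorff--Young (Theorem~\ref{thm:hy} and Proposition~\ref{prop:hyinf1}), and $f'(2^+)=-\tfrac14\big(H(|x|^2)+H(|\fF(x)|^2)\big)$, which vanishes under hypothesis (1). Applying Theorem~\ref{thm:hy} to $\tfF$ at $\fF(x)$ (again via $\tfF\fF(x)=J(x)^{\#}$ and the isometry of $J$) yields the companion reverse inequality $f\ge0$ on $[1,2]$, so $f$ vanishes at $p=2$ to first order while changing sign there. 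Since $p\mapsto\log\|\fF(x)\|_{p,\bB}$ is convex in $1/p$ and $q\mapsto\log\|x\|_{q,\bA}$ is convex in $1/q$ --- and each is affine on a subinterval exactly when $\fF(x)$, respectively $x$, is a multiple of a partial isometry --- the plan is to deduce from this rigidity at $p=2$ a genuine equality $\|\fF(x)\|_{p_0,\bB}=\|x\|_{q_0,\bA}$ at some non-extreme exponent, and then to invoke the equality analysis of the interpolation theorem (Proposition~\ref{prop:inter}) and of H\"older's inequality (Proposition~\ref{prop:holder}) to conclude that $x$ and $\fF(x)$ are multiples of partial isometries; the computation of the previous paragraph then turns (1) into $\log\!\big(\cS(x)\cS(\fF(x))\big)=0$, i.e.\ (2). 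The hard part is precisely this last passage --- extracting an honest Hausdorff--Young equality at an interior exponent from $f'(2^+)=0$ and running the Riesz--Thorin rigidity argument in this C$^*$-algebraic setting --- which I would carry out along the lines of the corresponding analysis for subfactor planar algebras in \cite{LiuWuscm} (cf.\ also \cite{JLW16}).
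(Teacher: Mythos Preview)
Your proposal is correct and follows the same route as the paper, which itself gives no details and simply cites \cite[Theorem~6.4]{JLW16}; you have usefully spelled out $(2)\Leftrightarrow(3)$ and $(3)\Rightarrow(1)$ explicitly and, like the paper, deferred the substantive implication $(1)\Rightarrow(2)$ to the same reference. Your treatment of the companion chain via $\tfF\fF(x)=J(x)^{\#}$ and the $J$-invariance of the $\bA$-norms is exactly the right bookkeeping needed to transport the \cite{JLW16} argument to this setting.
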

\begin{proof}
The arguments are similar to the one of Theorem 6.4 in \cite{JLW16}, since only the Hausdorff-Young inequality is involved.
\end{proof}

\begin{proposition}\label{prop:rightshift}
Let $(\bA, \bB, \fF, d, \tau)$ be a fusion bialgebra and $w$ an extremal bi-partial isometry.
Suppose that $w$ is a projection.
Then $\tilde{w}$ is a right shift of a biprojection.
\end{proposition}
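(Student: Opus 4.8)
The plan is to unwind the hypothesis that $w$ is an extremal bi-partial isometry which happens to be a projection, and to show that then $\widetilde{w} = \fF(w)/\|w\|_{\infty,\bA}$ behaves exactly like the object $B_h$ in Definition~\ref{def:shift}, i.e.\ it is a projection in $\bA$ satisfying $d(B)=d(\widetilde{w})$ and $B * \widetilde{w} = d(B)\,\widetilde{w}$ for an appropriate biprojection $B$. First I would invoke the equivalence theorem just proved: since $w$ is an extremal bi-partial isometry, $\cS(w)\cS(\fF(w))=1$. Because $w$ is a projection, $\|w\|_{\infty,\bA}=1$ and $\cS(w)=d(w)$; also $\fF(w)$ is a multiple of a partial isometry, so by Corollary~\ref{cor:holder1} applied in $\bB$ together with extremality, $\fF(w)$ is in fact a multiple of a projection. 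Writing $\fF(w)=\lambda\,\widetilde{w}$ with $\widetilde{w}$ a projection in $\bB$ (note $\fF(w)$ is self-adjoint up to a phase since $J$ is a $*$-isomorphism and $w=w^*$), we get $\cS(\fF(w))=\tau(\widetilde w)$, so the saturation $\cS(w)\cS(\fF(w))=1$ reads $d(w)\tau(\widetilde w)=1$.

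Next I would produce the biprojection. The natural candidate is $B := \cR(\fF^{-1}(\widetilde{w}))$ (equivalently the range projection in $\bA$ obtained from $w$), or rather the biprojection associated via Proposition~\ref{prop:biproj1}/Theorem~\ref{thm:biproj} to the fusion subalgebra generated by the support data of $w$. Concretely: $w$ is a projection in the commutative C$^*$-algebra $\bA$, hence $w=\sum_{j\in I} P_j$ for a subset $I$ of minimal projections; the condition that $\fF(w)$ is a multiple of a projection forces, by the computation in the proof of Proposition~\ref{prop:biproj1} (solving $\fF(P)^2=\lambda\fF(P)$, $\fF(P)^*=\tfrac{\bar\lambda}{\lambda}\fF(P)$), that the coefficients of $\fF(w)$ in the basis $B$ are each either $0$ or $d(x_k)$ up to the global scalar — i.e.\ $w$ is itself (a scalar multiple of) a biprojection's Fourier transform, so $\fF(w)$ is proportional to $\fF(B)$ for the biprojection $B$ with range $\bA_0=$ span of those $\fF^{-1}(x_k)$. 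Then $\widetilde w = \cR(\fF(B))$ in the notation of Lemma~\ref{lem:extrem1}, and part (1) of that lemma says $\cR(\fF(B))$ is a shift of $\cR(\fF(B))$, while the right-shift relation $B*\widetilde w = d(B)\widetilde w$ is exactly the biprojection absorption identity $B*B=d(B)B$ transported through $\fF$. This identifies $\widetilde w$ as a right shift of the biprojection $B$.

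The one genuine subtlety — and what I expect to be the main obstacle — is ruling out the possibility that $\fF(w)$ is only a multiple of a \emph{partial} isometry that is not proportional to a genuine sub-biprojection; that is, pinning down that the extremality hypothesis $\|\fF^{-1}(x)\|_{\infty,\bA}=\|x\|_{1,\bB}$ for $x=\fF(w)$, combined with $w$ being a projection, actually forces the $0$-or-$d(x_k)$ dichotomy on the Fourier coefficients rather than merely forcing $\fF(w)$ to be a scalar times a projection with arbitrary support. I would handle this by running the Hölder equality analysis of Proposition~\ref{prop:holder}(2) and Corollary~\ref{cor:holder1} in tandem: extremality of $w$ gives that the spectral projection of $|\fF(w)|$ at its sup-norm dominates $\cR(\fF(w))$, which with $\cS(w)\cS(\fF(w))=1$ and Plancherel forces $|\fF(w)|$ to be a single scalar times a projection; then self-adjointness (from $w=w^*$ and $J$ a $*$-isomorphism) upgrades this to $\fF(w)$ itself being a scalar multiple of a projection, and finally the projection equation in $\bA$ for $w$ translates, via $\fF$, into the pair of equations \eqref{eq:solv} whose solutions were already classified in the proof of Proposition~\ref{prop:biproj1}. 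Assembling these gives that $\widetilde w$ is a right shift of a biprojection, completing the proof.
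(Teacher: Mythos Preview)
Your argument has a genuine gap at the point where you claim that $\fF(w)$ is ``self-adjoint up to a phase since $J$ is a $*$-isomorphism and $w=w^*$.'' The relation between the two involutions is $\fF(w)^*=\fF(J(w))$, and $w=w^{\#}$ in $\bA$ does \emph{not} force $J(w)=w$: the modular conjugation $J$ permutes the minimal projections by $P_j\mapsto P_{j^*}$, so $J(w)=w$ only when the support of $w$ is $*$-closed. In the group case, $w=1_{Hg}$ for a coset of a subgroup $H$ is a projection in $\bA$ and an extremal bi-partial isometry, yet $\fF(w)=\sum_{h\in H}hg$ is not a multiple of a projection when $g\notin H$. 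So your conclusion that $w$ is itself a biprojection is too strong and fails in these standard examples; the proposition really does produce right shifts that are not biprojections.

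The paper's proof circumvents exactly this issue: instead of trying to make $\fF(w)$ into a projection, it forms $P=\fF(w)\fF(w)^*$, which \emph{is} a positive multiple of a projection in $\bB$ because $\fF(w)$ is a multiple of a partial isometry. The substantive work is then to show that $\fF^{-1}(P)=w*J(w)$ is a multiple of a projection in $\bA$. This is done by a chain of norm inequalities (Hausdorff--Young, H\"older, and Young on $\bA$) that collapse to the equality $\|\fF^{-1}(P)\|_{2,\bA}^2=\|\fF^{-1}(P)\|_{\infty,\bA}\|\fF^{-1}(P)\|_{1,\bA}$, whence Corollary~\ref{cor:holder1} and the Schur product property give that $\fF^{-1}(P)$ is a positive multiple of a projection --- i.e.\ a biprojection $B$. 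Finally one checks $B*w=d(B)\,w$ directly from $P\fF(w)=\|\fF(w)\|_{\infty,\bB}^2\,\fF(w)$. Your H\"older/extremality analysis is on the right track for the norm computations, but it cannot replace the passage through $\fF(w)\fF(w)^*$; that move is the missing idea.
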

\begin{proof}
Let $w=\sum_{j\in J} d(x_j)\fF^{-1}( x_j)$.
Then
\begin{equation}\label{eq:partial1}
\|w\|_{\infty, \bA}=1, \quad \|w\|_{2, \bA}= \|\fF(w)\|_{2, \bB}=\left(\sum_{j\in J}d(x_j)^2\right)^{1/2}, \quad \|w\|_{1, \bA}=\sum_{j\in J} d(x_j)^2.
\end{equation}
By the assumption, we have
\begin{equation}\label{eq:partial2}
\|\fF(w)\|_{1, \bB}=\|w\|_{\infty, \bA}=1, \quad \|\fF(w)\|_{\infty, \bB}=\|w\|_{1, \bA}=\sum_{j\in J} d(x_j)^2.
\end{equation}
Let $P=\fF(w)\fF(w)^*$.
Then $P$ is a multiple of a projection in $\bB$ and
\begin{equation}\label{eq:partial3}
\|P\|_{\infty, \bB}=\left(\sum_{j\in J}d(x_j)^2\right)^2, \quad \|P\|_{1, \bB}=\|w\|_{2,\bA}^2=\sum_{j\in J}d(x_j)^2, \quad \|P\|_{2, \bB}=\left(\sum_{j\in J}d(x_j)^2\right)^{3/2}
\end{equation}
We will show that $\fF^{-1}(P)$ is a multiple of partial isometry.
By Corollary~\ref{cor:holder1}, we have to check $\|\fF^{-1}(P)\|_{2,\bA}^2=\|\fF^{-1}(P)\|_{\infty, \bA}\|\fF^{-1}(P)\|_{1, \bA}$.
In fact,
\begin{align*}
\left(\sum_{j\in J} d(x_j)^2 \right)^{3}&=\|\fF^{-1}(P)\|_{2, \bA}^2 \\
&\leq \|\fF^{-1}(P)\|_{\infty, \bA} \|\fF^{-1}(P)\|_{1, \bA} \quad \text{Equation (\ref{eq:partial3}) and Proposition~\ref{prop:holder}}\\
&\leq \|P\|_{1, \bB}\|w*J(w)\|_{1, \bA} \quad \text{Proposition~\ref{prop:hyinf1}}\\
&= \left(\sum_{j\in J}d(x_j)^2\right)\|w\|_{1, \bA}^2 \quad \text{Proposition~\ref{prop:young2}}\\
&= \left(\sum_{j\in J}d(x_j)^2\right)^3,
\end{align*}
i.e. $\|\fF^{-1}(P)\|_{2,\bA}^2=\|\fF^{-1}(P)\|_{\infty, \bA}\|\fF^{-1}(P)\|_{1, \bA}$ and $\fF^{-1}(P)$ is a multiple of a partial isometry.
By Schur product property, we have that $\fF^{-1}(P)>0$ and $\fF^{-1}(P)$ is a multiple of a projection.
Hence
\begin{equation}\label{eq:partial4}
\begin{aligned}
\|\fF^{-1}(P)\|_{\infty, \bA}&=\sum_{j\in J} d(x_j)^2, \\
\cR(\fF^{-1}(P))& = \left(\sum_{j\in J}d(x_j)^2\right)^{-1}\fF^{-1}(P),\\
d(\cR(\fF^{-1}(P)))&=\sum_{j\in J}d(x_j)^2.
\end{aligned}
\end{equation}
and $ \left(\sum_{j\in J}d(x_j)^2\right)^{-1}\fF^{-1}(P)$ is a biprojection

By Equation (\ref{eq:partial4}), we have
\begin{align*}
\cR(\fF^{-1}(P))* w&= \left(\sum_{j\in J}d(x_j)^2\right)^{-1}\fF^{-1}(P)*w
= \left(\sum_{j\in J}d(x_j)^2\right)^{-1}\fF^{-1}(P\fF(w))\\
&=\|w\|_{\infty, \bB}^2\left(\sum_{j\in J}d(x_j)^2\right)^{-1}w
=\left(\sum_{j\in J}d(x_j)^2\right) w\\
&= d(\cR(\fF^{-1}(P))) w=\|w\|_{1, \bA}w.
\end{align*}
Hence $w$ is a right shift of $\cR(\fF^{-1}(P))$.
\end{proof}

\begin{corollary}
Let $(\bA, \bB, \fF, d, \tau)$ be a fusion bialgebra.
Then a left shift of a biprojection is a right shift of a biprojection.
\end{corollary}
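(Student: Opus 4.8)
The plan is to obtain the statement as a short consequence of Lemma~\ref{lem:extrem1}, the characterisation of extremal bi-partial isometries proved just above Proposition~\ref{prop:rightshift}, and Proposition~\ref{prop:rightshift} itself. Let $P$ be a left shift of a biprojection $B$. By Definition~\ref{def:shift} this already records that $P$ is a projection in $\bA$, that $d(P)=d(B)$, and that $P*B=d(B)P$; the only real input still needed is to upgrade $P$ to an extremal bi-partial isometry.

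First I would apply the ``left shift'' clause of Lemma~\ref{lem:extrem1}(4): since $P$ is a left shift of the biprojection $B$, one gets $\cS(P)\,\cS(\fF(P))=1$, so that $P$ saturates the Donoho--Stark bound of Theorem~\ref{thm:DS}. Then I would feed this into the equivalence theorem preceding Proposition~\ref{prop:rightshift} (the one asserting that, for $x\in\bA$, $\cS(x)\cS(\fF(x))=1$ holds if and only if $x$ is an extremal bi-partial isometry); applied to $x=P$, it shows that $P$ is an extremal bi-partial isometry. Since $P$ is also a projection, Proposition~\ref{prop:rightshift} then applies verbatim with $w=P$ and yields that $P$ is a right shift of a biprojection --- explicitly, of the range projection of $\fF^{-1}(\fF(P)\fF(P)^{*})$, suitably normalised --- which is exactly the claim.

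I expect no genuine obstacle: the corollary is bookkeeping on top of results already in place, and no inequality is reproved, the analytic content sitting entirely in Theorem~\ref{thm:DS}, the extremizer theorem, and Proposition~\ref{prop:rightshift}. The two points to watch are that ``left shift'' in Definition~\ref{def:shift} already forces $P$ to be a projection (so that Proposition~\ref{prop:rightshift} is literally applicable), and that one must use the left-shift case of Lemma~\ref{lem:extrem1}(4) rather than its shift-of-$\cR(\fF(B))$ case. It is also worth noting that the naive attempt, namely to show that $P$ is a right shift of the \emph{same} biprojection $B$, only gives via Lemma~\ref{lem:extrem1}(3) that $J(P)$ is a right shift of $B$; the detour through the extremizer characterisation is what actually closes the gap.
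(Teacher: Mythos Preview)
Your argument is correct and matches the paper's proof, which simply says ``It follows from Lemma~\ref{lem:extrem1} and Proposition~\ref{prop:rightshift}.'' You have correctly unpacked the implicit intermediate step: one passes from $\cS(P)\cS(\fF(P))=1$ (Lemma~\ref{lem:extrem1}(4)) to the extremal bi-partial isometry property via the equivalence theorem immediately preceding Proposition~\ref{prop:rightshift}, and then applies Proposition~\ref{prop:rightshift} to the projection $P$.
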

\begin{proof}
It follows from Lemma~\ref{lem:extrem1} and Proposition~\ref{prop:rightshift}.
\end{proof}

\begin{question}
Are the minimizers of the Donoho-Stark uncertainty principle bishifts of biprojections?
\end{question}

\begin{theorem}
Let $(\bA, \bB, \fF, d, \tau)$ be a fusion bialgebra.
Suppose that the dual has Young's property.
Then the minimizers of the Donoho-Stark uncertainty principle are bishifts of biprojections.
\end{theorem}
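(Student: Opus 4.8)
The plan is to combine the characterization of the uncertainty extremizers established just above with the biprojection calculus of this section. That characterization identifies the minimizers $\{x\neq 0:\cS(x)\cS(\fF(x))=1\}$ with the extremal bi‑partial isometries, and the reverse inclusion of the present statement (bishifts of biprojections are minimizers) is Lemma~\ref{lem: extrem2}; so it suffices to show every extremal bi‑partial isometry $x\in\bA$ is a bishift of a biprojection. Fix such an $x$; write $s=\|\fF(x)\|_{\infty,\bB}$, $p=\cR(x)\in\bA$, $q=\cR(\fF(x))\in\bB$, and set $u=s^{-1}\fF(x)\in\bB$, a partial isometry with $\fF(x)=su$ and $uu^*=q$. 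Then $\fF(x)\fF(x)^*=s^2q$, so, using $\fF(J(x))=\fF(x)^*$,
\[
x*J(x)=\fF^{-1}\!\big(\fF(x)\fF(x)^*\big)=s^2\,\fF^{-1}(q).
\]

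The decisive step is to show $\fF^{-1}(q)$ is a positive multiple of a biprojection $B$, and this is the only place the hypothesis is used. By Propositions~\ref{prop:young7} and~\ref{prop:young92}, Young's property for the dual forces the Schur product property for the dual, and (in the form of Proposition~\ref{SchurProp}) this is precisely the input that yields the positivity $x*J(x)=\fF^{-1}(\fF(x)\fF(x)^*)\ge 0$ in $\bA$, i.e.\ $\fF^{-1}(q)\ge 0$. Granting it, I would reproduce the norm bookkeeping from the proof of Proposition~\ref{prop:rightshift}: Plancherel gives $\|\fF^{-1}(q)\|_{2,\bA}^2=\tau(q)$; Hausdorff--Young (Proposition~\ref{prop:hyinf1}) gives $\|\fF^{-1}(q)\|_{\infty,\bA}\le\|q\|_{1,\bB}=\tau(q)$; and Young's inequality on $\bA$ (Proposition~\ref{prop:young2}), together with $\|J(x)\|_{1,\bA}=\|x\|_{1,\bA}$ ($J$ being a trace-preserving $*$-isomorphism) and the extremality $s=\|x\|_{1,\bA}$, give
\[
\|\fF^{-1}(q)\|_{1,\bA}=s^{-2}\|x*J(x)\|_{1,\bA}=s^{-2}\|x\|_{1,\bA}^2=1.
\]
Hölder's inequality then forces $\|\fF^{-1}(q)\|_{2,\bA}^2=\|\fF^{-1}(q)\|_{\infty,\bA}\|\fF^{-1}(q)\|_{1,\bA}$, so by Corollary~\ref{cor:holder1} and positivity $\fF^{-1}(q)=\tau(q)\,B$ with $B:=\cR(\fF^{-1}(q))$; hence $\fF(B)=\tau(q)^{-1}q$ is a multiple of the projection $q$, $B$ is a biprojection, $d(B)=\tau(q)^{-1}=\cS(x)=d(p)$, and $\cR(\fF(B))=q$.

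It remains to read off the shift data and reassemble $x$. Since $\cR(\fF(B))=q=\cR(\fF(x))$, Lemma~\ref{lem:extrem1}(1) says $q$ is a shift of $\cR(\fF(B))$. From $\fF(B)\fF(x)=d(p)\,q\,\fF(x)=d(p)\fF(x)$ (using $qu=u$) one gets $B*x=d(B)\,x$; combining this with Proposition~\ref{prop:range1} for $B,p\ge 0$, the trace identity $\|B*p\|_{1,\bA}=d(B)d(p)$ (Young on $\bA$), and the biprojection relation $B*B=d(B)B$, one upgrades it to $B*p=d(B)\,p$, i.e.\ $p=\cR(x)$ is a right shift of $B$ (the dimensions already match). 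Finally $q\fF(x)=\fF(x)$ and $x\diamond p=x$ give $x=\fF^{-1}(q)*(x\diamond p)$, which is the bishift of $B$ with $\tB_g=q$, $B_h=p$ and $y=x$. The corresponding statement for minimizers in $\bB$ follows symmetrically, the dual's Young property playing the role of Young's inequality on $\bA$.

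The step I expect to be the genuine obstacle is upgrading $B*x=d(B)x$ to the pointwise identity $B*\cR(x)=d(B)\cR(x)$: Proposition~\ref{prop:range1} and the Young trace count pin down the support and the trace of $B*\cR(x)$, but turning this into an equality of elements amounts to showing that the support of $x$ lies inside the fusion subalgebra attached to $B$ (Proposition~\ref{prop:biproj1}), on which $B*(\,\cdot\,)$ acts as $d(B)$ times a conditional expectation; this is where both extremality conditions on $x$ and the dual Schur positivity of $x*J(x)$ re-enter. A second technical point is the non-commutativity of $\bB$ — so $u$, $q$ and the relevant projections need not commute — while the commutativity of $\bA$ keeps the $\bA$-side estimates transparent; the overall scheme parallels the corresponding extremizer theorem for subfactor planar algebras in~\cite{JLW16}.
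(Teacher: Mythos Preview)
Your outline follows the template of Proposition~\ref{prop:rightshift} exactly as the paper intends, and the norm bookkeeping is essentially right (note that Proposition~\ref{prop:young2} is an equality only for positive arguments, but the inequality is all you need, since the chain $\tau(q)=\|\fF^{-1}(q)\|_{2,\bA}^2\le\|\fF^{-1}(q)\|_{\infty,\bA}\|\fF^{-1}(q)\|_{1,\bA}\le\tau(q)$ forces equality anyway).

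There is, however, a genuine gap at the positivity step. You assert that the dual Schur product property, via Proposition~\ref{SchurProp}, gives $x*J(x)=\fF^{-1}(\fF(x)\fF(x)^*)\ge 0$ in $\bA$. It does not. The Schur product property on $\bB$ is a statement about the \emph{convolution} $*_\bB$ (equivalently, about the nonnegativity of the pairings $d\big((J(a)*a)\diamond(J(b)*b)\diamond(J(c)*c)\big)$); it says nothing about $\fF^{-1}$ carrying positive elements of $\bB$ to positive elements of $\bA$, and in fact that implication is false in general---any nontrivial minimal projection of $\bB$ already has negative coordinates in the $\bRp$-basis, hence $\fF^{-1}$ of it is not positive in $\bA$. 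In Proposition~\ref{prop:rightshift} the positivity of $w*J(w)$ came from the \emph{primal} Schur product property together with $w\ge 0$; since your $x$ is only a multiple of a partial isometry, that route is unavailable, and without $\fF^{-1}(q)\ge 0$ you cannot promote ``multiple of a partial isometry'' to ``positive multiple of a projection,'' so your $B$ is not known to be a biprojection.

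The hypothesis is meant to be spent on the \emph{$\bB$-side}, in the literal dual of Proposition~\ref{prop:rightshift}: work with the projection $q=\cR(\fF(x))\in\bB$ and form $q*_\bB J_\bB(q)=\tfF^{-1}\big(\tfF(q)\diamond\tfF(q)^{\#}\big)$. Here the dual Schur product property gives $q*_\bB J_\bB(q)\ge 0$ in $\bB$ (since $q,J_\bB(q)\ge 0$), and the dual Young property supplies the $1$-norm bound $\|q*_\bB J_\bB(q)\|_{1,\bB}\le\|q\|_{1,\bB}^2$ needed in the H\"older step---this is the genuine role of the hypothesis, mirroring how Propositions~\ref{prop:young2} and the primal Schur product were used for $w$ in Proposition~\ref{prop:rightshift}. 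Running the two arguments (primal on $p=\cR(x)$, dual on $q$) in parallel produces the biprojection and both shifts, after which the reassembly of $x$ as a bishift proceeds as you outlined. The step you flagged as the ``genuine obstacle'' (upgrading $B*x=d(B)x$ to $B*p=d(B)p$) then follows from this two-sided picture rather than from a direct support calculation.
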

\begin{proof}
The proof is similar to the proof of Proposition~\ref{prop:rightshift}.
We leave the details to the reader.
\end{proof}

\begin{theorem}[Exact inverse sum set theorem]\label{thm:sumset}
Let $(\bA, \bB, \fF, d, \tau)$ be a fusion bialgebra and $P, Q$ projections in $\bA$.
Then the following are equivalent:
\begin{enumerate}[(1)]
\item $\cS(P*Q)=\cS(P)$;
\item $ \frac{1}{\cS(Q)}P*Q$ is a projection;
\item there is a biprojection $B$ such that $Q\leq B_{h}$ and $P=\cR(x*B)$ for some $x\neq 0$ in $\bA$, where $B_{h}$ is a right shift of $B$.
\end{enumerate}
\end{theorem}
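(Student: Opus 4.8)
The plan is to prove the cycle $(1)\Leftrightarrow(2)$, then $(2)\Rightarrow(3)$, then $(3)\Rightarrow(1)$, using throughout that the Schur product property holds on $\bA$ (Definition~\ref{Def: Fusion bialgebras}(1)), so that convolution of positive elements of $\bA$ is positive and order-preserving. For $(1)\Leftrightarrow(2)$ I would specialize the chain of inequalities in the proof of the sum-set estimate (Theorem~\ref{thm:sumsetest}) to $x=P$, $y=Q$; since $P,Q$ are projections it reads
$$\cS(P)\cS(Q)=\|P*Q\|_{1,\bA}\le\|\cR(P*Q)\|_{2,\bA}\,\|P*Q\|_{2,\bA}\le\cS(P*Q)^{1/2}\cS(P)\cS(Q)^{1/2},$$
so $\cS(P*Q)\ge\cS(P)$ always, with equality exactly when the Hölder step is an equality. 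By Corollary~\ref{cor:holder1} that forces $P*Q$ to be a multiple of a partial isometry, and since $P*Q\ge0$ it is a nonnegative multiple of the projection $\cR(P*Q)$; comparing $\|P*Q\|_{1,\bA}=\|P\|_{1,\bA}\|Q\|_{1,\bA}=\cS(P)\cS(Q)$ (Proposition~\ref{prop:young2}) with $\|P*Q\|_{1,\bA}=c\,\cS(P*Q)=c\,\cS(P)$ identifies the scalar $c=\cS(Q)$, giving (2). Conversely (2) yields $\|P*Q\|_{1,\bA}=\cS(Q)\cS(P*Q)$, which against $\cS(P)\cS(Q)$ forces $\cS(P*Q)=\cS(P)$.

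For $(2)\Rightarrow(3)$ I would first construct the biprojection generated by $Q$. Put $a=J(Q)*Q\ge0$; by Plancherel (Proposition~\ref{prop:planch}) the component of $a$ along $\fF^{-1}(1_{\bB})$ equals $\|\fF(Q)\|_{2,\bB}^2=\cS(Q)>0$, and this component stays positive under convolution, so the range projections $\cR(a)\le\cR(a*a)\le\cdots$ increase and, in finite dimension, stabilize at a projection $B=\cR(a^{*n})$ with $\cR(B*B)=B$. A positivity computation then shows the index set of $B$ is closed under fusion and, since $J(a)=a$, under the involution, so by Proposition~\ref{prop:biproj2} (cf. Theorem~\ref{thm:biproj}) $B$ is a biprojection and $B\ge\cR(J(Q)*Q)$; a Young bound (Proposition~\ref{prop:young1}) gives $Q*J(Q)\le\cS(Q)B$. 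A support count shows $Q\le B_{h}:=\cR(B*Q)$, with $d(B_h)=d(B)$ (because $B$ is generated by $\cR(J(Q)*Q)$, so the support of $Q$ lies in a single ``coset'') and $B*B_h=d(B)B_h$ from $B*B=d(B)B$; thus $B_h$ is a right shift of $B$ in the sense of Definition~\ref{def:shift}. Finally, feeding the identity $\cS(P*Q)=\cS(P)$ into a sharper version of the equality analysis above — the analytic counterpart of Etingof's module-structure criterion for the equality case of the sum-set estimate — shows that the range of $P$ is a right $B$-module, i.e. $P*B=d(B)P$, so that $x:=P\ne0$ satisfies $P=\cR(x*B)$.

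For $(3)\Rightarrow(1)$: since $0\le Q\le B_h$ and convolution is order-preserving on positive elements, $\cR(P*Q)\le\cR(P*B_h)$. From $P=\cR(x*B)$ one checks that $P$ is right-$B$-invariant, $P*B=d(B)P$ (the key auxiliary point: the range of $x*B$ is fixed by right convolution with $B$, using $B*B=d(B)B$ and that $\fF(B)$ is $d(B)$ times a projection), whence $B*B_h=d(B)B_h$ gives $P*B_h=d(B)\widetilde P$ for a projection $\widetilde P$ that is a right shift of $P$; in particular $\cS(\cR(P*B_h))=\cS(P)$. Combined with $\cS(P*Q)\ge\cS(P)$ from the sum-set estimate, this forces $\cS(P*Q)=\cS(P)$. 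The main obstacle is $(2)\Rightarrow(3)$: promoting the scalar identity $\cS(P*Q)=\cS(P)$ to the structural statements — that $B=\cR(a^{*n})$ is a genuine biprojection, that $Q$ lies under a single right shift, and that the range of $P$ is exactly a $B$-module — which requires tracking supports precisely and using $(1)\Leftrightarrow(2)$ as an input rather than a mere reformulation; the right-$B$-invariance of $\cR(x*B)$ needed in $(3)\Rightarrow(1)$ is the other place demanding care.
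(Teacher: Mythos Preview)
Your $(1)\Leftrightarrow(2)$ is correct and matches the paper. The gaps are in $(2)\Rightarrow(3)$ and $(3)\Rightarrow(1)$.

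For $(2)\Rightarrow(3)$, the decisive step you are missing is the identity $P*Q*J(Q)=\cS(Q)^2 P$. The paper obtains it as follows: with $P_1:=\cR(P*Q)=\tfrac{1}{\cS(Q)}P*Q$, Frobenius reciprocity (Proposition~\ref{prop:rotation2}) gives $d((P_1*J(Q))\diamond P)=d(P*Q)=d(P)d(Q)=d(P_1*J(Q))$, so $\cR(P_1*J(Q))\le P$; the sum-set estimate then forces equality, and applying $(1)\Rightarrow(2)$ to the pair $(P_1,J(Q))$ yields $\tfrac{1}{\cS(Q)}P_1*J(Q)=P$, i.e.\ $P*Q*J(Q)=\cS(Q)^2P$. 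Only \emph{after} this does one construct $B$: the paper takes $\fF(B)$ to be the spectral projection of $\cS(Q)^{-2}\fF(Q)\fF(Q)^*$ at its top eigenvalue (equivalently $\fF(B)=\lim_n\cS(Q)^{-2n}(\fF(Q)\fF(Q)^*)^n$), so that $P*B=P$ follows immediately from iterating the identity above. Your construction $B=\cR\big((J(Q)*Q)^{*n}\big)$ is a different object in general (it is the fusion subring generated by the support, not the top-eigenvalue projection), you have the factors in the wrong order ($Q*J(Q)$ is what the identity controls), and your justification that ``the range of $P$ is a right $B$-module'' by appeal to an ``analytic counterpart of Etingof's criterion'' is exactly the content that needs proving. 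Likewise, the claim $d(B_h)=d(B)$ for $B_h=\cR(B*Q)$ is not a ``support count''; the paper deduces it by computing $\cR(Q_1*J(Q_1))=\cR(B)$ and invoking the sum-set estimate again.

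For $(3)\Rightarrow(1)$, your route through ``$P*B_h=d(B)\widetilde P$ for a right shift $\widetilde P$ of $P$'' is not justified and not how the paper proceeds. The paper argues: from $P=\cR(x*B)$ and $B*B=d(B)B$ one gets $\cR(P*B)=\cR(x*B*B)=\cR(x*B)=P$; since $Q\le B_h$ and $B_h$ is a right shift one has $Q*J(Q)\le B_h*J(B_h)=\cS(B)B$; then
\[
\cS(P)\le\cS(P*Q)\le\cS(\cR(P*Q)*J(Q))=\cS(P*Q*J(Q))\le\cS(P*B)=\cS(P),
\]
forcing equality. Your $(3)\Rightarrow(1)$ should be reorganized along these lines.
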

\begin{proof}
$(1)\Rightarrow (2)$ By Theorem~\ref{thm:sumsetest}, we have that $\cS(P*Q)\geq \cS (P)$.
By the assumption and the proof of Theorem~\ref{thm:sumsetest}, we have
\begin{equation}\label{eq:sumset1}
\|P*Q\|_{1, \bA}=\|\cR(P*Q)\|_{2, \bA}\|P*Q\|_{2, \bA}, \quad \|P*Q\|_{2, \bA}=\|P\|_{2,\bA}\|Q\|_{1, \bA}.
\end{equation}
By Proposition~\ref{prop:holder}, we have that
$$P*Q=\lambda \cR(P*Q)$$
and
$$\lambda \cS(P*Q)^{1/2}=\|P*Q\|_{2, \bA}=\cS(P)^{1/2}\cS(Q).$$
Therefore $\lambda=\cS(Q)$ and $ \frac{1}{\cS(Q)}P*Q$ is a projection.

$(2)\Rightarrow (1)$ By the assumption and Proposition~\ref{prop:young2}, we have
$$\cS(P*Q)=\cS\left(\frac{1}{\cS(Q)}P*Q\right)=\frac{1}{\cS(Q)}d(P*Q)=\frac{\cS(P)\cS(Q)}{\cS (Q)}=\cS(P).$$

$(2)\Rightarrow (3)$ Let $P_1=\cR (P*Q)$.
Then by Proposition~\ref{prop:rotation2}, we have
\begin{align*}
d((P_1*J(Q))\diamond P) &= d((J(P)*P_1)\diamond Q)
= d((J(Q)*J(P))\diamond J(P_1))\\
&= d((P*Q)\diamond P_1)= d(P*Q)\\
&=d(P)d(Q)=d(P_1*J(Q)).
\end{align*}
Hence $\cR (P_1*J(Q))\leq P$.
But by Theorem~\ref{thm:sumsetest}, we have
\begin{align*}
d(\cR (P_1*J(Q)))=\cS (P_1*J(Q))\geq \cS (P_1)=\cS (P).
\end{align*}
Then
$$\frac{1}{\cS (J(Q))}P_1*J(Q)=\cR (P_1*J(Q))=P.$$
Expanding the expression, we have
$$\frac{1}{\cS (Q)^2}P* Q*J(Q)=P.$$
Note that $\|\fF(Q)\|_{\infty, \bB}\leq \|Q\|_{1, \bA}=\cS (Q)$.
Let
\begin{equation}\label{eq:sumset2}
\fF(B)=\lim_{n\to \infty} \frac{1}{\cS (Q)^{2n}}(\fF(Q)\fF(Q)^*)^n,
\end{equation}
Then $\fF(B)$ is the spectral projection of $\cS (Q)^{-2}\fF(Q)\fF(Q)^*$ corresponding to $ \frac{\|\fF(Q)\|_{\infty, \bB}^2}{\cS (Q)^2}$.
Moreover $B>0$, $P*B=P$, $P=\cR (P*B)$ , $B\neq 0$ and
\begin{equation}\label{eq:sumset3}
\|\fF(B)\|_{\infty, \bB}=1=\|B\|_{1, \bA}, \quad \|B\|_{\infty, \bA}\leq \|\fF(B)\|_{1, \bB}=\|\fF(B)\|_{2, \bB}^2=\|B\|_{2, \bA}^2
\end{equation}
Hence
$$\|B\|_{\infty, \bA}\|B\|_{1, \bA}\leq \|B\|_{2, \bA}^2\leq \|B\|_{\infty, \bA}\|B\|_{1, \bA} .$$
By Corollary~\ref{cor:holder1}, we have that $B$ is a multiple of a partial isometry and then $B$ is a multiple of a projection.
Therefore $\|B\|_{2, \bA}^{-2}B$ is a biprojection.

Let $Q_1=\cR (B*Q)$.
Then
\begin{align*}
\cR (Q_1*J(Q_1)) &=\cR (\cR  (B*Q)J(\cR (B*Q)))\\
&=\cR (\cR  (B*Q)\cR (J(Q)*B))\\
&=\cR  (B*Q*J(Q)*B)\quad  \text{Proposition~\ref{prop:range1}}\\
&=\cR (B).
\end{align*}
Hence $\cS (Q_1*J(Q_1))=\cS (B)$.
On the other hand, by Theorem~\ref{thm:sumsetest}, we have
\begin{align*}
\cS (Q_1*J(Q_1))\geq \cS (Q_1)=\cS (B*Q)\geq \cS (B).
\end{align*}
Now we obtain that $\cS (B*Q)=\cS (B)$.
By ``$(1)\Rightarrow (2)$", we have $Q_1=\frac{1}{\cS (Q)}\cR (B)*Q.$
Note that
$$d(Q_1)=\cS (Q_1)=\cS (B)=d(\cR (B))$$
and
$$\cR (B)*Q_1=\|B\|_{2, \bA}^{-2}B*Q_1=d(\cR (B))Q_1.$$
We see $Q_1$ is a right shift of $\cR (B)$.

Now we have to check $Q\leq  Q_1$.
Note that $\fF^{-1}(1_\bB)\leq \|B\|_{\infty, \diamond}B$.
Then by Proposition~\ref{prop:range1}, we have $Q\leq Q_1$.

$(3)\Rightarrow (1)$ By the assumption and Proposition~\ref{prop:range1}, we have
$$P* B=\cR (x*B*B)=P.$$
By Proposition~\ref{prop:rightshift}, we have
$$Q*J(Q)\leq B_{h}*J(B_h)=\cS (B)B.$$
Then by Proposition~\ref{prop:range1} and Theorem~\ref{thm:sumsetest}, we have
\begin{align*}
\cS (P)&\leq \cS (P*Q)=\cS (\cR (P*Q))\\
&\leq \cS (\cR (P*Q)*J(Q))=\cS (P*Q*J(Q))\\
&\leq \cS (P*B)=\cS (P).
\end{align*}
Hence $\cS (P*Q)=\cS (P)$.
\end{proof}

\begin{remark}
Let $(\bA, \bB, \fF, d, \tau)$ be a fusion bialgebra.
If the Schur product property holds on the dual, the results in Theorem~\ref{thm:sumset} are true for projections in $\bB$.
\end{remark}

Following the proofs in \cite{JLWscm}, one can obtain the following theorems:
\begin{theorem}[Extremizers of Young's inequality]
Let $(\bA, \bB, \fF, d, \tau)$ be a fusion bialgebra.
Suppose the dual has Young's property,
Then the following are equivalent:
\begin{enumerate}[(1)]
\item $\|x*y\|_{r, \bA}=\|x\|_{t, \bA}\|y\|_{s, \bA}$ for some $1<r, t,s<\infty$ such that $1/r+1=1/t+1/s$;
\item $\|x*y\|_{r, \bA}=\|x\|_{t, \bA}\|y\|_{s, \bA}$ for any $1\leq r, t,s\leq \infty$ such that $1/r+1=1/t+1/s$;
\item $x$, $y$ are bishifts of biprojection such that $\cR (\fF(x))=\cR (\fF(y))$.
\end{enumerate}
\end{theorem}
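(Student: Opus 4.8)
The plan is to mirror the proof of the corresponding extremizer theorem for subfactor planar algebras in \cite{JLWscm}, after observing that every tool it uses is now available for a fusion bialgebra whose dual has Young's property: the Hausdorff--Young inequality (Theorem~\ref{thm:hy}), Young's inequality together with its endpoint building blocks (Propositions~\ref{prop:young1}--\ref{prop:young4}, Theorem~\ref{thm:young}), the equality cases of Hölder's inequality (Proposition~\ref{prop:holder}, Corollary~\ref{cor:holder1}), the sum-set estimates (Theorems~\ref{thm:sumsetest}, \ref{thm:sumsetest2}, \ref{thm:sumset}), and the biprojection/bishift dictionary (Theorem~\ref{thm:biproj}, Proposition~\ref{prop:rightshift}, Lemmas~\ref{lem:extrem1} and~\ref{lem: extrem2}). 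The implication $(2)\Rightarrow(1)$ is immediate by specializing the exponents, so the two substantive steps are $(1)\Rightarrow(3)$ and $(3)\Rightarrow(2)$, and the equivalence then follows from the cycle $(1)\Rightarrow(3)\Rightarrow(2)\Rightarrow(1)$.

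For $(1)\Rightarrow(3)$ I would first unwind the interpolation chain behind Theorem~\ref{thm:young}: Young's inequality on $\bA$ was produced by interpolating the endpoint bounds of Propositions~\ref{prop:young3} and~\ref{prop:young4}, which themselves rest on Propositions~\ref{prop:young1} and~\ref{prop:young2}. Using that $p\mapsto\log\|z\|_{p,\bA}$ is convex and that $\|x*y\|_{1,\bA}=\|x\|_{1,\bA}\|y\|_{1,\bA}$ holds identically (Proposition~\ref{prop:young2}), an equality $\|x*y\|_{r,\bA}=\|x\|_{t,\bA}\|y\|_{s,\bA}$ at one interior triple forces equality throughout the chain, in particular in the bounds $\|x*y\|_{\infty,\bA}\le\|x\|_{\infty,\bA}\|y\|_{1,\bA}$ of Proposition~\ref{prop:young1} and $\|x*y\|_{\infty,\bA}\le\|x\|_{p,\bA}\|y\|_{q,\bA}$ of Proposition~\ref{prop:young4}. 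Tracking the equality conditions in those proofs --- the coincidence of the phases of the coefficients forced in Proposition~\ref{prop:young1}, and the Hölder equality $|u|^{p}/\|u\|_{p}^{p}=|v^{*}|^{q}/\|v\|_{q}^{q}$ together with Corollary~\ref{cor:holder1}, invoked through Propositions~\ref{prop:rotation2} and~\ref{prop:young4} --- one deduces that $x$, $y$, $\fF(x)$, $\fF(y)$ are all multiples of partial isometries and that $x,y$ are extremal, i.e. the $L^{1}$--$L^{\infty}$ Hausdorff--Young bound of Proposition~\ref{prop:hyinf1} is saturated. Thus $x$ and $y$ are extremal bi-partial isometries, so Proposition~\ref{prop:rightshift} and the exact inverse sum-set theorem (Theorem~\ref{thm:sumset}), applied on both $\bA$ and $\bB$ --- it is here that the dual Young's property enters, via the remark after Theorem~\ref{thm:sumset} that its conclusions also hold for projections in $\bB$ once the dual has the Schur product property --- yield a biprojection $B$ with $\cR(x)$ a right shift of $B$ and $\cR(\fF(x))$ a shift of $\cR(\fF(B))$; writing $x=\fF^{-1}(\tB_{g})*(y_{0}\diamond B_{h})$ then exhibits $x$ as a bishift of $B$, and likewise for $y$. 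Comparing the two expressions for $\|x*y\|_{\infty,\bA}$ pins down the Fourier supports and gives $\cR(\fF(x))=\cR(\fF(y))$.

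For $(3)\Rightarrow(2)$ I would compute directly. If $x,y$ are bishifts of a common biprojection $B$ with $\cR(\fF(x))=\cR(\fF(y))$, then by Lemma~\ref{lem: extrem2} each of $x$ and $y$ is a multiple of a partial isometry with $\cS(x)\cS(\fF(x))=\cS(y)\cS(\fF(y))=1$, and the same holds for $x*y$, since the convolution of two bishifts with equal Fourier range is again a bishift. For a multiple $z$ of a partial isometry in the commutative algebra $\bA$ one has $\|z\|_{t,\bA}=\|z\|_{\infty,\bA}\,\cS(z)^{1/t}$, so after dividing out the common scalar factors the desired equality reduces to the automatic identity $\|x*y\|_{1,\bA}=\|x\|_{1,\bA}\|y\|_{1,\bA}$ of Proposition~\ref{prop:young2} together with $\cS(x)=\cS(y)=\cS(x*y)$: the first two coincide because $\cR(\fF(x))=\cR(\fF(y))$ and $\cS(\cdot)\cS(\fF(\cdot))=1$, while $\cS(x*y)=\cS(x)$ follows from the shift relations of Lemma~\ref{lem:extrem1} (the Fourier range of $x*y$ is again $\cR(\fF(x))$). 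The exponent relation $1/t+1/s=1+1/r$ then makes the powers of the common value $\cS(x)$ match, so equality holds for every admissible triple, including the endpoints.

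The main obstacle is the rigidity step in $(1)\Rightarrow(3)$: Proposition~\ref{prop:inter} only records an inequality, so propagating a single interior equality back to the endpoint inequalities needs a genuine strict-convexity argument in the spirit of \cite{JLWscm,JLW16}, after which one must carefully compose the equality cases of the several Hölder and Young sub-inequalities; and, crucially, this analysis must be run on the possibly noncommutative dual side as well in order to recognize $\cR(\fF(x))$ as a shift of a biprojection, which is exactly the point where the hypothesis that the dual has Young's property --- hence the Schur product property, hence the dual sum-set and biprojection theory --- becomes indispensable.
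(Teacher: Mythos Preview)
Your proposal is correct and takes essentially the same approach as the paper: the paper itself gives no proof beyond the sentence ``Following the proofs in \cite{JLWscm}, one can obtain the following theorems,'' and your outline is precisely an adaptation of the \cite{JLWscm} argument using the fusion-bialgebra toolbox (Propositions~\ref{prop:young1}--\ref{prop:young4}, Theorem~\ref{thm:sumset}, Proposition~\ref{prop:rightshift}, Lemmas~\ref{lem:extrem1}--\ref{lem: extrem2}), with the dual Young's property invoked exactly where the $\bB$-side analysis is needed to recognize $\cR(\fF(x))$ as a shift of $\cR(\fF(B))$. One small point to tighten in a full write-up: Proposition~\ref{prop:rightshift} is stated for an extremal bi-partial isometry that is already a projection, so in $(1)\Rightarrow(3)$ you should explicitly pass from $x$ to the projection $\cR(x)$ (using that $\bA$ is commutative, so $|x|$ is a scalar multiple of $\cR(x)$) before applying it, and carry the phase along separately as in \cite{JLWscm}.
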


\begin{theorem}[Extremizers of the Hausdorff-Young inequality]
Let $(\bA, \bB, \fF, d, \tau)$ be a fusion bialgebra.
Suppose the dual has Young's property,
Then the following are equivalent:
\begin{enumerate}[(1)]
\item $\|x\|_{\frac{t}{t-1}, \bB}=\|x\|_{t, \bA}$ for some $1<t<2$;
\item $\|x\|_{\frac{t}{t-1}, \bB}=\|x\|_{t, \bA}$ for any $1\leq t\leq 2$;
\item $x$ is a bishift of a biprojection.
\end{enumerate}
\end{theorem}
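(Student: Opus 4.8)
The plan is to follow the strategy of \cite{JLWscm} for Hausdorff--Young extremizers, transplanting it to the fusion bialgebra setting; the hypothesis that the dual has Young's property will be needed only at the very last step of $(1)\Rightarrow(3)$. The implication $(2)\Rightarrow(1)$ is trivial. For $(3)\Rightarrow(2)$ I would argue: if $x$ is a bishift of a biprojection, then by Lemma~\ref{lem: extrem2} the range projections $\cR(x)$, $\cR(\fF(x))$ are honest projections and $\cS(x)\cS(\fF(x))=1$, so by the characterization of extremal bi-partial isometries established just above, $x=c\,u$ and $\fF(x)=c'\,v$ for partial isometries $u\in\bA$, $v\in\bB$, with $\|\fF(x)\|_{\infty,\bB}=\|x\|_{1,\bA}$. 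Then $\|x\|_{t,\bA}=|c|\,\cS(x)^{1/t}$ and $\|\fF(x)\|_{t',\bB}=|c'|\,\cS(\fF(x))^{1/t'}$ for $1\le t\le2$, $1/t+1/t'=1$; feeding in Plancherel (Proposition~\ref{prop:planch}), the $t=1$ equality $|c'|=|c|\cS(x)$, and $\cS(x)\cS(\fF(x))=1$, a one-line computation gives $\|\fF(x)\|_{t',\bB}=|c|\,\cS(x)^{1/t}=\|x\|_{t,\bA}$ for every $t\in[1,2]$, which is $(2)$.

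The substance is $(1)\Rightarrow(3)$, which I would split in two steps. \emph{Step 1: equality $\|\fF(x)\|_{t_0',\bB}=\|x\|_{t_0,\bA}$ for a single $t_0\in(1,2)$ forces $x$ to be an extremal bi-partial isometry.} The Hausdorff--Young inequality at exponent $t_0$ (Theorem~\ref{thm:hy}) is produced by interpolating, through Proposition~\ref{prop:inter}, the contraction $\fF\colon L^1(\bA)\to L^\infty(\bB)$ of Proposition~\ref{prop:hyinf1} against the Plancherel isometry $\fF\colon L^2(\bA)\to L^2(\bB)$. Following \cite{JLWscm}, I would attach to $x$ the usual Stein analytic family $z\mapsto x_z\in\bA$, obtained from the polar decomposition $x=u|x|$ by raising $|x|$ to a $z$-dependent power normalised so that $x_\theta=x$ with $1/t_0=1-\theta/2$, together with a dual family $z\mapsto y_z\in\bB$ norming $\fF(x_z)$, and study $\Phi(z)=\tau\bigl(\fF(x_z)\,y_z\bigr)$ on the strip $0\le\mathrm{Re}(z)\le1$ after the harmless normalisation $\|x\|_{t_0,\bA}=1$. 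Then $\Phi$ is analytic and bounded by $1$ on the strip, with $|\Phi(\theta)|=1$ by hypothesis, so the equality case of the three-lines lemma forces $|\Phi|\equiv1$, and in particular equality on the edge $\mathrm{Re}(z)=0$, i.e. $\|\fF(x_{it})\|_{\infty,\bB}=\|x_{it}\|_{1,\bA}$, where $|x_{it}|=|x|^{t_0}$ is independent of $t$. Reading the proof of Proposition~\ref{prop:hyinf1} backwards, such an equality saturates the triangle inequality $\|\sum_j\lambda_jx_j\|_{\infty,\bB}\le\sum_j|\lambda_j|d(x_j)$ in $\bB$, which forces $|x|^{t_0}$, hence $|x|$, to be a multiple of a projection; the symmetric argument, with the roles of $x$ and $\fF(x)$ exchanged (using the dual Hausdorff--Young inequality for $\tfF$) together with Corollary~\ref{cor:holder1}, yields that $\fF(x)$ is also a multiple of a partial isometry and that both $x$ and $\fF(x)$ are extremal; thus $x$ is an extremal bi-partial isometry.

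\emph{Step 2.} By the equivalence of extremal bi-partial isometries with the condition $\cS(x)\cS(\fF(x))=1$ proved just above, $x$ is a minimizer of the Donoho--Stark uncertainty principle (Theorem~\ref{thm:DS}); since the dual has Young's property, the preceding theorem identifying such minimizers applies and shows that $x$ is a bishift of a biprojection. This closes the loop, and also explains why Young's property is invoked only here.

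I expect Step~1 to be the main obstacle: it is exactly the rigidity (equality case) of complex interpolation in the noncommutative setting, and the care required is threefold --- (i) running Stein/Kosaki interpolation between the two \emph{different} $C^*$-algebras $\bA$ and $\bB$ that $\fF$ connects, rather than within a single algebra; (ii) tracking precisely how the boundary family $x_{it}$ relates to $x$ itself, so that the edge equality transports structural information back to $x$ and to $\fF(x)$ (this is where one must also bring in the dual Hausdorff--Young inequality and Corollary~\ref{cor:holder1}); and (iii) extracting from the saturated triangle inequality in $\bB$ the conclusion that $|x|$ is a multiple of a projection. All of this is carried out in \cite{JLWscm} for subfactor planar algebras, and the arguments go through verbatim once the one genuinely new ingredient --- Young's property on the dual, used only in Step~2 --- is available.
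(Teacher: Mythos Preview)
Your proposal is essentially correct and matches the paper's approach: the paper itself gives no independent proof, stating only that one follows \cite{JLWscm}, and your outline does exactly that, correctly isolating that Young's property on the dual is needed only in Step~2 to invoke the theorem identifying Donoho--Stark minimizers as bishifts of biprojections.

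One imprecision worth flagging in your Step~1 sketch: the claim that saturation of the triangle inequality $\|\sum_j\lambda_j x_j\|_{\infty,\bB}\le\sum_j|\lambda_j|d(x_j)$ alone forces $|x|$ to be a multiple of a projection is not literally true. For instance, $x=P_1+d(x_2)^{-1}P_2\in\bA$ has $\fF(x)=x_1+x_2$, and since $d(x_2)$ lies in the spectrum of $x_2$ (Perron--Frobenius), $\|x_1+x_2\|_{\infty,\bB}=1+d(x_2)$ saturates the bound, yet $|x|$ is not a multiple of a projection when $d(x_2)>1$. The actual mechanism in \cite{JLWscm} is that $|\Phi|\equiv1$ forces the H\"older equality case (Proposition~\ref{prop:holder}) between $\fF(x_z)$ and the explicitly constructed dual family $y_z$ built from the polar decomposition of $\fF(x)$; it is this alignment of the two analytic families, not the triangle inequality by itself, that pins down the spectra of $|x|$ and $|\fF(x)|$. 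Since you explicitly defer the details to \cite{JLWscm}, this is not a genuine gap, but the shorthand you wrote would mislead a reader trying to reconstruct the argument.
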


\section{Quantum Schur Product Theorem on Unitary Fusion Categories}
In this section, we reformulate the quantum Schur product theorem (Theorem 4.1 in \cite{Liuex}) in categorical language. Planar algebras can be regarded as a topological axiomatization of pivotal categories (or 2-category in general). Subfactor planar algebras satisfy particular conditions designed for subfactor theory, see Page 9-13 of \cite{Jon99} for Jones' original motivation.
A subfactor planar algebra is equivalent to a rigid $C^*$-tensor category with a Frobenius *-algebra. The correspondence between subfactor planar algebras and unitary fusion categories was discussed by M\"{u}ger, particularly for Frobenius algebras in \cite{Mug03I} and for the quantum double in \cite{Mug03II}.

Let $\sD$ be a unitary fusion category, (or a rigid $C^*$-tensor category in general). Let $(\gamma,m,\eta)$ be a Frobenius *-algebra of $\sD$, $\gamma$ is an object of $\sD$, $m \in \hom_{\sD}(\gamma \otimes \gamma,\gamma)$, $\eta \in \hom_{\sD}(1,\gamma)$, where $1$ is the unit object of $\sD$, such that $(\gamma,m,\eta)$ is a monoid object and $(\gamma,m^*,\eta^*)$ is a comonoid object.
Let $\cup_{\gamma}=\eta^*m$ be the evaluation map and $\cap_{\gamma}=m^*\eta$ be the co-evaluation map. Then $\cup_{\gamma}^*=\cap_{\gamma}$.

We construct a quintuple $(\bA, \conv, J, d, \tau)$ from the Frobenius algebra:
Take the $C^*$ algebra
\[
\bA=\hom_{\sD}(\gamma,\gamma),
\]
with the ordinary multiplication and adjoint operation.
For $x,y\in \bA$, their convolution is
\[
x*y=m(x\otimes y) m^*.
\]
The modular conjugation $J$ is the restriction of the dual map of $\sD$ on $\bA$.
The Haar measure is
\[
d(x)= \cup_{\gamma} (x \otimes 1_\gamma) \cap_{\gamma},
\]
where $1_{\cdot}$ is the identity map on the object $\cdot$.
The Dirac measure is
\[
\tau(x) = \eta^* x \eta.
\]

We reformulate the quantum Schur product theorem on subfactor planar algebras and its proof as follows:
\begin{theorem}[Theorem 4.1 in \cite{Liuex}]\label{Thm: QSP for Frobenius algebras}
Given a Frobenius algebra $(\gamma,m,\eta)$ of a rigid $C^*$-tensor category, for any $x, y \in \bA:=\hom_{\sD}(\gamma,\gamma)$, $x,y>0$, we have that
\[
x*y:=m(x\otimes y) m^*>0.
\]
\end{theorem}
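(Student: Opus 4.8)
The plan is to prove positivity by writing $x*y$ explicitly in the form $WW^{*}$ inside the $C^{*}$-algebra $\bA=\hom_{\sD}(\gamma,\gamma)$, using only that $\otimes$ is a $*$-bifunctor. Since $\bA$ is a (finite-dimensional) $C^{*}$-algebra and $x,y>0$, I would first put $a:=x^{1/2}$ and $b:=y^{1/2}$, which are invertible positive elements of $\bA$, so that $x=a^{*}a$ and $y=b^{*}b$. This is the only point where functional calculus is used, and it is legitimate because $\hom_{\sD}(\gamma,\gamma)$ is a genuine $C^{*}$-algebra.

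Next I would invoke the interchange law (bifunctoriality of $\otimes$) in a rigid $C^{*}$-tensor category, namely $(a^{*}\otimes b^{*})\circ(a\otimes b)=(a^{*}a)\otimes(b^{*}b)=x\otimes y$, together with the $*$-compatibility $(a\otimes b)^{*}=a^{*}\otimes b^{*}$. Setting $W:=m\,(a^{*}\otimes b^{*})\in\hom_{\sD}(\gamma\otimes\gamma,\gamma)$, its adjoint is $W^{*}=(a\otimes b)\,m^{*}$, and hence
\[
x*y \;=\; m\,(x\otimes y)\,m^{*} \;=\; m\,(a^{*}\otimes b^{*})\,(a\otimes b)\,m^{*} \;=\; W\,W^{*}\;\ge\;0
\]
in $\bA$. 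To upgrade $\ge 0$ to the stated strict inequality, I would note that $W$ is an epimorphism: $a^{*}\otimes b^{*}$ is invertible (with inverse $(a^{*})^{-1}\otimes(b^{*})^{-1}$), and $m$ is split epi by the unit axiom $m\,(\eta\otimes 1_{\gamma})=1_{\gamma}$ (up to the unit constraint) of the monoid $(\gamma,m,\eta)$. Consequently the range projection of $WW^{*}$ is $1_{\gamma}$, so $x*y=WW^{*}$ is invertible and positive, i.e. $x*y>0$. The weaker statement ``$x,y\ge 0\Rightarrow x*y\ge 0$'' then follows by replacing $x,y$ with $x+\varepsilon 1_{\gamma}$, $y+\varepsilon 1_{\gamma}$ and letting $\varepsilon\downarrow 0$.

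I do not expect a serious obstacle: once the factorization $x*y=WW^{*}$ is spotted, everything reduces to bifunctoriality of $\otimes$ and its compatibility with the adjoint, both of which are axioms of a $C^{*}$-tensor category. The two points that need a little care are (i) keeping the whole argument intrinsic to $\sD$, in particular avoiding any concrete Hilbert-space realization, and (ii) the mild extra input (epi-ness of $m$, hence of $W$) used for the strict inequality. It is worth emphasizing that this factorization is special to the Frobenius-algebra, i.e.\ the $\bA$, side: it crucially uses that the comultiplication is the adjoint $m^{*}$ of the multiplication $m$, and there is no analogous trick for the convolution $\conv_{\bB}$ on the dual — which is exactly why the Schur product property can, and does, fail on $\bB$.
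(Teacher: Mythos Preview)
Your argument is correct and the core step---the factorization $x*y=WW^{*}$ with $W=m\,(x^{1/2}\otimes y^{1/2})$---is exactly the paper's proof (the paper writes it as $V^{*}V$ with $V=(\sqrt{x}\otimes\sqrt{y})\,m^{*}$, which is your $W^{*}$). The only difference is in the upgrade to strict positivity: the paper argues via the faithful trace $d$, noting that $d(x*y)$ is a positive multiple of $d(x)d(y)>0$, whereas you use that $m$ is split epi through the monoid unit law $m(\eta\otimes 1_\gamma)=1_\gamma$ to conclude $WW^{*}$ is invertible; both are valid and equally short.
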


\begin{proof}
Let $\sqrt{x}$ and $\sqrt{y}$ be the positive square roots of the positive operators $x$ and $y$ respectively.
Then
\[
x*y=((\sqrt{x} \otimes \sqrt{y}) m^*)^*((\sqrt{x} \otimes \sqrt{y}) m^*) \geq 0.
\]
Note that $d$ is a faithful state, so $d(x), d(y)>0$. Moreover, $d(x*y)$ is a positive multiple of $d(x)d(y)$, so $d(x*y)>0$ and $x*y>0$.
\end{proof}

By the associativity of $m$ and $J(m)=m$, the vector space $\hom_{\sD}(\gamma,\gamma)$ forms another $C^*$-algebra $\bB$, with a multiplication $*$ and involution $J$. The identity map induces a unitary transformation $\fF: \bA \to \bB$, due to the Plancherel's formula,
\[
\tau(x*J(x))=\cup_{\gamma}(x \otimes J(x))\cap_{\gamma} = \cup_{\gamma}(x^*x \otimes 1_{\gamma})\cap_{\gamma}=d(x^*x).
\]

\begin{proposition}
When $\bA$ is commutative, the quintuple $(\bA, \bB, \fF, d, \tau)$ is a canonical fusion bialgebra.
\end{proposition}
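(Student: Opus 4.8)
The plan is to verify, one at a time, the conditions of Definition~\ref{Def: Fusion bialgebras} for the quintuple $(\bA,\bB,\fF,d,\tau)$, reusing the structure set up above. The easy items come first: $\bA=\hom_{\sD}(\gamma,\gamma)$ and $\bB$ are finite dimensional $C^{*}$-algebras, $\bA$ is commutative by hypothesis, $\fF$ is the identity on the underlying vector space and hence a bijection, and it preserves $2$-norms by the Plancherel identity $\tau(\fF(x)^{*}\fF(x))=d(x^{*}x)$ recorded above (polarize for the full sesquilinear form). Faithfulness of $d$ on $\bA$ follows from $d(x^{*}x)=\|(x\otimes 1_{\gamma})\cap_{\gamma}\|^{2}$, which is nonzero for $x\neq 0$ because $\cap_{\gamma}$ solves the conjugate equations; and $\tau$ is then faithful on $\bB$ since $\tau(\fF(x)^{*}\fF(x))=d(x^{*}x)>0$ for $x\neq 0$. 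Commutativity of $\bA$ makes $d$ automatically tracial; for $\tau$ one notes that $\tau(x\conv y)=\cup_{\gamma}(x\otimes y)\cap_{\gamma}$ is a closed diagram, and sphericality of $\sD$ (with its canonical pivotal structure, as for unitary fusion categories) yields $\cup_{\gamma}(x\otimes y)\cap_{\gamma}=\cup_{\gamma}(y\otimes x)\cap_{\gamma}$, i.e.\ the trace property of $\tau$.

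Next I would check the three numbered conditions. Condition $(1)$, the Schur product property for this quintuple, is exactly the assertion that $x\conv y=m(x\otimes y)m^{*}\geq 0$ whenever $x,y\geq 0$ in $\bA$, which is Theorem~\ref{Thm: QSP for Frobenius algebras}. For condition $(2)$, the map $J$ of Definition~\ref{Def: Fusion bialgebras} is anti-linear by construction and agrees with the involution of $\bB$, i.e.\ with the dual map of $\sD$ restricted to $\hom_{\sD}(\gamma,\gamma)$; this map squares to the identity and preserves adjoints, and although a priori it only reverses the composition product, commutativity of $\bA$ turns $J(xy)=J(y)J(x)$ into $J(xy)=J(x)J(y)$, so $J$ is an anti-linear $*$-automorphism of $\bA$. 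For condition $(3)$ I would identify the convolution unit: from the monoid unit law $m(\eta\otimes 1_{\gamma})=1_{\gamma}$ and the comonoid counit law $(\eta^{*}\otimes 1_{\gamma})m^{*}=1_{\gamma}$ one gets $(\eta\eta^{*})\conv x=m(\eta\eta^{*}\otimes x)m^{*}=x$ for every $x\in\bA$, so $\fF^{-1}(1_{\bB})=\eta\eta^{*}$. Since the unit object is simple, $\eta^{*}\eta\in\hom_{\sD}(\mathbf{1},\mathbf{1})=\bC$ is a positive scalar, and $p:=(\eta^{*}\eta)^{-1}\eta\eta^{*}$ satisfies $p^{2}=p=p^{*}$ and $pxp\in\bC p$ for all $x$, hence is a minimal projection of $\bA$; thus $\fF^{-1}(1_{\bB})$ is a positive multiple of a minimal projection.

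Finally, for the word ``canonical'' I would compute, using $m(\eta\eta^{*}\otimes 1_{\gamma})m^{*}=1_{\gamma}$, that $d(\fF^{-1}(1_{\bB}))=d(\eta\eta^{*})=\eta^{*}\eta$, and observe that $\fF^{-1}(1_{\bB})=\eta\eta^{*}$ is itself minimal precisely when $\eta^{*}\eta=1$; so both normalizations hold exactly when $\eta$ is an isometry, which is the normalization carried by a Frobenius $*$-algebra in this setting (the general case reducing to it by the gauge transformation with $\lambda_{1}=1$, $\lambda_{2}=(\eta^{*}\eta)^{-2}$). I expect the main obstacle to be the diagrammatic bookkeeping with the monoid/comonoid and Frobenius relations: concretely, identifying $1_{\bB}=\eta\eta^{*}$, checking that the dual map is multiplicative for the composition product on $\bA$, and extracting the trace property of $\tau$ from sphericality; the remaining steps are routine given Theorem~\ref{Thm: QSP for Frobenius algebras} and the Plancherel identity already proved in this section.
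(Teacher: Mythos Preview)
Your proof is correct and follows essentially the same approach as the paper's, but with considerably more detail. The paper's own proof is a three-line sketch: it cites Theorem~\ref{Thm: QSP for Frobenius algebras} for the Schur product property, asserts that the dual map is an anti-linear $*$-isomorphism for the modular conjugation property, and notes that $1_1$ (i.e.\ your $\eta\eta^*$, with $\eta$ normalized to be an isometry) is a minimal projection with $d(1_1)=1$ for the Jones projection and canonical conditions. Your argument unpacks each of these claims explicitly---identifying $\fF^{-1}(1_{\bB})=\eta\eta^*$ via the unit/counit laws, deducing minimality from simplicity of the unit object, obtaining $d(\eta\eta^*)=\eta^*\eta$, and handling the possible order reversal of $J$ via commutativity of $\bA$---but the underlying ideas are the same.
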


\begin{proof}
The Schur product property follows from Theorem \ref{Thm: QSP for Frobenius algebras}.
The modulo conjugation property holds, as the duality map is an anti-linear *-isomorphism.
The Jones projection property holds, as $\fF(1_{1})$ is the identity of $\bB$. Moreover, $1_1$ is a minimal central projection and $d(1_1)=1$, so  $(\bA, \bB, \fF, d, \tau)$ is a canonical fusion bialgebra.
\end{proof}

Following the well-known correspondence between subfactor planar algebras and a rigid $C^*$ tensor category with a Frobenius *-algebra, we reformulate the subfactorization of a fusion bialgebra as follows:
\begin{definition}
A fusion bialgebra is subfactorizable if and only if it is the quintuple $(\bA, \bB, \fF, d, \tau)$ arisen from a Frobenius *-algebra in a rigid $C^*$ tensor category constructed above.
\end{definition}

There is another way to construct the dual $\bB$ using the the dual of $\sD$ w.r.t. the Frobenius algebra $\gamma$, which is compatible with the Fourier duality of subfactor planar algebras.
The dual $\hat{\sD}$ of $\sD$ w.r.t. the Frobenius algebra $(\gamma,m,\eta)$ is defined as the $\gamma-\gamma$ bimodule category over $\sD$, with the unit object $\gamma$. The dual Frobenius *-algebra of $(\gamma,m,\eta)$ is $(\hat{\gamma},\hat{m},\hat{\eta})$, $\hat{\gamma}=\gamma\otimes \gamma$, $\hat{m}=1_{\gamma}\otimes \cup_{\gamma} \otimes 1_{\gamma}$, $\hat{\eta}=m^*$.
Then the quintuple from the Frobenius algebra $\hat{\gamma}$ of $\hat{\sD}$ is dual to the quintuple from the Frobenius algebra
$\gamma$ of $\sD$. In particular, the $C^*$-algebra $\bB$ can be implemented by $\hom_{\hat{\sD}}(\hat{\gamma},\hat{\gamma})$ with ordinary multiplication and adjoint operation.

Let $\sC$ be a unitary fusion category. Take $\sD=\mathscr{C} \boxtimes \overline{\mathscr{C}}$, then $\sD$ has a canonical Frobenius algebra $(\gamma,m,\eta)$.
Here $\displaystyle \gamma=\bigoplus_{j=1}^m X_j \otimes \overline{X_j}$, where $\{X_1, X_2, \ldots, X_m\}$ is the set of irreducible (or simple) objects of $\mathscr{C}$, $X_1$ is the unit and $\overline{X_j}=X_{j^*}$;
\[
m=\FPdim(\sC)^{1/4}\bigoplus_{j,k,s=1}^m  (\FPdim(X_j)\FPdim(X_k)\FPdim(X_s))^{1/2} \sum_{\alpha \in ONB(X_j,X_k;X_s)} \alpha \boxtimes \overline{\alpha},
\]
where $\FPdim(X_j)$ is the Frobenius-Perron dimension of $X_j$, $\displaystyle \FPdim(\sC)=\sum_{j=1}^m \FPdim(X_j)^2$ is the Frobenius-Perron dimension of $\sC$ and $ONB(X_j,X_k;X_s)$ is an orthonormal basis of $\hom_{\sC}(X_j\otimes X_k, X_s)$;
and $\FPdim(\sC)^{1/4}\eta \in \hom_{\sD}(1, \gamma)$ is the canonical inclusion (in particular, $\hat{\gamma}$ is the image of the unit of $\sC$ under the action of the adjoint functor of the forgetful functor from $Z(\sC)$ to $\sC$).
Its dual $\hat{\sD}$ is isomorphic to the Drinfeld center $Z(\sC)$ of $\sC$ as a fusion category. This construction is well-known as the quantum double construction. Consequently,

\begin{proposition}
Let $\cR$ be the Grothendieck ring of a unitary fusion category $\sC$. Then the canonical fusion bialgebra associated to the fusion ring $\cR$ is isomorphic to the one $(\bA, \bB, \fF, d, \tau)$ associated to the canonical Frobenius algebra $\gamma$ of $\sC \otimes \overline{\sC}$ in the quantum double construction. So it is subfactorizable.
\end{proposition}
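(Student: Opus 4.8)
The plan is to recognize both fusion bialgebras as the canonical extension, in the sense of Theorem~\ref{Thm: extension}, of one and the same C$^*$-algebra-with-$\bRp$-basis, and then to invoke the uniqueness clause of that theorem. First I would record the structure of $\bA=\hom_{\sD}(\gamma,\gamma)$ for $\gamma=\bigoplus_{j=1}^m X_j\otimes\overline{X_j}$: the summands $X_j\otimes\overline{X_j}$ are pairwise non-isomorphic simple objects of $\sD=\sC\boxtimes\overline{\sC}$, so $\gamma$ is multiplicity free, $\bA$ is abelian and $*$-isomorphic to $\bC^m$, and its minimal projections $q_1,\dots,q_m$ are the projections onto the respective summands, with $q_1=1_{\mathbf 1}$. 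By the preceding Proposition, $(\bA,\bB,\fF,d,\tau)$ is then a canonical fusion bialgebra, so by Theorem~\ref{Thm: extension} the algebra $\bB$ carries a unique $\bRp$-basis $B=\{x_1=1,x_2,\dots,x_m\}$ with each $\fF^{-1}(x_j)$ a positive multiple of a minimal projection of $\bA$; after reindexing, $\fF^{-1}(x_j)=c_j q_{j}$ for scalars $c_j>0$, with $q_1$ matching $x_1=1$.

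The computational core is to evaluate the Frobenius data on the $q_j$. Using the explicit formula for $m$, whose $(j,k,s)$-component is $\FPdim(\sC)^{1/4}(\FPdim(X_j)\FPdim(X_k)\FPdim(X_s))^{1/2}\sum_{\alpha\in ONB(X_j,X_k;X_s)}\alpha\boxtimes\overline{\alpha}$, together with the orthonormality of $ONB(X_j,X_k;X_s)$ inside $\hom_{\sC}(X_j\otimes X_k,X_s)$, I would compute $q_j\conv q_k=m(q_j\otimes q_k)m^*$ and find it to be a nonnegative linear combination of the $q_s$ with coefficients proportional to the fusion multiplicities $N_{j,k}^s=\dim\hom_{\sC}(X_j\otimes X_k,X_s)$; similarly $d(q_j)=\cup_\gamma(q_j\otimes 1_\gamma)\cap_\gamma=\FPdim(X_j)^2$, $\tau(q_j)=\eta^*q_j\eta$ is a positive multiple of $\delta_{j,1}$, and $J(q_j)=q_{j^*}$ since $J$ is the duality map and $\overline{X_j\otimes\overline{X_j}}\cong X_{j^*}\otimes\overline{X_{j^*}}$. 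Imposing the normalization $x_1=1$, $N_{j,k}^1=\delta_{j,k^*}$ that characterizes a $\bRp$-basis pins down the $c_j$ (this is exactly the role played by the powers of $\FPdim$ built into $m$), and yields $x_jx_k=\sum_s N_{j,k}^s x_s$ with $N_{j,k}^s$ the fusion coefficients of $\cR$, $d(x_j)=\FPdim(X_j)$, and $\mu=\sum_j d(x_j)^2=\FPdim(\sC)$. Hence $(\bB,B)$ is isomorphic, as a C$^*$-algebra with $\bRp$-basis, to the fusion algebra $\bC[\cR]$ with its basis of simple objects.

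To conclude, the fusion algebra $\bC[\cR]$ with this $\bRp$-basis is, by the Example above, the distinguished $\bRp$-basis of the canonical fusion bialgebra associated to $\cR$, and by Theorem~\ref{Thm: extension} a canonical fusion bialgebra is determined up to isomorphism by its distinguished $\bRp$-basis (which is moreover gauge-invariant); therefore the canonical fusion bialgebra of $\cR$ is isomorphic to $(\bA,\bB,\fF,d,\tau)$. Since the latter arises from a Frobenius $*$-algebra in the rigid C$^*$-tensor category $\sC\boxtimes\overline{\sC}$, it is subfactorizable by definition. I expect the main obstacle to be the diagrammatic bookkeeping of the previous paragraph: verifying that the particular powers of $\FPdim(X_j)$, $\FPdim(X_k)$, $\FPdim(X_s)$ and $\FPdim(\sC)$ inserted into $m$ are precisely those needed for the renormalized vectors $c_j\fF(q_j)$ to form a $\bRp$-basis with structure constants equal to the fusion multiplicities and with $d$ equal to the unnormalized Frobenius-Perron trace. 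This is the heart of the quantum double construction, and it is where the pairing $\alpha\boxtimes\overline{\alpha}$ and the orthonormality of $ONB(X_j,X_k;X_s)$ are genuinely used.
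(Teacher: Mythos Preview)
Your proposal is correct and follows essentially the same approach as the paper: both identify the minimal projections $q_j=1_{X_j}\boxtimes 1_{\overline{X_j}}$ in $\bA=\hom_{\sD}(\gamma,\gamma)$, rescale them (the paper takes $x_j:=\FPdim(X_j)^{-1}q_j$ directly, you reach the same $c_j$ via the normalization in Theorem~\ref{Thm: extension}), and then verify that $d(x_j)=\FPdim(X_j)$, $\tau(x_j)=\delta_{j,1}$, $J(x_j)=x_{j^*}$, and $x_j*x_k=\sum_s N_{j,k}^s x_s$ with the fusion multiplicities of $\cR$. The only difference is packaging: the paper writes down the specific basis and checks the relations in one pass, whereas you route through the uniqueness clause of Theorem~\ref{Thm: extension}; the computational content is identical.
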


\begin{proof}
Following the notations above, we take $x_j:=\FPdim(X)^{-1} 1_{X_j} \boxtimes 1_{\overline{X_j}}$. Then
\begin{align*}
d(x_j)&=PF(X_j) \; \\
x_j \diamond x_k&=\delta_{j,k} d(x_j)^{-1} x_j \; ; \\
(x_j)^{\#}&= x_j \; ;\\
\tau(x_j)&=\delta_{1,j} \; ; \\
J(x_j)&=x_{j^*} \; ;\\
x_j*x_k&=\sum_{k \in Irr} \dim  \ket{Z} \; ,\\
\end{align*}
where $\diamond$ and $\#$ are the ordinary multiplication and adjoint operator on the commutative $C^*$-algebra $\bA=\hom_{\sC}(\gamma,\gamma)$ respectively, $\delta_{j,k}$ is the Kronecker delta and $N_{j,k}^s=\hom_{\sC}(X_j\otimes X_k,X_s)$.
Therefore, the fusion bialgebra associated to the Grothendieck ring is isomorphic to the fusion bialgebra arisen from the canonical Frobenius algebra $(\gamma, m ,\eta)$ of $\sC\boxtimes \overline{\sC}$ in the quantum double construction. So it is subfactorizable.
\end{proof}

\begin{remark}
To encode the fusion rule of $\sC$ as the convolution on $\bA$ exactly, our normalization of the Frobenius algebra $(\gamma,m,\eta)$ is slightly different from the usual one identical to planar tangles in planar algebras, see e.g. Equation (18), Propositions 4.1 and 4.2 in \cite{Liu19}.
\end{remark}

\begin{proposition}\label{Prop: QSP dual of G}
Let $(\bA, \bB, \fF, d, \tau)$ be the canonical fusion bialgebra associated with the Grothendieck ring $\cR$ of the unitary fusion category $\sC$.
Then the Schur product property holds on $\bB$, the dual of the fusion ring $\cR$.
\end{proposition}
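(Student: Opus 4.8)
The plan is to deduce this from the quantum Schur product theorem, Theorem~\ref{Thm: QSP for Frobenius algebras}, via the quantum double construction, rather than to argue directly on $\bB$. By the preceding proposition the canonical fusion bialgebra $(\bA,\bB,\fF,d,\tau)$ attached to $\cR$ is isomorphic to the quintuple built from the canonical Frobenius *-algebra $(\gamma,m,\eta)$ of $\sD=\sC\boxtimes\overline{\sC}$; in particular it can be subfactorized, so the shortest route is simply to invoke Proposition~\ref{prop:ufusionschur}, which already packages ``subfactorizable $\Rightarrow$ Schur product property on the dual.'' I would state the argument this way and then, for the reader, spell out why the quantum Schur product theorem gives exactly the positivity demanded of $\conv_{\bB}$.

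To do that, first I would recall the identification of the Fourier dual $(\bB,\bA,\tfF,\tau,d)$ with the fusion bialgebra coming from the dual Frobenius *-algebra $(\hat\gamma,\hat m,\hat\eta)$ of the dual category $\hat\sD\cong Z(\sC)$, as explained in the paragraph following the quantum double construction above. Under this identification $\bB\cong\hom_{\hat\sD}(\hat\gamma,\hat\gamma)$ as $C^*$-algebras with their ordinary multiplication and adjoint, and the convolution $\conv_{\bB}$ on $\bB$ obtained by transporting the multiplication $\diamond$ of $\bA$ through $\tfF$ becomes the Frobenius-algebra convolution $x\conv y=\hat m(x\otimes y)\hat m^*$ on $\hom_{\hat\sD}(\hat\gamma,\hat\gamma)$. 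Checking this amounts to matching, on both sides, the $C^*$-structures, the traces $d$ and $\tau$, and the Fourier transforms, using $\hat\eta=m^*$ and $\hat m=1_\gamma\otimes\cup_\gamma\otimes 1_\gamma$; this is the step where the Plancherel identity $\tau(x\conv J(x))=d(x^*x)$ must be re-derived for $\hat\gamma$ inside the dual category.

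Once this is in place the conclusion is immediate: $\hat\sD\cong Z(\sC)$ is again a rigid $C^*$-tensor category (a unitary fusion category, in fact) and $(\hat\gamma,\hat m,\hat\eta)$ is a Frobenius *-algebra in it, so Theorem~\ref{Thm: QSP for Frobenius algebras} applies verbatim: for $x,y\geq 0$ in $\hom_{\hat\sD}(\hat\gamma,\hat\gamma)$ one has $x\conv y=\big((\sqrt x\otimes\sqrt y)\hat m^*\big)^*\big((\sqrt x\otimes\sqrt y)\hat m^*\big)\geq 0$, with strict positivity when $x,y>0$ by faithfulness of the dual Haar state. Transporting back through the fusion bialgebra isomorphism yields $x\conv_{\bB}y\geq 0$ for all $x,y\geq 0$ in $\bB$, i.e.\ the Schur product property on $\bB$. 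The main obstacle is purely bookkeeping: reconciling the normalization of $(\gamma,m,\eta)$ used here (which, as the remark above notes, differs slightly from the planar-algebra one) with that of $(\hat\gamma,\hat m,\hat\eta)$ so that $\conv_{\bB}$ and the categorical convolution agree on the nose, together with the identification $\hat\sD\cong Z(\sC)$ and the claim that the dual quintuple is precisely the one coming from $\hat\gamma$. No analytic input — no uncertainty principle, no interpolation — is needed; positivity of $\hat m(x\otimes y)\hat m^*$ is entirely structural.
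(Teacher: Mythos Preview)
Your proposal is correct and follows essentially the same route as the paper: apply Theorem~\ref{Thm: QSP for Frobenius algebras} to the dual Frobenius *-algebra $(\hat\gamma,\hat m,\hat\eta)$ in $\hat\sD\cong Z(\sC)$ to obtain the Schur product property on $\bB$. The paper's proof is the one-line version of exactly this; your additional remarks about the shortcut via Proposition~\ref{prop:ufusionschur} and the bookkeeping on normalizations are sound elaborations but not a different argument.
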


\begin{proof}
Applying the quantum Schur product theorem, Theorem \ref{Thm: QSP for Frobenius algebras}, to the Frobenius algebra $(\hat{\gamma}, \hat{m},\hat{\eta})$ of the Drinfeld center $Z(\sC)$, we obtain the Schur product property on $\bB$.
\end{proof}

We obtain an equivalent statement on $\bA$ as follows (see another equivalent statement in Proposition \ref{SchurRef}):
\begin{proposition}
Let $(\bA, \bB, \fF, d, \tau)$ (or $(\bA, \conv, J, d, \tau)$ as in Remark \ref{Rem: Equivalent Definition of Fusion Bialgebras}) be the canonical fusion bialgebra associated with the Grothendieck ring $\cR$ of the unitary fusion category $\sC$.
Then
\begin{align*}
d((J(x)*x)\diamond (J(y)*y) \diamond (J(z)*z)) \geq 0, ~\forall x,y,z \in \bA.
\end{align*}
\end{proposition}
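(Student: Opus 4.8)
The plan is simply to combine two facts already established in the paper. By Proposition~\ref{SchurProp}, for an arbitrary fusion bialgebra $(\bA,\bB,\fF,d,\tau)$ the inequality
$$d((J(x)*x)\diamond (J(y)*y) \diamond (J(z)*z)) \geq 0, \qquad \forall\, x,y,z\in\bA,$$
is \emph{equivalent} to the Schur product property on $\bB$ — this equivalence is obtained there by rewriting the left-hand side through $\fF$ and $\conv_\bB$, using Plancherel's formula (Proposition~\ref{prop:planch}) and the fact that every positive element of $\bB$ is of the form $|\fF(x)|^2=\fF(J(x)*x)$. Hence it suffices to verify the Schur product property on $\bB$ in the case at hand, where $\bB$ is the dual of the Grothendieck ring of $\sC$.

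That is exactly Proposition~\ref{Prop: QSP dual of G}. In the quantum double construction the $C^*$-algebra $\bB$ is realized as $\hom_{\hat{\sD}}(\hat{\gamma},\hat{\gamma})$ with $\hat{\sD}\cong Z(\sC)$, and the convolution $\conv_\bB$ on $\bB$ coincides with the Frobenius-algebra convolution $u\conv v=\hat{m}(u\otimes v)\hat{m}^*$ attached to the dual Frobenius $*$-algebra $(\hat{\gamma},\hat{m},\hat{\eta})$. Applying the quantum Schur product theorem (Theorem~\ref{Thm: QSP for Frobenius algebras}) to $(\hat{\gamma},\hat{m},\hat{\eta})$ gives $u\conv v>0$ for $u,v>0$; since $\conv_\bB$ is bilinear on a finite-dimensional space, approximating $u,v\ge 0$ by $u+\varepsilon 1_\bB,\ v+\varepsilon 1_\bB>0$ and letting $\varepsilon\to 0$ yields $u\conv_\bB v\ge 0$ for all $u,v\ge 0$, i.e. the Schur product property on $\bB$. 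Combined with Proposition~\ref{SchurProp}, this is precisely the stated inequality.

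I do not expect any genuine obstacle: the statement is a restatement of Proposition~\ref{Prop: QSP dual of G} through the equivalence of Proposition~\ref{SchurProp}. The only point needing care — and it is settled in the discussion preceding Proposition~\ref{Prop: QSP dual of G} and in \cite{Liu19,Mug03II} — is the identification of the quintuple built from $(\hat{\gamma},\hat{m},\hat{\eta})$ on $Z(\sC)$ with the Fourier dual $(\bB,\bA,\tfF,\tau,d)$ of the quintuple built from $(\gamma,m,\eta)$ on $\sC\boxtimes\overline{\sC}$, so that $\hat{m}$-convolution matches $\conv_\bB$ under $\fF(P_j)=d(x_j)x_j$. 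One could alternatively bypass the Drinfeld center and argue combinatorially, expanding the three convolutions in the basis $B$ and recognizing the left-hand side as a nonnegative combination of squared fusion multiplicities weighted by Frobenius-Perron dimensions; but the categorical route above is shorter and conceptually cleaner.
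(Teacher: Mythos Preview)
Your proof is correct and matches the paper's first proof essentially verbatim: combine Proposition~\ref{SchurProp} with Proposition~\ref{Prop: QSP dual of G}. The paper also supplies a second, direct proof that bypasses the Drinfeld center entirely---not via the combinatorial expansion you sketch, but by a diagrammatic argument in $\sC\boxtimes\overline{\sC}$: one rewrites $d((J(x)*x)\diamond(J(y)*y)\diamond(J(z)*z))$ as a planar diagram and observes that it equals its own horizontal reflection paired with itself, so nonnegativity follows from reflection positivity of the dual functor on $\sC$.
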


\begin{proof}
It follows from Propositions \ref{SchurProp} and \ref{Prop: QSP dual of G}.

We give a second proof without passing through the Drinfeld center $Z(\sC)$.

\[
d((J(x)*x)\diamond (J(y)*y) \diamond (J(z)*z))=
\raisebox{-1.5cm}{
\begin{tikzpicture}
\draw (0,0.5-.2)--++(0,.4);
\draw (0,1.5-.2)--++(0,.4);
\draw (0,2.5-.2)--++(0,.4);
\draw (2,0.5-.2)--++(0,.4);
\draw (2,1.5-.2)--++(0,.4);
\draw (2,2.5-.2)--++(0,.4);
\draw (0,0.5-.2)--++(1,-.1);
\draw (0,1.5-.2)--++(1,-.1);
\draw (0,2.5-.2)--++(1,-.1);
\draw (0,0.5+.2)--++(1,+.1);
\draw (0,1.5+.2)--++(1,+.1);
\draw (0,2.5+.2)--++(1,+.1);
\draw (2,0.5-.2)--++(-1,-.1);
\draw (2,1.5-.2)--++(-1,-.1);
\draw (2,2.5-.2)--++(-1,-.1);
\draw (2,0.5+.2)--++(-1,+.1);
\draw (2,1.5+.2)--++(-1,+.1);
\draw (2,2.5+.2)--++(-1,+.1);
\draw (1,.8)--++(0,.4);
\draw (1,1.8)--++(0,.4);
\draw (1,2.8)--++(0,.2) arc (180:0: .7 and .2) --++ (0,-3) arc (0:-180: .7 and .2) --++(0,.2);
\node at (1-.3,0.9) {$m^*$};
\node at (1-.3,1.9) {$m^*$};
\node at (1-.3,2.9) {$m^*$};
\node at (1-.4,0.1) {$m$};
\node at (1-.4,1.1) {$m$};
\node at (1-.4,2.1) {$m$};
\node at (-.3,.5) {$J(x)$};
\node at (-.3,1.5) {$J(y)$};
\node at (-.3,2.5) {$J(z)$};
\node at (2-.2,.5) {$x$};
\node at (2-.2,1.5) {$y$};
\node at (2-.2,2.5) {$z$};
\end{tikzpicture}}
=
\raisebox{-1.5cm}{
\begin{tikzpicture}
\draw (0,0+.1) --++ (0,3) arc (0:180: .5 and .2) --++(0,-3) arc (-180: 0: .5 and .2);
\draw (2,0+.3) --++ (0,3) arc (180: 0: .5 and .2) --++(0,-3) arc (0: -180: .5 and .2);;
\draw (0,1-.1) --++(2,0+.2);
\draw (0,2-.1) --++(2,0+.2);
\draw (0,3-.1) --++(2,0+.2);
\node at (-.3,.5) {$J(x)$};
\node at (-.3,1.5) {$J(y)$};
\node at (-.3,2.5) {$J(z)$};
\node at (2-.2,.5) {$x$};
\node at (2-.2,1.5) {$y$};
\node at (2-.2,2.5) {$z$};
\node at (-.2,1-.1) {$m$};
\node at (-.2,2-.1) {$m$};
\node at (-.2,3-.1) {$m$};
\node at (2-.2,1+.2) {$m^*$};
\node at (2-.2,2+.2) {$m^*$};
\node at (2-.2,3+.2) {$m^*$};
\draw[red,dashed] (1,.1)--(1,3.3);
\end{tikzpicture}}
\geq 0 .
\]
The last inequality follows from reflection positivity of the horizontal reflection, namely the dual functor on $\sC$.

\end{proof}

\begin{remark}
Let us mention \cite{ENO21} which contains a reformulation of our first proof together with a discussion on some integrality properties of the numbers appearing in the Schur product criterion.
\end{remark}

In particular, if Grothendieck ring $\cR$ is commutative, then $\bB$ is commutative. There is a one-to-one correspondence between minimal projections $P_j$ in $\bB$ and characters $\chi_j$ of $\cR$, $j=1,2,\ldots, m$:
\[
P_j=\sum_{k=1}^n d(x_k)^{-1} \chi_j(x_k) x_{k^*}.
\]
Take
\[P_j*_{B}P_k=\sum_{s=1}^m \hat{N}_{j,k}^s P_s,\]
then $\hat{N}_{j,k}^s\geq0$, due to the Schur product property on $\bB$.



The dual of the fusion ring $\cR$ is independent of its categorification. The Schur product property may not hold on the dual of a fusion ring in general. Therefore, the Schur product property is an analytic obstruction of unitary categorification of fusion rings. We discuss its applications in \S \ref{Sec: Application}. Similarly, Young's inequality and sumset estimates are also analytic obstructions of unitary categorification of fusion rings.

\section{Applications and Conclusions}\label{Sec: Application}
In this section, we show that the Schur product property on the dual is an analytic obstruction for the unitary categorification of fusion rings. Furthermore, this obstruction is very efficient to rule out the fusion rings of high ranks (we apply it on simple integral fusion rings).
The inequalities for the fusion coefficients (Proposition~\ref{incoeff}) in the next subsection are essential for finding new fusion rings more efficiently.

\subsection{Upper Bounds on the Fusion Coefficients}
In this subsection, we obtain inequalities for fusion rings from the inequalities proved in previous sections.

\begin{proposition}  \label{incoeff}
Let $\fA$ be a fusion ring.
Then
\begin{enumerate}[(1)]
\item $\sum_{\ell=1}^m \left(N_{j,k}^\ell\right)^2\leq \min\{ d(x_j)^2, d(x_k)^2\}$;
\item $N_{j,k}^\ell\leq d(x_\ell) d(x_j)^{\frac{2-t}{t}}d(x_k)^{\frac{t-2}{t}}$ for any $t\geq 1$;
\item $N_{j,k}^\ell \leq \min\{d(x_j), d(x_k), d(x_\ell)\}$;
\item $\sum_{s=1}^m N_{j_1,j_2}^s N_{j_3, j_4}^s\leq \min_{j\neq j'\in\{j_1, j_2, j_3, j_4\}}d(x_{j}) d(x_{j'}).$
\end{enumerate}
\end{proposition}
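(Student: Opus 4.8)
The plan is to transport everything to the commutative $C^*$-algebra $\bA$ via $\fF^{-1}$, where the fusion coefficients $N_{j,k}^\ell$ become the structure constants of the convolution $\conv$ and the Young-type inequalities of \S\ref{Sec: Young} apply directly. The first step is to record the relevant norms of the distinguished basis vectors. Setting $P_j := d(x_j)\,\fF^{-1}(x_j)$, one checks from $x_j\diamond x_k=\delta_{j,k}d(x_j)^{-1}x_j$ that $P_j$ is a minimal projection of $\bA$ with $d(P_j)=d(x_j)^2$; hence for $1\le t\le\infty$,
\[
\|\fF^{-1}(x_j)\|_{t,\bA}=d(x_j)^{(2-t)/t},
\]
so in particular $\|\fF^{-1}(x_j)\|_{1,\bA}=d(x_j)$, $\|\fF^{-1}(x_j)\|_{2,\bA}=1$, $\|\fF^{-1}(x_j)\|_{\infty,\bA}=d(x_j)^{-1}$, and $\{\fF^{-1}(x_j)\}$ is orthonormal in $L^2(\bA,d)$. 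Moreover $\fF^{-1}(x_j)\conv\fF^{-1}(x_k)=\sum_{\ell}N_{j,k}^{\ell}\,\fF^{-1}(x_\ell)$, whose squared $2$-norm is $\sum_\ell(N_{j,k}^\ell)^2$ and whose $\infty$-norm is $\max_\ell N_{j,k}^\ell/d(x_\ell)$, by the formulas above (and one can also use Proposition~\ref{prop:planch} to pass between the two sides).

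For (1), apply Young's inequality in the form $\|x\conv y\|_{2,\bA}\le\|x\|_{2,\bA}\|y\|_{1,\bA}$ (Proposition~\ref{prop:young3}) to $x=\fF^{-1}(x_j)$, $y=\fF^{-1}(x_k)$: the left side is $\bigl(\sum_\ell(N_{j,k}^\ell)^2\bigr)^{1/2}$ and the right side is $d(x_k)$; the symmetric choice gives $d(x_j)$. For (2), apply $\|x\conv y\|_{\infty,\bA}\le\|x\|_{t,\bA}\|y\|_{t/(t-1),\bA}$ (Proposition~\ref{prop:young4}, with $p=t$) to the same $x,y$: the left side dominates $N_{j,k}^\ell/d(x_\ell)$ for every $\ell$, and the right side is $d(x_j)^{(2-t)/t}d(x_k)^{(t-2)/t}$, which after clearing $d(x_\ell)$ is exactly the asserted bound. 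Statement (3) is then immediate: (1) gives $N_{j,k}^\ell\le\min\{d(x_j),d(x_k)\}$ and (2) at $t=2$ gives $N_{j,k}^\ell\le d(x_\ell)$.

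For (4), first interpret $\sum_s N_{j_1,j_2}^s N_{j_3,j_4}^s$ as the inner product $\langle \fF^{-1}(x_{j_1})\conv\fF^{-1}(x_{j_2}),\ \fF^{-1}(x_{j_3})\conv\fF^{-1}(x_{j_4})\rangle$ in $L^2(\bA,d)$ (using orthonormality and the reality of the coefficients); Cauchy--Schwarz combined with (1) bounds it by $\min\{d(x_{j_1}),d(x_{j_2})\}\cdot\min\{d(x_{j_3}),d(x_{j_4})\}$, which is precisely the minimum of $d(x_j)d(x_{j'})$ over the four ``cross'' pairs $\{(j_1,j_3),(j_1,j_4),(j_2,j_3),(j_2,j_4)\}$. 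To also cover the two ``within-factor'' pairs $(j_1,j_2)$ and $(j_3,j_4)$, use the pointwise bound $N_{j_3,j_4}^s\le d(x_s)$ from (3) together with the Perron--Frobenius identity $\sum_s N_{j_1,j_2}^s d(x_s)=d(x_{j_1})d(x_{j_2})$ of Proposition~\ref{Prop: dim d} to get $\sum_s N_{j_1,j_2}^s N_{j_3,j_4}^s\le d(x_{j_1})d(x_{j_2})$, and symmetrically $\le d(x_{j_3})d(x_{j_4})$. Taking the minimum of the three bounds yields the minimum over all six pairs.

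The computations are routine once the dictionary between $\bA$, its minimal projections, and the fusion coefficients is in place; the only points needing a little care are selecting the correct conjugate exponents in the Young inequality for (2) and, for (4), noticing that Cauchy--Schwarz alone misses the two ``within-factor'' pairs, so the elementary estimate through Proposition~\ref{Prop: dim d} must be added. (An alternative for (4) would be to rewrite the sum via Frobenius reciprocity, Equation~\eqref{Equ: rotation}, so that the remaining pairings also appear as genuine inner products of convolutions, but the two-step argument above is shorter.)
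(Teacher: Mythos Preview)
Your argument is correct. Parts (1)--(3) follow the paper's proof essentially verbatim: compute the $t$-norms of the basis elements in $\bA$ and plug into the appropriate case of Young's inequality (the paper quotes Theorem~\ref{thm:young} uniformly, you quote the special cases Propositions~\ref{prop:young3} and~\ref{prop:young4}, which amounts to the same thing). For (3) you extract $N_{j,k}^\ell\le\min\{d(x_j),d(x_k)\}$ from (1), whereas the paper gets it by cycling indices via Frobenius reciprocity (Equation~\eqref{Equ: rotation}) starting from the single bound $N_{j,k}^\ell\le d(x_\ell)$; both are fine.

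For (4) the approaches genuinely diverge. The paper bounds the \emph{triple} convolution
\[
\|\fF^{-1}(x_j)*\fF^{-1}(x_k)*\fF^{-1}(x_\ell)\|_{\infty,\bA}\le\|\fF^{-1}(x_j)\|_{1,\bA}\|\fF^{-1}(x_k)\|_{2,\bA}\|\fF^{-1}(x_\ell)\|_{2,\bA},
\]
reads off $\sum_s N_{j,k}^s N_{s,\ell}^t\le d(x_t)d(x_j)$, converts $N_{s,\ell}^t$ to $N_{\ell,t}^s$ by Frobenius reciprocity, and then argues that the four indices may be permuted. Your route---Cauchy--Schwarz plus (1) for the four cross pairs, together with the pointwise bound $N_{j_3,j_4}^s\le d(x_s)$ and the dimension identity of Proposition~\ref{Prop: dim d} for the two within-factor pairs---is more elementary: it avoids both the triple convolution and the Frobenius-reciprocity bookkeeping, and it makes explicit which pair of indices is responsible for each of the six bounds. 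The paper's method, on the other hand, fits all of (1)--(4) into a single Young-inequality template, which is aesthetically uniform even if the index manipulation at the end is a bit opaque.
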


\begin{proof}
Let $(\bA, \bB, \fF, d, \tau)$ be the fusion bialgebra arising from the fusion ring $\fA$.
By Theorem~\ref{thm:young}, we have for any $1/r+1=1/p+1/q$,
$$\left\|\sum_{\ell=1}^m N_{j,k}^\ell \fF^{-1}( x_\ell)\right\|_{r,\bA}=\|\fF^{-1}(x_j)*\fF^{-1}( x_k)\|_{r, \bA}\leq \|\fF^{-1}(x_j)\|_{p, \bA}\|\fF^{-1}(x_k)\|_{q, \bA}.$$
If $r<\infty$, then we obtain that
\begin{equation}\label{eq:young81}
\left(\sum_{\ell=1}^m d(x_\ell)^{2-r}\left( N_{j,k}^\ell \right)^r\right)^{1/r}\leq d(x_j)^{\frac{2-p}{p}}d(x_k)^{\frac{2-q}{q}}.
\end{equation}
If $r=\infty$, then we have
\begin{equation}\label{eq:young82}
\max_{1\leq \ell \leq m}\frac{N_{j,k}^\ell}{d(x_\ell)}\leq  d(x_j)^{\frac{2-p}{p}}d(x_k)^{\frac{2-q}{q}}.
\end{equation}
In Inequality (\ref{eq:young81}), let $r=2$, $p=1$, $q=2$, we have $\sum_{\ell=1}^m \left(N_{j,k}^\ell\right)^2\leq d(x_j)^2$; let $r=2$, $p=2$, $q=1$, we have $\sum_{\ell=1}^m \left(N_{j,k}^\ell\right)^2\leq d(x_k)^2$.
Hence
$$\sum_{\ell=1}^m \left(N_{j,k}^\ell\right)^2\leq \min\{ d(x_j)^2, d(x_k)^2\}.$$
This proves $(1)$.

In Inequality (\ref{eq:young82}), let $p=t$ and $q=\frac{t}{t-1}$ for any $t\geq 1$.
Then
$$N_{j,k}^\ell\leq d(x_\ell) d(x_j)^{\frac{2-t}{t}}d(x_k)^{\frac{t-2}{t}}.$$
This shows $(2)$ is true.

Take $p=q=2$ in Inequality (\ref{eq:young82}), we have $N_{j,k}^\ell\leq d(x_\ell)$.
By Equation (\ref{Equ: rotation}), we have
$$N_{j,k}^\ell\leq \min\{d(x_j), d(x_k), d(x_\ell)\}.$$
This indicates that $(4)$ is true.

By Theorem~\ref{thm:young} again, we have
\begin{align*}
\left\|\sum_{t=1}^m\sum_{s=1}^m N_{j,k}^s N_{s,\ell}^t \fF^{-1}(x_t)\right\|_{\infty, \bA}
&=\|\fF^{-1}(x_j)*\fF^{-1}(x_k)*\fF^{-1}(x_\ell)\|_{\infty, \bA}\\
&\leq \|\fF^{-1}(x_j)\|_{1, \bA} \|\fF^{-1}(x_k)\|_{2, \bA}\|\fF^{-1}(x_\ell)\|_{2, \bA}.
\end{align*}
Then
$$\frac{\sum_{s=1}^m N_{j,k}^s N_{s,\ell}^t}{d(x_t)}\leq d(x_j).$$
We have $\sum_{s=1}^m N_{j,k}^s N_{s,\ell}^t\leq d(x_t) d(x_j)$.
By Equation (\ref{Equ: rotation}), we have
$$\sum_{s=1}^m N_{j,k}^s N_{\ell, t}^s\leq d(x_t) d(x_j).$$
Note that $j,k,\ell, t$ can be interchanged, we see $(5)$ is true.
\end{proof}

\begin{proposition}
Let $\fA$ be a fusion ring.
Suppose that the fusion bialgebra arising from $\fA$ is self-dual.
Let $S$ be the $S$-matrix associated to $\fA$.
Then we have the following inequalities:
\begin{enumerate}[(1)]
\item $\sum_{j=1}^md(x_j)^{\frac{t-2}{t-1}}|S_{kj}|^{\frac{t}{t-1}}\leq d(x_k)^{\frac{2-t}{t-1}}d(\fA)^{\frac{t-2}{2t-2}},$ $1<t\leq 2$;
\item $d(x_j)^{t-2}d(\fA)^{1-t/2}\leq \sum_{k=1}^m|S_{jk}|^td(x_k)^{2-t},$ $1<t\leq 2$;
\item $|S_{jk}|\leq d(x_j)d(x_k)d(\fA)^{-1/2}$,
\end{enumerate}
where $d(\fA)$ is the Frobenius-Perron dimension of $\fA$.
\end{proposition}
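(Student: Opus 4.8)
\emph{Proof proposal.} The plan is to read (1)--(3) as the Hausdorff--Young inequality, its reverse companion, and the $L^1$--$L^\infty$ endpoint bound for the Fourier transform carried by the self-dual structure, with the $S$-matrix in the role of the Fourier kernel. By Proposition~\ref{Prop: S unitary} the map $\fF\Phi_\bB$ (equivalently $\Phi_\bA\tfF$ in the symmetric case) is a unitary on $L^2(\bB,\tau)$ whose matrix in the orthonormal basis $B=\{x_1,\dots,x_m\}$ is $S$; so the proposition reduces to evaluating the $L^p$--$L^q$ mapping bounds already proved for $\fF$ on the distinguished basis vectors, whose norms are explicit powers of the dimensions $d(x_j)$.

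The first step is to record the $L^p$-geometry that produces the right-hand sides. On the $\bB$-side, $\|x_k\|_{2,\bB}=1$ by orthonormality and $\|x_k\|_{\infty,\bB}=\|L_k\|_\infty=d(x_k)$ by the definition of $d$. On the $\bA$-side, $\fF^{-1}(x_k)=d(x_k)^{-1}P_k$, where $P_k:=d(x_k)\fF^{-1}(x_k)$ is a minimal projection of the commutative algebra $\bA$; Plancherel (Proposition~\ref{prop:planch}) forces $d(P_k)=d(x_k)^2$, whence $\|\fF^{-1}(x_k)\|_{p,\bA}=d(x_k)^{2/p-1}$. Applying $\fF^{-1}$ to $\fF\Phi_\bB(x_k)=\sum_j S_k^j x_j$ gives $\Phi_\bB(x_k)=\sum_j S_k^j\,d(x_j)^{-1}P_j$, which is diagonal in the minimal projections of $\bA$, so that
\[
\|\Phi_\bB(x_k)\|_{p,\bA}^{\,p}=\sum_{j=1}^m |S_k^j|^p\, d(x_j)^{2-p},\qquad
\|\Phi_\bB(x_k)\|_{\infty,\bA}=\max_{1\le j\le m}\frac{|S_k^j|}{d(x_j)} .
\]
Because the self-duality matches the traces $d$ and $\tau$ only up to a gauge power of $\mu=d(\fA)$ (the Fourier dual of a canonical fusion bialgebra is not canonical), the left-hand sides equal the corresponding $L^p(\bB,\tau)$-norms of $x_k$ times the appropriate power of $\mu$.

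Granting these identities, the three estimates come out by pairing the two sides through $\fF$. Inequality (3) is the $p=\infty$ endpoint: combining the displayed formula for $\|\Phi_\bB(x_k)\|_{\infty,\bA}$ with $\|x_k\|_{\infty,\bB}=d(x_k)$ and the $\mu^{-1/2}$ gauge factor at the endpoint yields $|S_k^j|\le d(x_j)d(x_k)\,d(\fA)^{-1/2}$. For (1), I would feed the Hausdorff--Young inequality (Theorem~\ref{thm:hy}) applied to $\fF$ on $\fF^{-1}(x_k)$, namely $\|x_k\|_{q,\bB}\le\|\fF^{-1}(x_k)\|_{q/(q-1),\bA}=d(x_k)^{(q-2)/q}$, into the sum formula together with the gauge factor, then relabel $q=t/(t-1)$ so that $2\le q<\infty$ becomes $1<t\le2$. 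Inequality (2) is the matching lower bound, obtained the same way from the reverse estimate $K(1/p,1/q)^{-1}\|x\|_{p}\le\|\tfF(x)\|_{q}$ of Theorem~\ref{thm:normfourier} on the conjugate line (where $K=1$). Throughout, the degenerate group case --- where $d(x_j)\equiv1$, $S$ is a normalized bicharacter, and (1)--(3) are all equalities --- is a convenient consistency check.

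The part I expect to be delicate is not conceptual but arithmetic: tracking the gauge powers of $\mu=d(\fA)$. The factors $d(\fA)^{(t-2)/(2t-2)}$, $d(\fA)^{1-t/2}$ and $d(\fA)^{-1/2}$ in (1)--(3) are exactly these gauge powers (combined, where relevant, with the constants $K$ of Figure~\ref{Fig: norm}), and getting their exponents to come out right requires being careful about which side of $\fF$ each $L^p$-norm lives on, about which of the regions $R_F$, $R_T$, $R_{TF}$ the exponent pair $(1/p,1/q)$ lies in, and about the precise normalization of the maps $\Phi_\bA,\Phi_\bB$ implementing the self-duality.
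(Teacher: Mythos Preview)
Your approach is the paper's: the proof there is the single line ``It follows from the Hausdorff--Young inequalities,'' and your expansion---computing the $L^p$-norms of the basis vectors on each side and reading off the $S$-entries as the Fourier kernel of $\fF\Phi_\bB$---is exactly the intended unpacking.

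One caution on your treatment of (3): a $*$-isomorphism preserves the operator norm \emph{exactly}, so pairing $\|\Phi_\bB(x_k)\|_{\infty,\bA}$ with $\|x_k\|_{\infty,\bB}=d(x_k)$ gives only $|S_k^{\,j}|\le d(x_j)d(x_k)$, with no room for a ``$\mu^{-1/2}$ gauge factor at the endpoint.'' The factor $\mu^{-1/2}$ is not a correction to an $L^\infty$--$L^\infty$ comparison but is built into the normalization of $S$ in the self-dual gauge (where $d'(1_\bA)=\tau'(1_\bB)=\mu^{1/2}$, forcing $S_1^{\,k}=d(x_k)\mu^{-1/2}$); equivalently, (3) comes from the $L^1\!\to\!L^\infty$ endpoint $\|\tfF(y)\|_{\infty,\bA}\le\|y\|_{1,\bB}$ applied after passing to that gauge, not from an $L^\infty\!\to\!L^\infty$ bound. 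This is precisely the bookkeeping you flag as delicate in your last paragraph, so your outline is sound---just be aware that for (3) the exponent of $\mu$ is carried by the normalization of $S$, not by the mapping estimate.
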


\begin{proof}
It follows from the Hausdorff-Young inequalities.
\end{proof}

\subsection{Schur Product Property Reformulated}
In this subsection we reformulate Schur product property (on the dual) using the irreducible complex representation of the fusion algebra, which in the commutative case, becomes a purely combinatorial property of the character table.

Note that Proposition~\ref{prop:ufusionschur} states that if the fusion ring $\fA$ is the Grothendieck ring of unitary fusion category, then Schur product property holds on the dual of $\fA$, so it can be seen as a criterion for unitary categorification.


 \begin{proposition}[Non-Commutative Schur Product Criterion] \label{SchurRef}
The Schur product property holds on the dual of a fusion ring/algebra $\mathfrak{A}$ with basis $\{x_1 = 1, \dots, x_r \}$ if and only if for all triple of irreducible unital *-representations $(\pi_s,V_s)_{s=1,2,3}$ of the fusion ring/algebra $\mathfrak{A}$ over $\mathbb{C}$, and for all $v_s \in V_s$, we have
\begin{align}\label{Equ: Schur Criterion of Irreps}
\sum_i \frac{1}{d(x_i)} \prod_{s=1}^3 \left(v_s^* \pi_s(x_i) v_s\right) &\ge 0.
\end{align}
 \end{proposition}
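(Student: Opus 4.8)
The plan is to reduce the Schur product property on the dual to a positivity statement that is tested one irreducible representation at a time, and for this I first need the dual convolution explicitly. Unwinding $\tfF=\#\,\fF^{-1}\,*$ on the basis: on $\bB$ the involution sends $x_j\mapsto x_{j^*}$, $\fF^{-1}$ is the identity on symbols, and $\#$ is trivial on $\bA$, so $\tfF(x_j)=x_{j^*}$ in $\bA$ and $\tfF^{-1}(x_\ell)=x_{\ell^*}$. Combined with $x_{j^*}\diamond x_{k^*}=\delta_{j,k}d(x_j)^{-1}x_{j^*}$ this gives the dual convolution $x_j\conv_\bB x_k=\delta_{j,k}d(x_j)^{-1}x_j$, so $\bigl(\sum_j\lambda_jx_j\bigr)\conv_\bB\bigl(\sum_j\mu_jx_j\bigr)=\sum_j d(x_j)^{-1}\lambda_j\mu_j\,x_j$. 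Since the irreducible unital $*$-representations of $\mathfrak A$ over $\mathbb{C}$ are exactly the irreducible representations of the finite-dimensional $C^*$-algebra $\bB$, the Schur product property on the dual is equivalent to the assertion that $\sum_j d(x_j)^{-1}\lambda_j\mu_j\,(v^*\pi(x_j)v)\ge 0$ for every positive $x=\sum_j\lambda_jx_j$ and $y=\sum_j\mu_jx_j$ in $\bB$, every irreducible $(\pi,V)$, and every $v\in V$.

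The key input is a representation-theoretic description of the positive cone $\bB_+$. Since $\tau$ is a faithful trace on $\bB\cong\bigoplus_s\mathrm{End}(V_s)$, we can write $\tau=\sum_s\tau_s\,\mathrm{Tr}_{V_s}\circ\,\pi_s$ with all $\tau_s>0$; hence $T\colon\bB\to\bB^*$, $T(x)=\tau(x\,\cdot\,)$, is a linear isomorphism carrying $\bB_+$ onto the cone of positive linear functionals. The latter is spanned by the functionals $a\mapsto v^*\pi_s(a)v$, so $\bB_+$ is exactly the set of finite sums of elements $\xi_{\pi,v}:=T^{-1}\bigl(v^*\pi(\,\cdot\,)v\bigr)=\sum_j\bigl(v^*\pi(x_{j^*})v\bigr)x_j$, whose $x_j$-coefficient is $v^*\pi(x_{j^*})v=\overline{v^*\pi(x_j)v}$. (Alternatively, one checks directly that $\tau(\xi_{\pi,v}\,a^*a)=\|\pi(a)v\|^2\ge0$ from the Frobenius-reciprocity identity $\sum_jN_{\ell,j}^{k}x_{j^*}=x_{k^*}x_\ell$.) Finally, the contragredient $x_j\mapsto x_{j^*}$ is a $*$-anti-automorphism of $\bB$, so each irreducible $\pi$ has a conjugate irreducible $\overline\pi$ (namely $x_j\mapsto\pi(x_{j^*})^{T}$) and, for $v\in V_\pi$, a vector $\overline v$ with $\overline v^{\,*}\,\overline\pi(x_j)\,\overline v=\overline{v^*\pi(x_j)v}$ for all $j$; moreover $(\pi,v)\mapsto(\overline\pi,\overline v)$ is an involution on such pairs.

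With these in place the forward implication is immediate: given irreducible $\pi_1,\pi_2,\pi_3$ and vectors $v_s\in V_s$, apply the Schur property to $x=\xi_{\pi_1,v_1}\ge 0$ and $y=\xi_{\pi_2,v_2}\ge 0$ and evaluate $x\conv_\bB y\ge 0$ in the representation $(\overline{\pi_3},\overline{v_3})$; using the coefficient formula above, the inequality obtained is the complex conjugate of $\sum_j d(x_j)^{-1}\prod_{s=1}^3(v_s^*\pi_s(x_j)v_s)\ge 0$, which therefore holds.

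For the converse, write $x,y\in\bB_+$ as $x=\sum_a\xi_{\pi^{(a)},v_a}$ and $y=\sum_b\xi_{\rho^{(b)},w_b}$. Then for any irreducible $(\pi_3,v_3)$,
\[
v_3^*\pi_3(x\conv_\bB y)v_3=\sum_{a,b}\sum_j\frac{1}{d(x_j)}\,\overline{v_a^*\pi^{(a)}(x_j)v_a}\;\overline{w_b^*\rho^{(b)}(x_j)w_b}\;\bigl(v_3^*\pi_3(x_j)v_3\bigr),
\]
and replacing $\pi^{(a)},v_a$ by $\overline{\pi^{(a)}},\overline{v_a}$ and $\rho^{(b)},w_b$ by $\overline{\rho^{(b)}},\overline{w_b}$ exhibits each $(a,b)$-summand as an instance of the hypothesised inequality for the irreducible triple $(\overline{\pi^{(a)}},\overline{\rho^{(b)}},\pi_3)$, hence $\ge 0$; thus $v_3^*\pi_3(x\conv_\bB y)v_3\ge 0$ for all $(\pi_3,v_3)$, i.e.\ $x\conv_\bB y\ge 0$, so the Schur product property holds on the dual. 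I expect the main obstacle to be the parametrisation of $\bB_+$ in the second paragraph: either the abstract identification with $T^{-1}$ of the positive functionals (where faithfulness of $\tau$ is essential) or, if one prefers the concrete route, the fusion-coefficient computation $\tau(\xi_{\pi,v}\,a^*a)=\|\pi(a)v\|^2$. The conjugate-representation bookkeeping is routine but is the other place where care with transposes and complex conjugates is needed.
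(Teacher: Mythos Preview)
Your argument is correct, but it takes a genuinely different route from the paper's. The paper invokes Proposition~\ref{SchurProp} (already proved) to rewrite the Schur product property on $\bB$ as
\[
\sum_i \frac{1}{d(x_i)}\prod_{s=1}^3\bigl(u_s^* M_i u_s\bigr)\ge 0\qquad\text{for all }u_s\in\bC^r,
\]
where $M_i$ is the fusion matrix of $x_i$; this is a single instance of the claimed inequality for the left-regular representation $x_i\mapsto M_i$. Since that representation is faithful, it contains every irreducible, so the equivalence with irreducible triples follows by decomposing each $u_s$ block by block. The whole proof is three lines once Proposition~\ref{SchurProp} is in hand.

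Your approach bypasses Proposition~\ref{SchurProp} entirely: you compute $\conv_\bB$ on the basis, identify $\bB_+$ with $T^{-1}$ of the positive-functional cone (and hence as sums of the $\xi_{\pi,v}$), and then test positivity of $x\conv_\bB y$ irreducible by irreducible, using conjugate representations to align the complex conjugates. This is self-contained and makes the representation-theoretic content explicit, at the price of more bookkeeping (the $T$-map, the contragredient, the pairing $(\pi,v)\leftrightarrow(\overline{\pi},\overline{v})$). The paper's route is shorter because it recycles the earlier characterisation $\fF(J(X)*X)=|\fF(X)|^2$ of positive elements in $\bB$, which is really the same parametrisation of $\bB_+$ you obtain, only phrased via the regular representation rather than via arbitrary irreducibles.
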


\begin{proof}
Let $\mathfrak{A}$ be a fusion ring/algebra with basis $\{x_1 = 1, \dots, x_r \}$ and $(\bA, \bB, \fF, d, \tau)$ the fusion bialgebra arising from $\fA$.
By Proposition~\ref{SchurProp} and the fact that $d$ is multiplicative, the Schur product property holds on $\bB$ if and only if
$$ d\left(\prod_{s=1}^3 X_s *_\bA X^{*}_s\right) \ge 0 $$
for all $X_s \in \bA$.
Now, $X_s = \sum_i \alpha_{s,i}x_i$, so it is equivalent to
$$ \sum_i \frac{1}{d(x_i)} \prod_{s=1}^3 \left(\sum_{j,k} \alpha_{s,k} \overline{\alpha}_{s,j^{*}} N_{k,j^{*}}^i\right) \ge 0 $$
for all $\alpha_{s,i} \in \mathbb{C}$. Now let $M_i$ be the matrix $(N_{k,j^{*}}^i)$ which is also $(N_{i,j}^k)$ by Frobenius reciprocity, so that $M_i$ is the fusion matrix of $x_i$. Let $u_s$ be the vector $(\alpha_{s,i})$.  Then $$\sum_{j,k} \alpha_{s,k} \overline{\alpha}_{s,j^{*}} N_{k,j^{*}}^i  =  u_s^*M_iu_s.$$
Then the criterion is equivalent to have
 \begin{align}\label{Equ: Schur Criterion of Irreps 2}
 \sum_i \frac{1}{d(x_i)} \prod_{s=1}^3 (u_s^* M_i u_s) &\ge 0.
 \end{align}
for all $u_s \in \mathbb{C}^r$.
Recall that the map $\pi: x_i \to M_i$ is a unital *-representation of $\mathfrak{A}$.
So Equation \eqref{Equ: Schur Criterion of Irreps} implies Equation \eqref{Equ: Schur Criterion of Irreps 2}.
On the other hand, $\pi$ is faithful, so Equation \eqref{Equ: Schur Criterion of Irreps 2} implies Equation \eqref{Equ: Schur Criterion of Irreps}. \end{proof}

Assume that the fusion ring/algebra $\mathfrak{A}$ is commutative, then for all $i$, $x_i x_{i^*} = x_{i^*} x_i$, so that the fusion matrices $M_i$ are normal (so diagonalizable) and commuting, so they are simultaneously diagonalizable, i.e. there is an invertible matrix $P$ such that $P^{-1}M_iP = diag(\lambda_{i,1}, \dots, \lambda_{i,r})$, so that the maps $\pi_j: M_i \mapsto \lambda_{i,j}$ completely characterize the irreducible complex representations $\pi_j$ of $\mathfrak{A}$.  We can assume that $\pi_1=d$, so that $\lambda_{i,1} = d(x_i) = \Vert M_i \Vert$.
 \begin{definition} The matrix $\Lambda(\mathfrak{A}):=(\lambda_{i,j})$ is called the \emph{character table} of the commutative fusion ring $\mathfrak{A}$.
 \end{definition}
Note that for a finite group $G$, if $\mathfrak{A}_G$ is the Grothendieck ring of the unitary fusion category $\mathrm{Rep}(G)$, then $\Lambda(\mathfrak{A}_G)$ is the usual character table of $G$.
 \begin{corollary}[Commutative Schur Product Criterion]  \label{SchurCom}
 The Schur product property holds on the dual of a commutative fusion ring/algebra $\mathfrak{A}$ with character table $\Lambda=(\lambda_{i,j})$ if and only if for all triple $(j_1,j_2,j_3)$ $$\sum_i \frac{\lambda_{i,j_1}\lambda_{i,j_2}\lambda_{i,j_3}}{\lambda_{i,1}} \ge 0.$$
 \end{corollary}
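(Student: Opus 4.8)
The plan is to obtain this corollary as an immediate specialization of the Non-Commutative Schur Product Criterion (Proposition~\ref{SchurRef}) to the commutative case, using the identification of the irreducible $*$-representations of a commutative fusion algebra with the columns of its character table.

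First I would record the structural input. Since $\mathfrak{A}$ is commutative, the fusion algebra $\mathbb{C}\otimes_{\bZ}\mathfrak{A}$ is a commutative finite-dimensional C$^*$-algebra for the involution $x_i\mapsto x_{i^*}$, hence isomorphic to $\mathbb{C}^r$. Consequently every irreducible unital $*$-representation is one-dimensional, there are exactly $r$ of them, and—via the simultaneous diagonalization $P^{-1}M_iP=\mathrm{diag}(\lambda_{i,1},\dots,\lambda_{i,r})$ discussed just above—they are precisely the characters $\pi_j\colon x_i\mapsto\lambda_{i,j}$, $j=1,\dots,r$. I would also note the two small facts needed to feed these legitimately into Proposition~\ref{SchurRef}: each $\pi_j$ is unital because $M_1=I$ forces $\lambda_{1,j}=1$, and each $\pi_j$ is a $*$-representation because the $M_i$ are commuting normal matrices, so the diagonalizing $P$ may be taken unitary, whence $\lambda_{i^*,j}=\overline{\lambda_{i,j}}$. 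Finally recall the normalization $\pi_1=d$, so that $\lambda_{i,1}=d(x_i)=\Vert M_i\Vert>0$ and dividing by $\lambda_{i,1}$ is harmless.

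Then I would simply substitute. Given a triple $(\pi_{j_1},\pi_{j_2},\pi_{j_3})$ of these one-dimensional representations, a vector $v_s\in V_s\cong\mathbb{C}$ is a scalar and $v_s^*\pi_s(x_i)v_s=|v_s|^2\lambda_{i,j_s}$, so the left-hand side of Inequality~\eqref{Equ: Schur Criterion of Irreps} becomes
$$\left(\prod_{s=1}^3 |v_s|^2\right)\sum_i \frac{\lambda_{i,j_1}\lambda_{i,j_2}\lambda_{i,j_3}}{\lambda_{i,1}}.$$
Since $\prod_{s=1}^3 |v_s|^2$ ranges over all of $\bRp$ (and may be taken equal to $1$), for each fixed triple $(j_1,j_2,j_3)$ the condition ``\eqref{Equ: Schur Criterion of Irreps} holds for all $v_s$'' is equivalent to $\sum_i \lambda_{i,j_1}\lambda_{i,j_2}\lambda_{i,j_3}/\lambda_{i,1}\ge 0$. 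Quantifying over all triples and invoking Proposition~\ref{SchurRef} yields the stated equivalence.

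The only genuine subtlety—and the step I would spell out most carefully—is the identification of the irreducible unital $*$-representations of $\mathfrak{A}$ with the columns of the character table, including the checks that these characters are $*$-preserving and unital; once that is in place, the corollary is a one-line substitution into Proposition~\ref{SchurRef}.
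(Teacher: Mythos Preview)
Your proof is correct and follows exactly the same approach as the paper's: specialize Proposition~\ref{SchurRef} to one-dimensional representations and factor out the nonnegative scalar $\prod_s |v_s|^2$. The paper's proof is a one-liner that omits the routine verifications (unitality, $*$-preservation, identification with the character table columns) that you spell out carefully.
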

 \begin{proof}
 Immediate from Proposition~\ref{SchurRef}, because here the irreducible representations are one-dimensional, so that there we have $v_s^* \pi_s(M_i) v_s = \Vert v_s \Vert^2 \pi_s(M_i)$.
 \end{proof}
\begin{remark}
The formula in \cite[Theorem 7.2.1]{Ser} looks similar to the one in Corollary~\ref{SchurCom} (applied to the group case), because the underlying planar diagrams are the same.
However, the results are different as the underlying planar algebras are so: the subfactor planar algebra of a finite group $G$ for the first formula, and for the second one, the dual of the quantum double of $Vec(G)$, namely the Drinfeld center of $Vec(G)$.
\end{remark}

In order to test the efficiency of Schur product criterion, we wrote a code computing the character table of a commutative fusion ring/algebra and checking whether Schur product property holds (on the dual) using Corollary~\ref{SchurCom}. The next two subsections presents the first results.

\subsection{Fusion Algebras of Small Rank} \label{Sec:SmallRank}
Ostrik\cite{Ost15} already classified the pivotal fusion category of rank $3$. In this section we would like to show how efficient is Schur product criterion in this case. We will next consider two families of rank $4$ fusion rings/algebras found by David Penneys and his collaborators\footnote{at the 2014 AMS MRC on Mathematics of Quantum Phases of Matter and Quantum Information.}\cite{Pen19}, and finally look to a family of rank $5$ fusion rings/algebras.

Recall \cite[Proposition 3.1]{Ost15} that a fusion ring $\mathfrak{A}$ of rank $3$ and basis $\{x_1 = 1, x_2, x_3 \}$ satisfies either $x_2^*=x_3$ and then is $\mathbb{C}C_3$, or $x_i^* = x_i$ and then is of the following form (extended to fusion algebras):
$$\left\{
\begin{array}{ll}
x_2x_2 = x_1 + px_2+mx_3, \\
x_2x_3 = mx_2+nx_3, \\
x_3x_3 = x_1 + nx_2+qx_3,
\end{array}
\right.$$
with $m,n,p,q \in \mathbb{R}_{\ge 0}$ and $m^2+n^2=1+mq+np$ (given by associativity). Note that $x_3x_2=x_2x_3$ by Frobenius reciprocity, so that the fusion algebra is commutative. We can assume (up to equivalence) that $m \le n$, and then $n > 0$ (because if $n=0$ then $m=0$ and the above associativity relation becomes $0=1$, contradiction), so that $p=(m^2+n^2-1-mq)/n$; and it is a fusion ring if and only if in addition $m,n,p,q \in \mathbb{Z}_{\ge 0}$ and $n$ divides $(m^2-1-mq)$. Recall \cite[Section 4.5]{Ost15} that it admits a pivotal categorification if and only if $(m,n,q) = (0, 1, 0), (0, 1, 1), (0, 1, 2),(1, 1, 1)$, and there is a unitary model for all of them.

Let $M_i$ be the fusion matrix of $x_i$, written below (with $p=(m^2+n^2-1-mr)/n$):
$$ \left(\begin{matrix}1&0&0  \\ 0&1&0  \\ 0&0&1\end{matrix} \right), \ \left(\begin{matrix}0&1&0  \\ 1&p&m  \\ 0&m&n \end{matrix} \right), \ \left(\begin{matrix}0&0&1  \\ 0&m&n  \\ 1&n&q \end{matrix} \right) $$
Let $\chi_i$ be the characteristic polynomial of $M_i$:
$$\chi_2(x) = x^3-(p+n)x^2+(pn-1-m^2)x+n$$
$$\chi_3(x) = x^3-(q+m)x^2+(qm-1-n^2)x+m$$
The matrix $M_i$ is self-adjoint thus its eigenvalues (and so the roots of $\chi_i$) are real. By using \cite[Theorem A.4]{Jan10}, we can deduce the following character table:
$$  \left[ \begin{matrix}
1&1&1 \\
\frac{b_2}{3}+2r_2c_2 & \frac{b_2}{3}-r_2(c_2-\sqrt{3}s_2)  & \frac{b_2}{3}-r_2(c_2+\sqrt{3}s_2) \\
\frac{b_3}{3}+2r_3c_3 & \frac{b_3}{3}-r_3(c_3+\sqrt{3}s_3) & \frac{b_3}{3}-r_3(c_3-\sqrt{3}s_3)
\end{matrix} \right]
$$
with $c_i = \cos(\frac{\varphi_i}{3})$, $s_i = \sin(\frac{\varphi_i}{3})$,  $\varphi_i= \arccos\left(\frac{q_i/2}{(p_i/3)^{3/2}}\right)$, $r_i = \sqrt{\frac{p_i}{3}}$, $p_i= \frac{b_i^2}{3}-a_i$, $q_i= \frac{2b_i^3}{27}-\frac{b_ia_i}{3}-d_i$, $a_2=pn-1-m^2$, $b_2=p+n$, $d_2=n$, $a_3=qm-1-n^2$, 	$b_3=q+m$ and $d_3=m$.

We observe that about $30\%$ of over 10000 samples can be ruled out by Schur's criterion\footnote{It is nontrivial to characterize the set of all the triples $(m,n,q)$ for which Schur product property (on the dual) does not hold. Using the above character table together with Theorem~\ref{SchurCom} and computer assistance, for $q,n,m \in \mathbb{Z}$, $0 \le q \le 30$, $1 \le n \le 30$ and $0 \le m \le n$, there are exactly $14509$ fusion bialgebras (resp. $542$ fusion rings), and among them, $4757$ (resp. $198$) ones can be ruled out from subfactorization (resp. unitary categorification) by Schur product criterion.}. Note that Ostrik used the inequality in \cite[Theorem 2.21]{Ost15} to rule out some fusion rings. See Figure~\ref{fig:r3} to visualize the efficienty of Schur product criterion and Ostrik's criterion for this family. Note that Ostrik's criterion works for the fusion rings only (not algebras\footnote{Consider the $(\delta_1,\delta_2)$-Bisch-Jones subfactor, its 2-box space provides a fusion algebra in this family with $$(m,n,q)=(0,(\delta_2^2-1)^{\frac{1}{2}},\delta_2(\delta_1^2-2)(\delta_1^2-1)^{-\frac{1}{2}}),$$ which is often in the colored area of the figure for Ostrik's criterion, for example if $(\delta_1,\delta_2)=(\sqrt{2},10/3)$ then $(m,n,q) = (0,\sqrt{91}/3,5)$. Let us also mention here that for a fusion ring, \emph{subfactorizable} is strictly weaker than \emph{unitarily categorifiable}, because if $(\delta_1^2,\delta_2^2)=(6+2\sqrt{6},2)$ then $(m,n,q)=(0,1,4)$, which is ruled out from pivotal categorification by Ostrik's paper \cite{Ost15}.}) and is no more efficient for higher ranks, whereas Schur product criterion does (see Subsection~\ref{sub:simple}).

\begin{figure}
  \includegraphics[scale=0.4]{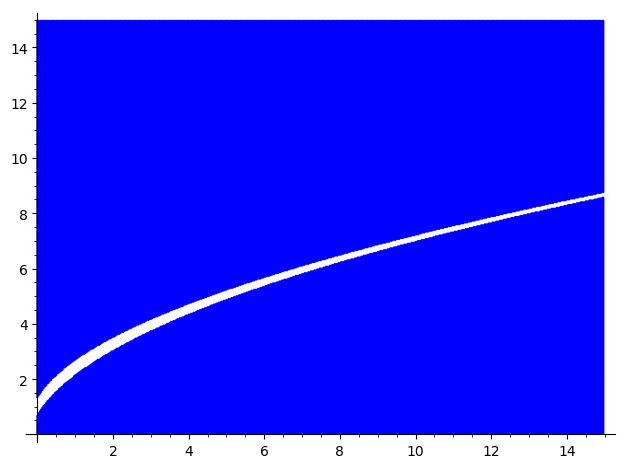}   \hspace{1cm}  \includegraphics[scale=0.4]{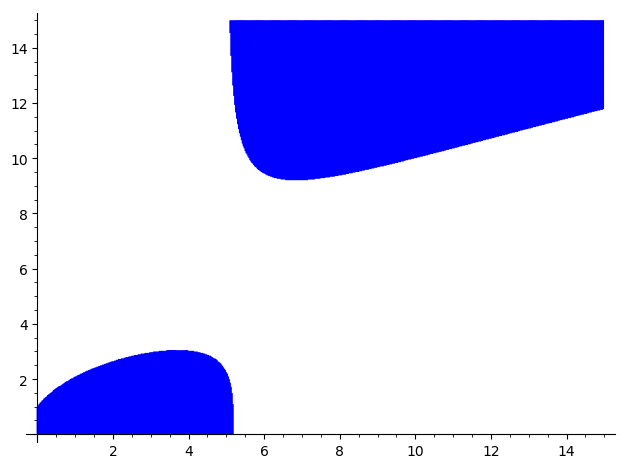}
  \caption{Rank 3: for $q=5$, the set of $(m,n)$ such that Schur product property on the dual (resp. Ostrik's inequality) does not hold is (numerically) given by the right (resp. left) figure (where, for clarity, neither $m \le n$ nor $m^2+n^2-1-mq \ge 0$ is assumed). About the right one, there are two areas, one (at the bottom) is finite, the other infinite; moreover, the projection of these two areas on the $m$-axis overlap around $m=q$. Each area corresponds to the application of Theorem~\ref{SchurCom} on one column. The form appears for all the samples of $q$ we tried, so it is not hard to believe that it is the generic form, and in particular that Schur product property (on the dual) does not hold if $q+1 \le m \le n$ and $n \ge 2q+2$, with $m,n,q \in \mathbb{R}_{\ge 0}$ (so that the corresponding fusion bialgebras admit no subfactorization); it should be provable using the given character table (we did not make the computation).}
  \label{fig:r3}
\end{figure}

Then, let us mention two families (denoted $K_3$ and $K_4$) of fusion algebras of rank $4$ with self-adjoint objects provided by David Penneys and his collaborators \cite{Pen19}. Visualize the obstructions on Figures~\ref{fig:K3} and~\ref{fig:K4}.

\newsavebox{\smlmat}
\savebox{\smlmat}{$\left(\begin{matrix} 1&0&0&0 \\ 0&1&0&0 \\ 0&0&1&0 \\ 0&0&0&1\end{matrix} \right), \left(\begin{matrix}0&1&0&0 \\ 1&d-b&b&0 \\ 0&b&d&1 \\ 0&0&1&0\end{matrix} \right), \left(\begin{matrix}0&0&1&0 \\ 0&b&d&1 \\ 1&d&d+b&1 \\ 0&1&1&0\end{matrix} \right), \left(\begin{matrix}0&0&0&1 \\ 0&0&1&0 \\ 0&1&1&0 \\ 1&0&0&1 \end{matrix} \right)$}

\begin{figure}
  \includegraphics[scale=0.4]{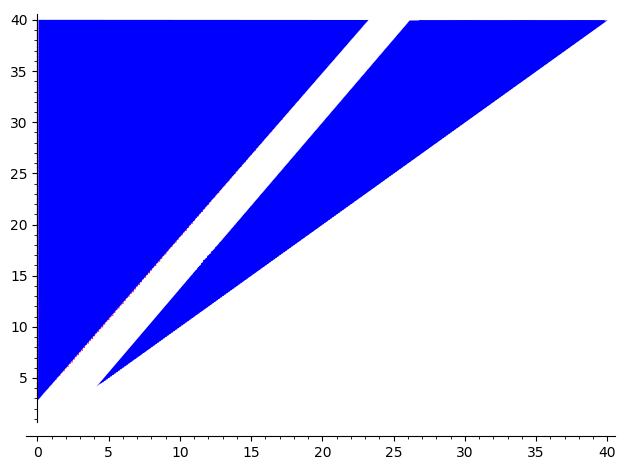} \hspace{1cm} \includegraphics[scale=0.4]{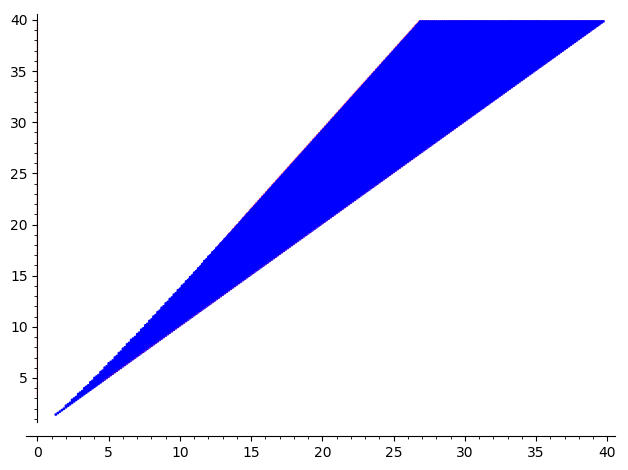}
  \caption{Rank 4, family of fusion algebras $K_3(b,d)$: \newline \usebox{\smlmat}  \newline Schur product property on the dual (resp. Ostrik's inequality) does not hold for $(b,d)$ in the right (resp. left) figure. The area $b>d$ is not considered because $d-b \ge 0$.}
  \label{fig:K3}
\end{figure}

\newsavebox{\smlmatt}
\savebox{\smlmatt}{$ \left(\begin{matrix} 1&0&0&0 \\ 0&1&0&0 \\ 0&0&1&0 \\ 0&0&0&1 \end{matrix} \right), \left(\begin{matrix}0&1&0&0 \\ 1&a&b&1 \\ 0&b&d&0 \\ 0&1&0&0 \end{matrix} \right), \left(\begin{matrix}0&0&1&0 \\ 0&b&d&0 \\ 1&d&g&1 \\ 0&0&1&0\end{matrix} \right), \left(\begin{matrix}0&0&0&1 \\ 0&1&0&0 \\ 0&0&1&0 \\ 1&0&0&0 \end{matrix} \right) $}

\begin{figure}
  \includegraphics[scale=0.4]{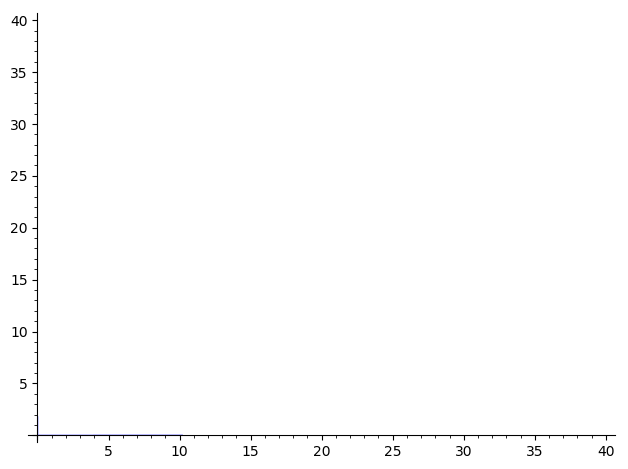} \hspace{1cm}  \includegraphics[scale=0.4]{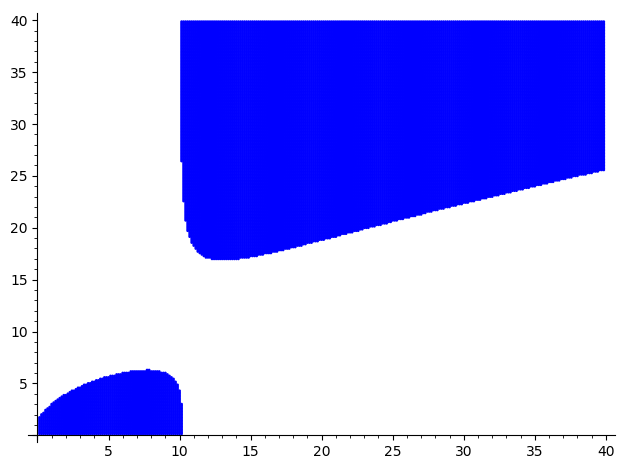}
  \caption{Rank 4, family $K_4(a,b,d,g)$ with $bd>0$ and $a=(b^2+d^2-2-bg)/d$: \newline \usebox{\smlmatt}  \newline  Same convention as above (to simplify, $a$ is not assumed non-negative) with $g=10$. It should be the generic shape for fixed $g$. Ostrik's inequality always holds, so the left figure is empty. About the figure for Schur product criterion on the right, the structure is similar to Figure~\ref{fig:r3}, one finite area on the bottom, one infinite area, and the projection of both on the $b$-axis should overlap around $b=g$.}
  \label{fig:K4}
\end{figure}

Finally, let us consider the family of fusion rings of rank $5$ with exactly three self-adjoint simple objects. By Frobenius reciprocity, the fusion rules must be as follows (with $16$ parameters):

$$  \left( \begin{matrix}1&0&0&0&0\\0&1&0&0&0\\0&0&1&0&0\\0&0&0&1&0\\0&0&0&0&1\end{matrix} \right),
 \left( \begin{matrix}0&1&0&0&0\\0&a&k&f&j\\1&a&a&b&c\\0&d&f&g&h\\0&e&j&i&l\end{matrix} \right)   ,
 \left( \begin{matrix}0&0&1&0&0\\1&a&a&d&e\\0&k&a&f&j\\0&f&b&g&i\\0&j&c&h&l\end{matrix} \right)   ,
 \left( \begin{matrix}0&0&0&1&0\\0&b&f&g&i\\0&f&d&g&h\\1&g&g&m&o\\0&i&h&o&p\end{matrix} \right)   ,
 \left( \begin{matrix}0&0&0&0&1\\0&c&j&h&l\\0&j&e&i&l\\0&h&i&o&p\\1&l&l&p&n\end{matrix} \right)  $$

\noindent such that $n_{i,j}^k \in \mathbb{Z}_{\ge 0}$, and $\sum_s n_{i,j}^sn_{s,k}^t = \sum_s n_{j,k}^sn_{i,s}^t$ (associativity). We found (up to equivalence) exactly $47$ ones at multiplicity $\le 4$ (by brute-force computation), $4$ of which are simple. The Schur product property on the dual (resp. Ostrik's inequality) does not hold on exactly $6$ (resp. $1$) among the $47$ ones, and on exactly $2$ (resp. $1$) among the $4$ simple ones. Schur product criterion may be more efficient at higher multiplicity. Here are the two simple ones on which the Schur product property on the dual (and Ostrik's inequality) holds (note that they are also of Frobenius type).

$$\left( \begin{matrix}1&0&0&0&0\\0&1&0&0&0\\0&0&1&0&0\\0&0&0&1&0\\0&0&0&0&1\end{matrix} \right),
\left( \begin{matrix}0&1&0&0&0\\0&0&1&1&0\\1&0&0&0&1\\0&0&1&0&1\\0&1&0&1&1\end{matrix} \right),
\left( \begin{matrix}0&0&1&0&0\\1&0&0&0&1\\0&1&0&1&0\\0&1&0&0&1\\0&0&1&1&1\end{matrix} \right),
\left( \begin{matrix}0&0&0&1&0\\0&0&1&0&1\\0&1&0&0&1\\1&0&0&1&1\\0&1&1&1&1\end{matrix} \right),
\left( \begin{matrix}0&0&0&0&1\\0&1&0&1&1\\0&0&1&1&1\\0&1&1&1&1\\1&1&1&1&2\end{matrix} \right)$$
Note that $(d(b_1),d(b_2),d(b_3),d(b_4),d(b_5))=(1,\alpha,\alpha,\beta,\gamma)$ with $\alpha=1+2\cos(2\pi/7)\simeq 2.2469$, $\beta=1-2\cos(6\pi/7)\simeq 2.8019$, $\gamma=\alpha+\beta-1 \simeq 4.0489$, so that $\FPdim \simeq 36.650$.

$$
\left( \begin{matrix}1&0&0&0&0\\0&1&0&0&0\\0&0&1&0&0\\0&0&0&1&0\\0&0&0&0&1\end{matrix} \right) ,
\left( \begin{matrix}0&1&0&0&0\\0&0&2&0&1\\1&0&0&2&0\\0&2&0&1&2\\0&0&1&2&2\end{matrix} \right) ,
\left( \begin{matrix}0&0&1&0&0\\1&0&0&2&0\\0&2&0&0&1\\0&0&2&1&2\\0&1&0&2&2\end{matrix} \right) ,
\left( \begin{matrix}0&0&0&1&0\\0&2&0&1&2\\0&0&2&1&2\\1&1&1&4&3\\0&2&2&3&4\end{matrix} \right) ,
\left( \begin{matrix}0&0&0&0&1\\0&0&1&2&2\\0&1&0&2&2\\0&2&2&3&4\\1&2&2&4&4\end{matrix} \right)  $$
Note that $(d(b_1),d(b_2),d(b_3),d(b_4),d(b_5))=(1,\alpha,\alpha,\beta,\gamma)$ with $\alpha= 3+\sqrt{6} \simeq 5.4494$, $\beta= 4+2\sqrt{6}\simeq 8.8989$, $\gamma= 5+2\sqrt{6}\simeq 9.8989$, so that $\FPdim = 120+48\sqrt{6} \simeq 237.57$.

\subsection{Simple Integral Fusion Rings (High Rank)} \label{sub:simple}
A fusion category (resp. ring) is called \emph{simple} if it has no nontrivial proper fusion subcategories (resp. subrings). Here is a result of Etingof, Nikshych and Ostrik \cite[Proposition 9.11]{ENO11}:
\begin{proposition} \label{weak}
A weakly group-theoretical simple fusion category has the Grothendieck ring of $\mathrm{Rep}(G)$, with $G$ a finite simple group.
\end{proposition}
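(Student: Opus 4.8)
The plan is to follow the proof of \cite[Proposition~9.11]{ENO11}. The first point to keep in mind is that simplicity is not a Morita invariant (a nilpotent category is never simple unless it is $\mathrm{Vec}_{\mathbb{Z}/p}$), so one cannot merely replace $\mathcal{C}$ by a Morita equivalent nilpotent category; the structure theory of weakly group-theoretical categories must be applied to $\mathcal{C}$ directly. I would begin with the dichotomy furnished by the universal grading. For any fusion category $\mathcal{C}$ the adjoint subcategory $\mathcal{C}_{\mathrm{ad}}$, generated by the objects $X\otimes X^{*}$ with $X$ simple, is a fusion subcategory, and $\mathcal{C}_{\mathrm{ad}}=\mathrm{Vec}$ precisely when $\mathcal{C}$ is pointed; since $\mathcal{C}$ is simple we get either $\mathcal{C}_{\mathrm{ad}}=\mathrm{Vec}$ (so $\mathcal{C}$ is pointed) or $\mathcal{C}_{\mathrm{ad}}=\mathcal{C}$ (so the universal grading group $U(\mathcal{C})$ is trivial). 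In particular, for such a simple $\mathcal{C}$ with $\FPdim(\mathcal{C})>1$, being nilpotent is equivalent to being pointed.

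In the pointed case $\mathcal{C}\simeq\mathrm{Vec}_{G}^{\omega}$, whose fusion subcategories are exactly the $\mathrm{Vec}_{H}^{\omega|_{H}}$ for subgroups $H\le G$; simplicity then forces $G$ to have no proper nontrivial subgroup, hence $G\cong\mathbb{Z}/p$ for a prime $p$. The Grothendieck ring is then $\mathbb{Z}[\mathbb{Z}/p]$, which is the Grothendieck ring of $\Rep(\mathbb{Z}/p)$, and $\mathbb{Z}/p$ is a finite simple group, so the statement holds here.

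For the non-pointed case ($\mathcal{C}_{\mathrm{ad}}=\mathcal{C}$, $\FPdim(\mathcal{C})>1$, so $\mathcal{C}$ is not nilpotent) I would invoke the structural results of \cite{ENO11}: weak group-theoreticity passes to the Drinfeld center $\mathcal{Z}(\mathcal{C})$, a nondegenerate braided fusion category of integer Frobenius--Perron dimension, and from this machinery one extracts a nontrivial Tannakian fusion subcategory $\mathcal{E}\simeq\Rep(G)$ sitting inside $\mathcal{C}$ which is \emph{normal} and lifts to the center; equivalently, the de-equivariantization $\mathcal{C}_{G}=\mathcal{C}\boxtimes_{\Rep(G)}\mathrm{Vec}$ satisfies $\FPdim(\mathcal{C}_{G})=\FPdim(\mathcal{C})/|G|$ and $\mathcal{C}=(\mathcal{C}_{G})^{G}$. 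Because $\mathcal{C}$ is simple this normal subcategory must be all of $\mathcal{C}$, i.e. $\mathcal{C}\simeq\Rep(G)$ as a fusion category, and hence its Grothendieck ring is that of $\Rep(G)$; concretely, any $\mathcal{C}_{G}\neq\mathrm{Vec}$ would manufacture a proper nontrivial fusion subcategory of $\mathcal{C}$, contradicting simplicity. Since the fusion subcategories of $\Rep(G)$ are precisely the $\Rep(G/N)$ for normal subgroups $N\trianglelefteq G$, the simplicity of $\mathcal{C}\simeq\Rep(G)$ translates into $G$ having no proper nontrivial normal subgroup, i.e. $G$ is a finite simple group (which also subsumes the pointed case, with $G\cong\mathbb{Z}/p$).

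The hard part is the non-pointed step, namely producing the normal Tannakian subcategory $\mathcal{E}$; this is where essentially all of the weakly-group-theoretical technology of \cite{ENO11} enters: (i) exhibiting a nontrivial Tannakian (isotropic) subcategory of the nondegenerate braided category $\mathcal{Z}(\mathcal{C})$, using the structure of weakly integral nondegenerate braided categories, and (ii) arranging for it to descend to a normal central fusion subcategory of $\mathcal{C}$ itself, rather than to a subcategory of $\mathcal{Z}(\mathcal{C})$ whose image under the forgetful functor $\mathcal{Z}(\mathcal{C})\to\mathcal{C}$ collapses to $\mathrm{Vec}$. Once that input is granted, the dichotomy, the pointed case, and the deduction that $G$ is simple are short and formal.
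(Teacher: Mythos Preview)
The paper does not give its own proof of this proposition; it simply quotes it as \cite[Proposition~9.11]{ENO11}. So there is nothing in the paper to compare your argument against beyond the bare citation.

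Your outline is a reasonable reconstruction of how the ENO argument runs: the adjoint-subcategory dichotomy handles the pointed case cleanly, and in the non-pointed case one needs the structural theorem of \cite{ENO11} producing a nontrivial normal Tannakian subcategory of a non-nilpotent weakly group-theoretical category, after which simplicity forces $\mathcal{C}\simeq\Rep(G)$ and then $G$ simple. You correctly flag that the entire content lies in manufacturing that normal Tannakian $\mathcal{E}\subset\mathcal{C}$ (not merely a Tannakian subcategory of $\mathcal{Z}(\mathcal{C})$), and that this is exactly what the machinery of \cite{ENO11} is built to deliver; your sketch does not supply that step, but neither does the present paper, which is content to invoke \cite{ENO11} as a black box.
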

A fusion category (resp. ring) is called \emph{integral} if the Frobenius-Perron dimension of every (simple) object is an integer. Here is the strong version of \cite[Question 2]{ENO11}:
\begin{question} \label{Qweak}
Is there an integral fusion category which is not weakly group-theoretical?
\end{question}

Then it seems relevant to look for integral simple fusion rings which are not Grothendieck rings of any $\mathrm{Rep}(G)$ with $G$ finite (simple) group, because according to Proposition~\ref{weak}, the categorification of one of them would provide a positive answer to Question~\ref{Qweak}.

\begin{definition}
Let $\mathfrak{A}$ be a fusion ring of basis $\{x_1 = 1, \dots, x_r \}$ with $d(x_1) \le d(x_2) \le \cdots  \le d(x_r)$. Let us call $r$ its \emph{rank}, $\sum_i d(x_i)^2$ its \emph{Frobenius-Perron dimension} (or $\FPdim(\mathfrak{A})$), and $[d(x_1), d(x_2), \dots , d(x_r)]$ its \emph{type}, which will also be written by $[[n_1,m_1],[n_2,m_2], \dots, [n_s,m_s]]$ where $m_i$ is the number of $x_j$ with $d(x_j) = n_i$, $\sum_i m_i = r$ and $1 = n_1 < n_2 < \cdots < n_s$.
\end{definition}

Recall that the Grothendieck ring of $\mathrm{Rep}(G)$ is simple if and only if $G$ is simple because every fusion subring is given by $\Rep(G/N)$ with $N$ a normal subgroup. Now a non-abelian simple group is perfect (i.e. $[G,G]$ generates $G$), and there is also a way to characterize the perfect groups at the fusion ring level:

\begin{proposition} A finite group $G$ is perfect if and only if the type of the Grothendieck ring of $\mathrm{Rep}(G)$ satisfies $m_1=1$ (i.e. every one-dimensional representation must be trivial). \end{proposition}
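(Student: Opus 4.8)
The plan is to translate both sides of the equivalence into a single statement about the abelianization $G^{\mathrm{ab}} := G/[G,G]$. First I would pin down the relevant invariant: in $\mathrm{Rep}(G)$ the Frobenius--Perron dimension of a simple object coincides with the ordinary dimension of the corresponding irreducible complex representation, so $m_1$ is exactly the number of one-dimensional irreducible representations of $G$, i.e.\ the number of group homomorphisms $\chi\colon G \to \mathbb{C}^\times$. Observe that the unit object $x_1$ (the trivial representation) is always such a homomorphism, so $m_1 \ge 1$ unconditionally, and the phrasing ``every one-dimensional representation is trivial'' is just the assertion $m_1 = 1$.

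Next I would compute $m_1$ in group-theoretic terms. Any homomorphism $G \to \mathbb{C}^\times$ has abelian image, hence contains $[G,G]$ in its kernel and factors uniquely through the quotient $G^{\mathrm{ab}}$; conversely every character of $G^{\mathrm{ab}}$ pulls back along $G \twoheadrightarrow G^{\mathrm{ab}}$ to a homomorphism $G \to \mathbb{C}^\times$. This exhibits a bijection between the one-dimensional representations of $G$ and the characters of the finite abelian group $G^{\mathrm{ab}}$. Since a finite abelian group $A$ has exactly $|A|$ characters (by the structure theorem for finite abelian groups, or equivalently Pontryagin duality), we obtain $m_1 = |G^{\mathrm{ab}}| = [\,G : [G,G]\,]$.

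The conclusion is then immediate: $m_1 = 1$ if and only if $[\,G:[G,G]\,] = 1$, i.e.\ $[G,G] = G$, which is precisely the definition of $G$ being perfect. I do not anticipate any genuine obstacle here; the only point requiring a moment of care is the normalization that the simple objects of $\mathrm{Rep}(G)$ of Frobenius--Perron dimension one are exactly the linear characters of $G$, which is standard and already implicit in the preceding discussion.
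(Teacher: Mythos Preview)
Your proof is correct and follows essentially the same idea as the paper: one-dimensional representations of $G$ are exactly those factoring through the abelianization $G^{\mathrm{ab}}=G/[G,G]$, so $m_1=1$ is equivalent to $G^{\mathrm{ab}}$ being trivial. Your version is slightly more streamlined in that you compute $m_1=|G^{\mathrm{ab}}|$ directly via Pontryagin duality, whereas the paper argues each implication separately (using that commutators die under any linear character for one direction, and decomposing the pullback of a representation of $G^{\mathrm{ab}}$ into one-dimensional pieces for the other).
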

\begin{proof}
Let $G$ be a perfect group and let $\pi$ be a one-dimensional representation of $G$. By assumption, every $g \in G$ is a product of commutators, but $\pi(G)$ is abelian (because $\pi$ is one-dimensional), so that $\pi(g) = \pi(1)$. It follows that $\pi$ is trivial.

Now assume that every one-dimensional representation is trivial, and consider the quotient map $p: G \to Z$ with $Z=G/\langle [G,G]\rangle$ which is abelian. Then $p$ induces a representation $\pi$ of $G$ with $\pi(G)$ abelian, so that $\pi$ is a direct sum of one-dimensional representations. It follows by assumption that $Z=\pi(G)$ is trivial, which means that $G$ is perfect.
\end{proof}

This proposition leads us to call \emph{perfect} a fusion ring with $m_1=1$. Note that a non-perfect simple fusion ring is given by a prime order cyclic group. The fusion ring $\mathfrak{A}$ is called of \emph{Frobenius type} if $\frac{\FPdim(\mathfrak{A})}{d(x_i)}$ is an algebraic integer for all $i$, and if $\mathfrak{A}$ is integral, this means that $d(x_i)$ divides $\FPdim(\mathfrak{A})$. Kaplansky's 6th conjecture \cite{Kap75} states that for every finite dimensional semisimple Hopf algebra $H$ over $\mathbb{C}$, the integral fusion category $\Rep(H)$ is of Frobenius type. If in addition $H$ has a $*$-structure (i.e. is a Kac algebra), then $\Rep(H)$ is unitary. For a \textit{first step} in the proof of this conjecture, see \cite[Theorem 2]{Kac72}. Note that there exist simple integral fusion rings which are not of Frobenius type (see Subsection \ref{sub:notFrob}).
 The integral simple (and perfect) fusion rings of Frobenius type are classified in the following cases (with $\FPdim \neq p^aq^b, pqr$, by \cite{ENO11}), with computer assistance, significantly boosted by Proposition~\ref{incoeff}\footnote{In preparing this paper, we first had a classification up to some smaller dimensions, given by a previous version of the computer program. Then the use of inequalities in Proposition~\ref{incoeff} boosted the computation, allowing us to extend the bounds significantly.}.

$$\begin{array}{c|c|c|c|c|c|c|c}
\text{rank} & \le 5 & 6     & 7   & 8   & 9 & 10 &  \text{all}  \\ \hline
\FPdim <  & 1000000 & 150000 & 15000 & 4080 & 504 & 240 & 132
\end{array}$$

\noindent We found exactly $34$ ones, and each of them is commutative; which leads to:
\begin{question}
Is there a non-commutative simple integral fusion ring (of Frobenius type)?
\end{question}
Let us first summarize these results of the computer search: four of them are Grothendieck rings of $\mathrm{Rep}(G)$ with $G$ a non-abelian finite simple group, $28$ (among the remaining $30$) are ruled out (from unitary categorification) by Schur product property (on the dual), and none can be ruled out by already known obstructions (as Ostrik's inequality); the existence of a unitary categorification is unknown for each of the remaining two. Here are the results in details, where $\#$ counts the number of fusion rings, whereas $\#$\text{Schur} counts those checking the \textit{commutative} Schur product criterion (all the fusion matrices are available in Appendix):

$$\begin{array}{c|c|c|c|c|c}
\# & \text{rank} & \text{FPdim}     & \text{type} & \# \text{Schur}  & \text{group} \\ \hline
 1 & 5 & 60 & [[1,1],[3,2],[4,1],[5,1]] & 1 & \PSL(2,5)  \\ \hline
 1 & 6 & 168 & [[1,1],[3,2],[6,1],[7,1],[8,1]] & 1 & \PSL(2,7) \\ \hline
 2 & 7 & 210 & [[1,1],[5,3],[6,1],[7,2]] & 1 & \\ \hline
 2 & 7 & 360 & [[1,1],[5,2],[8,2],[9,1],[10,1]] & 1 & \PSL(2,9)  \\ \hline
 4 & 7 & 7980 & [[1, 1], [19, 1], [20, 1], [21, 1], [42, 2], [57, 1]] & 0 & \\ \hline
 15& 8 & 660 & [[1,1],[5,2],[10,2],[11,1],[12,2]] & 2 & \PSL(2,11) \\ \hline
 5 & 8 & 990 & [[1,1],[9,1],[10,1],[11,4],[18,1]] & 0 & \\ \hline
 2 & 8 & 1260 & [[1, 1], [6, 1], [7, 2], [10, 1], [15, 1], [20, 2]] & 0 & \\ \hline
 2 & 8 & 1320 & [[1, 1], [6, 2], [10, 1], [11, 1], [15, 2], [24, 1]] & 0 &
\end{array}$$

\begin{question} Are there only finitely many simple integral fusion rings of a given rank (assuming Frobenius type and perfect)? Is the above list the full classification at rank $\le 6$? If the Schur product property (on the dual) is assumed to hold, is it full at rank $ \le 8$?
\end{question}

Let us write here the fusion matrices and character tables for the first fusion ring ruled out written above, and for the two which were not.

First the simple integral fusion ring of rank $7$, $\FPdim$  $210$, type $[[1,1],[5,3],[6,1],[7,2]]$ and fusion matrices:
$$ {\begin{smallmatrix}
1 & 0 & 0 & 0& 0& 0& 0 \\
0 & 1 & 0 & 0& 0& 0& 0 \\
0 & 0 & 1 & 0& 0& 0& 0 \\
0 & 0 & 0 & 1& 0& 0& 0 \\
0 & 0 & 0 & 0& 1& 0& 0 \\
0 & 0 & 0 & 0& 0& 1& 0 \\
0 & 0 & 0 & 0& 0& 0& 1
\end{smallmatrix}, \
\begin{smallmatrix}
0 & 1 & 0 & 0& 0& 0& 0 \\
1 & 1 & 0 & 1& 0& 1& 1 \\
0 & 0 & 1 & 0& 1& 1& 1 \\
0 & 1 & 0 & 0& 1& 1& 1 \\
0 & 0 & 1 & 1& 1& 1& 1 \\
0 & 1 & 1 & 1& 1& 1& 1 \\
0 & 1 & 1 & 1& 1& 1& 1
\end{smallmatrix} , \
\begin{smallmatrix}
0 & 0 & 1 & 0& 0& 0& 0 \\
0 & 0 & 1 & 0& 1& 1& 1 \\
1 & 1 & 1 & 0& 0& 1& 1 \\
0 & 0 & 0 & 1& 1& 1& 1 \\
0 & 1 & 0 & 1& 1& 1& 1 \\
0 & 1 & 1 & 1& 1& 1& 1 \\
0 & 1 & 1 & 1& 1& 1& 1
\end{smallmatrix} , \
\begin{smallmatrix}
0 & 0 & 0 & 1& 0& 0& 0 \\
0 & 1 & 0 & 0& 1& 1& 1 \\
0 & 0 & 0 & 1& 1& 1& 1 \\
1 & 0 & 1 & 1& 0& 1& 1 \\
0 & 1 & 1 & 0& 1& 1& 1 \\
0 & 1 & 1 & 1& 1& 1& 1 \\
0 & 1 & 1 & 1& 1& 1& 1
\end{smallmatrix} , \
\begin{smallmatrix}
0 & 0 & 0 & 0& 1& 0& 0 \\
0 & 0 & 1 & 1& 1& 1& 1 \\
0 & 1 & 0 & 1& 1& 1& 1 \\
0 & 1 & 1 & 0& 1& 1& 1 \\
1 & 1 & 1 & 1& 1& 1& 1 \\
0 & 1 & 1 & 1& 1& 2& 1 \\
0 & 1 & 1 & 1& 1& 1& 2
\end{smallmatrix} , \
\begin{smallmatrix}
0 & 0 & 0 & 0& 0& 1& 0 \\
0 & 1 & 1 & 1& 1& 1& 1 \\
0 & 1 & 1 & 1& 1& 1& 1 \\
0 & 1 & 1 & 1& 1& 1& 1 \\
0 & 1 & 1 & 1& 1& 2& 1 \\
1 & 1 & 1 & 1& 2& 0& 3 \\
0 & 1 & 1 & 1& 1& 3& 1
\end{smallmatrix} , \
\begin{smallmatrix}
0 & 0 & 0 & 0& 0& 0& 1 \\
0 & 1 & 1 & 1& 1& 1& 1 \\
0 & 1 & 1 & 1& 1& 1& 1 \\
0 & 1 & 1 & 1& 1& 1& 1 \\
0 & 1 & 1 & 1& 1& 1& 2 \\
0 & 1 & 1 & 1& 1& 3& 1 \\
1 & 1 & 1 & 1& 2& 1& 2
\end{smallmatrix}} $$

Its character table is the following:
$$  \left[ \begin{matrix}
1 & 1 & 1 & 1 & 1 & 1 & 1  \\
5 & -1 & -\zeta_7 -\zeta_7^6 & -\zeta_7^5 - \zeta_7^2 & -\zeta_7^4 - \zeta_7^3 & 0 & 0 \\
5 & -1 & -\zeta_7^5 - \zeta_7^2 & -\zeta_7^4 - \zeta_7^3 & -\zeta_7 -\zeta_7^6 & 0 & 0  \\
5 & -1 & -\zeta_7^4 - \zeta_7^3 & -\zeta_7 -\zeta_7^6 & -\zeta_7^5 - \zeta_7^2 & 0 & 0  \\
6 & 0 & -1 & -1 & -1 & 1 & 1  \\
7 & 1 & 0 & 0 & 0 & 0 & -3  \\
7 & 1 & 0 & 0 & 0 & -1 & 2
\end{matrix} \right] $$
It is possible to see why it was ruled out by Schur product criterion by observing this character table (in particular its last column) together with Corollary~\ref{SchurCom}:  $$ \frac{1^3}{1} + \frac{0^3}{5} + \frac{0^3}{5} + \frac{0^3}{5} + \frac{1^3}{6} + \frac{(-3)^3}{7} + \frac{2^3}{7} =  -\frac{65}{42}<0.$$

\begin{remark}
Here we applied Corollary~\ref{SchurCom} by using three times the same block (i.e. irreducible representation, or column here), but it is not always possible.
For example, the simple fusion ring of type $[[1,1],[5,2],[8,2],[9,1],[10,1]]$ (the one not given by $\PSL(2,9)$) required two blocks to be ruled out.
\end{remark}

Next, the fusion matrices of the simple integral fusion ring of same type as above, for which Schur product property (on the dual) holds:
$$ {\begin{smallmatrix}
1 & 0 & 0 & 0& 0& 0& 0 \\
0 & 1 & 0 & 0& 0& 0& 0 \\
0 & 0 & 1 & 0& 0& 0& 0 \\
0 & 0 & 0 & 1& 0& 0& 0 \\
0 & 0 & 0 & 0& 1& 0& 0 \\
0 & 0 & 0 & 0& 0& 1& 0 \\
0 & 0 & 0 & 0& 0& 0& 1
\end{smallmatrix} , \
\begin{smallmatrix}
0 & 1 & 0 & 0& 0& 0& 0 \\
1 & 1 & 0 & 1& 0& 1& 1 \\
0 & 0 & 1 & 0& 1& 1& 1 \\
0 & 1 & 0 & 0& 1& 1& 1 \\
0 & 0 & 1 & 1& 1& 1& 1 \\
0 & 1 & 1 & 1& 1& 1& 1 \\
0 & 1 & 1 & 1& 1& 1& 1
\end{smallmatrix} , \
\begin{smallmatrix}
0 & 0 & 1 & 0& 0& 0& 0 \\
0 & 0 & 1 & 0& 1& 1& 1 \\
1 & 1 & 1 & 0& 0& 1& 1 \\
0 & 0 & 0 & 1& 1& 1& 1 \\
0 & 1 & 0 & 1& 1& 1& 1 \\
0 & 1 & 1 & 1& 1& 1& 1 \\
0 & 1 & 1 & 1& 1& 1& 1
\end{smallmatrix} , \
\begin{smallmatrix}
0 & 0 & 0 & 1& 0& 0& 0 \\
0 & 1 & 0 & 0& 1& 1& 1 \\
0 & 0 & 0 & 1& 1& 1& 1 \\
1 & 0 & 1 & 1& 0& 1& 1 \\
0 & 1 & 1 & 0& 1& 1& 1 \\
0 & 1 & 1 & 1& 1& 1& 1 \\
0 & 1 & 1 & 1& 1& 1& 1
\end{smallmatrix} , \
\begin{smallmatrix}
0 & 0 & 0 & 0& 1& 0& 0 \\
0 & 0 & 1 & 1& 1& 1& 1 \\
0 & 1 & 0 & 1& 1& 1& 1 \\
0 & 1 & 1 & 0& 1& 1& 1 \\
1 & 1 & 1 & 1& 1& 1& 1 \\
0 & 1 & 1 & 1& 1& 2& 1 \\
0 & 1 & 1 & 1& 1& 1& 2
\end{smallmatrix} , \
\begin{smallmatrix}
0 & 0 & 0 & 0& 0& 1& 0 \\
0 & 1 & 1 & 1& 1& 1& 1 \\
0 & 1 & 1 & 1& 1& 1& 1 \\
0 & 1 & 1 & 1& 1& 1& 1 \\
0 & 1 & 1 & 1& 1& 2& 1 \\
1 & 1 & 1 & 1& 2& 1& 2 \\
0 & 1 & 1 & 1& 1& 2& 2
\end{smallmatrix} , \
\begin{smallmatrix}
0 & 0 & 0 & 0& 0& 0& 1 \\
0 & 1 & 1 & 1& 1& 1& 1 \\
0 & 1 & 1 & 1& 1& 1& 1 \\
0 & 1 & 1 & 1& 1& 1& 1 \\
0 & 1 & 1 & 1& 1& 1& 2 \\
0 & 1 & 1 & 1& 1& 2& 2 \\
1 & 1 & 1 & 1& 2& 2& 1
\end{smallmatrix}} $$
Let us call $\mathfrak{F}_{210}$ the corresponding fusion ring (mentioned after \cite[Problem 4.12]{Pal18}). Its character table is:
$$  \left[ \begin{matrix}
1 & 1 & 1 & 1 & 1 & 1 & 1  \\
5 & -1 & -\zeta_7 -\zeta_7^6 & -\zeta_7^5 - \zeta_7^2 & -\zeta_7^4 - \zeta_7^3 & 0 & 0 \\
5 & -1 & -\zeta_7^5 - \zeta_7^2 & -\zeta_7^4 - \zeta_7^3 & -\zeta_7 -\zeta_7^6 & 0 & 0  \\
5 & -1 & -\zeta_7^4 - \zeta_7^3 & -\zeta_7 -\zeta_7^6 & -\zeta_7^5 - \zeta_7^2 & 0 & 0  \\
6 & 0 & -1 & -1 & -1 & 1 & 1  \\
7 & 1 & 0 & 0 & 0 & \zeta_5+\zeta_5^4 & \zeta_5^2+\zeta_5^3  \\
7 & 1 & 0 & 0 & 0 & \zeta_5^2+\zeta_5^3 & \zeta_5+\zeta_5^4
\end{matrix} \right]$$

Finally, the fusion matrices of the only simple integral fusion ring (not given by a group) of rank $8$, $\FPdim$  $660$, type $[[1,1],[5,2],[10,2],[11,1],[12,2]]$ on which Schur product property (on the dual) holds:
$$\begin{smallmatrix}1 & 0 & 0 & 0 & 0 & 0 & 0 & 0 \\ 0 & 1 & 0 & 0 & 0 & 0 & 0 & 0 \\ 0 & 0 & 1 & 0 & 0 & 0 & 0 & 0 \\ 0 & 0 & 0 & 1 & 0 & 0 & 0 & 0 \\ 0 & 0 & 0 & 0 & 1 & 0 & 0 & 0 \\ 0 & 0 & 0 & 0 & 0 & 1 & 0 & 0 \\ 0 & 0 & 0 & 0 & 0 & 0 & 1 & 0 \\ 0 & 0 & 0 & 0 & 0 & 0 & 0 & 1\end{smallmatrix},  \ \begin{smallmatrix}0 & 1 & 0 & 0 & 0 & 0 & 0 & 0 \\ 0 & 0 & 1 & 1 & 1 & 0 & 0 & 0 \\ 1 & 0 & 0 & 0 & 0 & 0 & 1 & 1 \\ 0 & 0 & 1 & 0 & 1 & 1 & 1 & 1 \\ 0 & 0 & 1 & 1 & 0 & 1 & 1 & 1 \\ 0 & 0 & 0 & 1 & 1 & 1 & 1 & 1 \\ 0 & 1 & 0 & 1 & 1 & 1 & 1 & 1 \\ 0 & 1 & 0 & 1 & 1 & 1 & 1 & 1\end{smallmatrix},  \ \begin{smallmatrix}0 & 0 & 1 & 0 & 0 & 0 & 0 & 0 \\ 1 & 0 & 0 & 0 & 0 & 0 & 1 & 1 \\ 0 & 1 & 0 & 1 & 1 & 0 & 0 & 0 \\ 0 & 1 & 0 & 0 & 1 & 1 & 1 & 1 \\ 0 & 1 & 0 & 1 & 0 & 1 & 1 & 1 \\ 0 & 0 & 0 & 1 & 1 & 1 & 1 & 1 \\ 0 & 0 & 1 & 1 & 1 & 1 & 1 & 1 \\ 0 & 0 & 1 & 1 & 1 & 1 & 1 & 1\end{smallmatrix},  \ \begin{smallmatrix}0 & 0 & 0 & 1 & 0 & 0 & 0 & 0 \\ 0 & 0 & 1 & 0 & 1 & 1 & 1 & 1 \\ 0 & 1 & 0 & 0 & 1 & 1 & 1 & 1 \\ 1 & 0 & 0 & 3 & 1 & 1 & 2 & 2 \\ 0 & 1 & 1 & 1 & 1 & 2 & 2 & 2 \\ 0 & 1 & 1 & 1 & 2 & 2 & 2 & 2 \\ 0 & 1 & 1 & 2 & 2 & 2 & 2 & 2 \\ 0 & 1 & 1 & 2 & 2 & 2 & 2 & 2\end{smallmatrix}, \begin{smallmatrix}0 & 0 & 0 & 0 & 1 & 0 & 0 & 0 \\ 0 & 0 & 1 & 1 & 0 & 1 & 1 & 1 \\ 0 & 1 & 0 & 1 & 0 & 1 & 1 & 1 \\ 0 & 1 & 1 & 1 & 1 & 2 & 2 & 2 \\ 1 & 0 & 0 & 1 & 3 & 1 & 2 & 2 \\ 0 & 1 & 1 & 2 & 1 & 2 & 2 & 2 \\ 0 & 1 & 1 & 2 & 2 & 2 & 2 & 2 \\ 0 & 1 & 1 & 2 & 2 & 2 & 2 & 2\end{smallmatrix},  \ \begin{smallmatrix}0 & 0 & 0 & 0 & 0 & 1 & 0 & 0 \\ 0 & 0 & 0 & 1 & 1 & 1 & 1 & 1 \\ 0 & 0 & 0 & 1 & 1 & 1 & 1 & 1 \\ 0 & 1 & 1 & 1 & 2 & 2 & 2 & 2 \\ 0 & 1 & 1 & 2 & 1 & 2 & 2 & 2 \\ 1 & 1 & 1 & 2 & 2 & 2 & 2 & 2 \\ 0 & 1 & 1 & 2 & 2 & 2 & 3 & 2 \\ 0 & 1 & 1 & 2 & 2 & 2 & 2 & 3\end{smallmatrix},  \ \begin{smallmatrix}0 & 0 & 0 & 0 & 0 & 0 & 1 & 0 \\ 0 & 1 & 0 & 1 & 1 & 1 & 1 & 1 \\ 0 & 0 & 1 & 1 & 1 & 1 & 1 & 1 \\ 0 & 1 & 1 & 2 & 2 & 2 & 2 & 2 \\ 0 & 1 & 1 & 2 & 2 & 2 & 2 & 2 \\ 0 & 1 & 1 & 2 & 2 & 2 & 3 & 2 \\ 1 & 1 & 1 & 2 & 2 & 3 & 2 & 3 \\ 0 & 1 & 1 & 2 & 2 & 2 & 3 & 3\end{smallmatrix},  \ \begin{smallmatrix}0 & 0 & 0 & 0 & 0 & 0 & 0 & 1 \\ 0 & 1 & 0 & 1 & 1 & 1 & 1 & 1 \\ 0 & 0 & 1 & 1 & 1 & 1 & 1 & 1 \\ 0 & 1 & 1 & 2 & 2 & 2 & 2 & 2 \\ 0 & 1 & 1 & 2 & 2 & 2 & 2 & 2 \\ 0 & 1 & 1 & 2 & 2 & 2 & 2 & 3 \\ 0 & 1 & 1 & 2 & 2 & 2 & 3 & 3 \\ 1 & 1 & 1 & 2 & 2 & 3 & 3 & 2\end{smallmatrix} $$
Let us call $\mathfrak{F}_{660}$ the corresponding fusion ring. Its character table is:
$$ \left[ \begin{matrix}
1&1&1&1&1&1&1&1 \\ 5&0&-1&1&-1&0&\zeta_{11}+\zeta_{11}^3+\zeta_{11}^4+\zeta_{11}^5+\zeta_{11}^9&\zeta_{11}^2+\zeta_{11}^6+\zeta_{11}^7+\zeta_{11}^8+\zeta_{11}^{10} \\ 5&0&-1&1&-1&0&\zeta_{11}^2+\zeta_{11}^6+\zeta_{11}^7+\zeta_{11}^8+\zeta_{11}^{10}&\zeta_{11}+\zeta_{11}^3+\zeta_{11}^4+\zeta_{11}^5+\zeta_{11}^9 \\ 10&0&1+\sqrt{3}&0&1-\sqrt{3}&0&-1&-1 \\ 10&0&1-\sqrt{3}&0&1+\sqrt{3}&0&-1&-1 \\ 11&1&-1&-1&-1&1&0&0 \\ 12&\zeta_{5}+\zeta_{5}^4&0&0&0&\zeta_{5}^2+\zeta_{5}^3&1&1 \\ 12&\zeta_{5}^2+\zeta_{5}^3&0&0&0&\zeta_{5}+\zeta_{5}^4&1&1
\end{matrix} \right]$$
\begin{question}
Do $\mathfrak{F}_{210}$ or $\mathfrak{F}_{660}$ admit a unitary categorification?
\end{question}

Subsection \ref{ExtraPerfect} of Appendix mentions $2561$ extra perfect integral fusion rings of rank $\le 10$. Among them, $7$ ones are simple and $9$ ones are noncommutative (none both). In the commutative case, $2072$ ones can be ruled out from unitary categorification by Corollary \ref{SchurCom} (more than $80\%$).

\section{Appendix}
\subsection{List of simple integral fusion rings of Frobenius type}
We provide here the fusion matrices for the $34$ simple integral fusion rings mentioned above (all commutative) together with what we know about them. Those ruled out have no additional data.

\begin{itemize}
\item Rank $5$ and $\FPdim$  $60$, one of type $[[1,1],[3,2],[4,1],[5,1]]$, given by the group $\PSL(2,5)$:
$$\begin{smallmatrix} 1&0&0&0&0 \\ 0&1&0&0&0 \\ 0&0&1&0&0 \\ 0&0&0&1&0 \\ 0&0&0&0&1\end{smallmatrix} ,   \ \begin{smallmatrix}0&1&0&0&0 \\ 1&1&0&0&1 \\ 0&0&0&1&1 \\ 0&0&1&1&1 \\ 0&1&1&1&1\end{smallmatrix} ,   \ \begin{smallmatrix}0&0&1&0&0 \\ 0&0&0&1&1 \\ 1&0&1&0&1 \\ 0&1&0&1&1 \\ 0&1&1&1&1\end{smallmatrix} ,   \ \begin{smallmatrix}0&0&0&1&0 \\ 0&0&1&1&1 \\ 0&1&0&1&1 \\ 1&1&1&1&1 \\ 0&1&1&1&2\end{smallmatrix} ,   \ \begin{smallmatrix}0&0&0&0&1 \\ 0&1&1&1&1 \\ 0&1&1&1&1 \\ 0&1&1&1&2 \\ 1&1&1&2&2\end{smallmatrix} $$

\item Rank $6$ and  $\FPdim$  $168$, one of type $[[1,1],[3,2],[6,1],[7,1],[8,1]]$, given by $\PSL(2,7)$:
$$ \begin{smallmatrix}1&0&0&0&0&0 \\ 0&1&0&0&0&0 \\ 0&0&1&0&0&0 \\ 0&0&0&1&0&0 \\ 0&0&0&0&1&0 \\ 0&0&0&0&0&1\end{smallmatrix} ,   \ \begin{smallmatrix}0&1&0&0&0&0 \\ 0&0&1&1&0&0 \\ 1&0&0&0&0&1 \\ 0&0&1&0&1&1 \\ 0&0&0&1&1&1 \\ 0&1&0&1&1&1\end{smallmatrix} ,   \ \begin{smallmatrix}0&0&1&0&0&0 \\ 1&0&0&0&0&1 \\ 0&1&0&1&0&0 \\ 0&1&0&0&1&1 \\ 0&0&0&1&1&1 \\ 0&0&1&1&1&1\end{smallmatrix} ,   \ \begin{smallmatrix}0&0&0&1&0&0 \\ 0&0&1&0&1&1 \\ 0&1&0&0&1&1 \\ 1&0&0&2&1&2 \\ 0&1&1&1&2&2 \\ 0&1&1&2&2&2\end{smallmatrix} ,   \ \begin{smallmatrix}0&0&0&0&1&0 \\ 0&0&0&1&1&1 \\ 0&0&0&1&1&1 \\ 0&1&1&1&2&2 \\ 1&1&1&2&2&2 \\ 0&1&1&2&2&3\end{smallmatrix} ,   \ \begin{smallmatrix}0&0&0&0&0&1 \\ 0&1&0&1&1&1 \\ 0&0&1&1&1&1 \\ 0&1&1&2&2&2 \\ 0&1&1&2&2&3 \\ 1&1&1&2&3&3\end{smallmatrix} $$

\item Rank $7$ and $\FPdim$ $210$, two of type $[[1,1],[5,3],[6,1],[7,2]]$:
$$ \begin{smallmatrix}1&0&0&0&0&0&0 \\ 0&1&0&0&0&0&0 \\ 0&0&1&0&0&0&0 \\ 0&0&0&1&0&0&0 \\ 0&0&0&0&1&0&0 \\ 0&0&0&0&0&1&0 \\ 0&0&0&0&0&0&1\end{smallmatrix} ,   \ \begin{smallmatrix}0&1&0&0&0&0&0 \\ 1&1&0&1&0&1&1 \\ 0&0&1&0&1&1&1 \\ 0&1&0&0&1&1&1 \\ 0&0&1&1&1&1&1 \\ 0&1&1&1&1&1&1 \\ 0&1&1&1&1&1&1\end{smallmatrix} ,   \ \begin{smallmatrix}0&0&1&0&0&0&0 \\ 0&0&1&0&1&1&1 \\ 1&1&1&0&0&1&1 \\ 0&0&0&1&1&1&1 \\ 0&1&0&1&1&1&1 \\ 0&1&1&1&1&1&1 \\ 0&1&1&1&1&1&1\end{smallmatrix} ,   \ \begin{smallmatrix}0&0&0&1&0&0&0 \\ 0&1&0&0&1&1&1 \\ 0&0&0&1&1&1&1 \\ 1&0&1&1&0&1&1 \\ 0&1&1&0&1&1&1 \\ 0&1&1&1&1&1&1 \\ 0&1&1&1&1&1&1\end{smallmatrix} ,   \ \begin{smallmatrix}0&0&0&0&1&0&0 \\ 0&0&1&1&1&1&1 \\ 0&1&0&1&1&1&1 \\ 0&1&1&0&1&1&1 \\ 1&1&1&1&1&1&1 \\ 0&1&1&1&1&2&1 \\ 0&1&1&1&1&1&2\end{smallmatrix} ,   \ \begin{smallmatrix}0&0&0&0&0&1&0 \\ 0&1&1&1&1&1&1 \\ 0&1&1&1&1&1&1 \\ 0&1&1&1&1&1&1 \\ 0&1&1&1&1&2&1 \\ 1&1&1&1&2&0&3 \\ 0&1&1&1&1&3&1\end{smallmatrix} ,   \ \begin{smallmatrix}0&0&0&0&0&0&1 \\ 0&1&1&1&1&1&1 \\ 0&1&1&1&1&1&1 \\ 0&1&1&1&1&1&1 \\ 0&1&1&1&1&1&2 \\ 0&1&1&1&1&3&1 \\ 1&1&1&1&2&1&2\end{smallmatrix} $$

\noindent  The one satisfying Schur product property (on the dual):
$$ \begin{smallmatrix}1&0&0&0&0&0&0 \\ 0&1&0&0&0&0&0 \\ 0&0&1&0&0&0&0 \\ 0&0&0&1&0&0&0 \\ 0&0&0&0&1&0&0 \\ 0&0&0&0&0&1&0 \\ 0&0&0&0&0&0&1\end{smallmatrix} ,   \ \begin{smallmatrix}0&1&0&0&0&0&0 \\ 1&1&0&1&0&1&1 \\ 0&0&1&0&1&1&1 \\ 0&1&0&0&1&1&1 \\ 0&0&1&1&1&1&1 \\ 0&1&1&1&1&1&1 \\ 0&1&1&1&1&1&1\end{smallmatrix} ,   \ \begin{smallmatrix}0&0&1&0&0&0&0 \\ 0&0&1&0&1&1&1 \\ 1&1&1&0&0&1&1 \\ 0&0&0&1&1&1&1 \\ 0&1&0&1&1&1&1 \\ 0&1&1&1&1&1&1 \\ 0&1&1&1&1&1&1\end{smallmatrix} ,   \ \begin{smallmatrix}0&0&0&1&0&0&0 \\ 0&1&0&0&1&1&1 \\ 0&0&0&1&1&1&1 \\ 1&0&1&1&0&1&1 \\ 0&1&1&0&1&1&1 \\ 0&1&1&1&1&1&1 \\ 0&1&1&1&1&1&1\end{smallmatrix} ,   \ \begin{smallmatrix}0&0&0&0&1&0&0 \\ 0&0&1&1&1&1&1 \\ 0&1&0&1&1&1&1 \\ 0&1&1&0&1&1&1 \\ 1&1&1&1&1&1&1 \\ 0&1&1&1&1&2&1 \\ 0&1&1&1&1&1&2\end{smallmatrix} ,   \ \begin{smallmatrix}0&0&0&0&0&1&0 \\ 0&1&1&1&1&1&1 \\ 0&1&1&1&1&1&1 \\ 0&1&1&1&1&1&1 \\ 0&1&1&1&1&2&1 \\ 1&1&1&1&2&1&2 \\ 0&1&1&1&1&2&2\end{smallmatrix} ,   \ \begin{smallmatrix}0&0&0&0&0&0&1 \\ 0&1&1&1&1&1&1 \\ 0&1&1&1&1&1&1 \\ 0&1&1&1&1&1&1 \\ 0&1&1&1&1&1&2 \\ 0&1&1&1&1&2&2 \\ 1&1&1&1&2&2&1\end{smallmatrix} $$

\item Rank $7$ and $\FPdim$ $360$, two of type $[[1,1],[5,2],[8,2],[9,1],[10,1]]$:
$$ \begin{smallmatrix}1&0&0&0&0&0&0 \\ 0&1&0&0&0&0&0 \\ 0&0&1&0&0&0&0 \\ 0&0&0&1&0&0&0 \\ 0&0&0&0&1&0&0 \\ 0&0&0&0&0&1&0 \\ 0&0&0&0&0&0&1\end{smallmatrix} ,   \ \begin{smallmatrix}0&1&0&0&0&0&0 \\ 1&1&0&0&0&1&1 \\ 0&0&0&1&1&1&0 \\ 0&0&1&1&1&1&1 \\ 0&0&1&1&1&1&1 \\ 0&1&1&1&1&1&1 \\ 0&1&0&1&1&1&2\end{smallmatrix} ,   \ \begin{smallmatrix}0&0&1&0&0&0&0 \\ 0&0&0&1&1&1&0 \\ 1&0&1&0&0&1&1 \\ 0&1&0&1&1&1&1 \\ 0&1&0&1&1&1&1 \\ 0&1&1&1&1&1&1 \\ 0&0&1&1&1&1&2\end{smallmatrix} ,   \ \begin{smallmatrix}0&0&0&1&0&0&0 \\ 0&0&1&1&1&1&1 \\ 0&1&0&1&1&1&1 \\ 1&1&1&1&2&1&2 \\ 0&1&1&2&0&2&2 \\ 0&1&1&1&2&2&2 \\ 0&1&1&2&2&2&2\end{smallmatrix} ,   \ \begin{smallmatrix}0&0&0&0&1&0&0 \\ 0&0&1&1&1&1&1 \\ 0&1&0&1&1&1&1 \\ 0&1&1&2&0&2&2 \\ 1&1&1&0&3&1&2 \\ 0&1&1&2&1&2&2 \\ 0&1&1&2&2&2&2\end{smallmatrix} ,   \ \begin{smallmatrix}0&0&0&0&0&1&0 \\ 0&1&1&1&1&1&1 \\ 0&1&1&1&1&1&1 \\ 0&1&1&1&2&2&2 \\ 0&1&1&2&1&2&2 \\ 1&1&1&2&2&2&2 \\ 0&1&1&2&2&2&3\end{smallmatrix} ,   \ \begin{smallmatrix}0&0&0&0&0&0&1 \\ 0&1&0&1&1&1&2 \\ 0&0&1&1&1&1&2 \\ 0&1&1&2&2&2&2 \\ 0&1&1&2&2&2&2 \\ 0&1&1&2&2&2&3 \\ 1&2&2&2&2&3&2\end{smallmatrix} $$

\noindent The one given by the finite simple group $\PSL(2,9)$:
$$ \begin{smallmatrix}1&0&0&0&0&0&0 \\ 0&1&0&0&0&0&0 \\ 0&0&1&0&0&0&0 \\ 0&0&0&1&0&0&0 \\ 0&0&0&0&1&0&0 \\ 0&0&0&0&0&1&0 \\ 0&0&0&0&0&0&1\end{smallmatrix} ,   \ \begin{smallmatrix}0&1&0&0&0&0&0 \\ 1&1&0&0&0&1&1 \\ 0&0&0&1&1&1&0 \\ 0&0&1&1&1&1&1 \\ 0&0&1&1&1&1&1 \\ 0&1&1&1&1&1&1 \\ 0&1&0&1&1&1&2\end{smallmatrix} ,   \ \begin{smallmatrix}0&0&1&0&0&0&0 \\ 0&0&0&1&1&1&0 \\ 1&0&1&0&0&1&1 \\ 0&1&0&1&1&1&1 \\ 0&1&0&1&1&1&1 \\ 0&1&1&1&1&1&1 \\ 0&0&1&1&1&1&2\end{smallmatrix} ,   \ \begin{smallmatrix}0&0&0&1&0&0&0 \\ 0&0&1&1&1&1&1 \\ 0&1&0&1&1&1&1 \\ 1&1&1&2&1&1&2 \\ 0&1&1&1&1&2&2 \\ 0&1&1&1&2&2&2 \\ 0&1&1&2&2&2&2\end{smallmatrix} ,   \ \begin{smallmatrix}0&0&0&0&1&0&0 \\ 0&0&1&1&1&1&1 \\ 0&1&0&1&1&1&1 \\ 0&1&1&1&1&2&2 \\ 1&1&1&1&2&1&2 \\ 0&1&1&2&1&2&2 \\ 0&1&1&2&2&2&2\end{smallmatrix} ,   \ \begin{smallmatrix}0&0&0&0&0&1&0 \\ 0&1&1&1&1&1&1 \\ 0&1&1&1&1&1&1 \\ 0&1&1&1&2&2&2 \\ 0&1&1&2&1&2&2 \\ 1&1&1&2&2&2&2 \\ 0&1&1&2&2&2&3\end{smallmatrix} ,   \ \begin{smallmatrix}0&0&0&0&0&0&1 \\ 0&1&0&1&1&1&2 \\ 0&0&1&1&1&1&2 \\ 0&1&1&2&2&2&2 \\ 0&1&1&2&2&2&2 \\ 0&1&1&2&2&2&3 \\ 1&2&2&2&2&3&2\end{smallmatrix} $$

\item Rank $7$ and $\FPdim$ $7980$, four of type $[[1, 1], [19, 1], [20, 1], [21, 1], [42, 2], [57, 1]]$:
$$ \begin{smallmatrix}1&0&0&0&0&0&0 \\ 0&1&0&0&0&0&0 \\ 0&0&1&0&0&0&0 \\ 0&0&0&1&0&0&0 \\ 0&0&0&0&1&0&0 \\ 0&0&0&0&0&1&0 \\ 0&0&0&0&0&0&1\end{smallmatrix} ,   \ \begin{smallmatrix}0&1&0&0&0&0&0 \\ 1&0&0&1&2&2&3 \\ 0&0&1&1&2&2&3 \\ 0&1&1&1&2&2&3 \\ 0&2&2&2&4&4&6 \\ 0&2&2&2&4&4&6 \\ 0&3&3&3&6&6&7\end{smallmatrix} ,   \ \begin{smallmatrix}0&0&1&0&0&0&0 \\ 0&0&1&1&2&2&3 \\ 1&1&1&1&2&2&3 \\ 0&1&1&2&2&2&3 \\ 0&2&2&2&5&4&6 \\ 0&2&2&2&4&5&6 \\ 0&3&3&3&6&6&8\end{smallmatrix} ,   \ \begin{smallmatrix}0&0&0&1&0&0&0 \\ 0&1&1&1&2&2&3 \\ 0&1&1&2&2&2&3 \\ 1&1&2&2&0&4&3 \\ 0&2&2&0&9&2&6 \\ 0&2&2&4&2&7&6 \\ 0&3&3&3&6&6&9\end{smallmatrix} ,   \ \begin{smallmatrix}0&0&0&0&1&0&0 \\ 0&2&2&2&4&4&6 \\ 0&2&2&2&5&4&6 \\ 0&2&2&0&9&2&6 \\ 1&4&5&9&2&15&12 \\ 0&4&4&2&15&6&12 \\ 0&6&6&6&12&12&18\end{smallmatrix} ,   \ \begin{smallmatrix}0&0&0&0&0&1&0 \\ 0&2&2&2&4&4&6 \\ 0&2&2&2&4&5&6 \\ 0&2&2&4&2&7&6 \\ 0&4&4&2&15&6&12 \\ 1&4&5&7&6&12&12 \\ 0&6&6&6&12&12&18\end{smallmatrix} ,   \ \begin{smallmatrix}0&0&0&0&0&0&1 \\ 0&3&3&3&6&6&7 \\ 0&3&3&3&6&6&8 \\ 0&3&3&3&6&6&9 \\ 0&6&6&6&12&12&18 \\ 0&6&6&6&12&12&18 \\ 1&7&8&9&18&18&22\end{smallmatrix} $$

$$ \begin{smallmatrix}1&0&0&0&0&0&0 \\ 0&1&0&0&0&0&0 \\ 0&0&1&0&0&0&0 \\ 0&0&0&1&0&0&0 \\ 0&0&0&0&1&0&0 \\ 0&0&0&0&0&1&0 \\ 0&0&0&0&0&0&1\end{smallmatrix} ,   \ \begin{smallmatrix}0&1&0&0&0&0&0 \\ 1&0&0&1&2&2&3 \\ 0&0&1&1&2&2&3 \\ 0&1&1&1&2&2&3 \\ 0&2&2&2&4&4&6 \\ 0&2&2&2&4&4&6 \\ 0&3&3&3&6&6&7\end{smallmatrix} ,   \ \begin{smallmatrix}0&0&1&0&0&0&0 \\ 0&0&1&1&2&2&3 \\ 1&1&1&1&2&2&3 \\ 0&1&1&2&2&2&3 \\ 0&2&2&2&5&4&6 \\ 0&2&2&2&4&5&6 \\ 0&3&3&3&6&6&8\end{smallmatrix} ,   \ \begin{smallmatrix}0&0&0&1&0&0&0 \\ 0&1&1&1&2&2&3 \\ 0&1&1&2&2&2&3 \\ 1&1&2&2&0&4&3 \\ 0&2&2&0&7&4&6 \\ 0&2&2&4&4&5&6 \\ 0&3&3&3&6&6&9\end{smallmatrix} ,   \ \begin{smallmatrix}0&0&0&0&1&0&0 \\ 0&2&2&2&4&4&6 \\ 0&2&2&2&5&4&6 \\ 0&2&2&0&7&4&6 \\ 1&4&5&7&7&11&12 \\ 0&4&4&4&11&9&12 \\ 0&6&6&6&12&12&18\end{smallmatrix} ,   \ \begin{smallmatrix}0&0&0&0&0&1&0 \\ 0&2&2&2&4&4&6 \\ 0&2&2&2&4&5&6 \\ 0&2&2&4&4&5&6 \\ 0&4&4&4&11&9&12 \\ 1&4&5&5&9&10&12 \\ 0&6&6&6&12&12&18\end{smallmatrix} ,   \ \begin{smallmatrix}0&0&0&0&0&0&1 \\ 0&3&3&3&6&6&7 \\ 0&3&3&3&6&6&8 \\ 0&3&3&3&6&6&9 \\ 0&6&6&6&12&12&18 \\ 0&6&6&6&12&12&18 \\ 1&7&8&9&18&18&22\end{smallmatrix} $$

$$ \begin{smallmatrix}1&0&0&0&0&0&0 \\ 0&1&0&0&0&0&0 \\ 0&0&1&0&0&0&0 \\ 0&0&0&1&0&0&0 \\ 0&0&0&0&1&0&0 \\ 0&0&0&0&0&1&0 \\ 0&0&0&0&0&0&1\end{smallmatrix} ,   \ \begin{smallmatrix}0&1&0&0&0&0&0 \\ 1&0&0&1&2&2&3 \\ 0&0&1&1&2&2&3 \\ 0&1&1&1&2&2&3 \\ 0&2&2&2&4&4&6 \\ 0&2&2&2&4&4&6 \\ 0&3&3&3&6&6&7\end{smallmatrix} ,   \ \begin{smallmatrix}0&0&1&0&0&0&0 \\ 0&0&1&1&2&2&3 \\ 1&1&1&1&2&2&3 \\ 0&1&1&2&2&2&3 \\ 0&2&2&2&5&4&6 \\ 0&2&2&2&4&5&6 \\ 0&3&3&3&6&6&8\end{smallmatrix} ,   \ \begin{smallmatrix}0&0&0&1&0&0&0 \\ 0&1&1&1&2&2&3 \\ 0&1&1&2&2&2&3 \\ 1&1&2&2&0&4&3 \\ 0&2&2&0&5&6&6 \\ 0&2&2&4&6&3&6 \\ 0&3&3&3&6&6&9\end{smallmatrix} ,   \ \begin{smallmatrix}0&0&0&0&1&0&0 \\ 0&2&2&2&4&4&6 \\ 0&2&2&2&5&4&6 \\ 0&2&2&0&5&6&6 \\ 1&4&5&5&8&11&12 \\ 0&4&4&6&11&8&12 \\ 0&6&6&6&12&12&18\end{smallmatrix} ,   \ \begin{smallmatrix}0&0&0&0&0&1&0 \\ 0&2&2&2&4&4&6 \\ 0&2&2&2&4&5&6 \\ 0&2&2&4&6&3&6 \\ 0&4&4&6&11&8&12 \\ 1&4&5&3&8&12&12 \\ 0&6&6&6&12&12&18\end{smallmatrix} ,   \ \begin{smallmatrix}0&0&0&0&0&0&1 \\ 0&3&3&3&6&6&7 \\ 0&3&3&3&6&6&8 \\ 0&3&3&3&6&6&9 \\ 0&6&6&6&12&12&18 \\ 0&6&6&6&12&12&18 \\ 1&7&8&9&18&18&22\end{smallmatrix} $$

$$ \begin{smallmatrix}1&0&0&0&0&0&0 \\ 0&1&0&0&0&0&0 \\ 0&0&1&0&0&0&0 \\ 0&0&0&1&0&0&0 \\ 0&0&0&0&1&0&0 \\ 0&0&0&0&0&1&0 \\ 0&0&0&0&0&0&1\end{smallmatrix} ,   \ \begin{smallmatrix}0&1&0&0&0&0&0 \\ 1&0&0&1&2&2&3 \\ 0&0&1&1&2&2&3 \\ 0&1&1&1&2&2&3 \\ 0&2&2&2&4&4&6 \\ 0&2&2&2&4&4&6 \\ 0&3&3&3&6&6&7\end{smallmatrix} ,   \ \begin{smallmatrix}0&0&1&0&0&0&0 \\ 0&0&1&1&2&2&3 \\ 1&1&1&1&2&2&3 \\ 0&1&1&2&2&2&3 \\ 0&2&2&2&5&4&6 \\ 0&2&2&2&4&5&6 \\ 0&3&3&3&6&6&8\end{smallmatrix} ,   \ \begin{smallmatrix}0&0&0&1&0&0&0 \\ 0&1&1&1&2&2&3 \\ 0&1&1&2&2&2&3 \\ 1&1&2&2&0&4&3 \\ 0&2&2&0&3&8&6 \\ 0&2&2&4&8&1&6 \\ 0&3&3&3&6&6&9\end{smallmatrix} ,   \ \begin{smallmatrix}0&0&0&0&1&0&0 \\ 0&2&2&2&4&4&6 \\ 0&2&2&2&5&4&6 \\ 0&2&2&0&3&8&6 \\ 1&4&5&3&5&15&12 \\ 0&4&4&8&15&3&12 \\ 0&6&6&6&12&12&18\end{smallmatrix} ,   \ \begin{smallmatrix}0&0&0&0&0&1&0 \\ 0&2&2&2&4&4&6 \\ 0&2&2&2&4&5&6 \\ 0&2&2&4&8&1&6 \\ 0&4&4&8&15&3&12 \\ 1&4&5&1&3&18&12 \\ 0&6&6&6&12&12&18\end{smallmatrix} ,   \ \begin{smallmatrix}0&0&0&0&0&0&1 \\ 0&3&3&3&6&6&7 \\ 0&3&3&3&6&6&8 \\ 0&3&3&3&6&6&9 \\ 0&6&6&6&12&12&18 \\ 0&6&6&6&12&12&18 \\ 1&7&8&9&18&18&22\end{smallmatrix} $$

\item Rank $8$ and $\FPdim$ $660$,  fifteen of type $[[1,1],[5,2],[10,2],[11,1],[12,2]]$:
$$ \begin{smallmatrix}1&0&0&0&0&0&0&0 \\ 0&1&0&0&0&0&0&0 \\ 0&0&1&0&0&0&0&0 \\ 0&0&0&1&0&0&0&0 \\ 0&0&0&0&1&0&0&0 \\ 0&0&0&0&0&1&0&0 \\ 0&0&0&0&0&0&1&0 \\ 0&0&0&0&0&0&0&1\end{smallmatrix} ,   \ \begin{smallmatrix}0&1&0&0&0&0&0&0 \\ 0&0&1&1&1&0&0&0 \\ 1&0&0&0&0&0&1&1 \\ 0&0&1&0&1&1&1&1 \\ 0&0&1&1&0&1&1&1 \\ 0&0&0&1&1&1&1&1 \\ 0&1&0&1&1&1&1&1 \\ 0&1&0&1&1&1&1&1\end{smallmatrix} ,   \ \begin{smallmatrix}0&0&1&0&0&0&0&0 \\ 1&0&0&0&0&0&1&1 \\ 0&1&0&1&1&0&0&0 \\ 0&1&0&0&1&1&1&1 \\ 0&1&0&1&0&1&1&1 \\ 0&0&0&1&1&1&1&1 \\ 0&0&1&1&1&1&1&1 \\ 0&0&1&1&1&1&1&1\end{smallmatrix} ,   \ \begin{smallmatrix}0&0&0&1&0&0&0&0 \\ 0&0&1&0&1&1&1&1 \\ 0&1&0&0&1&1&1&1 \\ 1&0&0&2&2&1&2&2 \\ 0&1&1&2&0&2&2&2 \\ 0&1&1&1&2&2&2&2 \\ 0&1&1&2&2&2&2&2 \\ 0&1&1&2&2&2&2&2\end{smallmatrix} ,   \ \begin{smallmatrix}0&0&0&0&1&0&0&0 \\ 0&0&1&1&0&1&1&1 \\ 0&1&0&1&0&1&1&1 \\ 0&1&1&2&0&2&2&2 \\ 1&0&0&0&4&1&2&2 \\ 0&1&1&2&1&2&2&2 \\ 0&1&1&2&2&2&2&2 \\ 0&1&1&2&2&2&2&2\end{smallmatrix} ,   \ \begin{smallmatrix}0&0&0&0&0&1&0&0 \\ 0&0&0&1&1&1&1&1 \\ 0&0&0&1&1&1&1&1 \\ 0&1&1&1&2&2&2&2 \\ 0&1&1&2&1&2&2&2 \\ 1&1&1&2&2&2&2&2 \\ 0&1&1&2&2&2&3&2 \\ 0&1&1&2&2&2&2&3\end{smallmatrix} ,   \ \begin{smallmatrix}0&0&0&0&0&0&1&0 \\ 0&1&0&1&1&1&1&1 \\ 0&0&1&1&1&1&1&1 \\ 0&1&1&2&2&2&2&2 \\ 0&1&1&2&2&2&2&2 \\ 0&1&1&2&2&2&3&2 \\ 1&1&1&2&2&3&0&5 \\ 0&1&1&2&2&2&5&1\end{smallmatrix} ,   \ \begin{smallmatrix}0&0&0&0&0&0&0&1 \\ 0&1&0&1&1&1&1&1 \\ 0&0&1&1&1&1&1&1 \\ 0&1&1&2&2&2&2&2 \\ 0&1&1&2&2&2&2&2 \\ 0&1&1&2&2&2&2&3 \\ 0&1&1&2&2&2&5&1 \\ 1&1&1&2&2&3&1&4\end{smallmatrix} $$

$$ \begin{smallmatrix}1&0&0&0&0&0&0&0 \\ 0&1&0&0&0&0&0&0 \\ 0&0&1&0&0&0&0&0 \\ 0&0&0&1&0&0&0&0 \\ 0&0&0&0&1&0&0&0 \\ 0&0&0&0&0&1&0&0 \\ 0&0&0&0&0&0&1&0 \\ 0&0&0&0&0&0&0&1\end{smallmatrix} ,   \ \begin{smallmatrix}0&1&0&0&0&0&0&0 \\ 0&0&1&1&1&0&0&0 \\ 1&0&0&0&0&0&1&1 \\ 0&0&1&0&1&1&1&1 \\ 0&0&1&1&0&1&1&1 \\ 0&0&0&1&1&1&1&1 \\ 0&1&0&1&1&1&1&1 \\ 0&1&0&1&1&1&1&1\end{smallmatrix} ,   \ \begin{smallmatrix}0&0&1&0&0&0&0&0 \\ 1&0&0&0&0&0&1&1 \\ 0&1&0&1&1&0&0&0 \\ 0&1&0&0&1&1&1&1 \\ 0&1&0&1&0&1&1&1 \\ 0&0&0&1&1&1&1&1 \\ 0&0&1&1&1&1&1&1 \\ 0&0&1&1&1&1&1&1\end{smallmatrix} ,   \ \begin{smallmatrix}0&0&0&1&0&0&0&0 \\ 0&0&1&0&1&1&1&1 \\ 0&1&0&0&1&1&1&1 \\ 1&0&0&2&2&1&2&2 \\ 0&1&1&2&0&2&2&2 \\ 0&1&1&1&2&2&2&2 \\ 0&1&1&2&2&2&2&2 \\ 0&1&1&2&2&2&2&2\end{smallmatrix} ,   \ \begin{smallmatrix}0&0&0&0&1&0&0&0 \\ 0&0&1&1&0&1&1&1 \\ 0&1&0&1&0&1&1&1 \\ 0&1&1&2&0&2&2&2 \\ 1&0&0&0&4&1&2&2 \\ 0&1&1&2&1&2&2&2 \\ 0&1&1&2&2&2&2&2 \\ 0&1&1&2&2&2&2&2\end{smallmatrix} ,   \ \begin{smallmatrix}0&0&0&0&0&1&0&0 \\ 0&0&0&1&1&1&1&1 \\ 0&0&0&1&1&1&1&1 \\ 0&1&1&1&2&2&2&2 \\ 0&1&1&2&1&2&2&2 \\ 1&1&1&2&2&2&2&2 \\ 0&1&1&2&2&2&3&2 \\ 0&1&1&2&2&2&2&3\end{smallmatrix} ,   \ \begin{smallmatrix}0&0&0&0&0&0&1&0 \\ 0&1&0&1&1&1&1&1 \\ 0&0&1&1&1&1&1&1 \\ 0&1&1&2&2&2&2&2 \\ 0&1&1&2&2&2&2&2 \\ 0&1&1&2&2&2&3&2 \\ 1&1&1&2&2&3&1&4 \\ 0&1&1&2&2&2&4&2\end{smallmatrix} ,   \ \begin{smallmatrix}0&0&0&0&0&0&0&1 \\ 0&1&0&1&1&1&1&1 \\ 0&0&1&1&1&1&1&1 \\ 0&1&1&2&2&2&2&2 \\ 0&1&1&2&2&2&2&2 \\ 0&1&1&2&2&2&2&3 \\ 0&1&1&2&2&2&4&2 \\ 1&1&1&2&2&3&2&3\end{smallmatrix} $$

$$ \begin{smallmatrix}1&0&0&0&0&0&0&0 \\ 0&1&0&0&0&0&0&0 \\ 0&0&1&0&0&0&0&0 \\ 0&0&0&1&0&0&0&0 \\ 0&0&0&0&1&0&0&0 \\ 0&0&0&0&0&1&0&0 \\ 0&0&0&0&0&0&1&0 \\ 0&0&0&0&0&0&0&1\end{smallmatrix} ,   \ \begin{smallmatrix}0&1&0&0&0&0&0&0 \\ 0&0&1&1&1&0&0&0 \\ 1&0&0&0&0&0&1&1 \\ 0&0&1&0&1&1&1&1 \\ 0&0&1&1&0&1&1&1 \\ 0&0&0&1&1&1&1&1 \\ 0&1&0&1&1&1&1&1 \\ 0&1&0&1&1&1&1&1\end{smallmatrix} ,   \ \begin{smallmatrix}0&0&1&0&0&0&0&0 \\ 1&0&0&0&0&0&1&1 \\ 0&1&0&1&1&0&0&0 \\ 0&1&0&0&1&1&1&1 \\ 0&1&0&1&0&1&1&1 \\ 0&0&0&1&1&1&1&1 \\ 0&0&1&1&1&1&1&1 \\ 0&0&1&1&1&1&1&1\end{smallmatrix} ,   \ \begin{smallmatrix}0&0&0&1&0&0&0&0 \\ 0&0&1&0&1&1&1&1 \\ 0&1&0&0&1&1&1&1 \\ 1&0&0&2&2&1&2&2 \\ 0&1&1&2&0&2&2&2 \\ 0&1&1&1&2&2&2&2 \\ 0&1&1&2&2&2&2&2 \\ 0&1&1&2&2&2&2&2\end{smallmatrix} ,   \ \begin{smallmatrix}0&0&0&0&1&0&0&0 \\ 0&0&1&1&0&1&1&1 \\ 0&1&0&1&0&1&1&1 \\ 0&1&1&2&0&2&2&2 \\ 1&0&0&0&4&1&2&2 \\ 0&1&1&2&1&2&2&2 \\ 0&1&1&2&2&2&2&2 \\ 0&1&1&2&2&2&2&2\end{smallmatrix} ,   \ \begin{smallmatrix}0&0&0&0&0&1&0&0 \\ 0&0&0&1&1&1&1&1 \\ 0&0&0&1&1&1&1&1 \\ 0&1&1&1&2&2&2&2 \\ 0&1&1&2&1&2&2&2 \\ 1&1&1&2&2&2&2&2 \\ 0&1&1&2&2&2&3&2 \\ 0&1&1&2&2&2&2&3\end{smallmatrix} ,   \ \begin{smallmatrix}0&0&0&0&0&0&1&0 \\ 0&1&0&1&1&1&1&1 \\ 0&0&1&1&1&1&1&1 \\ 0&1&1&2&2&2&2&2 \\ 0&1&1&2&2&2&2&2 \\ 0&1&1&2&2&2&3&2 \\ 1&1&1&2&2&3&2&3 \\ 0&1&1&2&2&2&3&3\end{smallmatrix} ,   \ \begin{smallmatrix}0&0&0&0&0&0&0&1 \\ 0&1&0&1&1&1&1&1 \\ 0&0&1&1&1&1&1&1 \\ 0&1&1&2&2&2&2&2 \\ 0&1&1&2&2&2&2&2 \\ 0&1&1&2&2&2&2&3 \\ 0&1&1&2&2&2&3&3 \\ 1&1&1&2&2&3&3&2\end{smallmatrix} $$
$$ \begin{smallmatrix}1&0&0&0&0&0&0&0 \\ 0&1&0&0&0&0&0&0 \\ 0&0&1&0&0&0&0&0 \\ 0&0&0&1&0&0&0&0 \\ 0&0&0&0&1&0&0&0 \\ 0&0&0&0&0&1&0&0 \\ 0&0&0&0&0&0&1&0 \\ 0&0&0&0&0&0&0&1\end{smallmatrix} ,   \ \begin{smallmatrix}0&1&0&0&0&0&0&0 \\ 0&0&1&1&1&0&0&0 \\ 1&0&0&0&0&0&1&1 \\ 0&0&1&0&1&1&1&1 \\ 0&0&1&1&0&1&1&1 \\ 0&0&0&1&1&1&1&1 \\ 0&1&0&1&1&1&1&1 \\ 0&1&0&1&1&1&1&1\end{smallmatrix} ,   \ \begin{smallmatrix}0&0&1&0&0&0&0&0 \\ 1&0&0&0&0&0&1&1 \\ 0&1&0&1&1&0&0&0 \\ 0&1&0&0&1&1&1&1 \\ 0&1&0&1&0&1&1&1 \\ 0&0&0&1&1&1&1&1 \\ 0&0&1&1&1&1&1&1 \\ 0&0&1&1&1&1&1&1\end{smallmatrix} ,   \ \begin{smallmatrix}0&0&0&1&0&0&0&0 \\ 0&0&1&0&1&1&1&1 \\ 0&1&0&0&1&1&1&1 \\ 1&0&0&3&1&1&2&2 \\ 0&1&1&1&1&2&2&2 \\ 0&1&1&1&2&2&2&2 \\ 0&1&1&2&2&2&2&2 \\ 0&1&1&2&2&2&2&2\end{smallmatrix} ,   \ \begin{smallmatrix}0&0&0&0&1&0&0&0 \\ 0&0&1&1&0&1&1&1 \\ 0&1&0&1&0&1&1&1 \\ 0&1&1&1&1&2&2&2 \\ 1&0&0&1&3&1&2&2 \\ 0&1&1&2&1&2&2&2 \\ 0&1&1&2&2&2&2&2 \\ 0&1&1&2&2&2&2&2\end{smallmatrix} ,   \ \begin{smallmatrix}0&0&0&0&0&1&0&0 \\ 0&0&0&1&1&1&1&1 \\ 0&0&0&1&1&1&1&1 \\ 0&1&1&1&2&2&2&2 \\ 0&1&1&2&1&2&2&2 \\ 1&1&1&2&2&2&2&2 \\ 0&1&1&2&2&2&3&2 \\ 0&1&1&2&2&2&2&3\end{smallmatrix} ,   \ \begin{smallmatrix}0&0&0&0&0&0&1&0 \\ 0&1&0&1&1&1&1&1 \\ 0&0&1&1&1&1&1&1 \\ 0&1&1&2&2&2&2&2 \\ 0&1&1&2&2&2&2&2 \\ 0&1&1&2&2&2&3&2 \\ 1&1&1&2&2&3&0&5 \\ 0&1&1&2&2&2&5&1\end{smallmatrix} ,   \ \begin{smallmatrix}0&0&0&0&0&0&0&1 \\ 0&1&0&1&1&1&1&1 \\ 0&0&1&1&1&1&1&1 \\ 0&1&1&2&2&2&2&2 \\ 0&1&1&2&2&2&2&2 \\ 0&1&1&2&2&2&2&3 \\ 0&1&1&2&2&2&5&1 \\ 1&1&1&2&2&3&1&4\end{smallmatrix} $$

$$ \begin{smallmatrix}1&0&0&0&0&0&0&0 \\ 0&1&0&0&0&0&0&0 \\ 0&0&1&0&0&0&0&0 \\ 0&0&0&1&0&0&0&0 \\ 0&0&0&0&1&0&0&0 \\ 0&0&0&0&0&1&0&0 \\ 0&0&0&0&0&0&1&0 \\ 0&0&0&0&0&0&0&1\end{smallmatrix} ,   \ \begin{smallmatrix}0&1&0&0&0&0&0&0 \\ 0&0&1&1&1&0&0&0 \\ 1&0&0&0&0&0&1&1 \\ 0&0&1&0&1&1&1&1 \\ 0&0&1&1&0&1&1&1 \\ 0&0&0&1&1&1&1&1 \\ 0&1&0&1&1&1&1&1 \\ 0&1&0&1&1&1&1&1\end{smallmatrix} ,   \ \begin{smallmatrix}0&0&1&0&0&0&0&0 \\ 1&0&0&0&0&0&1&1 \\ 0&1&0&1&1&0&0&0 \\ 0&1&0&0&1&1&1&1 \\ 0&1&0&1&0&1&1&1 \\ 0&0&0&1&1&1&1&1 \\ 0&0&1&1&1&1&1&1 \\ 0&0&1&1&1&1&1&1\end{smallmatrix} ,   \ \begin{smallmatrix}0&0&0&1&0&0&0&0 \\ 0&0&1&0&1&1&1&1 \\ 0&1&0&0&1&1&1&1 \\ 1&0&0&3&1&1&2&2 \\ 0&1&1&1&1&2&2&2 \\ 0&1&1&1&2&2&2&2 \\ 0&1&1&2&2&2&2&2 \\ 0&1&1&2&2&2&2&2\end{smallmatrix} ,   \ \begin{smallmatrix}0&0&0&0&1&0&0&0 \\ 0&0&1&1&0&1&1&1 \\ 0&1&0&1&0&1&1&1 \\ 0&1&1&1&1&2&2&2 \\ 1&0&0&1&3&1&2&2 \\ 0&1&1&2&1&2&2&2 \\ 0&1&1&2&2&2&2&2 \\ 0&1&1&2&2&2&2&2\end{smallmatrix} ,   \ \begin{smallmatrix}0&0&0&0&0&1&0&0 \\ 0&0&0&1&1&1&1&1 \\ 0&0&0&1&1&1&1&1 \\ 0&1&1&1&2&2&2&2 \\ 0&1&1&2&1&2&2&2 \\ 1&1&1&2&2&2&2&2 \\ 0&1&1&2&2&2&3&2 \\ 0&1&1&2&2&2&2&3\end{smallmatrix} ,   \ \begin{smallmatrix}0&0&0&0&0&0&1&0 \\ 0&1&0&1&1&1&1&1 \\ 0&0&1&1&1&1&1&1 \\ 0&1&1&2&2&2&2&2 \\ 0&1&1&2&2&2&2&2 \\ 0&1&1&2&2&2&3&2 \\ 1&1&1&2&2&3&1&4 \\ 0&1&1&2&2&2&4&2\end{smallmatrix} ,   \ \begin{smallmatrix}0&0&0&0&0&0&0&1 \\ 0&1&0&1&1&1&1&1 \\ 0&0&1&1&1&1&1&1 \\ 0&1&1&2&2&2&2&2 \\ 0&1&1&2&2&2&2&2 \\ 0&1&1&2&2&2&2&3 \\ 0&1&1&2&2&2&4&2 \\ 1&1&1&2&2&3&2&3\end{smallmatrix} $$

$$ \begin{smallmatrix}1&0&0&0&0&0&0&0 \\ 0&1&0&0&0&0&0&0 \\ 0&0&1&0&0&0&0&0 \\ 0&0&0&1&0&0&0&0 \\ 0&0&0&0&1&0&0&0 \\ 0&0&0&0&0&1&0&0 \\ 0&0&0&0&0&0&1&0 \\ 0&0&0&0&0&0&0&1\end{smallmatrix} ,   \ \begin{smallmatrix}0&1&0&0&0&0&0&0 \\ 0&0&1&1&1&0&0&0 \\ 1&0&0&0&0&0&1&1 \\ 0&0&1&1&0&1&1&1 \\ 0&0&1&0&1&1&1&1 \\ 0&0&0&1&1&1&1&1 \\ 0&1&0&1&1&1&1&1 \\ 0&1&0&1&1&1&1&1\end{smallmatrix} ,   \ \begin{smallmatrix}0&0&1&0&0&0&0&0 \\ 1&0&0&0&0&0&1&1 \\ 0&1&0&1&1&0&0&0 \\ 0&1&0&1&0&1&1&1 \\ 0&1&0&0&1&1&1&1 \\ 0&0&0&1&1&1&1&1 \\ 0&0&1&1&1&1&1&1 \\ 0&0&1&1&1&1&1&1\end{smallmatrix} ,   \ \begin{smallmatrix}0&0&0&1&0&0&0&0 \\ 0&0&1&1&0&1&1&1 \\ 0&1&0&1&0&1&1&1 \\ 1&1&1&0&3&1&2&2 \\ 0&0&0&3&0&2&2&2 \\ 0&1&1&1&2&2&2&2 \\ 0&1&1&2&2&2&2&2 \\ 0&1&1&2&2&2&2&2\end{smallmatrix} ,   \ \begin{smallmatrix}0&0&0&0&1&0&0&0 \\ 0&0&1&0&1&1&1&1 \\ 0&1&0&0&1&1&1&1 \\ 0&0&0&3&0&2&2&2 \\ 1&1&1&0&3&1&2&2 \\ 0&1&1&2&1&2&2&2 \\ 0&1&1&2&2&2&2&2 \\ 0&1&1&2&2&2&2&2\end{smallmatrix} ,   \ \begin{smallmatrix}0&0&0&0&0&1&0&0 \\ 0&0&0&1&1&1&1&1 \\ 0&0&0&1&1&1&1&1 \\ 0&1&1&1&2&2&2&2 \\ 0&1&1&2&1&2&2&2 \\ 1&1&1&2&2&2&2&2 \\ 0&1&1&2&2&2&3&2 \\ 0&1&1&2&2&2&2&3\end{smallmatrix} ,   \ \begin{smallmatrix}0&0&0&0&0&0&1&0 \\ 0&1&0&1&1&1&1&1 \\ 0&0&1&1&1&1&1&1 \\ 0&1&1&2&2&2&2&2 \\ 0&1&1&2&2&2&2&2 \\ 0&1&1&2&2&2&3&2 \\ 1&1&1&2&2&3&0&5 \\ 0&1&1&2&2&2&5&1\end{smallmatrix} ,   \ \begin{smallmatrix}0&0&0&0&0&0&0&1 \\ 0&1&0&1&1&1&1&1 \\ 0&0&1&1&1&1&1&1 \\ 0&1&1&2&2&2&2&2 \\ 0&1&1&2&2&2&2&2 \\ 0&1&1&2&2&2&2&3 \\ 0&1&1&2&2&2&5&1 \\ 1&1&1&2&2&3&1&4\end{smallmatrix} $$

$$ \begin{smallmatrix}1&0&0&0&0&0&0&0 \\ 0&1&0&0&0&0&0&0 \\ 0&0&1&0&0&0&0&0 \\ 0&0&0&1&0&0&0&0 \\ 0&0&0&0&1&0&0&0 \\ 0&0&0&0&0&1&0&0 \\ 0&0&0&0&0&0&1&0 \\ 0&0&0&0&0&0&0&1\end{smallmatrix} ,   \ \begin{smallmatrix}0&1&0&0&0&0&0&0 \\ 0&0&1&1&1&0&0&0 \\ 1&0&0&0&0&0&1&1 \\ 0&0&1&1&0&1&1&1 \\ 0&0&1&0&1&1&1&1 \\ 0&0&0&1&1&1&1&1 \\ 0&1&0&1&1&1&1&1 \\ 0&1&0&1&1&1&1&1\end{smallmatrix} ,   \ \begin{smallmatrix}0&0&1&0&0&0&0&0 \\ 1&0&0&0&0&0&1&1 \\ 0&1&0&1&1&0&0&0 \\ 0&1&0&1&0&1&1&1 \\ 0&1&0&0&1&1&1&1 \\ 0&0&0&1&1&1&1&1 \\ 0&0&1&1&1&1&1&1 \\ 0&0&1&1&1&1&1&1\end{smallmatrix} ,   \ \begin{smallmatrix}0&0&0&1&0&0&0&0 \\ 0&0&1&1&0&1&1&1 \\ 0&1&0&1&0&1&1&1 \\ 1&1&1&0&3&1&2&2 \\ 0&0&0&3&0&2&2&2 \\ 0&1&1&1&2&2&2&2 \\ 0&1&1&2&2&2&2&2 \\ 0&1&1&2&2&2&2&2\end{smallmatrix} ,   \ \begin{smallmatrix}0&0&0&0&1&0&0&0 \\ 0&0&1&0&1&1&1&1 \\ 0&1&0&0&1&1&1&1 \\ 0&0&0&3&0&2&2&2 \\ 1&1&1&0&3&1&2&2 \\ 0&1&1&2&1&2&2&2 \\ 0&1&1&2&2&2&2&2 \\ 0&1&1&2&2&2&2&2\end{smallmatrix} ,   \ \begin{smallmatrix}0&0&0&0&0&1&0&0 \\ 0&0&0&1&1&1&1&1 \\ 0&0&0&1&1&1&1&1 \\ 0&1&1&1&2&2&2&2 \\ 0&1&1&2&1&2&2&2 \\ 1&1&1&2&2&2&2&2 \\ 0&1&1&2&2&2&3&2 \\ 0&1&1&2&2&2&2&3\end{smallmatrix} ,   \ \begin{smallmatrix}0&0&0&0&0&0&1&0 \\ 0&1&0&1&1&1&1&1 \\ 0&0&1&1&1&1&1&1 \\ 0&1&1&2&2&2&2&2 \\ 0&1&1&2&2&2&2&2 \\ 0&1&1&2&2&2&3&2 \\ 1&1&1&2&2&3&1&4 \\ 0&1&1&2&2&2&4&2\end{smallmatrix} ,   \ \begin{smallmatrix}0&0&0&0&0&0&0&1 \\ 0&1&0&1&1&1&1&1 \\ 0&0&1&1&1&1&1&1 \\ 0&1&1&2&2&2&2&2 \\ 0&1&1&2&2&2&2&2 \\ 0&1&1&2&2&2&2&3 \\ 0&1&1&2&2&2&4&2 \\ 1&1&1&2&2&3&2&3\end{smallmatrix} $$

$$ \begin{smallmatrix}1&0&0&0&0&0&0&0 \\ 0&1&0&0&0&0&0&0 \\ 0&0&1&0&0&0&0&0 \\ 0&0&0&1&0&0&0&0 \\ 0&0&0&0&1&0&0&0 \\ 0&0&0&0&0&1&0&0 \\ 0&0&0&0&0&0&1&0 \\ 0&0&0&0&0&0&0&1\end{smallmatrix} ,   \ \begin{smallmatrix}0&1&0&0&0&0&0&0 \\ 0&0&1&1&1&0&0&0 \\ 1&0&0&0&0&0&1&1 \\ 0&0&1&1&0&1&1&1 \\ 0&0&1&0&1&1&1&1 \\ 0&0&0&1&1&1&1&1 \\ 0&1&0&1&1&1&1&1 \\ 0&1&0&1&1&1&1&1\end{smallmatrix} ,   \ \begin{smallmatrix}0&0&1&0&0&0&0&0 \\ 1&0&0&0&0&0&1&1 \\ 0&1&0&1&1&0&0&0 \\ 0&1&0&1&0&1&1&1 \\ 0&1&0&0&1&1&1&1 \\ 0&0&0&1&1&1&1&1 \\ 0&0&1&1&1&1&1&1 \\ 0&0&1&1&1&1&1&1\end{smallmatrix} ,   \ \begin{smallmatrix}0&0&0&1&0&0&0&0 \\ 0&0&1&1&0&1&1&1 \\ 0&1&0&1&0&1&1&1 \\ 1&1&1&0&3&1&2&2 \\ 0&0&0&3&0&2&2&2 \\ 0&1&1&1&2&2&2&2 \\ 0&1&1&2&2&2&2&2 \\ 0&1&1&2&2&2&2&2\end{smallmatrix} ,   \ \begin{smallmatrix}0&0&0&0&1&0&0&0 \\ 0&0&1&0&1&1&1&1 \\ 0&1&0&0&1&1&1&1 \\ 0&0&0&3&0&2&2&2 \\ 1&1&1&0&3&1&2&2 \\ 0&1&1&2&1&2&2&2 \\ 0&1&1&2&2&2&2&2 \\ 0&1&1&2&2&2&2&2\end{smallmatrix} ,   \ \begin{smallmatrix}0&0&0&0&0&1&0&0 \\ 0&0&0&1&1&1&1&1 \\ 0&0&0&1&1&1&1&1 \\ 0&1&1&1&2&2&2&2 \\ 0&1&1&2&1&2&2&2 \\ 1&1&1&2&2&2&2&2 \\ 0&1&1&2&2&2&3&2 \\ 0&1&1&2&2&2&2&3\end{smallmatrix} ,   \ \begin{smallmatrix}0&0&0&0&0&0&1&0 \\ 0&1&0&1&1&1&1&1 \\ 0&0&1&1&1&1&1&1 \\ 0&1&1&2&2&2&2&2 \\ 0&1&1&2&2&2&2&2 \\ 0&1&1&2&2&2&3&2 \\ 1&1&1&2&2&3&2&3 \\ 0&1&1&2&2&2&3&3\end{smallmatrix} ,   \ \begin{smallmatrix}0&0&0&0&0&0&0&1 \\ 0&1&0&1&1&1&1&1 \\ 0&0&1&1&1&1&1&1 \\ 0&1&1&2&2&2&2&2 \\ 0&1&1&2&2&2&2&2 \\ 0&1&1&2&2&2&2&3 \\ 0&1&1&2&2&2&3&3 \\ 1&1&1&2&2&3&3&2\end{smallmatrix} $$

$$ \begin{smallmatrix}1&0&0&0&0&0&0&0 \\ 0&1&0&0&0&0&0&0 \\ 0&0&1&0&0&0&0&0 \\ 0&0&0&1&0&0&0&0 \\ 0&0&0&0&1&0&0&0 \\ 0&0&0&0&0&1&0&0 \\ 0&0&0&0&0&0&1&0 \\ 0&0&0&0&0&0&0&1\end{smallmatrix} ,   \ \begin{smallmatrix}0&1&0&0&0&0&0&0 \\ 0&0&1&1&1&0&0&0 \\ 1&0&0&0&0&0&1&1 \\ 0&0&1&1&0&1&1&1 \\ 0&0&1&0&1&1&1&1 \\ 0&0&0&1&1&1&1&1 \\ 0&1&0&1&1&1&1&1 \\ 0&1&0&1&1&1&1&1\end{smallmatrix} ,   \ \begin{smallmatrix}0&0&1&0&0&0&0&0 \\ 1&0&0&0&0&0&1&1 \\ 0&1&0&1&1&0&0&0 \\ 0&1&0&1&0&1&1&1 \\ 0&1&0&0&1&1&1&1 \\ 0&0&0&1&1&1&1&1 \\ 0&0&1&1&1&1&1&1 \\ 0&0&1&1&1&1&1&1\end{smallmatrix} ,   \ \begin{smallmatrix}0&0&0&1&0&0&0&0 \\ 0&0&1&1&0&1&1&1 \\ 0&1&0&1&0&1&1&1 \\ 1&1&1&1&2&1&2&2 \\ 0&0&0&2&1&2&2&2 \\ 0&1&1&1&2&2&2&2 \\ 0&1&1&2&2&2&2&2 \\ 0&1&1&2&2&2&2&2\end{smallmatrix} ,   \ \begin{smallmatrix}0&0&0&0&1&0&0&0 \\ 0&0&1&0&1&1&1&1 \\ 0&1&0&0&1&1&1&1 \\ 0&0&0&2&1&2&2&2 \\ 1&1&1&1&2&1&2&2 \\ 0&1&1&2&1&2&2&2 \\ 0&1&1&2&2&2&2&2 \\ 0&1&1&2&2&2&2&2\end{smallmatrix} ,   \ \begin{smallmatrix}0&0&0&0&0&1&0&0 \\ 0&0&0&1&1&1&1&1 \\ 0&0&0&1&1&1&1&1 \\ 0&1&1&1&2&2&2&2 \\ 0&1&1&2&1&2&2&2 \\ 1&1&1&2&2&2&2&2 \\ 0&1&1&2&2&2&3&2 \\ 0&1&1&2&2&2&2&3\end{smallmatrix} ,   \ \begin{smallmatrix}0&0&0&0&0&0&1&0 \\ 0&1&0&1&1&1&1&1 \\ 0&0&1&1&1&1&1&1 \\ 0&1&1&2&2&2&2&2 \\ 0&1&1&2&2&2&2&2 \\ 0&1&1&2&2&2&3&2 \\ 1&1&1&2&2&3&0&5 \\ 0&1&1&2&2&2&5&1\end{smallmatrix} ,   \ \begin{smallmatrix}0&0&0&0&0&0&0&1 \\ 0&1&0&1&1&1&1&1 \\ 0&0&1&1&1&1&1&1 \\ 0&1&1&2&2&2&2&2 \\ 0&1&1&2&2&2&2&2 \\ 0&1&1&2&2&2&2&3 \\ 0&1&1&2&2&2&5&1 \\ 1&1&1&2&2&3&1&4\end{smallmatrix} $$

$$ \begin{smallmatrix}1&0&0&0&0&0&0&0 \\ 0&1&0&0&0&0&0&0 \\ 0&0&1&0&0&0&0&0 \\ 0&0&0&1&0&0&0&0 \\ 0&0&0&0&1&0&0&0 \\ 0&0&0&0&0&1&0&0 \\ 0&0&0&0&0&0&1&0 \\ 0&0&0&0&0&0&0&1\end{smallmatrix} ,   \ \begin{smallmatrix}0&1&0&0&0&0&0&0 \\ 0&0&1&1&1&0&0&0 \\ 1&0&0&0&0&0&1&1 \\ 0&0&1&1&0&1&1&1 \\ 0&0&1&0&1&1&1&1 \\ 0&0&0&1&1&1&1&1 \\ 0&1&0&1&1&1&1&1 \\ 0&1&0&1&1&1&1&1\end{smallmatrix} ,   \ \begin{smallmatrix}0&0&1&0&0&0&0&0 \\ 1&0&0&0&0&0&1&1 \\ 0&1&0&1&1&0&0&0 \\ 0&1&0&1&0&1&1&1 \\ 0&1&0&0&1&1&1&1 \\ 0&0&0&1&1&1&1&1 \\ 0&0&1&1&1&1&1&1 \\ 0&0&1&1&1&1&1&1\end{smallmatrix} ,   \ \begin{smallmatrix}0&0&0&1&0&0&0&0 \\ 0&0&1&1&0&1&1&1 \\ 0&1&0&1&0&1&1&1 \\ 1&1&1&1&2&1&2&2 \\ 0&0&0&2&1&2&2&2 \\ 0&1&1&1&2&2&2&2 \\ 0&1&1&2&2&2&2&2 \\ 0&1&1&2&2&2&2&2\end{smallmatrix} ,   \ \begin{smallmatrix}0&0&0&0&1&0&0&0 \\ 0&0&1&0&1&1&1&1 \\ 0&1&0&0&1&1&1&1 \\ 0&0&0&2&1&2&2&2 \\ 1&1&1&1&2&1&2&2 \\ 0&1&1&2&1&2&2&2 \\ 0&1&1&2&2&2&2&2 \\ 0&1&1&2&2&2&2&2\end{smallmatrix} ,   \ \begin{smallmatrix}0&0&0&0&0&1&0&0 \\ 0&0&0&1&1&1&1&1 \\ 0&0&0&1&1&1&1&1 \\ 0&1&1&1&2&2&2&2 \\ 0&1&1&2&1&2&2&2 \\ 1&1&1&2&2&2&2&2 \\ 0&1&1&2&2&2&3&2 \\ 0&1&1&2&2&2&2&3\end{smallmatrix} ,   \ \begin{smallmatrix}0&0&0&0&0&0&1&0 \\ 0&1&0&1&1&1&1&1 \\ 0&0&1&1&1&1&1&1 \\ 0&1&1&2&2&2&2&2 \\ 0&1&1&2&2&2&2&2 \\ 0&1&1&2&2&2&3&2 \\ 1&1&1&2&2&3&1&4 \\ 0&1&1&2&2&2&4&2\end{smallmatrix} ,   \ \begin{smallmatrix}0&0&0&0&0&0&0&1 \\ 0&1&0&1&1&1&1&1 \\ 0&0&1&1&1&1&1&1 \\ 0&1&1&2&2&2&2&2 \\ 0&1&1&2&2&2&2&2 \\ 0&1&1&2&2&2&2&3 \\ 0&1&1&2&2&2&4&2 \\ 1&1&1&2&2&3&2&3\end{smallmatrix} $$

$$ \begin{smallmatrix}1&0&0&0&0&0&0&0 \\ 0&1&0&0&0&0&0&0 \\ 0&0&1&0&0&0&0&0 \\ 0&0&0&1&0&0&0&0 \\ 0&0&0&0&1&0&0&0 \\ 0&0&0&0&0&1&0&0 \\ 0&0&0&0&0&0&1&0 \\ 0&0&0&0&0&0&0&1\end{smallmatrix} ,   \ \begin{smallmatrix}0&1&0&0&0&0&0&0 \\ 0&0&1&1&1&0&0&0 \\ 1&0&0&0&0&0&1&1 \\ 0&0&1&0&1&1&1&1 \\ 0&0&1&1&0&1&1&1 \\ 0&0&0&1&1&1&1&1 \\ 0&1&0&1&1&1&1&1 \\ 0&1&0&1&1&1&1&1\end{smallmatrix} ,   \ \begin{smallmatrix}0&0&1&0&0&0&0&0 \\ 1&0&0&0&0&0&1&1 \\ 0&1&0&1&1&0&0&0 \\ 0&1&0&0&1&1&1&1 \\ 0&1&0&1&0&1&1&1 \\ 0&0&0&1&1&1&1&1 \\ 0&0&1&1&1&1&1&1 \\ 0&0&1&1&1&1&1&1\end{smallmatrix} ,   \ \begin{smallmatrix}0&0&0&1&0&0&0&0 \\ 0&0&1&0&1&1&1&1 \\ 0&1&0&0&1&1&1&1 \\ 0&1&1&2&0&2&2&2 \\ 1&0&0&2&2&1&2&2 \\ 0&1&1&1&2&2&2&2 \\ 0&1&1&2&2&2&2&2 \\ 0&1&1&2&2&2&2&2\end{smallmatrix} ,   \ \begin{smallmatrix}0&0&0&0&1&0&0&0 \\ 0&0&1&1&0&1&1&1 \\ 0&1&0&1&0&1&1&1 \\ 1&0&0&2&2&1&2&2 \\ 0&1&1&0&2&2&2&2 \\ 0&1&1&2&1&2&2&2 \\ 0&1&1&2&2&2&2&2 \\ 0&1&1&2&2&2&2&2\end{smallmatrix} ,   \ \begin{smallmatrix}0&0&0&0&0&1&0&0 \\ 0&0&0&1&1&1&1&1 \\ 0&0&0&1&1&1&1&1 \\ 0&1&1&1&2&2&2&2 \\ 0&1&1&2&1&2&2&2 \\ 1&1&1&2&2&2&2&2 \\ 0&1&1&2&2&2&3&2 \\ 0&1&1&2&2&2&2&3\end{smallmatrix} ,   \ \begin{smallmatrix}0&0&0&0&0&0&1&0 \\ 0&1&0&1&1&1&1&1 \\ 0&0&1&1&1&1&1&1 \\ 0&1&1&2&2&2&2&2 \\ 0&1&1&2&2&2&2&2 \\ 0&1&1&2&2&2&3&2 \\ 1&1&1&2&2&3&0&5 \\ 0&1&1&2&2&2&5&1\end{smallmatrix} ,   \ \begin{smallmatrix}0&0&0&0&0&0&0&1 \\ 0&1&0&1&1&1&1&1 \\ 0&0&1&1&1&1&1&1 \\ 0&1&1&2&2&2&2&2 \\ 0&1&1&2&2&2&2&2 \\ 0&1&1&2&2&2&2&3 \\ 0&1&1&2&2&2&5&1 \\ 1&1&1&2&2&3&1&4\end{smallmatrix} $$

$$ \begin{smallmatrix}1&0&0&0&0&0&0&0 \\ 0&1&0&0&0&0&0&0 \\ 0&0&1&0&0&0&0&0 \\ 0&0&0&1&0&0&0&0 \\ 0&0&0&0&1&0&0&0 \\ 0&0&0&0&0&1&0&0 \\ 0&0&0&0&0&0&1&0 \\ 0&0&0&0&0&0&0&1\end{smallmatrix} ,   \ \begin{smallmatrix}0&1&0&0&0&0&0&0 \\ 0&0&1&1&1&0&0&0 \\ 1&0&0&0&0&0&1&1 \\ 0&0&1&0&1&1&1&1 \\ 0&0&1&1&0&1&1&1 \\ 0&0&0&1&1&1&1&1 \\ 0&1&0&1&1&1&1&1 \\ 0&1&0&1&1&1&1&1\end{smallmatrix} ,   \ \begin{smallmatrix}0&0&1&0&0&0&0&0 \\ 1&0&0&0&0&0&1&1 \\ 0&1&0&1&1&0&0&0 \\ 0&1&0&0&1&1&1&1 \\ 0&1&0&1&0&1&1&1 \\ 0&0&0&1&1&1&1&1 \\ 0&0&1&1&1&1&1&1 \\ 0&0&1&1&1&1&1&1\end{smallmatrix} ,   \ \begin{smallmatrix}0&0&0&1&0&0&0&0 \\ 0&0&1&0&1&1&1&1 \\ 0&1&0&0&1&1&1&1 \\ 0&1&1&2&0&2&2&2 \\ 1&0&0&2&2&1&2&2 \\ 0&1&1&1&2&2&2&2 \\ 0&1&1&2&2&2&2&2 \\ 0&1&1&2&2&2&2&2\end{smallmatrix} ,   \ \begin{smallmatrix}0&0&0&0&1&0&0&0 \\ 0&0&1&1&0&1&1&1 \\ 0&1&0&1&0&1&1&1 \\ 1&0&0&2&2&1&2&2 \\ 0&1&1&0&2&2&2&2 \\ 0&1&1&2&1&2&2&2 \\ 0&1&1&2&2&2&2&2 \\ 0&1&1&2&2&2&2&2\end{smallmatrix} ,   \ \begin{smallmatrix}0&0&0&0&0&1&0&0 \\ 0&0&0&1&1&1&1&1 \\ 0&0&0&1&1&1&1&1 \\ 0&1&1&1&2&2&2&2 \\ 0&1&1&2&1&2&2&2 \\ 1&1&1&2&2&2&2&2 \\ 0&1&1&2&2&2&3&2 \\ 0&1&1&2&2&2&2&3\end{smallmatrix} ,   \ \begin{smallmatrix}0&0&0&0&0&0&1&0 \\ 0&1&0&1&1&1&1&1 \\ 0&0&1&1&1&1&1&1 \\ 0&1&1&2&2&2&2&2 \\ 0&1&1&2&2&2&2&2 \\ 0&1&1&2&2&2&3&2 \\ 1&1&1&2&2&3&1&4 \\ 0&1&1&2&2&2&4&2\end{smallmatrix} ,   \ \begin{smallmatrix}0&0&0&0&0&0&0&1 \\ 0&1&0&1&1&1&1&1 \\ 0&0&1&1&1&1&1&1 \\ 0&1&1&2&2&2&2&2 \\ 0&1&1&2&2&2&2&2 \\ 0&1&1&2&2&2&2&3 \\ 0&1&1&2&2&2&4&2 \\ 1&1&1&2&2&3&2&3\end{smallmatrix} $$

$$ \begin{smallmatrix}1&0&0&0&0&0&0&0 \\ 0&1&0&0&0&0&0&0 \\ 0&0&1&0&0&0&0&0 \\ 0&0&0&1&0&0&0&0 \\ 0&0&0&0&1&0&0&0 \\ 0&0&0&0&0&1&0&0 \\ 0&0&0&0&0&0&1&0 \\ 0&0&0&0&0&0&0&1\end{smallmatrix} ,   \ \begin{smallmatrix}0&1&0&0&0&0&0&0 \\ 0&0&1&1&1&0&0&0 \\ 1&0&0&0&0&0&1&1 \\ 0&0&1&0&1&1&1&1 \\ 0&0&1&1&0&1&1&1 \\ 0&0&0&1&1&1&1&1 \\ 0&1&0&1&1&1&1&1 \\ 0&1&0&1&1&1&1&1\end{smallmatrix} ,   \ \begin{smallmatrix}0&0&1&0&0&0&0&0 \\ 1&0&0&0&0&0&1&1 \\ 0&1&0&1&1&0&0&0 \\ 0&1&0&0&1&1&1&1 \\ 0&1&0&1&0&1&1&1 \\ 0&0&0&1&1&1&1&1 \\ 0&0&1&1&1&1&1&1 \\ 0&0&1&1&1&1&1&1\end{smallmatrix} ,   \ \begin{smallmatrix}0&0&0&1&0&0&0&0 \\ 0&0&1&0&1&1&1&1 \\ 0&1&0&0&1&1&1&1 \\ 0&1&1&2&0&2&2&2 \\ 1&0&0&2&2&1&2&2 \\ 0&1&1&1&2&2&2&2 \\ 0&1&1&2&2&2&2&2 \\ 0&1&1&2&2&2&2&2\end{smallmatrix} ,   \ \begin{smallmatrix}0&0&0&0&1&0&0&0 \\ 0&0&1&1&0&1&1&1 \\ 0&1&0&1&0&1&1&1 \\ 1&0&0&2&2&1&2&2 \\ 0&1&1&0&2&2&2&2 \\ 0&1&1&2&1&2&2&2 \\ 0&1&1&2&2&2&2&2 \\ 0&1&1&2&2&2&2&2\end{smallmatrix} ,   \ \begin{smallmatrix}0&0&0&0&0&1&0&0 \\ 0&0&0&1&1&1&1&1 \\ 0&0&0&1&1&1&1&1 \\ 0&1&1&1&2&2&2&2 \\ 0&1&1&2&1&2&2&2 \\ 1&1&1&2&2&2&2&2 \\ 0&1&1&2&2&2&3&2 \\ 0&1&1&2&2&2&2&3\end{smallmatrix} ,   \ \begin{smallmatrix}0&0&0&0&0&0&1&0 \\ 0&1&0&1&1&1&1&1 \\ 0&0&1&1&1&1&1&1 \\ 0&1&1&2&2&2&2&2 \\ 0&1&1&2&2&2&2&2 \\ 0&1&1&2&2&2&3&2 \\ 1&1&1&2&2&3&2&3 \\ 0&1&1&2&2&2&3&3\end{smallmatrix} ,   \ \begin{smallmatrix}0&0&0&0&0&0&0&1 \\ 0&1&0&1&1&1&1&1 \\ 0&0&1&1&1&1&1&1 \\ 0&1&1&2&2&2&2&2 \\ 0&1&1&2&2&2&2&2 \\ 0&1&1&2&2&2&2&3 \\ 0&1&1&2&2&2&3&3 \\ 1&1&1&2&2&3&3&2\end{smallmatrix} $$

\noindent The one given by the finite simple group $\mathrm{PSL}(2,11)$:
$$ \begin{smallmatrix}1&0&0&0&0&0&0&0 \\ 0&1&0&0&0&0&0&0 \\ 0&0&1&0&0&0&0&0 \\ 0&0&0&1&0&0&0&0 \\ 0&0&0&0&1&0&0&0 \\ 0&0&0&0&0&1&0&0 \\ 0&0&0&0&0&0&1&0 \\ 0&0&0&0&0&0&0&1\end{smallmatrix} ,   \ \begin{smallmatrix}0&1&0&0&0&0&0&0 \\ 0&0&1&1&1&0&0&0 \\ 1&0&0&0&0&0&1&1 \\ 0&0&1&1&0&1&1&1 \\ 0&0&1&0&1&1&1&1 \\ 0&0&0&1&1&1&1&1 \\ 0&1&0&1&1&1&1&1 \\ 0&1&0&1&1&1&1&1\end{smallmatrix} ,   \ \begin{smallmatrix}0&0&1&0&0&0&0&0 \\ 1&0&0&0&0&0&1&1 \\ 0&1&0&1&1&0&0&0 \\ 0&1&0&1&0&1&1&1 \\ 0&1&0&0&1&1&1&1 \\ 0&0&0&1&1&1&1&1 \\ 0&0&1&1&1&1&1&1 \\ 0&0&1&1&1&1&1&1\end{smallmatrix} ,   \ \begin{smallmatrix}0&0&0&1&0&0&0&0 \\ 0&0&1&1&0&1&1&1 \\ 0&1&0&1&0&1&1&1 \\ 1&1&1&1&2&1&2&2 \\ 0&0&0&2&1&2&2&2 \\  0&1&1&1&2&2&2&2 \\ 0&1&1&2&2&2&2&2 \\ 0&1&1&2&2&2&2&2\end{smallmatrix} ,   \ \begin{smallmatrix}0&0&0&0&1&0&0&0 \\ 0&0&1&0&1&1&1&1 \\ 0&1&0&0&1&1&1&1 \\  0&0&0&2&1&2&2&2 \\ 1&1&1&1&2&1&2&2 \\ 0&1&1&2&1&2&2&2 \\ 0&1&1&2&2&2&2&2 \\ 0&1&1&2&2&2&2&2\end{smallmatrix} ,   \ \begin{smallmatrix}0&0&0&0&0&1&0&0 \\ 0&0&0&1&1&1&1&1 \\ 0&0&0&1&1&1&1&1 \\ 0&1&1&1&2&2&2&2 \\ 0&1&1&2&1&2&2&2 \\ 1&1&1&2&2&2&2&2 \\ 0&1&1&2&2&2&3&2 \\ 0&1&1&2&2&2&2&3\end{smallmatrix} ,   \ \begin{smallmatrix}0&0&0&0&0&0&1&0 \\ 0&1&0&1&1&1&1&1 \\ 0&0&1&1&1&1&1&1 \\ 0&1&1&2&2&2&2&2 \\ 0&1&1&2&2&2&2&2 \\ 0&1&1&2&2&2&3&2 \\ 1&1&1&2&2&3&2&3 \\ 0&1&1&2&2&2&3&3\end{smallmatrix} ,   \ \begin{smallmatrix}0&0&0&0&0&0&0&1 \\ 0&1&0&1&1&1&1&1 \\ 0&0&1&1&1&1&1&1 \\ 0&1&1&2&2&2&2&2 \\ 0&1&1&2&2&2&2&2 \\ 0&1&1&2&2&2&2&3 \\ 0&1&1&2&2&2&3&3 \\ 1&1&1&2&2&3&3&2\end{smallmatrix} $$

\noindent The one (non group-like) satisfying Schur product property (on the dual):
$$ \begin{smallmatrix} 1&0&0&0&0&0&0&0 \\ 0&1&0&0&0&0&0&0 \\ 0&0&1&0&0&0&0&0 \\ 0&0&0&1&0&0&0&0 \\ 0&0&0&0&1&0&0&0 \\ 0&0&0&0&0&1&0&0 \\ 0&0&0&0&0&0&1&0 \\ 0&0&0&0&0&0&0&1\end{smallmatrix} ,   \ \begin{smallmatrix}0&1&0&0&0&0&0&0 \\ 0&0&1&1&1&0&0&0 \\ 1&0&0&0&0&0&1&1 \\ 0&0&1&0&1&1&1&1 \\ 0&0&1&1&0&1&1&1 \\ 0&0&0&1&1&1&1&1 \\ 0&1&0&1&1&1&1&1 \\ 0&1&0&1&1&1&1&1\end{smallmatrix} ,   \ \begin{smallmatrix}0&0&1&0&0&0&0&0 \\ 1&0&0&0&0&0&1&1 \\ 0&1&0&1&1&0&0&0 \\ 0&1&0&0&1&1&1&1 \\ 0&1&0&1&0&1&1&1 \\ 0&0&0&1&1&1&1&1 \\ 0&0&1&1&1&1&1&1 \\ 0&0&1&1&1&1&1&1\end{smallmatrix} ,   \ \begin{smallmatrix}0&0&0&1&0&0&0&0 \\ 0&0&1&0&1&1&1&1 \\ 0&1&0&0&1&1&1&1 \\ 1&0&0&3&1&1&2&2 \\ 0&1&1&1&1&2&2&2 \\ 0&1&1&1&2&2&2&2 \\ 0&1&1&2&2&2&2&2 \\ 0&1&1&2&2&2&2&2\end{smallmatrix} ,   \ \begin{smallmatrix}0&0&0&0&1&0&0&0 \\ 0&0&1&1&0&1&1&1 \\ 0&1&0&1&0&1&1&1 \\ 0&1&1&1&1&2&2&2 \\ 1&0&0&1&3&1&2&2 \\ 0&1&1&2&1&2&2&2 \\ 0&1&1&2&2&2&2&2 \\ 0&1&1&2&2&2&2&2\end{smallmatrix} ,   \ \begin{smallmatrix}0&0&0&0&0&1&0&0 \\ 0&0&0&1&1&1&1&1 \\ 0&0&0&1&1&1&1&1 \\ 0&1&1&1&2&2&2&2 \\ 0&1&1&2&1&2&2&2 \\  1&1&1&2&2&2&2&2 \\ 0&1&1&2&2&2&3&2 \\ 0&1&1&2&2&2&2&3\end{smallmatrix} ,   \ \begin{smallmatrix}0&0&0&0&0&0&1&0 \\ 0&1&0&1&1&1&1&1 \\ 0&0&1&1&1&1&1&1 \\ 0&1&1&2&2&2&2&2 \\ 0&1&1&2&2&2&2&2 \\ 0&1&1&2&2&2&3&2 \\ 1&1&1&2&2&3&2&3 \\ 0&1&1&2&2&2&3&3\end{smallmatrix} ,   \ \begin{smallmatrix}0&0&0&0&0&0&0&1 \\ 0&1&0&1&1&1&1&1 \\ 0&0&1&1&1&1&1&1 \\ 0&1&1&2&2&2&2&2 \\ 0&1&1&2&2&2&2&2 \\ 0&1&1&2&2&2&2&3 \\ 0&1&1&2&2&2&3&3 \\ 1&1&1&2&2&3&3&2\end{smallmatrix} $$

\item Rank $8$ and $\FPdim$ $990$,  five of type $[[1, 1], [9, 1], [10, 1], [11, 4], [18, 1]]$:
$$ \begin{smallmatrix}1&0&0&0&0&0&0&0 \\ 0&1&0&0&0&0&0&0 \\ 0&0&1&0&0&0&0&0 \\ 0&0&0&1&0&0&0&0 \\ 0&0&0&0&1&0&0&0 \\ 0&0&0&0&0&1&0&0 \\ 0&0&0&0&0&0&1&0 \\ 0&0&0&0&0&0&0&1\end{smallmatrix} ,   \ \begin{smallmatrix}0&1&0&0&0&0&0&0 \\ 1&4&0&1&1&1&1&0 \\ 0&0&1&1&1&1&1&2 \\ 0&1&1&1&1&1&1&2 \\ 0&1&1&1&1&1&1&2 \\ 0&1&1&1&1&1&1&2 \\ 0&1&1&1&1&1&1&2 \\ 0&0&2&2&2&2&2&3\end{smallmatrix} ,   \ \begin{smallmatrix}0&0&1&0&0&0&0&0 \\ 0&0&1&1&1&1&1&2 \\ 1&1&1&1&1&1&1&2 \\ 0&1&1&2&1&1&1&2 \\ 0&1&1&1&2&1&1&2 \\ 0&1&1&1&1&2&1&2 \\ 0&1&1&1&1&1&2&2 \\ 0&2&2&2&2&2&2&3\end{smallmatrix} ,   \ \begin{smallmatrix}0&0&0&1&0&0&0&0 \\ 0&1&1&1&1&1&1&2 \\ 0&1&1&2&1&1&1&2 \\ 1&1&2&0&0&2&3&2 \\ 0&1&1&0&2&2&2&2 \\ 0&1&1&2&2&1&1&2 \\ 0&1&1&3&2&1&0&2 \\ 0&2&2&2&2&2&2&4\end{smallmatrix} ,   \ \begin{smallmatrix}0&0&0&0&1&0&0&0 \\ 0&1&1&1&1&1&1&2 \\ 0&1&1&1&2&1&1&2 \\ 0&1&1&0&2&2&2&2 \\ 1&1&2&2&0&1&2&2 \\ 0&1&1&2&1&2&1&2 \\ 0&1&1&2&2&1&1&2 \\ 0&2&2&2&2&2&2&4\end{smallmatrix} ,   \ \begin{smallmatrix}0&0&0&0&0&1&0&0 \\ 0&1&1&1&1&1&1&2 \\ 0&1&1&1&1&2&1&2 \\ 0&1&1&2&2&1&1&2 \\ 0&1&1&2&1&2&1&2 \\ 1&1&2&1&2&0&2&2 \\ 0&1&1&1&1&2&2&2 \\ 0&2&2&2&2&2&2&4\end{smallmatrix} ,   \ \begin{smallmatrix}0&0&0&0&0&0&1&0 \\ 0&1&1&1&1&1&1&2 \\ 0&1&1&1&1&1&2&2 \\ 0&1&1&3&2&1&0&2 \\ 0&1&1&2&2&1&1&2 \\ 0&1&1&1&1&2&2&2 \\ 1&1&2&0&1&2&2&2 \\ 0&2&2&2&2&2&2&4\end{smallmatrix} ,   \ \begin{smallmatrix}0&0&0&0&0&0&0&1 \\ 0&0&2&2&2&2&2&3 \\ 0&2&2&2&2&2&2&3 \\ 0&2&2&2&2&2&2&4 \\ 0&2&2&2&2&2&2&4 \\ 0&2&2&2&2&2&2&4 \\ 0&2&2&2&2&2&2&4 \\ 1&3&3&4&4&4&4&5\end{smallmatrix} $$

$$ \begin{smallmatrix}1&0&0&0&0&0&0&0 \\ 0&1&0&0&0&0&0&0 \\ 0&0&1&0&0&0&0&0 \\ 0&0&0&1&0&0&0&0 \\ 0&0&0&0&1&0&0&0 \\ 0&0&0&0&0&1&0&0 \\ 0&0&0&0&0&0&1&0 \\ 0&0&0&0&0&0&0&1\end{smallmatrix} ,   \ \begin{smallmatrix}0&1&0&0&0&0&0&0 \\ 1&4&0&1&1&1&1&0 \\ 0&0&1&1&1&1&1&2 \\ 0&1&1&1&1&1&1&2 \\ 0&1&1&1&1&1&1&2 \\ 0&1&1&1&1&1&1&2 \\ 0&1&1&1&1&1&1&2 \\ 0&0&2&2&2&2&2&3\end{smallmatrix} ,   \ \begin{smallmatrix}0&0&1&0&0&0&0&0 \\ 0&0&1&1&1&1&1&2 \\ 1&1&1&1&1&1&1&2 \\ 0&1&1&2&1&1&1&2 \\ 0&1&1&1&2&1&1&2 \\ 0&1&1&1&1&2&1&2 \\ 0&1&1&1&1&1&2&2 \\ 0&2&2&2&2&2&2&3\end{smallmatrix} ,   \ \begin{smallmatrix}0&0&0&1&0&0&0&0 \\ 0&1&1&1&1&1&1&2 \\ 0&1&1&2&1&1&1&2 \\ 1&1&2&1&1&1&2&2 \\ 0&1&1&1&2&2&1&2 \\ 0&1&1&1&2&1&2&2 \\ 0&1&1&2&1&2&1&2 \\ 0&2&2&2&2&2&2&4\end{smallmatrix} ,   \ \begin{smallmatrix}0&0&0&0&1&0&0&0 \\ 0&1&1&1&1&1&1&2 \\ 0&1&1&1&2&1&1&2 \\ 0&1&1&1&2&2&1&2 \\ 1&1&2&2&1&1&1&2 \\ 0&1&1&2&1&1&2&2 \\ 0&1&1&1&1&2&2&2 \\ 0&2&2&2&2&2&2&4\end{smallmatrix} ,   \ \begin{smallmatrix}0&0&0&0&0&1&0&0 \\ 0&1&1&1&1&1&1&2 \\ 0&1&1&1&1&2&1&2 \\ 0&1&1&1&2&1&2&2 \\ 0&1&1&2&1&1&2&2 \\ 1&1&2&1&1&2&1&2 \\ 0&1&1&2&2&1&1&2 \\ 0&2&2&2&2&2&2&4\end{smallmatrix} ,   \ \begin{smallmatrix}0&0&0&0&0&0&1&0 \\ 0&1&1&1&1&1&1&2 \\ 0&1&1&1&1&1&2&2 \\ 0&1&1&2&1&2&1&2 \\ 0&1&1&1&1&2&2&2 \\ 0&1&1&2&2&1&1&2 \\ 1&1&2&1&2&1&1&2 \\ 0&2&2&2&2&2&2&4\end{smallmatrix} ,   \ \begin{smallmatrix}0&0&0&0&0&0&0&1 \\ 0&0&2&2&2&2&2&3 \\ 0&2&2&2&2&2&2&3 \\ 0&2&2&2&2&2&2&4 \\ 0&2&2&2&2&2&2&4 \\ 0&2&2&2&2&2&2&4 \\ 0&2&2&2&2&2&2&4 \\ 1&3&3&4&4&4&4&5\end{smallmatrix} $$

$$ \begin{smallmatrix}1&0&0&0&0&0&0&0 \\ 0&1&0&0&0&0&0&0 \\ 0&0&1&0&0&0&0&0 \\ 0&0&0&1&0&0&0&0 \\ 0&0&0&0&1&0&0&0 \\ 0&0&0&0&0&1&0&0 \\ 0&0&0&0&0&0&1&0 \\ 0&0&0&0&0&0&0&1\end{smallmatrix} ,   \ \begin{smallmatrix}0&1&0&0&0&0&0&0 \\ 1&4&0&1&1&1&1&0 \\ 0&0&1&1&1&1&1&2 \\ 0&1&1&1&1&1&1&2 \\ 0&1&1&1&1&1&1&2 \\ 0&1&1&1&1&1&1&2 \\ 0&1&1&1&1&1&1&2 \\ 0&0&2&2&2&2&2&3\end{smallmatrix} ,   \ \begin{smallmatrix}0&0&1&0&0&0&0&0 \\ 0&0&1&1&1&1&1&2 \\ 1&1&1&1&1&1&1&2 \\ 0&1&1&2&1&1&1&2 \\ 0&1&1&1&2&1&1&2 \\ 0&1&1&1&1&2&1&2 \\ 0&1&1&1&1&1&2&2 \\ 0&2&2&2&2&2&2&3\end{smallmatrix} ,   \ \begin{smallmatrix}0&0&0&1&0&0&0&0 \\ 0&1&1&1&1&1&1&2 \\ 0&1&1&2&1&1&1&2 \\ 1&1&2&1&1&1&2&2 \\ 0&1&1&1&1&2&2&2 \\ 0&1&1&1&2&1&2&2 \\ 0&1&1&2&2&2&0&2 \\ 0&2&2&2&2&2&2&4\end{smallmatrix} ,   \ \begin{smallmatrix}0&0&0&0&1&0&0&0 \\ 0&1&1&1&1&1&1&2 \\ 0&1&1&1&2&1&1&2 \\ 0&1&1&1&1&2&2&2 \\ 1&1&2&1&1&2&1&2 \\ 0&1&1&2&2&0&2&2 \\ 0&1&1&2&1&2&1&2 \\ 0&2&2&2&2&2&2&4\end{smallmatrix} ,   \ \begin{smallmatrix}0&0&0&0&0&1&0&0 \\ 0&1&1&1&1&1&1&2 \\ 0&1&1&1&1&2&1&2 \\ 0&1&1&1&2&1&2&2 \\ 0&1&1&2&2&0&2&2 \\ 1&1&2&1&0&3&1&2 \\ 0&1&1&2&2&1&1&2 \\ 0&2&2&2&2&2&2&4\end{smallmatrix} ,   \ \begin{smallmatrix}0&0&0&0&0&0&1&0 \\ 0&1&1&1&1&1&1&2 \\ 0&1&1&1&1&1&2&2 \\ 0&1&1&2&2&2&0&2 \\ 0&1&1&2&1&2&1&2 \\ 0&1&1&2&2&1&1&2 \\ 1&1&2&0&1&1&3&2 \\ 0&2&2&2&2&2&2&4\end{smallmatrix} ,   \ \begin{smallmatrix}0&0&0&0&0&0&0&1 \\ 0&0&2&2&2&2&2&3 \\ 0&2&2&2&2&2&2&3 \\ 0&2&2&2&2&2&2&4 \\ 0&2&2&2&2&2&2&4 \\ 0&2&2&2&2&2&2&4 \\ 0&2&2&2&2&2&2&4 \\ 1&3&3&4&4&4&4&5\end{smallmatrix} $$

$$ \begin{smallmatrix}1&0&0&0&0&0&0&0 \\ 0&1&0&0&0&0&0&0 \\ 0&0&1&0&0&0&0&0 \\ 0&0&0&1&0&0&0&0 \\ 0&0&0&0&1&0&0&0 \\ 0&0&0&0&0&1&0&0 \\ 0&0&0&0&0&0&1&0 \\ 0&0&0&0&0&0&0&1\end{smallmatrix} ,   \ \begin{smallmatrix}0&1&0&0&0&0&0&0 \\ 1&4&0&1&1&1&1&0 \\ 0&0&1&1&1&1&1&2 \\ 0&1&1&1&1&1&1&2 \\ 0&1&1&1&1&1&1&2 \\ 0&1&1&1&1&1&1&2 \\ 0&1&1&1&1&1&1&2 \\ 0&0&2&2&2&2&2&3\end{smallmatrix} ,   \ \begin{smallmatrix}0&0&1&0&0&0&0&0 \\ 0&0&1&1&1&1&1&2 \\ 1&1&1&1&1&1&1&2 \\ 0&1&1&2&1&1&1&2 \\ 0&1&1&1&2&1&1&2 \\ 0&1&1&1&1&2&1&2 \\ 0&1&1&1&1&1&2&2 \\ 0&2&2&2&2&2&2&3\end{smallmatrix} ,   \ \begin{smallmatrix}0&0&0&1&0&0&0&0 \\ 0&1&1&1&1&1&1&2 \\ 0&1&1&2&1&1&1&2 \\ 1&1&2&1&1&1&2&2 \\ 0&1&1&1&1&2&2&2 \\ 0&1&1&1&2&1&2&2 \\ 0&1&1&2&2&2&0&2 \\ 0&2&2&2&2&2&2&4\end{smallmatrix} ,   \ \begin{smallmatrix}0&0&0&0&1&0&0&0 \\ 0&1&1&1&1&1&1&2 \\ 0&1&1&1&2&1&1&2 \\ 0&1&1&1&1&2&2&2 \\ 1&1&2&1&2&1&1&2 \\ 0&1&1&2&1&1&2&2 \\ 0&1&1&2&1&2&1&2 \\ 0&2&2&2&2&2&2&4\end{smallmatrix} ,   \ \begin{smallmatrix}0&0&0&0&0&1&0&0 \\ 0&1&1&1&1&1&1&2 \\ 0&1&1&1&1&2&1&2 \\ 0&1&1&1&2&1&2&2 \\ 0&1&1&2&1&1&2&2 \\ 1&1&2&1&1&2&1&2 \\ 0&1&1&2&2&1&1&2 \\ 0&2&2&2&2&2&2&4\end{smallmatrix} ,   \ \begin{smallmatrix}0&0&0&0&0&0&1&0 \\ 0&1&1&1&1&1&1&2 \\ 0&1&1&1&1&1&2&2 \\ 0&1&1&2&2&2&0&2 \\ 0&1&1&2&1&2&1&2 \\ 0&1&1&2&2&1&1&2 \\ 1&1&2&0&1&1&3&2 \\ 0&2&2&2&2&2&2&4\end{smallmatrix} ,   \ \begin{smallmatrix}0&0&0&0&0&0&0&1 \\ 0&0&2&2&2&2&2&3 \\ 0&2&2&2&2&2&2&3 \\ 0&2&2&2&2&2&2&4 \\ 0&2&2&2&2&2&2&4 \\ 0&2&2&2&2&2&2&4 \\ 0&2&2&2&2&2&2&4 \\ 1&3&3&4&4&4&4&5\end{smallmatrix} $$

$$ \begin{smallmatrix}1&0&0&0&0&0&0&0 \\ 0&1&0&0&0&0&0&0 \\ 0&0&1&0&0&0&0&0 \\ 0&0&0&1&0&0&0&0 \\ 0&0&0&0&1&0&0&0 \\ 0&0&0&0&0&1&0&0 \\ 0&0&0&0&0&0&1&0 \\ 0&0&0&0&0&0&0&1\end{smallmatrix} ,   \ \begin{smallmatrix}0&1&0&0&0&0&0&0 \\ 1&4&0&1&1&1&1&0 \\ 0&0&1&1&1&1&1&2 \\ 0&1&1&1&1&1&1&2 \\ 0&1&1&1&1&1&1&2 \\ 0&1&1&1&1&1&1&2 \\ 0&1&1&1&1&1&1&2 \\ 0&0&2&2&2&2&2&3\end{smallmatrix} ,   \ \begin{smallmatrix}0&0&1&0&0&0&0&0 \\ 0&0&1&1&1&1&1&2 \\ 1&1&1&1&1&1&1&2 \\ 0&1&1&2&1&1&1&2 \\ 0&1&1&1&2&1&1&2 \\ 0&1&1&1&1&2&1&2 \\ 0&1&1&1&1&1&2&2 \\ 0&2&2&2&2&2&2&3\end{smallmatrix} ,   \ \begin{smallmatrix}0&0&0&1&0&0&0&0 \\ 0&1&1&1&1&1&1&2 \\ 0&1&1&2&1&1&1&2 \\ 1&1&2&2&1&1&1&2 \\ 0&1&1&1&1&2&2&2 \\ 0&1&1&1&2&1&2&2 \\ 0&1&1&1&2&2&1&2 \\ 0&2&2&2&2&2&2&4\end{smallmatrix} ,   \ \begin{smallmatrix}0&0&0&0&1&0&0&0 \\ 0&1&1&1&1&1&1&2 \\ 0&1&1&1&2&1&1&2 \\ 0&1&1&1&1&2&2&2 \\ 1&1&2&1&2&1&1&2 \\ 0&1&1&2&1&1&2&2 \\ 0&1&1&2&1&2&1&2 \\ 0&2&2&2&2&2&2&4\end{smallmatrix} ,   \ \begin{smallmatrix}0&0&0&0&0&1&0&0 \\ 0&1&1&1&1&1&1&2 \\ 0&1&1&1&1&2&1&2 \\ 0&1&1&1&2&1&2&2 \\ 0&1&1&2&1&1&2&2 \\ 1&1&2&1&1&2&1&2 \\ 0&1&1&2&2&1&1&2 \\ 0&2&2&2&2&2&2&4\end{smallmatrix} ,   \ \begin{smallmatrix}0&0&0&0&0&0&1&0 \\ 0&1&1&1&1&1&1&2 \\ 0&1&1&1&1&1&2&2 \\ 0&1&1&1&2&2&1&2 \\ 0&1&1&2&1&2&1&2 \\ 0&1&1&2&2&1&1&2 \\ 1&1&2&1&1&1&2&2 \\ 0&2&2&2&2&2&2&4\end{smallmatrix} ,   \ \begin{smallmatrix}0&0&0&0&0&0&0&1 \\ 0&0&2&2&2&2&2&3 \\ 0&2&2&2&2&2&2&3 \\ 0&2&2&2&2&2&2&4 \\ 0&2&2&2&2&2&2&4 \\ 0&2&2&2&2&2&2&4 \\ 0&2&2&2&2&2&2&4 \\ 1&3&3&4&4&4&4&5\end{smallmatrix} $$

\item Rank $8$ and $\FPdim$ $1260$,  two of type $[[1, 1], [6, 1], [7, 2], [10, 1], [15, 1], [20, 2]]$:
$$ \begin{smallmatrix}1&0&0&0&0&0&0&0 \\ 0&1&0&0&0&0&0&0 \\ 0&0&1&0&0&0&0&0 \\ 0&0&0&1&0&0&0&0 \\ 0&0&0&0&1&0&0&0 \\ 0&0&0&0&0&1&0&0 \\ 0&0&0&0&0&0&1&0 \\ 0&0&0&0&0&0&0&1\end{smallmatrix} ,   \ \begin{smallmatrix}0&1&0&0&0&0&0&0 \\ 1&0&0&0&0&1&0&1 \\ 0&0&1&0&0&1&0&1 \\ 0&0&0&1&0&1&0&1 \\ 0&0&0&0&0&0&2&1 \\ 0&1&1&1&0&2&0&2 \\ 0&0&0&0&2&0&3&2 \\ 0&1&1&1&1&2&2&1\end{smallmatrix} ,   \ \begin{smallmatrix}0&0&1&0&0&0&0&0 \\ 0&0&1&0&0&1&0&1 \\ 1&1&0&1&0&1&0&1 \\ 0&0&1&1&0&1&0&1 \\ 0&0&0&0&1&0&2&1 \\ 0&1&1&1&0&3&0&2 \\ 0&0&0&0&2&0&4&2 \\ 0&1&1&1&1&2&2&2\end{smallmatrix} ,   \ \begin{smallmatrix}0&0&0&1&0&0&0&0 \\ 0&0&0&1&0&1&0&1 \\ 0&0&1&1&0&1&0&1 \\ 1&1&1&0&0&1&0&1 \\ 0&0&0&0&1&0&2&1 \\ 0&1&1&1&0&3&0&2 \\ 0&0&0&0&2&0&4&2 \\ 0&1&1&1&1&2&2&2\end{smallmatrix} ,   \ \begin{smallmatrix}0&0&0&0&1&0&0&0 \\ 0&0&0&0&0&0&2&1 \\ 0&0&0&0&1&0&2&1 \\ 0&0&0&0&1&0&2&1 \\ 1&0&1&1&2&3&0&1 \\ 0&0&0&0&3&0&4&2 \\ 0&2&2&2&0&4&1&4 \\ 0&1&1&1&1&2&4&3\end{smallmatrix} ,   \ \begin{smallmatrix}0&0&0&0&0&1&0&0 \\ 0&1&1&1&0&2&0&2 \\ 0&1&1&1&0&3&0&2 \\ 0&1&1&1&0&3&0&2 \\ 0&0&0&0&3&0&4&2 \\ 1&2&3&3&0&6&0&4 \\ 0&0&0&0&4&0&9&4 \\ 0&2&2&2&2&4&4&5\end{smallmatrix} ,   \ \begin{smallmatrix}0&0&0&0&0&0&1&0 \\ 0&0&0&0&2&0&3&2 \\ 0&0&0&0&2&0&4&2 \\ 0&0&0&0&2&0&4&2 \\ 0&2&2&2&0&4&1&4 \\ 0&0&0&0&4&0&9&4 \\ 1&3&4&4&1&9&2&7 \\ 0&2&2&2&4&4&7&6\end{smallmatrix} ,   \ \begin{smallmatrix}0&0&0&0&0&0&0&1 \\ 0&1&1&1&1&2&2&1 \\ 0&1&1&1&1&2&2&2 \\ 0&1&1&1&1&2&2&2 \\ 0&1&1&1&1&2&4&3 \\ 0&2&2&2&2&4&4&5 \\ 0&2&2&2&4&4&7&6 \\ 1&1&2&2&3&5&6&7\end{smallmatrix} $$

$$ \begin{smallmatrix}1&0&0&0&0&0&0&0 \\ 0&1&0&0&0&0&0&0 \\ 0&0&1&0&0&0&0&0 \\ 0&0&0&1&0&0&0&0 \\ 0&0&0&0&1&0&0&0 \\ 0&0&0&0&0&1&0&0 \\ 0&0&0&0&0&0&1&0 \\ 0&0&0&0&0&0&0&1\end{smallmatrix} ,   \ \begin{smallmatrix}0&1&0&0&0&0&0&0 \\ 1&0&0&0&0&1&0&1 \\ 0&0&1&0&0&1&0&1 \\ 0&0&0&1&0&1&0&1 \\ 0&0&0&0&0&0&2&1 \\ 0&1&1&1&0&2&0&2 \\ 0&0&0&0&2&0&2&3 \\ 0&1&1&1&1&2&3&0\end{smallmatrix} ,   \ \begin{smallmatrix}0&0&1&0&0&0&0&0 \\ 0&0&1&0&0&1&0&1 \\ 1&1&0&1&0&1&0&1 \\ 0&0&1&1&0&1&0&1 \\ 0&0&0&0&1&0&2&1 \\ 0&1&1&1&0&3&0&2 \\ 0&0&0&0&2&0&3&3 \\ 0&1&1&1&1&2&3&1\end{smallmatrix} ,   \ \begin{smallmatrix}0&0&0&1&0&0&0&0 \\ 0&0&0&1&0&1&0&1 \\ 0&0&1&1&0&1&0&1 \\ 1&1&1&0&0&1&0&1 \\ 0&0&0&0&1&0&2&1 \\ 0&1&1&1&0&3&0&2 \\ 0&0&0&0&2&0&3&3 \\ 0&1&1&1&1&2&3&1\end{smallmatrix} ,   \ \begin{smallmatrix}0&0&0&0&1&0&0&0 \\ 0&0&0&0&0&0&2&1 \\ 0&0&0&0&1&0&2&1 \\ 0&0&0&0&1&0&2&1 \\ 1&0&1&1&2&3&0&1 \\ 0&0&0&0&3&0&4&2 \\ 0&2&2&2&0&4&2&3 \\ 0&1&1&1&1&2&3&4\end{smallmatrix} ,   \ \begin{smallmatrix}0&0&0&0&0&1&0&0 \\ 0&1&1&1&0&2&0&2 \\ 0&1&1&1&0&3&0&2 \\ 0&1&1&1&0&3&0&2 \\ 0&0&0&0&3&0&4&2 \\ 1&2&3&3&0&6&0&4 \\ 0&0&0&0&4&0&7&6 \\ 0&2&2&2&2&4&6&3\end{smallmatrix} ,   \ \begin{smallmatrix}0&0&0&0&0&0&1&0 \\ 0&0&0&0&2&0&2&3 \\ 0&0&0&0&2&0&3&3 \\ 0&0&0&0&2&0&3&3 \\ 0&2&2&2&0&4&2&3 \\ 0&0&0&0&4&0&7&6 \\ 1&2&3&3&2&7&2&9 \\ 0&3&3&3&3&6&9&2\end{smallmatrix} ,   \ \begin{smallmatrix}0&0&0&0&0&0&0&1 \\ 0&1&1&1&1&2&3&0 \\ 0&1&1&1&1&2&3&1 \\ 0&1&1&1&1&2&3&1 \\ 0&1&1&1&1&2&3&4 \\ 0&2&2&2&2&4&6&3 \\ 0&3&3&3&3&6&9&2 \\ 1&0&1&1&4&3&2&13\end{smallmatrix} $$

\item Rank $8$ and $\FPdim$ $1320$,  two of type $[[1, 1], [6, 2], [10, 1], [11, 1], [15, 2], [24, 1]]$:
$$ \begin{smallmatrix}1&0&0&0&0&0&0&0 \\ 0&1&0&0&0&0&0&0 \\ 0&0&1&0&0&0&0&0 \\ 0&0&0&1&0&0&0&0 \\ 0&0&0&0&1&0&0&0 \\ 0&0&0&0&0&1&0&0 \\ 0&0&0&0&0&0&1&0 \\ 0&0&0&0&0&0&0&1\end{smallmatrix} ,   \ \begin{smallmatrix}0&1&0&0&0&0&0&0 \\ 0&0&0&1&1&0&1&0 \\ 1&0&0&0&1&0&0&1 \\ 0&0&1&0&0&1&1&1 \\ 0&1&1&0&0&1&1&1 \\ 0&0&1&1&1&1&0&2 \\ 0&0&0&1&1&2&1&1 \\ 0&1&0&1&1&1&2&3\end{smallmatrix} ,   \ \begin{smallmatrix}0&0&1&0&0&0&0&0 \\ 1&0&0&0&1&0&0&1 \\ 0&0&0&1&1&1&0&0 \\ 0&1&0&0&0&1&1&1 \\ 0&1&1&0&0&1&1&1 \\ 0&0&0&1&1&1&2&1 \\ 0&1&0&1&1&0&1&2 \\ 0&0&1&1&1&2&1&3\end{smallmatrix} ,   \ \begin{smallmatrix}0&0&0&1&0&0&0&0 \\ 0&0&1&0&0&1&1&1 \\ 0&1&0&0&0&1&1&1 \\ 1&0&0&1&1&1&1&2 \\ 0&0&0&1&2&1&1&2 \\ 0&1&1&1&1&1&2&3 \\ 0&1&1&1&1&2&1&3 \\ 0&1&1&2&2&3&3&4\end{smallmatrix} ,   \ \begin{smallmatrix}0&0&0&0&1&0&0&0 \\ 0&1&1&0&0&1&1&1 \\ 0&1&1&0&0&1&1&1 \\ 0&0&0&1&2&1&1&2 \\ 1&0&0&2&2&1&1&2 \\ 0&1&1&1&1&2&2&3 \\ 0&1&1&1&1&2&2&3 \\ 0&1&1&2&2&3&3&5\end{smallmatrix} ,   \ \begin{smallmatrix}0&0&0&0&0&1&0&0 \\ 0&0&1&1&1&1&0&2 \\ 0&0&0&1&1&1&2&1 \\ 0&1&1&1&1&1&2&3 \\ 0&1&1&1&1&2&2&3 \\ 0&2&0&2&2&2&3&4 \\ 1&1&1&1&2&2&2&5 \\ 0&1&2&3&3&5&4&6\end{smallmatrix} ,   \ \begin{smallmatrix}0&0&0&0&0&0&1&0 \\ 0&0&0&1&1&2&1&1 \\ 0&1&0&1&1&0&1&2 \\ 0&1&1&1&1&2&1&3 \\ 0&1&1&1&1&2&2&3 \\ 1&1&1&1&2&2&2&5 \\ 0&0&2&2&2&3&2&4 \\ 0&2&1&3&3&4&5&6\end{smallmatrix} ,   \ \begin{smallmatrix}0&0&0&0&0&0&0&1 \\ 0&1&0&1&1&1&2&3 \\ 0&0&1&1&1&2&1&3 \\ 0&1&1&2&2&3&3&4 \\ 0&1&1&2&2&3&3&5 \\ 0&1&2&3&3&5&4&6 \\ 0&2&1&3&3&4&5&6 \\ 1&3&3&4&5&6&6&11\end{smallmatrix} $$

$$ \begin{smallmatrix}1&0&0&0&0&0&0&0 \\ 0&1&0&0&0&0&0&0 \\ 0&0&1&0&0&0&0&0 \\ 0&0&0&1&0&0&0&0 \\ 0&0&0&0&1&0&0&0 \\ 0&0&0&0&0&1&0&0 \\ 0&0&0&0&0&0&1&0 \\ 0&0&0&0&0&0&0&1\end{smallmatrix} ,   \ \begin{smallmatrix}0&1&0&0&0&0&0&0 \\ 0&0&0&1&1&1&0&0 \\ 1&0&0&0&1&0&0&1 \\ 0&0&1&0&0&1&1&1 \\ 0&1&1&0&0&1&1&1 \\ 0&0&0&1&1&1&2&1 \\ 0&0&1&1&1&0&1&2 \\ 0&1&0&1&1&2&1&3\end{smallmatrix} ,   \ \begin{smallmatrix}0&0&1&0&0&0&0&0 \\ 1&0&0&0&1&0&0&1 \\ 0&0&0&1&1&0&1&0 \\ 0&1&0&0&0&1&1&1 \\ 0&1&1&0&0&1&1&1 \\ 0&1&0&1&1&1&0&2 \\ 0&0&0&1&1&2&1&1 \\ 0&0&1&1&1&1&2&3\end{smallmatrix} ,   \ \begin{smallmatrix}0&0&0&1&0&0&0&0 \\ 0&0&1&0&0&1&1&1 \\ 0&1&0&0&0&1&1&1 \\ 1&0&0&1&1&1&1&2 \\ 0&0&0&1&2&1&1&2 \\ 0&1&1&1&1&1&2&3 \\ 0&1&1&1&1&2&1&3 \\ 0&1&1&2&2&3&3&4\end{smallmatrix} ,   \ \begin{smallmatrix}0&0&0&0&1&0&0&0 \\ 0&1&1&0&0&1&1&1 \\ 0&1&1&0&0&1&1&1 \\ 0&0&0&1&2&1&1&2 \\ 1&0&0&2&2&1&1&2 \\ 0&1&1&1&1&2&2&3 \\ 0&1&1&1&1&2&2&3 \\ 0&1&1&2&2&3&3&5\end{smallmatrix} ,   \ \begin{smallmatrix}0&0&0&0&0&1&0&0 \\ 0&0&0&1&1&1&2&1 \\ 0&1&0&1&1&1&0&2 \\ 0&1&1&1&1&1&2&3 \\ 0&1&1&1&1&2&2&3 \\ 0&0&2&2&2&2&3&4 \\ 1&1&1&1&2&2&2&5 \\ 0&2&1&3&3&5&4&6\end{smallmatrix} ,   \ \begin{smallmatrix}0&0&0&0&0&0&1&0 \\ 0&0&1&1&1&0&1&2 \\ 0&0&0&1&1&2&1&1 \\ 0&1&1&1&1&2&1&3 \\ 0&1&1&1&1&2&2&3 \\ 1&1&1&1&2&2&2&5 \\ 0&2&0&2&2&3&2&4 \\ 0&1&2&3&3&4&5&6\end{smallmatrix} ,   \ \begin{smallmatrix}0&0&0&0&0&0&0&1 \\ 0&1&0&1&1&2&1&3 \\ 0&0&1&1&1&1&2&3 \\ 0&1&1&2&2&3&3&4 \\ 0&1&1&2&2&3&3&5 \\ 0&2&1&3&3&5&4&6 \\ 0&1&2&3&3&4&5&6 \\ 1&3&3&4&5&6&6&11\end{smallmatrix} $$
\end{itemize}

\subsection{Examples of simple integral fusion rings not of Frobenius type} \label{sub:notFrob}

There is a simple integral fusion ring of rank $6$, $\FPdim$ $143$ and type $[[1,1],[4,2],[5,1],[6,1],[7,1]]$ which is not of Frobenius type. Here are its fusion matrices:
$$\begin{smallmatrix}1&0&0&0&0&0\\0&1&0&0&0&0\\0&0&1&0&0&0\\0&0&0&1&0&0\\0&0&0&0&1&0\\0&0&0&0&0&1\end{smallmatrix}  , \  \begin{smallmatrix}0&1&0&0&0&0\\1&0&1&1&1&0\\0&1&0&1&0&1\\0&1&1&1&0&1\\0&1&0&0&1&2\\0&0&1&1&2&1\end{smallmatrix}  , \  \begin{smallmatrix}0&0&1&0&0&0\\0&1&0&1&0&1\\1&0&2&0&0&1\\0&1&0&2&1&0\\0&0&0&1&2&1\\0&1&1&0&1&2\end{smallmatrix}  , \  \begin{smallmatrix}0&0&0&1&0&0\\0&1&1&1&0&1\\0&1&0&2&1&0\\1&1&2&1&0&1\\0&0&1&0&2&2\\0&1&0&1&2&2\end{smallmatrix}  , \  \begin{smallmatrix}0&0&0&0&1&0\\0&1&0&0&1&2\\0&0&0&1&2&1\\0&0&1&0&2&2\\1&1&2&2&1&1\\0&2&1&2&1&2\end{smallmatrix}  , \  \begin{smallmatrix}0&0&0&0&0&1\\0&0&1&1&2&1\\0&1&1&0&1&2\\0&1&0&1&2&2\\0&2&1&2&1&2\\1&1&2&2&2&2\end{smallmatrix}$$

 \vspace*{0.3cm}

\noindent Note that $143 = 11 \cdot 13$, so it admits no categorification because by \cite{EGO04}, any fusion category of Frobenius-Perron dimension $pq$ (with $p$,$q$ different odd primes) is group-theoretical, whereas by \cite{ENO11}, a (weakly) group-theoretical fusion category is of Frobenius type (or alternatively, cannot be both simple and non group-like).

There are $21$ simple integral fusion rings not of Frobenius type, of rank $\le 7$ and $\FPdim \le 1500$ (with $\FPdim \neq p^aq^b, pqr$, by \cite{ENO11}), and exactly $4$ ones (below) pass the Schur product criterion:

\begin{itemize}
\item rank $6$, FPdim $924 = 2^2 \cdot 3 \cdot 7 \cdot 11$, type $[[1,1],[7,1],[8,1],[12,1],[15,1],[21,1]]$ and fusion matrices:
$$\begin{smallmatrix}1&0&0&0&0&0 \\ 0&1&0&0&0&0 \\ 0&0&1&0&0&0 \\ 0&0&0&1&0&0 \\ 0&0&0&0&1&0 \\ 0&0&0&0&0&1\end{smallmatrix} , \  \begin{smallmatrix}0&1&0&0&0&0 \\ 1&0&0&1&1&1 \\ 0&0&1&1&1&1 \\ 0&1&1&1&1&2 \\ 0&1&1&1&1&3 \\ 0&1&1&2&3&3\end{smallmatrix} , \  \begin{smallmatrix}0&0&1&0&0&0 \\ 0&0&1&1&1&1 \\ 1&1&1&1&1&1 \\ 0&1&1&2&1&2 \\ 0&1&1&1&2&3 \\ 0&1&1&2&3&4\end{smallmatrix} , \  \begin{smallmatrix}0&0&0&1&0&0 \\ 0&1&1&1&1&2 \\ 0&1&1&2&1&2 \\ 1&1&2&1&3&3 \\ 0&1&1&3&3&4 \\ 0&2&2&3&4&6\end{smallmatrix} , \  \begin{smallmatrix}0&0&0&0&1&0 \\ 0&1&1&1&1&3 \\ 0&1&1&1&2&3 \\ 0&1&1&3&3&4 \\ 1&1&2&3&4&5 \\ 0&3&3&4&5&7\end{smallmatrix} , \  \begin{smallmatrix}0&0&0&0&0&1 \\ 0&1&1&2&3&3 \\ 0&1&1&2&3&4 \\ 0&2&2&3&4&6 \\ 0&3&3&4&5&7 \\ 1&3&4&6&7&10\end{smallmatrix}$$

\item rank $6$, FPdim $1320 = 2^3 \cdot 3 \cdot 5 \cdot 11$, type $[[1,1],[9,1],[10,1],[11,1],[21,1],[24,1]]$ and fusion matrices:
$$
\begin{smallmatrix}1&0&0&0&0&0 \\ 0&1&0&0&0&0 \\ 0&0&1&0&0&0 \\ 0&0&0&1&0&0 \\ 0&0&0&0&1&0 \\ 0&0&0&0&0&1\end{smallmatrix} , \  \begin{smallmatrix}0&1&0&0&0&0 \\ 1&0&0&1&1&2 \\ 0&0&1&1&1&2 \\ 0&1&1&1&1&2 \\ 0&1&1&1&3&4 \\ 0&2&2&2&4&3\end{smallmatrix} , \  \begin{smallmatrix}0&0&1&0&0&0 \\ 0&0&1&1&1&2 \\ 1&1&0&0&2&2 \\ 0&1&0&1&2&2 \\ 0&1&2&2&3&4 \\ 0&2&2&2&4&4\end{smallmatrix} , \  \begin{smallmatrix}0&0&0&1&0&0 \\ 0&1&1&1&1&2 \\ 0&1&0&1&2&2 \\ 1&1&1&1&2&2 \\ 0&1&2&2&4&4 \\ 0&2&2&2&4&5\end{smallmatrix} , \  \begin{smallmatrix}0&0&0&0&1&0 \\ 0&1&1&1&3&4 \\ 0&1&2&2&3&4 \\ 0&1&2&2&4&4 \\ 1&3&3&4&7&8 \\ 0&4&4&4&8&9\end{smallmatrix} , \  \begin{smallmatrix}0&0&0&0&0&1 \\ 0&2&2&2&4&3 \\ 0&2&2&2&4&4 \\ 0&2&2&2&4&5 \\ 0&4&4&4&8&9 \\ 1&3&4&5&9&11\end{smallmatrix}
$$

\item  rank $7$, FPdim $560 = 2^4 \cdot 5 \cdot 7$,  type  $[[1,1],[6,1],[7,2],[10,2],[15,1]]$ and fusion matrices:
$$ \begin{smallmatrix}1&0&0&0&0&0&0\\0&1&0&0&0&0&0\\0&0&1&0&0&0&0\\0&0&0&1&0&0&0\\0&0&0&0&1&0&0\\0&0&0&0&0&1&0\\0&0&0&0&0&0&1\end{smallmatrix}  , \ \begin{smallmatrix}0&1&0&0&0&0&0\\1&0&0&0&1&1&1\\0&0&1&0&1&1&1\\0&0&0&1&1&1&1\\0&1&1&1&0&1&2\\0&1&1&1&1&0&2\\0&1&1&1&2&2&2\end{smallmatrix}, \  \begin{smallmatrix}0&0&1&0&0&0&0\\0&0&1&0&1&1&1\\1&1&0&1&1&1&1\\0&0&1&1&1&1&1\\0&1&1&1&1&1&2\\0&1&1&1&1&1&2\\0&1&1&1&2&2&3\end{smallmatrix}  , \  \begin{smallmatrix}0&0&0&1&0&0&0\\0&0&0&1&1&1&1\\0&0&1&1&1&1&1\\1&1&1&0&1&1&1\\0&1&1&1&1&1&2\\0&1&1&1&1&1&2\\0&1&1&1&2&2&3\end{smallmatrix}  ,  \  \begin{smallmatrix}0&0&0&0&1&0&0\\0&1&1&1&0&1&2\\0&1&1&1&1&1&2\\0&1&1&1&1&1&2\\0&1&1&1&2&3&2\\1&0&1&1&2&2&3\\0&2&2&2&3&2&4\end{smallmatrix}  , \   \begin{smallmatrix}0&0&0&0&0&1&0\\0&1&1&1&1&0&2\\0&1&1&1&1&1&2\\0&1&1&1&1&1&2\\1&0&1&1&2&2&3\\0&1&1&1&3&2&2\\0&2&2&2&2&3&4\end{smallmatrix}  , \   \begin{smallmatrix}0&0&0&0&0&0&1\\0&1&1&1&2&2&2\\0&1&1&1&2&2&3\\0&1&1&1&2&2&3\\0&2&2&2&3&2&4\\0&2&2&2&2&3&4\\1&2&3&3&4&4&6\end{smallmatrix}  $$

\item rank $7$, FPdim $798=2 \cdot 3 \cdot 7 \cdot 19$,  type  $[[1,1],[7,1],[8,1],[9,3],[21,1]]$ and fusion matrices:
$$ \begin{smallmatrix}1&0&0&0&0&0&0\\0&1&0&0&0&0&0\\0&0&1&0&0&0&0\\0&0&0&1&0&0&0\\0&0&0&0&1&0&0\\0&0&0&0&0&1&0\\0&0&0&0&0&0&1\end{smallmatrix}  , \  \begin{smallmatrix}0&1&0&0&0&0&0\\1&0&0&1&1&1&1\\0&0&1&1&1&1&1\\0&1&1&1&1&1&1\\0&1&1&1&1&1&1\\0&1&1&1&1&1&1\\0&1&1&1&1&1&5\end{smallmatrix}  , \   \begin{smallmatrix}0&0&1&0&0&0&0\\0&0&1&1&1&1&1\\1&1&1&1&1&1&1\\0&1&1&2&1&1&1\\0&1&1&1&2&1&1\\0&1&1&1&1&2&1\\0&1&1&1&1&1&6\end{smallmatrix}  , \   \begin{smallmatrix}0&0&0&1&0&0&0\\0&1&1&1&1&1&1\\0&1&1&2&1&1&1\\1&1&2&1&1&2&1\\0&1&1&1&2&2&1\\0&1&1&2&2&1&1\\0&1&1&1&1&1&7\end{smallmatrix}  ,  \  \begin{smallmatrix}0&0&0&0&1&0&0\\0&1&1&1&1&1&1\\0&1&1&1&2&1&1\\0&1&1&1&2&2&1\\1&1&2&2&1&1&1\\0&1&1&2&1&2&1\\0&1&1&1&1&1&7\end{smallmatrix}  ,  \ \begin{smallmatrix}0&0&0&0&0&1&0\\0&1&1&1&1&1&1\\0&1&1&1&1&2&1\\0&1&1&2&2&1&1\\0&1&1&2&1&2&1\\1&1&2&1&2&1&1\\0&1&1&1&1&1&7\end{smallmatrix}  , \  \begin{smallmatrix}0&0&0&0&0&0&1\\0&1&1&1&1&1&5\\0&1&1&1&1&1&6\\0&1&1&1&1&1&7\\0&1&1&1&1&1&7\\0&1&1&1&1&1&7\\1&5&6&7&7&7&8\end{smallmatrix}  $$

\end{itemize}

\subsection{Extra perfect fusion rings} \label{ExtraPerfect}
The computer program uses several \emph{necessary} conditions for a fusion ring $\mathfrak{A}$ to be simple (and perfect, and of Frobenius type) and categorifiable, mainly collected in \cite{nat07} and \cite{ENO11}:
\begin{itemize}
\item $\FPdim(\mathfrak{A})$ not of the form $p^aq^b$ or $pqr$ , with $p$, $q$, $r$ prime,
\item $d(x_2) \ge 3$ (in particular, $m_1=1$, i.e. $\mathfrak{A}$ is perfect).
\item $s \ge 3$ (in particular rank $r \ge 3$),
\item $n_{r+1}<n_r^2$ for all $r>1$ (otherwise it trivially cannot be simple (and perfect)),
\item (Frobenius type) $n_i$ divides $\FPdim(\mathfrak{A})$ for all $i$ (idem for the fusion subrings\footnote{There are fusion rings of rank $9$, $\FPdim$ $4620$ and type $[[1, 1], [4, 2], [5, 1], [6, 1], [7, 1], [11, 1], [66, 1]]$ (Frobenius type) with a simple fusion subring of rank $8$, $\FPdim$ $264$ and type $[[1, 1], [4, 2], [5, 1], [6, 1], [7, 1], [11, 1]]$ (not Frobenius type).}),
\item $gcd(n_2, \dots , n_s) = 1$ (consequence of Frobenius type).
\end{itemize}
Now, all these necessary conditions together are not sufficient for having a simple fusion ring, so that the computer search provided also $319$ new perfect non-simple fusion rings ($4$ of which being noncommutative):


$$\begin{array}{c|c|c|c|c|c}
\# & \text{rank} & \text{FPdim}     & \text{type} & \# \text{Schur}  & \text{note} \\ \hline
1 & 7 & 7224  & [[1,1],[3,2],[6,1],[7,1],[8,1],[84,1]] & 1 & 84>8^2  \\ \hline
16 & 8 & 360 & [[1,1],[3,2],[4,1],[5,1],[10,3]]  & 6 &  \\ \hline
26 & 8 & 660 & [[1,1],[3,2],[4,1],[5,1],[10,2],[20,1]]  & 5 &  \\ \hline
24 & 8 & 960 & [[1, 1], [3, 2], [4, 1], [5, 1], [10, 1], [20, 2]]  & 14 &  \\ \hline
47 & 8 & 1260 & [[1, 1], [3, 2], [4, 1], [5, 1], [20, 3]] & 7 &   \\ \hline
1 & 8 & 1440 & [[1, 1], [3, 2], [4, 1], [5, 1], [10, 1], [16, 1], [32, 1]]  & 1 &  \\ \hline
1 & 8 & 1680 & [[1, 1], [3, 2], [4, 1], [5, 1], [12, 1], [24, 1], [30, 1]]  & 1 &  \\ \hline
2 & 8 & 2160 & [[1,1],[3,2],[4,1],[5,1],[10,1],[20,1],[40,1]]  & 2 &  \\ \hline
1 & 8 & 3120 & [[1,1],[3,2],[4,1],[5,1],[10,1],[16,1],[52,1]]]  & 1 &  \\ \hline
60 & 8 & 3360 & [[1,1],[3,2],[4,1],[5,1],[10,1],[40,2]]  & 32 &  \\ \hline
120 & 8 & 3696 & [[1,1],[3,2],[6,1],[7,1],[8,1],[42,2]]  & 40 &  \\ \hline
2 & 8 & 3960 & [[1, 1], [5, 2], [8, 2], [9, 1], [10, 1], [60, 1]]  & 1 &  \\ \hline
7 & 9 & 360 & [[1, 1], [3, 2], [4, 1], [5, 4], [15, 1]]  & 2 &  \\ \hline
11 & 9 & 420 & [[1, 1], [3, 2], [4, 1], [5, 1], [6, 2], [12, 2]] & 1 & +4 \text{NC}
\end{array}$$


Each of them contains the Grothendieck ring of $\Rep(G)$ as a proper subring (with $G=\PSL(2,q)$, $q=5,7,9$). None of them comes from a perfect group.  Ostrik's inequality holds on all of them. In the commutative case, the Schur product property (on the dual) holds on exactly $114$ ones.

Let us mention finally that we also found extra $2242$ perfect integral fusion rings out the bounds of the above table (among them, $7$ are simple, $5$ are noncommutative, none both). In the commutative case, the Schur product property (on the dual) holds on exactly $480$ of them (none of which is simple), and Ostrik's inequality holds on all of them.

$$\begin{array}{c|c|c|c|c|c}
\# & \text{rank} & \text{FPdim}     & \text{type} & \# \text{Schur}  & \text{note} \\ \hline
1 & 7 & 28392 & [[1,1],[3,2],[6,1],[7,1],[8,1],[168,1]] & 1 & 168>8^2  \\ \hline
4 & 8 & 4620 & [[1,1],[4,2],[5,1],[6,1],[7,1],[11,1],[66,1]] & 0 &   \\ \hline
1 & 8 & 5460 & [[1,1],[3,2],[4,1],[5,1],[10,1],[20,1],[70,1]] & 1 &   \\ \hline
8 & 8 & 5460 & [[1,1],[6,1],[7,2],[10,2],[15,1],[70,1]] & 1 &   \\ \hline
1 & 8 & 6960 & [[1,1],[3,2],[4,1],[5,1],[10,1],[20,1],[80,1]] & 1 &   \\ \hline
5 & 8 & 8160 & [[1,1],[3,2],[4,1],[5,1],[10,1],[40,1],[80,1]] & 5 &   \\ \hline
250 & 8 & 14280 & [[1,1],[3,2],[6,1],[7,1],[8,1],[84,2]] & 78 & 84>8^2  \\ \hline
2 & 8 & 44310 & [[1,1],[5,3],[6,1],[7,2],[210,1]] & 1 & 210>7^2 \\ \hline
1 & 9 & 1080 & [[1, 1], [5, 2], [8, 2], [9, 1], [10, 1], [12, 1], [24, 1]] & 0 &   \\ \hline
22 & 9 & 1260 & [[1,1],[3,2],[4,1],[5,1],[10,3],[30,1]] & 10 &   \\ \hline
1 & 9 & 1320  & [[1, 1], [5, 2], [6, 2], [10, 1], [11, 1], [20, 1], [24, 1]] & 0 & \text{simple}  \\ \hline
111 & 9 & 1344 & [[1,1],[3,2],[6,1],[7,1],[8,1],[14,2],[28,1]] & 26 &   \\ \hline
4 & 9 & 1512 & [[1, 1], [6, 1], [7, 1], [8, 4], [21, 1], [27, 1]] & 0 & \text{simple}  \\ \hline
52 & 9 & 1560 & [[1,1],[3,2],[4,1],[5,1],[10,2],[20,1],[30,1]] & 9 &  \\ \hline
56 & 9 & 2160 & [[1,1],[3,2],[4,1],[5,1],[20,3],[30,1]] & 17 &   \\ \hline
69 & 9 & 2160 & [[1, 1], [5, 2], [8, 2], [9, 1], [10, 1], [30, 2]] & 10 &   \\ \hline
1086 & 9 & 2520 & [[1,1],[3,2],[6,1],[7,1],[8,1],[28,3]] & 221 &   \\ \hline
5 & 9 & 2760 & [[1,1],[3,2],[4,1],[5,1],[10,2],[30,1],[40,1]] & 2 &   \\ \hline
13 & 9 & 3696 & [[1,1],[3,2],[6,1],[7,1],[8,1],[14,2],[56,1]] & 5 &   \\ \hline
16 & 9 & 3960 & [[1,1],[3,2],[4,1],[5,1],[10,3],[60,1]] & 6 &   \\ \hline
52 & 9 & 4200 & [[1, 1], [3, 2], [4, 1], [5, 1], [24, 1], [30, 2], [42, 1]] & 4 &   \\ \hline
29 & 9 & 4260 & [[1,1],[3,2],[4,1],[5,1],[10,2],[20,1],[60,1]] & 6 &   \\ \hline
24 & 9 & 4560 & [[1,1],[3,2],[4,1],[5,1],[10,1],[20,2],[60,1]] & 14  &    \\ \hline
404 & 9 & 4872 & [[1, 1], [3, 2], [6, 1], [7, 1], [8, 1], [28, 2], [56, 1]] & 56 &   \\ \hline
2 & 10 & 720 & [[1, 1], [4, 2], [5, 2], [9, 1], [10, 3], [16, 1]]  & 0 & \text{simple} \\ \hline
1 & 10 & 1200 & [[1,1],[3,2],[4,3],[5,1],[8,1],[12,1],[30,1]] & 0 &  \\ \hline
7 & 10 & 1260 & [[1,1],[3,2],[4,1],[5,4],[15,1],[30,1]] & 2 &  \\ \hline
12 & 10 & 1320 & [[1, 1], [3, 2], [4, 1], [5, 1], [6, 2], [12, 2], [30, 1]] & 3 & +3 \text{NC}    \\ \hline
3 & 10 & 1920 & [[1,1],[3,2],[4,1],[5,1],[8,2],[16,1],[24,1],[30,1]] & 1 & +2 \text{NC}
\end{array}$$

The fusion matrices of all the ($2595$) perfect integral fusion rings mentioned in this paper, together with the computer programs (written in SageMath \cite{sage}) and checks, are available in the second author's webpage \cite{FusionAtlas}.


\begin{thebibliography}{99}

\bibitem{BJ00}
{\sc D.~Bisch and V.~Jones}, {\em Singly generated planar algebras of small dimension},
Duke Math J, 101 (2000), pp.~41-75.

\bibitem{BJ03}
{\sc D.~Bisch and V.~Jones}, {\em Singly generated planar algebras of small dimension, part II},
Adv. Math, 175 (2003), pp.~297-318.

\bibitem{BJL17}
{\sc D.~Bisch, V.~Jones and Z. ~Liu}, {\em Singly generated planar algebras of small dimension, part III},
Trans. Amer. Math, Soc. 369 (2017), pp.~2461-2476.

\bibitem{Edg19}
{\sc J. Edge}, {\em Classification of spin models for Yang-Baxter planar algebras},
arXiv preprint, \href{https://arxiv.org/pdf/1902.08984.pdf}{https://arxiv.org/pdf/1902.08984.pdf}


\bibitem{Etingof19}
{\sc P.~Etingof},
Private discussions at MIT in 2019.


\bibitem{EGNO15}
{\sc P.~Etingof, S.~Gelaki, D.~Nikshych, and V.~Ostrik}, {\em Tensor
  Categories}, American Mathematical Society, (2015).
\newblock Mathematical Surveys and Monographs Volume 205.

\bibitem{EGO04}
{\sc P.~Etingof, S.~Gelaki and V.~Ostrik}, {\em Classification of fusion categories of dimension pq}, Internat. Math. Res. Notices, 57 (2004), pp.~3041--3056.


\bibitem{EtiKho95}
{\sc P.~Etingof and M.~Khovanov}, {\em Representations of tensor categories and Dynkin diagrams}, Internat. Math. Res. Notices, 5 (1995), pp.~235--247.


\bibitem{ENO05}
{\sc P.~Etingof, D.~Nikshych, and V.~Ostrik}, {\em On fusion categories},
  Annals of Mathematics, 162 (2005), pp.~581--642.

\bibitem{ENO11}
{\sc P.~Etingof, D.~Nikshych, and V.~Ostrik}, {\em Weakly group-theoretical and
  solvable fusion categories}, Adv. Math., 226 (2011), pp.~176--205.

\bibitem{ENO21}
{\sc P.~Etingof, D.~Nikshych, and V.~Ostrik}, {\em On a necessary condition for unitary categorification of fusion rings}, arXiv:2102.13239, (2021).

\bibitem{Jan10}
{\sc S.~Janson}, {\em Roots of polynomials of degrees 3 and 4},
  (2010).
arXiv preprint, \href{https://arxiv.org/pdf/1009.2373.pdf}{https://arxiv.org/pdf/1009.2373.pdf}


\bibitem{JLWscm}
{\sc C.~Jiang, Z.~Liu, and J.~Wu}, {\em Block maps
  and Fourier analysis}, Science China Mathematics, 62 (2019), pp.~1585--1614.


\bibitem{JLW16}
{\sc C.~Jiang, Z.~Liu, and J.~Wu}, {\em
 Noncommutative uncertainty principles}, Journal of Functional Analysis, 270 (2016), pp.~264--311.


\bibitem{Jon83}
{\sc V. F. R. ~Jones}, {\em Index for
  subfactors }, Inventiones Mathematicae, 72 (1983), pp.~1--25.

\bibitem{Jon99}
{\sc V. F. R.~Jones}, {\em Planar algebras, I},
  arXiv preprint
 \href{https://arxiv.org/pdf/math/9909027.pdf}{https://arxiv.org/pdf/math/9909027.pdf}.

\bibitem{Jones15}
{\sc V. F. R.~Jones},
Private discussions at Vanderbilt University in 2015.



\bibitem{Kac72}
{\sc G.~I. {Kac}}, {\em {Certain arithmetic properties of ring groups.}}, {Funct. Anal. Appl.}, 6 (1972), pp.~158--160.

\bibitem{Kap75}
{\sc I.~Kaplansky}, {\em Bialgebras}. Lecture Notes in Mathematics. Department of Mathematics, University of Chicago (1975) iv+57 pp.

\bibitem{Kos84}
{\sc H.~Kosaki}, {\em Applications of the complex interpolation method to a von Neumann algebra:
  noncommutative $L^p$-spaces}, Journal of Functional Analysis, 56 (1984),
  pp.~29--78.

\bibitem{Liuex}
{\sc Z.~Liu} {\em Exchange relation planar algebras of small rank},
Trans. Amer. Math. Soc., 368 (2016), pp.~8303--8348


\bibitem{Liu19}
{\sc Z.~Liu}, {\em Quon language: surface algebras and Fourier duality}, Communications in Mathematical Physics, 366 (2019), pp.~865--894.

\bibitem{LiuYB}
{\sc Z. ~Liu}, {\em Yang-Baxter relation planar algebras},
arXiv preprint, \href{https://arxiv.org/pdf/1507.06030.pdf}{https://arxiv.org/pdf/1507.06030.pdf}


\bibitem{LMP18}
{\sc Z.~Liu,  S.~Morrison and D. Penneys}, {\em Lifting shadings on symmetrically self-dual subfactor planar algebras}, Topological phases of matter and quantum computation, 51--61, Contemporary Mathematics 747, Amer. Math. Soc. Providence, RI, 2020.

\bibitem{LiuWuscm}
{\sc Z.~Liu and J.~Wu}, {\em Non-commutative R\'{e}nyi entropic uncertainty principles},  Sci. China Math 63 (2020) no. 11, pp~2287--2298.

\bibitem{Lus87}
{\sc G.~Lusztig}, {\em Leading coefficients of character values of Hecke algebras}, Proc. Symp. in Pure Math., 47 (1987), pp.~235--262.

\bibitem{Mug03I}
{\sc M.~M\"{u}ger}, {\em From subfactors to categories and topology I: Frobenius algebras in and Morita equivalence of tensor categories.}, J. Pure Appl. Algebra, 180 (2003), pp.~81--157.

\bibitem{Mug03II}
{\sc M.~M\"{u}ger}, {\em From subfactors to categories and topology II. The quantum double of tensor categories and subfactors.}, J. Pure Appl.
  Algebra, 180 (2003), pp.~159--219.

\bibitem{nat07}
{\sc S.~{Natale}}, {\em {Semisolvability of semisimple Hopf algebras of low dimension.}}, vol.~874, Providence, RI: American Mathematical Society (AMS),
  (2007).

\bibitem{Ost033}
{\sc V.~Ostrik}, {\em Fusion categories of rank 2}, Math. Res. Lett., 10 (2003), pp.~177--183.

\bibitem{Ost03}
{\sc V.~Ostrik}, {\em Module categories, weak Hopf algebras and modular invariants}, Transform. Groups, 8
  (2003), pp.~177--206.

\bibitem{Ost15}
{\sc V.~Ostrik}, {\em Pivotal fusion categories of rank 3}, Mosc. Math. J., 15 (2015), pp.~373--396, 405.

\bibitem{Pal18}
{\sc S.~Palcoux}, {\em Ore's theorem for cyclic subfactor planar algebras and beyond}, Pacific J. Math., 292-1 (2018), pp.~203--221.

\bibitem{FusionAtlas}
{\sc S.~Palcoux}, \url{https://sites.google.com/view/sebastienpalcoux/fusion-rings} (2020).

\bibitem{Pen19}
{\sc D.~Penneys}, {\em Private discussion at the 2019 Oberwolfach workshop Subfactor and Applications.}

\bibitem{Ren}
{\sc Y.~Ren}, {\em Universal skein theory for group actions}, Adv Math. 356 (2019), 106804, 22pp.

\bibitem{sage}
{\sc The Sage Developers}, {\em {S}ageMath, the {S}age {M}athematics {S}oftware {S}ystem} ({V}ersion 9.0), {sagemath.org} (2020)

\bibitem{Ser}{\sc J.P.~Serre}, {\em Topics in Galois theory}. Second edition. Research Notes in Mathematics, 1. (2008). xvi+120 pp.

\bibitem{Wang17}
Z. Wang, Private discussions at Harvard University in 2017.

\end{thebibliography}
\end{document}